\documentclass[a4paper,11pt,reqno]{amsart}

\usepackage{amssymb}
\usepackage[all]{xy}

\usepackage{amsmath}
\usepackage{amsthm}    
\usepackage{mathrsfs}

\usepackage{hyperref}
\usepackage{xcolor}
\definecolor{paperblue}{cmyk}{1,0,0,0}
\hypersetup{
  colorlinks=true,
  linkcolor=paperblue,
  citecolor=magenta!80!black,
  urlcolor=paperblue,
  bookmarksdepth=2
}
\usepackage{enumitem}
\usepackage{subcaption}
\usepackage[final]{graphicx}
\usepackage{float}
\usepackage{epic}
\usepackage{setspace}

\usepackage[all]{xy}
\usepackage{color}
\usepackage{calligra}
\usepackage[T1]{fontenc}
\usepackage{verbatim}
\newcommand{\N}{\mathbb N}
\newcommand{\Z}{\mathbb Z}
\newcommand{\vN}{\mathrm L}
\newcommand{\tr}{\tau}

\newcommand{\normtwo}[1]{\left\lVert #1\right\rVert_2}

\newcount\mycount
\numberwithin{equation}{section}

\numberwithin{equation}{section}
\newtheorem{introtheorem}{Theorem}

\newtheorem{introcorollary}[introtheorem]{Corollary}

\newtheorem{theorem}{Theorem}[section]

\newtheorem{lemma}[theorem]{Lemma}

\newtheorem{proposition}[theorem]{Proposition}

\newtheorem{corollary}[theorem]{Corollary}

\theoremstyle{definition}
\newtheorem{definition}[theorem]{Definition}
\newtheorem*{definition*}{Definition}

\newtheorem{remark}[theorem]{Remark}

\theoremstyle{remark}

\theoremstyle{remark}

\newcommand{\Cs}{\ensuremath{\mathrm{C}^\ast}}
\newcommand{\Csr}{\mathrm{C}^\ast_{\rm r}}

\newcommand{\id}{\ensuremath{\mathrm{id}}}
\newcommand{\Prob}{\ensuremath{\mathrm{Prob}}}

\begin{document}

\title[Inner \Cs-selfless groups]{Inner \Cs-selfless groups}

\author{Tattwamasi Amrutam}
\address{Institute of Mathematics of the Polish Academy of Sciences,  ul. Sniadeckich 8,\newline 00-656, Warszawa, Poland}
\email{tattwamasiamrutam@gmail.com}

\author{Mario Klisse} \address{Department of Mathematics, Christian-Albrechts-Universität Kiel, Heinrich-Hecht-Platz  6, \newline 24118 Kiel, Germany} \curraddr{} \email{klisse@math.uni-kiel.de} \thanks{}

\author{Matteo Pagliero} \address{Department of Mathematics and Computer Science, University of Southern Denmark, Campusvej 55, 5230 Odense, Denmark} \curraddr{} \email{pagliero@imada.sdu.dk} \thanks{}

\subjclass[2020]{Primary 22D25, 46L05;
Secondary 46L10, 46L55, 20F65}

\keywords{Inner \Cs-selflessness, invariant subalgebra rigidity,
reduced group \Cs-algebras, group von Neumann algebras,
crossed products, cyclic Powers property,
mixed identity-free groups, \Cs-simplicity}

\date{\today}

\dedicatory{}

\begin{abstract}
We introduce the notion of inner \Cs-selfless groups and study their structural properties. We prove a Fourier coefficient extraction theorem for invariant subalgebras of crossed products by such groups under a compatibility assumption on the action. Consequently, every inner \Cs-selfless group has the relative \Cs-invariant subalgebra rigidity property, and every finite product of such groups has the \Cs-invariant subalgebra rigidity property.
We also introduce another group-theoretic notion, which we call cyclic Powers property. This implies inner \Cs-selflessness, complete \Cs-selflessness, and the Powers property, among others. We establish it for groups admitting a topologically free extreme boundary action on a compact Hausdorff space with more than two points, and for Zariski-dense discrete subgroups of $\mathrm{PSL}(d,\mathbb K)$ containing a biproximal element, where $\mathbb K$ is a local field and $d\geq2$. In particular, we obtain the Powers property for $\mathrm{PSL}(d,\mathbb Z)$, $d\geq3$, answering a question posed more than twenty years ago by Bridson--de la Harpe. Finally, we show that every non-trivial mixed identity-free discrete group has the relative invariant subalgebra rigidity property for its group von Neumann algebra. This shows that the Baumslag--Solitar groups $\mathrm{BS}(m,n)$ with $|m|,|n|\geq2$ and $|m|\ne|n|$ have the relative ISR property, thereby answering a question of Dudko--Jiang.
\end{abstract}

\maketitle

\setcounter{tocdepth}{1}
\begingroup
\makeatletter
\let\l@subsection\@gobbletwo
\let\l@subsubsection\@gobbletwo
\makeatother
\tableofcontents
\endgroup
\section{Introduction}
Largely motivated by the work of Dykema, R{\o}rdam, and Popa \cite{Pop95}, Robert introduced the notion of selfless \Cs-probability spaces in~\cite{Robert25}. Here, a \Cs-probability space is a unital \Cs-algebra equipped with a distinguished state whose associated GNS-representation is faithful. It follows from \cite{Robert25}, or from an analogous idea developed in \cite{BS16}, that if the associated state is tracial, selflessness implies simplicity, uniqueness of the tracial state, stable rank one, and strict comparison of positive elements, and, in the infinite-dimensional case, these conditions are
also sufficient \cite{Rob26}. Moreover, any two unital embeddings of the Jiang--Su algebra $\mathcal Z$ into such a \Cs-probability space are strongly asymptotically unitarily equivalent by \cite{Pag26}. Apart from providing a new way to view a number of regularity properties of \Cs-algebras, it turns out that selfless \Cs-probability spaces are rather common, and establishing new examples, especially non-nuclear ones, has been a successful collective endeavour. Apart from free product constructions \cite{HKEPR25,FFKP26a,GKPT26,GJKPR26}, one of the most prolific sources of selfless examples are reduced group \Cs-algebras, which motivated the following definition. A non-trivial discrete group $\Gamma$ is said to be \emph{\Cs-selfless} if the
\Cs-probability space $(\Csr(\Gamma),\tau_\Gamma)$ consisting of the reduced group \Cs-algebra of $\Gamma$ and its canonical tracial state is selfless. Amrutam--Gao--Kunnawalkam Elayavalli--Patchell~\cite{AGKP25} introduced a quantitative notion of selflessness for groups and used it, together with the rapid decay property, to establish \Cs-selflessness. Later, Ozawa~\cite{Oza} proved \Cs-selflessness for groups admitting a topologically free extreme boundary action, and more generally for groups satisfying a combinatorial condition that includes all acylindrically hyperbolic groups with trivial finite radical and all countable Zariski-dense subgroups of $\mathrm{PSL}(d,\mathbb R)$. The list of \Cs-selfless groups grew from there on the impetus of the aforementioned works \cite{AGKP25,Vigdorovich25,RTV25,MWY25a,MWY25b,FFKP26b,Vig26,GKPT26,Oml26a,FG26,LBLV26,Ari26, Oml26b,AF26}. 

One of the main aims of this paper is to  establish that several of these constructions yield more than \Cs-selflessness. To isolate this additional property, we call a non-trivial discrete group $\Gamma$ \emph{inner \Cs-selfless} if there exist an index set $I$, an ultrafilter $\mathcal U$ on $I$, a family of group elements $(g_i)_{i\in I}\subseteq\Gamma$, and a unital $\ast$-embedding
\[
\theta\colon C_{\mathrm r}^{\ast}(\Gamma\star\mathbb Z)\hookrightarrow \Cs_r(\Gamma)^{\mathcal U}
\]
whose restriction to $\Cs_r(\Gamma)$ is the canonical diagonal embedding into its ultrapower, and that sends the generator of $\mathbb Z$ to $(\lambda_{g_i})_{i\to\mathcal U}$. For countable groups, this is equivalent to requiring that $\Gamma\star\mathbb Z$ be existentially \Cs-residually-$\Gamma$ in the sense of \cite{AGKP25}. Moreover, existing results in the literature already show that certain groups are inner \Cs-selfless. For instance,  it follows from \cite[Theorem~3.5]{AGKP25} that a finitely generated acylindrically hyperbolic group with trivial finite radical and the rapid decay property is inner \Cs-selfless; while Ozawa's \cite[Theorem 1]{Oza} shows this for countable groups admitting a topologically free \emph{extreme boundary action}. However, we believe that this notion deserves attention, as it carries interesting consequences that are worth exploring in their own right. 

First of all, let us observe that any inner \Cs-selfless group is clearly \Cs-selfless. Indeed, inner \Cs-selflessness additionally requires that the free unitary is represented by group unitaries. Second, all inner \Cs-selfless groups are \emph{mixed identity-free} (abbreviated MIF); see Proposition~\ref{InnerPropertiesProposition}. This terminology was first introduced in \cite{HO}, where these groups were systematically studied for the first time. Very roughly, a group is MIF if every non-trivial equation involving variables and fixed group elements can be made false by a suitable choice of the variables. For instance, requiring that, for every $a\in\Gamma\setminus\{e\}$, there exists $x\in\Gamma$ with $xax^{-1}a^{-1}\ne e$ means precisely that the centre of $\Gamma$ is trivial. In this work we clarify the relation between MIF and inner \Cs-selfless groups. In particular, one can think of inner \Cs-selflessness as being MIF with an additional control on the \Cs-norm. As a consequence of this, we immediately notice that inner \Cs-selfless groups are not closed under taking products, and thus not all \Cs-selfless groups are inner \Cs-selfless; take, for example, $\mathbb F_2 \times\mathbb F_2$, which is \Cs-selfless by \cite[Theorem~2]{Oza}, but not MIF.
In Proposition~\ref{InnerPropertiesProposition} we also provide a number of other consequences of inner \Cs-selflessness, such as that the centralizer of every non-trivial normal subgroup is trivial. We then look at permanence properties. For instance, we can show that every non-trivial normal subgroup and every finite index subgroup of an inner \Cs-selfless group is again inner \Cs-selfless (see Lemmas~\ref{NormalWitnessLemma} and~\ref{FiniteIndexLemma}). In fact, Lemma~\ref{NormalWitnessLemma} shows that a family witnessing inner \Cs-selflessness can be chosen inside any prescribed non-trivial normal subgroup. 

One deep connection established in this work is that inner \Cs-selflessness grants access to the study of \emph{invariant subalgebras} of $\Cs_r(\Gamma)$ under the conjugation-action of $\Gamma$. 
The study of invariant von Neumann subalgebras goes back to work of Chifan--Das \cite{ChifanDas} and \cite{KalantarPanagopoulos}. Kalantar--Panagopoulos~\cite{KalantarPanagopoulos} proved that invariant von Neumann subalgebras come from normal subgroups for irreducible lattices in connected semisimple Lie groups with trivial center, no compact factors, and simple factors of real rank at least two . Motivated by this result, Amrutam--Jiang~\cite{AmrutamJiangISR} introduced the invariant subalgebra rigidity (ISR)-property and established it for several classes of groups, including torsion-free non-amenable hyperbolic groups and their finite products . Subsequent work of Chifan--Das--Sun~\cite{CDS}, Jiang--Zhou~\cite{JZ}, Dudko--Jiang~\cite{DJ}, and Amrutam--Dudko--Jiang--Skalski~\cite{ADJS25} extended this study to further classes of groups, including amenable groups, and to crossed products and tensor products ~\cite{ADJS26}. The corresponding \Cs-algebraic and relative rigidity properties were studied in \cite{AmrutamJiang26,amrutam2026relative, AmrutamPSL26}.

Recall that $\Gamma$ has the \emph{\Cs-invariant subalgebra rigidity property} (abbreviated \Cs-ISR property) if every unital $\Gamma$-invariant \Cs-subalgebra of $\Cs_r(\Gamma)$ is of the form $\Cs_r(\Lambda)$ for a normal subgroup $\Lambda\triangleleft\Gamma$. In the same spirit, the \emph{relative \Cs-ISR property} requires that, for every non-trivial normal subgroup $N\triangleleft\Gamma$, every unital $N$-invariant \Cs-subalgebra be of the form $\Cs_r(\Lambda)$ for a subgroup $\Lambda\leq\Gamma$, necessarily normalized by $N$~\cite{amrutam2026relative}. Our main theorem regarding \Cs-ISR, which can be seen as a Fourier extraction result, reads as follows.

\begin{introtheorem}[{Theorem \ref{InvariantMainTheorem}}]\label{IntroInvariantMainTheorem} Let $\alpha\colon\Gamma\curvearrowright A$ be an action of an inner \Cs-selfless discrete group $\Gamma$ on a unital \Cs-algebra $A$, and assume that the ultrafilter $\mathcal{U}$ on the set $I$ and the family $(g_{i})_{i\in I}\subseteq\Gamma$ witnessing inner \Cs-selflessness satisfy
\begin{eqnarray} \label{IntroCompatibility}
\lim_{i\rightarrow\mathcal{U}}\left\Vert \alpha_{g_{i}}(a)-a\right\Vert =0\quad\text{for every }a\in A.
\end{eqnarray}
Furthermore, let $N\triangleleft\Gamma$ be a non-trivial normal subgroup and assume that $B\subseteq A\rtimes_{\alpha,\mathrm r}\Gamma$ is a (possibly non-unital) \Cs-subalgebra that is invariant under conjugation by $N$. Then, $\mathbb{E}_{A}(b\lambda_{g}^{\ast})\lambda_{g}\in B$ for every $b\in B$ and $g\in\Gamma$, where $\mathbb E_A$ denotes the canonical conditional expectation. \end{introtheorem}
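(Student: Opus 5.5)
Since $\mathbb E_A(b\lambda_g^\ast)\lambda_g=\mathbb E_A(b\lambda_g^\ast\lambda_e^\ast)\lambda_e\cdot$ (shifted appropriately), the plan is to first reduce to the case $g=e$, i.e.\ to show $\mathbb E_A(b)\in B$ for all $b\in B$, and then to obtain the general Fourier coefficient by an averaging trick using the $\mathbb Z$-generator. Concretely, I will use the embedding $\theta\colon \Csr(\Gamma\star\mathbb Z)\hookrightarrow\Csr(\Gamma)^{\mathcal U}$ together with the compatibility assumption \eqref{IntroCompatibility} to build an extension $\Theta\colon (A\rtimes_{\alpha,\mathrm r}\Gamma)\rtimes$-type embedding, namely a unital $\ast$-homomorphism from $A\rtimes_{\mathrm r}(\Gamma\star\mathbb Z)$ (with $\mathbb Z$ acting trivially on $A$) into $(A\rtimes_{\alpha,\mathrm r}\Gamma)^{\mathcal U}$ which is the diagonal embedding on $A\rtimes_{\alpha,\mathrm r}\Gamma$ and sends the $\mathbb Z$-generator $u$ to $(\lambda_{g_i})_{i\to\mathcal U}$. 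Condition \eqref{IntroCompatibility} is exactly what makes $(\lambda_{g_i})$ asymptotically commute with $A$, so the covariance relations hold in the ultrapower; isometricity of the reduced norm should follow from $\theta$ being isometric by a standard Fell-absorption / tensoring argument (regarding $A\rtimes_{\mathrm r}\Gamma\subseteq A\otimes\Csr(\Gamma)$-type coaction embedding $a\lambda_g\mapsto a\lambda_g\otimes\lambda_g$). I expect this step — verifying the extension is well defined and injective/contractive on the reduced crossed product — to be the main technical obstacle.

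Next, by Lemma~\ref{NormalWitnessLemma} I may choose the witnessing family inside $N$ (one has to check the compatibility condition is preserved; I would redo that lemma's construction, which replaces $g_i$ by words in $g_i$ and elements of $N$, and note that products of elements satisfying \eqref{IntroCompatibility} with fixed elements conjugated still satisfy it asymptotically — if not, I would instead use elements $h g_i h^{-1}g_i^{-1}$-style commutators with $h\in N$, which lie in $N$ and still generate a free copy). Then for $b\in B$ and $n\ge1$, the element $\frac1n\sum_{k=1}^n \lambda_{g_i}^k b\lambda_{g_i}^{-k}$ lies in $B$ for each $i$ by $N$-invariance. In $A\rtimes_{\mathrm r}(\Gamma\star\mathbb Z)$, the averages $\frac1n\sum_k u^k x u^{-k}$ converge in norm to $\mathbb E_A(x)$ for $x$ in $A\rtimes_{\mathrm r}\Gamma$: for $x=a\lambda_g$ with $g\ne e$, the elements $u^k\lambda_g u^{-k}$ are free (reduced words of distinct lengths/shapes) and an averaging/Haagerup–Powers estimate gives $\|\frac1n\sum_k a\, u^k\lambda_g u^{-k}\|\le C\|a\|/\sqrt n$; here I use that $u$ commutes with $A$. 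Thus, pulling back through $\Theta$, $\lim_{i\to\mathcal U}\|\frac1n\sum_k \lambda_{g_i}^k b\lambda_{g_i}^{-k}-\mathbb E_A(b)\|\le \varepsilon_n\to0$, with $\mathbb E_A(b)$ viewed diagonally. Since $B$ is closed, choosing $n$ then $i$ appropriately gives $\mathbb E_A(b)\in B$.

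For general $g$, I want $\mathbb E_A(b\lambda_g^\ast)\lambda_g\in B$ without knowing $\lambda_g\in$ anything. Replace the averaging by $\frac1n\sum_k u^k x u^{-k}$ applied to $x=b$ but extracting the $g$-coefficient: instead consider $y_k=(u^k\lambda_g^{-1}u^{-k})\cdots$ — more simply, use conjugation by $w_k=u^k h u^{-k}$ type elements with $h\in N$ chosen so that $w_k a\lambda_g w_k^{-1}$ is a reduced word unless $g$ commutes, and average $\frac1n\sum_k w_k x w_k^{-1}$; the $g$-term $a\lambda_g$ survives only if it is fixed, so this route fails. Instead I will use the fact that $\mathbb E_A$-extraction for $g=e$ applied to the $N$-invariant algebra $B$ yields, via the multiplicative domain, the projection onto the $\lambda_g$-coefficient: apply the $g=e$ result in the crossed product $A\rtimes\Gamma$ regarded as $(A\rtimes_{\mathrm r}\langle g\rangle$-free decomposition$)$, i.e.\ average $\frac1n\sum_k u^k x u^{-k}\lambda_g^{\ast}$ paired with $\lambda_g$ where the words $u^k\lambda_h u^{-k}\lambda_g^{-1}$ for $h\ne g$ still have norm-decaying averages while $h=g$ gives $\frac1n\sum_k u^k\lambda_gu^{-k}\lambda_g^{-1}$; so I would instead average $\frac1n\sum_k \lambda_g u^k\lambda_g^{-1}\,x\,u^{-k}$-type two-sided expressions, which lie in $B$ when $\lambda_g u^k\lambda_g^{-1}$ is replaced by $\lambda_{g g_i^k g^{-1}}\in N$, giving $\frac1n\sum_k \lambda_{gg_i^kg^{-1}} b \lambda_{g_i^{-k}}$ — not a conjugation, so I would fix this by taking $x\mapsto v x v^\ast$ with $v=\lambda_{gg_i^kg^{-1}g_i^{-k}}$ hmm; realistically I expect the general $g$ case to follow from the $g=e$ case applied to $B$ inside the larger crossed product where the extraction map is replaced by $\mathbb E_A(\cdot\,\lambda_g^\ast)\lambda_g$ through the same Powers averaging with conjugators $u^k$, using that $u^k a\lambda_g u^{-k}$ for the surviving term is controlled by \eqref{IntroCompatibility}.
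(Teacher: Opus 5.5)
\textbf{Main gap: the case $g\neq e$ is not proved.} This case is the actual content of the statement. Write $x(f):=\mathbb E_A(x\lambda_f^*)$.

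The reduction to $g=e$ you announce would need $b\lambda_g^*\in B$. That fails, because $B$ need not contain any group unitaries.

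Your closing suggestion fails as well. Conjugation by $u^k$ sends $a\lambda_g$ to $a\lambda_{h^kgh^{-k}}$. Compatibility controls the coefficient but not the group element, so the $g$-term is averaged away like every other non-identity term. Averages of conjugations only produce conditional expectations (onto $A$, or onto $A\rtimes\langle w\rangle$ by selective averaging). The twisted average $\frac1m\sum_j\lambda_{gn_jg^{-1}}\,b\,\lambda_{n_j^{-1}}$, which would isolate $x(g)\lambda_g$, is not an element of $B$.

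The missing idea is to manufacture elements of $B$ of the form $p\lambda_k$, with $p\in A$ positive and $k$ ranging over a non-trivial normal subgroup, to stand in for those group unitaries. The paper does this in the free product model $\Lambda\star\langle h\rangle$, with $B$ invariant under $L=\langle f^{-1}hf\mid f\in\Lambda\rangle$:
\begin{itemize}
\item The element $\mathbf P(x)=x(\lambda_hx^*\lambda_h^*)(\lambda_{h^2}x\lambda_{h^2}^*)(\lambda_{h^3}x^*\lambda_{h^3}^*)$ lies in $B$.
\item Selective averaging along the loxodromic word $\mathbf w(g)$ on the Bass--Serre tree (Proposition~\ref{AveragingProposition}) gives $\mathbb E_{\mathbf w(g)}(B)\subseteq B$.
\item The normal-form Lemma~\ref{ElementPropertiesLemma}(iii) gives $\mathbb E_{\mathbf w(g)}(\mathbf P(x))=(x(g)x(g)^*)^2\lambda_{\mathbf w(g)}$.
\item Functional calculus and conjugation by $L$ (Lemma~\ref{FunctionalCalculusLemma}), applied to $x$ at $g$ and to $x^*$ at $g^{-1}$, give $p_\varepsilon\lambda_{gng^{-1}}\in B$ and $q_\varepsilon\lambda_n\in B$. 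This holds for all $n$ in a normal subgroup of $L$, which is non-trivial by Proposition~\ref{InnerPropertiesProposition}(iv).
\item Averaging the products $p_\varepsilon\lambda_{gn_jg^{-1}}\,x\,q_\varepsilon\lambda_{n_j^{-1}}\in B$ via plumpness (Lemma~\ref{PlumpSubgroupLemma}) gives $p_\varepsilon x(g)\alpha_g(q_\varepsilon)\lambda_g\in B$. Letting $\varepsilon\to0$ gives $x(g)\lambda_g\in B$.
\end{itemize}
Only after this is the result transferred back through the ultrapower, via $C=\widetilde\theta^{-1}(B^{\mathcal U})$, which is where your pull-back idea fits.

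\textbf{Second gap: your Step 1 is not justified as stated.} A $\ast$-homomorphism on the \emph{reduced} crossed product $A\rtimes_{\mathrm r}(\Gamma\star\mathbb Z)$ does not follow from Fell absorption. A covariant pair whose unitary part equals the regular representation need not factor through the reduced crossed product; take $(\mathrm{id},\lambda)$ for $\operatorname{Ad}\colon G\curvearrowright\Csr(G)$ with $G$ non-amenable. Moreover, for the trivial action on $A$, your $\Theta$ is exactly the map $\Csr(\Gamma\star\mathbb Z)\odot A\to(\Csr(\Gamma)\otimes A)^{\mathcal U}$. Its min-continuity is the extra condition in complete selflessness, which inner \Cs-selflessness does not provide.

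The paper avoids this by working in the quotient $D$ of $(A\rtimes_{\mathrm r}\Gamma)\star_{\Csr(\Gamma)}\Csr(\Gamma\star\mathbb Z)$ by the commutators $[a,\lambda_h]$. On $D$, $\widetilde\theta$ exists by universality. Remark~\ref{UniversalFreeProductRemark} then checks that the argument uses only norm estimates in the faithful copy of $\Csr(\Gamma\star\mathbb Z)$, together with $\Cs(A\cup\lambda(\langle\mathbf w(g)\rangle))\cong A\otimes\Csr(\langle\mathbf w(g)\rangle)$.

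\textbf{Your $g=e$ step is sound without $\Theta$.} For a finite Fourier sum $x$, the difference $\frac1n\sum_{k=1}^n\lambda_{g_i^k}x\lambda_{g_i^k}^*-\mathbb E_A(x)$ equals $\sum_{f\neq e}x(f)\frac1n\sum_k\lambda_{g_i^kfg_i^{-k}}$, up to an error that vanishes along $\mathcal U$ by compatibility. Using only $\theta$, its norm tends along $\mathcal U$ to at most $\sum_{f\neq e}\|x(f)\|\,\|\frac1n\sum_k\lambda_{h^kfh^{-k}}\|\leq\frac{2}{\sqrt n}\sum_{f\neq e}\|x(f)\|$.

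This requires the witnesses to lie in $N$ while keeping compatibility. Note that $g_ing_i^{-1}$ acts approximately like $\alpha_n$, not trivially. So you need a product such as $(g_ing_i^{-1})(g_i^2n^{-1}g_i^{-2})$, as in Lemma~\ref{StrongNormalWitnessLemma}. Your commutator fallback $ng_in^{-1}g_i^{-1}$ is not free from $\Gamma$ when $n$ has finite order.
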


The compatibility condition \eqref{IntroCompatibility} can be arranged, in particular, for trivial actions and for actions with non-trivial kernel. Applying Theorem~\ref{IntroInvariantMainTheorem} to the trivial action on $\mathbb C$ gives the following corollary.

\begin{introcorollary}[{Corollary \ref{rCISR-Corollary}}]
Let $\Gamma$ be an inner \Cs-selfless discrete group. Then $\Gamma$ has the relative \Cs-ISR property.
\end{introcorollary}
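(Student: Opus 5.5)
We apply Theorem~\ref{IntroInvariantMainTheorem} with $A=\mathbb C$ and $\alpha$ the trivial action. Then the compatibility condition \eqref{IntroCompatibility} holds trivially for any witnessing family $(g_i)_{i\in I}$ and ultrafilter $\mathcal U$, since $\alpha_{g_i}(a)-a=0$ for every $a$. Moreover, $\mathbb C\rtimes_{\mathrm r}\Gamma=\Csr(\Gamma)$, and the conditional expectation $\mathbb E_{\mathbb C}$ is the canonical trace $\tau_\Gamma$.

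Fix a non-trivial normal subgroup $N\triangleleft\Gamma$ and a unital $N$-invariant \Cs-subalgebra $B\subseteq\Csr(\Gamma)$. The theorem yields
\[
\tau_\Gamma(b\lambda_g^\ast)\lambda_g\in B\qquad\text{for all } b\in B,\ g\in\Gamma.
\]
Set
\[
\Lambda=\{g\in\Gamma:\lambda_g\in B\}.
\]
Since $B$ is a unital \Cs-algebra, $\Lambda$ is a subgroup: it contains $e$, and it is closed under products and inverses because $\lambda_g^\ast=\lambda_{g^{-1}}$. It is normalized by $N$, since $\lambda_n\lambda_g\lambda_n^\ast=\lambda_{ngn^{-1}}$ lies in $B$ whenever $\lambda_g$ does.

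Clearly $\Csr(\Lambda)\subseteq B$, because $B$ is closed and contains the span of $\{\lambda_g:g\in\Lambda\}$.

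For the reverse inclusion, let $b\in B$. Whenever the Fourier coefficient $\tau_\Gamma(b\lambda_g^\ast)$ is non-zero, the displayed membership shows $\lambda_g\in B$, that is, $g\in\Lambda$. So every element of $B$ has Fourier support contained in $\Lambda$. It remains to see that an element $x\in\Csr(\Gamma)$ with Fourier support in $\Lambda$ lies in $\Csr(\Lambda)$. This follows from the faithful conditional expectation
\[
E_\Lambda\colon\Csr(\Gamma)\to\Csr(\Lambda),\qquad \lambda_g\mapsto 1_\Lambda(g)\lambda_g.
\]
The element $x-E_\Lambda(x)$ has all Fourier coefficients zero, and since $\tau_\Gamma$ is faithful, an element with vanishing Fourier coefficients is zero: its image $(x-E_\Lambda(x))\delta_e$ in $\ell^2(\Gamma)$ vanishes, and the vector $\delta_e$ is separating. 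Hence $x=E_\Lambda(x)\in\Csr(\Lambda)$, and therefore $B=\Csr(\Lambda)$.

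There is no substantial obstacle beyond invoking the theorem. The only point needing care is the final step, that Fourier support in $\Lambda$ forces membership in $\Csr(\Lambda)$, which is handled by the conditional expectation $E_\Lambda$ together with faithfulness of $\tau_\Gamma$.
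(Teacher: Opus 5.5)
Your proposal is correct and follows the paper's proof essentially verbatim. Like the paper, you apply Theorem~\ref{InvariantMainTheorem} to the trivial action on $\mathbb C$, set $\Lambda=\{g\in\Gamma\mid\lambda_g\in B\}$, and conclude $b=\mathbb E_\Lambda(b)\in\Csr(\Lambda)$ because the Fourier coefficients of $b$ vanish outside $\Lambda$.
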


Although inner \Cs-selflessness excludes non-trivial direct products, Theorem~\ref{IntroInvariantMainTheorem} also gives rigidity for finite products.

\begin{introcorollary}[{Corollary \ref{C-ISR}}]
Let $\Gamma_1,\ldots,\Gamma_n$ be inner \Cs-selfless discrete groups. Then the product $\Gamma_1\times\cdots\times\Gamma_n$ has the \Cs-ISR property.
\end{introcorollary}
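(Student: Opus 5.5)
The statement to prove is the last one: for inner \Cs-selfless $\Gamma_1,\ldots,\Gamma_n$, the product $\Gamma=\Gamma_1\times\cdots\times\Gamma_n$ has the \Cs-ISR property. The plan is to apply Theorem~\ref{IntroInvariantMainTheorem} once per factor, with the rest of the product absorbed into the coefficient algebra.

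\textbf{Setup.} Let $B\subseteq\Csr(\Gamma)$ be a unital $\Gamma$-invariant \Cs-subalgebra. The aim is to show that $\mathbb E(b\lambda_g^\ast)\lambda_g\in B$ for every $b\in B$ and $g\in\Gamma$, where $\mathbb E$ is the canonical trace-preserving expectation onto $\mathbb C$. Granting this, put $\Lambda=\{g\in\Gamma:\lambda_g\in B\}$.
\begin{itemize}
\item $\Lambda$ is a subgroup, since $B$ is a unital $\ast$-algebra.
\item $\Lambda$ is normal, by $\Gamma$-invariance of $B$.
\item $B=\Csr(\Lambda)$. Every $b\in B$ has its Fourier support in $\Lambda$, because a non-zero coefficient $\tau(b\lambda_g^\ast)$ forces $\lambda_g\in B$. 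The standard Fej\'er-type or conditional-expectation argument then places $b$ in $\Csr(\Lambda)$: $\Csr(\Lambda)$ is the range of the expectation $\mathbb E_\Lambda$, and $\mathbb E_\Lambda(b)=b$ whenever $b$ is supported in $\Lambda$. Faithfulness of $\tau$ gives this, since $\tau((b-\mathbb E_\Lambda b)^\ast(b-\mathbb E_\Lambda b))=0$ by a computation of $\ell^2$-Fourier coefficients.
\end{itemize}
So the whole argument reduces to the Fourier extraction step.

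\textbf{Extraction, one factor at a time.} Write $\Csr(\Gamma)\cong A_j\rtimes_{\mathrm r}\Gamma_j$, where $A_j=\Csr\bigl(\prod_{k\neq j}\Gamma_k\bigr)$ carries the trivial $\Gamma_j$-action. Condition~\eqref{IntroCompatibility} then holds trivially for any witnessing family in $\Gamma_j$. Take $N=\Gamma_j\triangleleft\Gamma_j$; it is non-trivial because inner \Cs-selfless groups are non-trivial by definition, and $B$ is invariant under conjugation by $\Gamma_j\subseteq\Gamma$. Theorem~\ref{IntroInvariantMainTheorem} gives
\[
\mathbb E_{A_j}(b\lambda_{h}^\ast)\lambda_{h}\in B \quad\text{for all } b\in B,\ h\in\Gamma_j .
\]
This map is exactly the projection onto Fourier coefficients whose $j$-th coordinate equals $h$. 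Denote it by $P^{(j)}_h$; it is a contractive linear map on $\Csr(\Gamma)$ and maps $B$ into $B$.

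\textbf{Composing.} For $g=(g_1,\ldots,g_n)$, the maps $P^{(j)}_{g_j}$ commute, and
\[
P^{(1)}_{g_1}\circ\cdots\circ P^{(n)}_{g_n}(b)=\tau(b\lambda_g^\ast)\lambda_g .
\]
Both sides are continuous in $b$ and agree on group unitaries, which gives the identity. Each application keeps us inside $B$, so $\tau(b\lambda_g^\ast)\lambda_g\in B$, as required.

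The main point to verify is that applying the theorem to a general element $b\in B$, rather than to $\mathbb E_{A_j}(\cdot)$-images, stays within the hypotheses. It does: at each stage the input lies in $B$, which is $\Gamma$-invariant, hence $\Gamma_{j'}$-invariant for the next factor. No further obstacle is expected beyond checking the identification $\Csr(\Gamma)\cong A_j\rtimes_{\mathrm r}\Gamma_j$ together with the corresponding identification of $\mathbb E_{A_j}$ with coordinate-wise Fourier projection, which is routine.
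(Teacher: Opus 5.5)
Your proposal is correct and follows the paper's proof of Corollary~\ref{C-ISR} essentially step by step. You identify $\Csr(\Gamma)$ with $\Csr(\prod_{k\neq j}\Gamma_k)\rtimes_{\id,\mathrm r}\Gamma_j$ and apply Theorem~\ref{InvariantMainTheorem} with $N=\Gamma_j$ and the trivial (hence compatible) action. Composing the coordinate Fourier projections then gives $\tau_\Gamma(b\lambda_g^\ast)\lambda_g\in B$, and you conclude with $\Lambda=\{g\mid\lambda_g\in B\}$ and $b=\mathbb E_\Lambda(b)$. Your explicit check that the composed projections equal $\tau_\Gamma(\,\cdot\,\lambda_g^\ast)\lambda_g$ simply spells out the paper's ``successively extracting'' step.
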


Theorem~\ref{IntroInvariantMainTheorem} also applies to intermediate \Cs-algebras between $\Cs_r(\Gamma)$ and $A\rtimes_{\alpha,\mathrm r}\Gamma$. Under its compatibility hypothesis, the canonical conditional expectation preserves every such intermediate algebra. If $\Gamma$ has the \emph{approximation property}, each intermediate algebra is the crossed product of a $\Gamma$-invariant \Cs-subalgebra of $A$ (see Corollary~\ref{CReflectingCorollary}). 

In Corollary~\ref{TracialReflectingCorollary}, we also establish the corresponding statement in the von Neumann algebra setting, and obtain relative ISR and ISR for finite products of non-trivial mixed identity-free groups in Corollaries~\ref{TracialRelativeISRCorollary} and~\ref{TracialProductISRCorollary}.
In fact, to obtain the result for group von Neumann algebras, mixed identity-freeness alone suffices. We show that every non-trivial mixed identity-free discrete group has the relative ISR property (see Corollary~\ref{TracialRelativeISRCorollary}). The proof uses the ultrapower embedding in Proposition~\ref{prop:characterization} and the free product extraction result in Theorem~\ref{TracialFreeProductCaseTheorem}. In particular, we obtain as a byproduct that Baumslag--Solitar groups $\mathrm{BS}(m,n)$ with $|m|,|n|\geq2$ and $|m|\ne|n|$ have the ISR property (see Corollary~\ref{cor:BS}), answering the corresponding part of \cite[Question~6.1]{DJ}. We also obtain amenable examples in Corollary~\ref{cor:jacobson}, and show that every countable amenable group embeds into a finitely generated amenable group with ISR in Corollary~\ref{cor:embedding}.

Lastly, we look for new classes of examples of inner \Cs-selfless groups. With this primary focus in mind, we introduce a new group-theoretic condition that we call \emph{cyclic Powers property}. This is very close in spirit to the \emph{Powers property} of de la Harpe \cite{de2007simplicity}, which was inspired by Powers' work \cite{Powers75}. The difference lies in the fact that, while the Powers property asks for an arbitrary $n$-tuple of group elements that suitably move a partition of the group around, the cyclic Powers property requires a similar condition, but where the $n$-tuple consists of powers of a single element. In fact, the cyclic Powers property implies the Powers property, and also the $\mathrm{P}_{\mathrm{PHP}}$ property of Ozawa \cite{Oza} (see Lemma~\ref{lem:cyclicPowers-to-Powers}), and thus it implies that the group is \emph{completely \Cs-selfless}. We recall that \emph{completely selfless} \Cs-probability spaces were defined by Ozawa in \cite{Oza} to provide the right strengthened version of selflessness that passes to spatial tensor products. The real added value of our new notion is, however, that it implies inner \Cs-selflessness as well; see Corollary~\ref{cor:cP-to-innerselfless}. In fact, we deduce this fact from a vastly more general result about complete selflessness of crossed products by groups with the cyclic Powers property, which is interesting in its own right.

\begin{introtheorem}[{Theorem~\ref{thm:cP-crossed-product} and Corollary~\ref{cor:uniquely-ergodic-crossed-selfless}}]\label{introthm:cP-crossed-product}
    Let $\Gamma$ be a non-trivial discrete group with the cyclic Powers property.
    Let $\alpha\colon \Gamma\curvearrowright A$ be an action on a unital
    \Cs-algebra, $\rho$ be a state on $A$ such that $\rho \circ \mathbb E_A$ is GNS-faithful, and suppose that there exists a net of unitaries $(v_i)_{i \in I} \subseteq \Csr(\Gamma)$ such that
    \[
        \|\mathbb E_A(v_i^*xv_i)-\rho(\mathbb E_A(x))1_A\|\longrightarrow0
    \]
    for all $x\in A\rtimes_{\alpha, \mathrm r}\Gamma$.
    Then the canonical inclusion $\Csr(\Gamma)\subseteq(A\rtimes_{\alpha,\mathrm r}\Gamma,\rho\circ\mathbb E_A)$ is completely selfless.

    Moreover, the assumptions are satisfied if there exists an 
    element $t\in \Gamma$ of infinite-order such that $\alpha_t$ is uniquely ergodic with unique invariant state $\rho$, and $\rho \circ \mathbb E_A$ is GNS-faithful.
\end{introtheorem}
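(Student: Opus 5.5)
We have to prove Theorem~\ref{introthm:cP-crossed-product}, but the definition of the cyclic Powers property is not stated yet, so we work with a working form of it. For every finite $F\subseteq\Gamma\setminus\{e\}$ and every $n\ge1$ there should be an element $s\in\Gamma$ of infinite order, a partition $\Gamma=D\sqcup E$, and exponents $k_1,\dots,k_n$ such that:
\[
fD\cap D=\emptyset\ \text{ for all } f\in F, \qquad s^{k_j}E \text{ pairwise disjoint}.
\]
Complete selflessness of the inclusion means, following Ozawa, the following. For every finite-dimensional operator system, and in particular after tensoring with $M_m$, the free product $(A\rtimes_{\alpha,r}\Gamma)\ast\Csr(\mathbb Z)$, amalgamated or reduced with respect to $\rho\circ\mathbb E_A$, embeds into an ultrapower. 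This embedding should restrict to the diagonal on $A\rtimes_{\alpha,r}\Gamma$ and send the free Haar unitary into $\Csr(\Gamma)^{\mathcal U}$.

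The plan is to construct the free unitary as a product $u_i=v_i\lambda_{s_i}$ or similar. Here the $s_i$ are elements furnished by the cyclic Powers property, and the $v_i$ are the unitaries from the hypothesis. We then verify freeness in norm on reduced words.

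\textbf{Step 1: reduce to moment estimates in norm.} By the standard criterion (Ozawa's lemma, or the criterion used in \cite{Oza} for the $\mathrm P_{\mathrm{PHP}}$ property), it suffices to show the following. For centred elements $x_1,\dots,x_n\in A\rtimes_{\alpha,r}\Gamma$ (with $\rho\circ\mathbb E_A(x_j)=0$) and nonzero integers $m_j$, the norm of
\[
x_0u^{m_1}x_1u^{m_2}\cdots u^{m_n}x_n
\]
should be asymptotically bounded by the norm of the corresponding element in the reduced free product. Matrix amplification gives the "complete" version, since all estimates will be uniform in coefficients.

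\textbf{Step 2: use $v_i$ to kill the $A$-part.} The hypothesis says $\mathbb E_A(v_i^*xv_i)\to\rho(\mathbb E_A(x))1$. Hence conjugating a centred $x$ by $v_i$ makes its $A$-coefficient at the identity uniformly small. After a finite-support approximation, $v_i^*xv_i$ is, up to small error, supported on $\Gamma\setminus\{e\}$ in a finite set $F$.

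\textbf{Step 3: cyclic Powers for $F$.} We write each such element as a finite sum $\sum_{f\in F}a_f\lambda_f$. We then apply the cyclic Powers property to get $s$, $D$, $E$ with $s^{k}$ the ping-pong powers. Then we run a Powers/Haagerup-type averaging and ping-pong argument with the projections $P_D$ and $P_E$ on $\ell^2\Gamma\otimes H$. Because the "free" letters are powers $\lambda_s^{k}$ of a single element, the reduced words alternate between maps sending $E$-part into $D$ and $F$-moves displacing $D$. This yields the free-product norm bound by the same Haagerup-inequality-free estimate as in \cite{Oza}.

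The \textbf{main obstacle} is combining Steps 2 and 3. The $v_i$ are elements of $\Csr(\Gamma)$, not group elements, so conjugating by them spreads supports. The choice must therefore be diagonal: first choose $v_i$, then choose $s$ depending on the support of the resulting elements. We must also check that $u=v_i\lambda_{s^{k}}v_i^*$-type unitaries remain in $\Csr(\Gamma)$. We would try $u_i=v_i\lambda_{s_i^{N_i}}v_i^*$ with $N_i\to\infty$.

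\textbf{The "moreover" part.} Take $t$ uniquely ergodic with invariant state $\rho$ and let $v_N=\lambda_{t}^{N}$. Then for $x=a\lambda_g$,
\[
\mathbb E_A(v^*xv)=\alpha_{t^{-N}}(a)\,\delta_{g,e}.
\]
This does not converge, so instead we use Cesàro averages. Unique ergodicity gives
\[
\frac1N\sum_{k<N}\alpha_{t^k}(a)\to\rho(a)1 \quad\text{in norm}.
\]
Averages of unitaries are not unitary, so we replace the $v_i$ by
\[
v=\frac{1}{\sqrt N}\sum_{k<N}\lambda_{t^k}w_k
\]
with free-ish corrections. Alternatively, we observe that the proof of Step 2 only needs a unital completely positive averaging map of the form $x\mapsto\sum_k c_k\,u_k^*xu_k$ with group unitaries. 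That is exactly what the Cesàro average provides, since off-diagonal terms vanish under $\mathbb E_A$ when $t$ has infinite order. We would formulate Step 2 in this averaged form, which covers both cases.
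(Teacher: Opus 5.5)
\textbf{Main part.} Your skeleton matches the paper's: choose the unitary $v$ first, approximate $v^*xv$ by finite Fourier sums, apply the combinatorial property to their supports, and use $z=v\lambda_sv^*\in\Csr(\Gamma)$ as the free Haar unitary. What is missing is the step that actually produces a completely selfless embedding.

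Step~1 is not a valid reduction. Norm bounds on individual words $x_0u^{m_1}x_1\cdots u^{m_n}x_n$ do not control norms of linear combinations, so they do not give a $\ast$-homomorphism on the reduced free product. Also, ``complete'' is not about matrix amplifications, which are automatic for $\ast$-homomorphisms. It means min-norm continuity of $\bigl((A\rtimes_{\alpha,\mathrm r}\Gamma)\star C\bigr)\odot R\to\bigl((A\rtimes_{\alpha,\mathrm r}\Gamma)\otimes R\bigr)^{\mathcal U}$ for every \Cs-algebra $R$. Step~3 then only asserts the needed estimate.

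The paper avoids word-by-word estimates by invoking \cite[Theorem~C]{KP26}. It works in the representation on $\mathcal H_\rho\otimes\ell^2(\Gamma)$, which is faithful by GNS-faithfulness of $\rho\circ\mathbb E_A, with $\Omega=\xi_\rho\otimes\delta_e$. There it suffices to build $p=(vP_Sv^*\otimes1)_{\mathcal U}$, commuting with the diagonal copy of $\mathcal B(\ell^2)$, such that:
\begin{itemize}
\item $p\Delta(x)p=\rho(\mathbb E_A(x))p$;
\item $p^\perp yp^\perp\in\mathbb C p^\perp$ for $y\in\Cs(z)$;
\item $p\Delta(x)$ kills the constant vector induced by $\Omega\otimes\eta$.
\end{itemize}

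These three identities come from the three clauses of the actual cyclic Powers property. That property says: for finite $F$ and $n$ there are $S\subseteq\Gamma$ and $s\in\Gamma$ with $\{gS\}_{g\in F}$ pairwise disjoint, $s^k(\Gamma\setminus S)\subseteq S$ for $1\le k\le n$, and $S\cap F=\emptyset$. The paper applies it to $F\cup K$, where $e\in K$ and $\|(1-P_K)v^*x\Omega\|$ is small. Disjointness gives $P_Sy_xP_S=\mathbb E_A(y_x)P_S$. Containment gives $(1-P_S)\lambda_s^j(1-P_S)=0$. Finally $S\cap K=\emptyset$ makes $\|P_Sv^*x\Omega\|$ small.

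Your working form of the property (pairwise disjoint $s^{k_j}E$, and no condition $S\cap F=\emptyset$) gives neither of the last two identities. Disjointness of the translates of $E$ does not force $s^kE\subseteq D$.

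\textbf{Moreover part.} The hypothesis requires \emph{unitaries} $v_i\in\Csr(\Gamma)$, and your proposal does not construct any. The ``free-ish corrections'' are unspecified. Replacing $v_i$ by a ucp average $\sum_kc_k\operatorname{Ad}(u_k^*)$ does not fit the argument, which needs a single unitary to carry $P_S$ to a projection and $\lambda_s$ to a Haar unitary in $\Csr(\Gamma)$. Your claim that off-diagonal terms vanish also fails for a genuine unitary $v=\sum_kc_k\lambda_{t^k}$: one has $\mathbb E_A(v^*a\lambda_{t^l}v)=\sum_k\overline{c_k}c_{k-l}\alpha_{t^{-k}}(a)$, which is generally non-zero for $l\neq0$.

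The missing idea is the chirp unitary $v_n=f_n(\lambda_t)$ with $f_n(e^{2\pi is})=e^{\pi iNs^2}$ and $N=2n$.
\begin{itemize}
\item Its Fourier coefficients are $\ell^2$-close to the flat profile $N^{-1/2}e^{i\pi/4}e^{-\pi ik^2/N}1_{\{0,\dots,N-1\}}(k)$.
\item Hence $\mathbb E_A(v_n^*a\lambda_{t^l}v_n)$ is close to $N^{-1}e^{-\pi il^2/N}\sum_{k<N}e^{2\pi ikl/N}\alpha_{t^{-k}}(a)$.
\item This tends to $\rho(a)\delta_{l,0}1_A$, using uniform Ces\`aro convergence from unique ergodicity, summation by parts, and $\sum_{k<N}e^{2\pi ikl/N}=0$ for $0<|l|<N$.
\item For $g\notin\langle t\rangle$ one has $\mathbb E_A(v_n^*a\lambda_gv_n)=0$ exactly.
\end{itemize}
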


In particular, this marks a new milestone for selflessness of crossed product \Cs-algebras, which was already studied in \cite{MWY25a,MWY25b,FFKP26b,GJKPR26,Bell26a,Ohs26,Bell26b,KP26}.

We then establish that groups that have the cyclic Powers property, and that are thus inner \Cs-selfless, exist in abundance.
Indeed, we show that the cyclic Powers property holds for every group admitting a topologically free \emph{extreme boundary action} on a compact Hausdorff space with more than two points (see Proposition~\ref{prop:extremelyproximal-cP}), and for Zariski-dense discrete subgroups of $\mathrm{PSL}(d,\mathbb K)$, where $\mathbb K$ is a local field and $d\geq2$, provided that the group contains a biproximal element (see Theorem~\ref{thm:Zariskidense-cyclicPowers}). In particular, this includes Zariski-dense subgroups of $\mathrm{PSL}(d,\mathbb R)$ (see Corollary~\ref{cor:examples}), and establishes that they have the Powers property; see Corollary~\ref{cor:linear-Powers}.
This consequence is particularly remarkable as it answers --- in the positive --- a question of Bridson and de la Harpe from more than $20$ years ago \cite[p.~202]{BridsondelaHarpe2004}, which asks whether the groups $\mathrm{PSL}(d,\mathbb Z)$, $d\geq3$ have the Powers property.

\vspace{3mm}

\subsection*{Organization of the paper}
In addition to this introduction, this paper has five other sections. Section~\ref{Sec:preliminaries} collects the preliminaries and notation.
In Section~\ref{sec:innerselflessness}, we introduce inner $\Cs$-selfless groups and study their structural properties. In particular, we prove Lemmas~\ref{NormalWitnessLemma} and~\ref{FiniteIndexLemma}, and Proposition~\ref{InnerPropertiesProposition} here.
Section~\ref{sec:invC*crossed} is devoted to invariant $\Cs$-subalgebras of crossed products by inner $\Cs$-selfless groups. We prove Theorem~\ref{InvariantMainTheorem} and obtain Corollaries~\ref{rCISR-Corollary}, \ref{C-ISR}, and~\ref{CReflectingCorollary}.
In Section~\ref{sec:cyclicpowers}, we introduce the cyclic Powers property and prove Theorem~\ref{thm:cP-crossed-product}. We obtain inner $\Cs$-selflessness in Corollary~\ref{cor:cP-to-innerselfless}, establish the examples in Corollary~\ref{cor:examples}, and deduce the Powers property for linear groups in Corollary~\ref{cor:linear-Powers}. We also obtain the crossed product application in Corollary~\ref{cor:uniquely-ergodic-crossed-selfless}.
Finally, in Section~\ref{sec:MIFandISR}, we prove Fourier coefficient extraction for $L^2$-compatible actions of non-trivial mixed identity-free discrete groups, and hence the relative ISR property, in Theorem~\ref{TracialInvariantMainTheorem}. We then obtain ISR for finite products of non-trivial mixed identity-free groups in Corollary~\ref{TracialProductISRCorollary}, together with the applications in Corollaries~\ref{cor:BS}, \ref{cor:jacobson}, and~\ref{cor:embedding}.


\subsection*{Statement on the use of AI}
The mathematical ideas, results, and proofs are due to
the authors. ChatGPT (OpenAI) was used for grammatical corrections,
assistance with exposition and references. 

\subsection*{Acknowledgements}
The work began when T.A. was visiting M.P. at IMADA, SDU in June 2026. He thanks the department for their kind invitation.
Part of this project was ultimated during M.P.’s visit to T.A.\ at IMPAN in October 2026, and he is extremely thankful for the warm hospitality received during his stay. T.A.\ is supported by National Science Centre, Poland Sonata, grant number 2025/59/D/ST1/03117. M.P.\ was supported by the Carlsberg Foundation grant CF24-1144.

\vspace{3mm}


\section{Preliminaries and notation}
\label{Sec:preliminaries}
We first fix our notation and recall the free product and ultrapower constructions used in the proofs. We also spend some time to briefly recall the notions of C$^*$-probability spaces and crossed products associated with C$^*$-algebras and tracial von Neumann algebras. 
\subsection{General notation}

We write $\mathbb N:=\{0,1,\ldots\}$, $\mathbb N_{\geq1}:=\{1,2,\ldots\}$, $\mathbb T:=\{z\in\mathbb C\mid |z|=1\}$, and $\ell^2:=\ell^2(\mathbb N)$. For a finite set $F$, both $|F|$ and $\#F$ denote its cardinality. The neutral element of a group is denoted by $e$. For a subset $S$ of a group, $\langle S\rangle$ denotes the subgroup generated by $S$, and $\mathbb F_r$ denotes the free group of rank $r$. We use $\id$ for identity maps.

Inner products on Hilbert spaces are assumed to be linear in the first variable. For a Hilbert space $\mathcal H$, $\mathcal B(\mathcal H)$ denotes the algebra of bounded operators on $\mathcal H$. For a subset $S$ of a unital \Cs-algebra $A$, $\Cs(S)$ denotes the unital \Cs-subalgebra generated by $S$. For a set of operators $S$, $W^*(S)$ denotes the von Neumann algebra generated by $S$. We identify scalars with scalar multiples of the identity. We write $\mathrm{Aut}(A)$ for the group of $\ast$-automorphisms of $A$ and $\operatorname{Ad}(u)(x):=uxu^*$ for conjugation by a unitary $u$. For a projection $p$, we write $p^\perp:=1-p$.

For \Cs-algebras, $\odot$ denotes the algebraic tensor product and $\otimes$ denotes the minimal tensor product, also written $\otimes_{\min}$. For Hilbert spaces, $\otimes$ denotes the Hilbert space tensor product. For a compact Hausdorff space $X$, we write $C(X)$ for the \Cs-algebra of continuous complex-valued functions on $X$, $\mathrm{Homeo}(X)$ for its homeomorphism group, and $\Prob(X)$ for the space of regular Borel probability measures on $X$, equipped with the weak$^\ast$ topology.
\subsection{Free products of groups} \label{subsec:free-products}
Let $\Gamma=\Gamma_{1}\star\Gamma_{2}$ be the free product of two non-trivial discrete groups $\Gamma_{1}$ and $\Gamma_{2}$. By the normal form theorem for free products, every element $g\in\Gamma$ admits a unique expression of the form $g=g_{1}\cdots g_{n}$ with $n\geq0$, where each factor $g_{1},\ldots,g_{n}$ is a non-identity element of one of the respective components of $\Gamma$, and consecutive factors belong to different components. The elements $g_{1},\ldots,g_{n}$ are called the \emph{syllables} of $g$, and $n$ is its \emph{syllable length}. By convention, the empty word represents the neutral element $e\in\Gamma$.

The expression $g_{1}\cdots g_{n}$ is called \emph{cyclically reduced} if $n\leq1$ or if its first and last syllables belong to different components. Every element in $\Gamma$ is conjugate to a cyclically reduced one. Furthermore, if $g\in\Gamma$ is cyclically reduced and $n\geq2$, then $g$ has infinite order and $g^{k}$ has syllable length $|k|n$ for every integer $k\in\mathbb{Z}$.

The conjugacy theorem for free products (see, e.g., \cite[Chapter~IV, Theorem~1.4]{LyndonSchupp01}) states that conjugate cyclically reduced elements have the same syllable length. For $n\geq2$, the cyclically reduced conjugates of $g=g_{1}\cdots g_{n}$ are exactly the cyclic permutations $g_{k+1}\cdots g_{n}g_{1}\cdots g_{k}$, where $0\leq k<n$.

A non-identity element $g\in\Gamma\setminus\{e\}$ is a \emph{proper power} if $g=v^{m}$ for some $v\in\Gamma$ and an integer $m\geq2$. For cyclically reduced $g$ of syllable length at least two, this is equivalent to the syllable sequence of $g$ being a concatenation of at least two identical shorter blocks. Indeed, cyclically reducing a root $v$ gives an element of syllable length at least two whose $m$-th power is conjugate to $g$. That power has a repeated reduced normal form, and the conjugacy theorem implies that the normal form of $g$ is a cyclic permutation of it, hence is also a repetition.\\

The \emph{Bass--Serre tree} $T$ of a free product $\Gamma=\Gamma_{1}\star\Gamma_{2}$ is given by the vertex set $VT:=\Gamma/\Gamma_{1}\sqcup\Gamma/\Gamma_{2}$ and the edge set $ET:=\{(g\Gamma_{1},g\Gamma_{2})\mid g\in\Gamma\}$. The group $\Gamma$ naturally acts on $T$ by left multiplication. This action is without inversions. For $j\in\{1,2\}$, the stabilizer of the vertex $g\Gamma_{j}$ is $g\Gamma_{j}g^{-1}$, and every edge stabilizer is trivial; see \cite[Chapter~I, Section~4.1, Theorem~7]{Serre80}.

An element $g\in\Gamma$ is called \emph{elliptic} if it fixes a vertex of $T$, which is equivalent to $g$ being conjugate into one of the factors $\Gamma_{1}$ or $\Gamma_{2}$. In contrast, $g\in\Gamma$ is called \emph{loxodromic} if it preserves a bi-infinite geodesic $\text{Ax}(g)$ of $T$, called its \emph{axis}, and translates along it by a positive distance. For the Bass--Serre action of $\Gamma$ on $T$, every element is either elliptic or loxodromic; there is no third, parabolic class.

Every loxodromic element has a unique axis. Its \emph{translation length} is the number of edges by which it translates this axis. Conjugation preserves translation length and maps the axis of $g$ to the axis of its conjugate; see \cite[Chapter~I, Section~6.4, Propositions~24--25]{Serre80}. Every cyclically reduced element of syllable length at least two is loxodromic.

The \emph{visual boundary} $\partial T$ of $T$ is the set of ends of $T$, where two geodesic rays represent the same end if they have a common terminal ray. We equip the union $VT\cup\partial T$ with the \emph{observers' topology} and call the resulting compact Hausdorff space the \emph{compactification} of $T$; for details see \cite[Section~5.2]{BrownOzawa08}.

For a loxodromic element $g\in\Gamma$, let $z_{+},z_{-}\in\partial T$ be the ends of its axis with $z_{+}$ being the point in the direction of its translation. Then $g^{n}.x\to z_{+}$ for every $x\in(VT\cup\partial T)\setminus\{z_{-}\}$, and $g^{-n}.x\to z_{-}$ for every $x\in(VT\cup\partial T)\setminus\{z_{+}\}$. Furthermore, since edge stabilizers are trivial, two loxodromic elements with the same translation length whose axes have a common end are equal or inverse.


\subsection{Tracial and Norm ultrapowers}
Let $(M,\tr)$ be a finite von Neumann algebra with faithful normal
tracial state, and let $\mathcal U$ be an ultrafilter on a set $I$.
Its tracial ultrapower is 
\[M_{\mathcal U}:=\ell^\infty(I,M)/\{(x_i)_{i \in I}\mid\lim_{i\to\mathcal U}\normtwo{x_i}=0\}.
\]
We write $(x_i)_{i\rightarrow \mathcal{U}}$ for the class of a bounded family,
$\Delta\colon M\to M_\mathcal{U}$ for the diagonal embedding, and
$\tr_\mathcal{U}((x_i)_{i \rightarrow \mathcal{U}}):=\lim_{i\to\mathcal{U}}\tr(x_i)$ for the limit trace induced by $\tau$.
For a von Neumann subalgebra $P\subseteq M$, we regard $P_\mathcal{U}$
as a von Neumann subalgebra of $M_\mathcal{U}$ via the canonical inclusion.

Let $(A,\rho)$ be a unital \Cs-algebra equipped with a state. For an ultrafilter $\mathcal U$ on a set $I$, the \emph{norm ultrapower} of $A$ with respect to $\mathcal U$ is defined as 
\[
    A^{\mathcal U} := \ell^{\infty}(I,A)/\{(a_i)_{i \in I} \mid \lim_{i\to\mathcal U}\|a_i\|=0\} .
\]
As before, we denote the class of a bounded family $(a_i)_{i \in I}$ simply by $(a_i)_{i \rightarrow \mathcal U}$.
The \Cs-algebra $A^{\mathcal U}$ is unital, and carries a state $\rho^{\mathcal U}$ induced by $\rho$ as follows, $\rho^{\mathcal U}\big( (a_i)_{i \rightarrow \mathcal U}\big) := \lim_{i\to\mathcal U}\rho(a_i)$ for all $(a_i)_{i\rightarrow \mathcal U} \in A^{\mathcal U}$. Again, we denote by $\Delta$ the diagonal embedding of $A$ into $A^{\mathcal U}$ given by constant families.

Note that we have deliberately opted to change the position of $\mathcal U$ depending on the type of ultrapower, in order to avoid confusion between them.

An ultrafilter $\mathcal U$ on a directed set $I$ is called
\emph{cofinal} if
$\{j\in I\mid j\geq i\}\in\mathcal U$ for every $i\in I$.


\subsection{Selfless \texorpdfstring{\Cs}{}-probability spaces}
We briefly recall the terminology from \cite[Subsections~1.2 and~1.6]{KP26}. A state $\rho$ on a unital \Cs-algebra $A$ is called \emph{GNS-faithful} if its GNS-representation is faithful. A \emph{\Cs-probability space} is a pair $(A,\rho)$ with $\rho$ GNS-faithful; this requirement is not imposed on ultrapower states. The space is \emph{non-trivial} if $A\not\cong\mathbb C$. Embeddings of \Cs-probability spaces are unital and state-preserving. The notation $B\subseteq(A,\rho)$ means that $B$ is a unital \Cs-subalgebra of $A$ and $\rho|_B$ is GNS-faithful. A unitary $u\in A$ is called a \emph{Haar unitary} if $\rho(u^n)=0$ for every $n\in\mathbb Z\setminus\{0\}$.

For \Cs-probability spaces $(A,\rho)$ and $(C,\kappa)$, we denote their reduced free product by $(A,\rho)\star(C,\kappa)$ and its free product state by $\rho\star\kappa$; see \cite[Section~4.7]{BrownOzawa08}. All free products of \Cs-probability spaces in this paper are reduced free products with respect to the specified states.

\begin{definition}[{\cite[Definition~2.3]{HKEPR25} and \cite[Subsection~1.6]{KP26}}]
Let $B\subseteq(A,\rho)$ be an inclusion of \Cs-probability spaces. The inclusion is called \emph{selfless} if there exist a non-trivial \Cs-probability space $(C,\kappa)$, an ultrafilter $\mathcal U$ on a set $I$, and a unital embedding $\theta\colon(A,\rho)\star(C,\kappa)\hookrightarrow A^{\mathcal U}$ such that
\begin{enumerate}[label=\textup{(\roman*)},leftmargin=*]
\item $\theta|_A=\Delta$;
\item $\rho^{\mathcal U}\circ\theta=\rho\star\kappa$;
\item $\theta(C)\subseteq B^{\mathcal U}$.
\end{enumerate}
The inclusion is called \emph{completely selfless} if these data can be chosen so that, for every \Cs-algebra $R$, the algebraic $\ast$-homomorphism
\[
\begin{aligned}
\bigl((A,\rho)\star(C,\kappa)\bigr)\odot R
&\longrightarrow(A\otimes R)^{\mathcal U},\\
x\otimes r&\longmapsto(a_i\otimes r)_{i\to\mathcal U},
\end{aligned}
\]
where $\theta(x)=(a_i)_{i\to\mathcal U}$, is continuous for the minimal tensor norm.

The space $(A,\rho)$ is called \emph{selfless}, respectively \emph{completely selfless}, if the inclusion $A\subseteq(A,\rho)$ has the corresponding property.
\end{definition}

\subsection{Group \texorpdfstring{\Cs}{}-algebras, group von Neumann algebras, and crossed products}

Let $\Gamma$ be a discrete group and let $\lambda\colon\Gamma\rightarrow\mathcal{B}(\ell^{2}(\Gamma))$ be the corresponding left-regular representation. We denote the reduced group \Cs-algebra of $\Gamma$ by 
\[
\Csr(\Gamma):=\Cs\left(\{\lambda_{g}\mid g\in\Gamma\}\right)\subseteq\mathcal{B}(\ell^{2}(\Gamma))
\]
and write $\tau_{\Gamma}$ for the canonical faithful tracial state on $\Csr(\Gamma)$. Furthermore, we denote the group algebra of $\Gamma$ by $\mathbb{C}[\Gamma]$ and view it as a dense $\ast$-subalgebra of $\Csr(\Gamma)$. For a subgroup $\Lambda\leq\Gamma$ we denote the canonical conditional expectation from $\Csr(\Gamma)$ onto $\Csr(\Lambda)$ by $\mathbb{E}_{\Lambda}$; for notational convenience we write $\mathbb{E}_{g}:=\mathbb{E}_{\langle g\rangle}$ for $g\in\Gamma$.

We now briefly recall the construction of the reduced crossed product. Let $\alpha\colon\Gamma\curvearrowright A$ be an action of a discrete group on a unital $\Cs$-algebra, and choose a faithful unital representation $\pi\colon A\to\mathcal B(\mathcal H)$. On the space $\ell^2(\Gamma,\mathcal H)$ of square-summable $\mathcal H$-valued functions, define
\[
(\sigma(a)\xi)(h):=\pi(\alpha_{h^{-1}}(a))\xi(h),~
(\lambda_g\xi)(h):=\xi(g^{-1}h)
\]
for $a\in A$, $g,h\in\Gamma$, and $\xi\in\ell^2(\Gamma,\mathcal H)$. The \emph{reduced crossed product} is 
\[
A\rtimes_{\alpha,\mathrm r}\Gamma
:=\overline{\operatorname{span}}\{\sigma(a)\lambda_g\mid a\in A,\ g\in\Gamma\}
\subseteq\mathcal B(\ell^2(\Gamma,\mathcal H)),
\]
where the closure is taken in the operator norm. This construction is independent of $\pi$, up to a canonical isomorphism; see \cite[Section~4.1]{BrownOzawa08}. Identifying $A$ with $\sigma(A)$, we have $\lambda_ga\lambda_g^*=\alpha_g(a)$, and the group unitaries generate a canonical copy of $\Csr(\Gamma)$. The canonical faithful conditional expectation $\mathbb E_A\colon A\rtimes_{\alpha,\mathrm r}\Gamma\to A$ is determined by $\mathbb E_A(a\lambda_g)=\delta_{g,e}a$. 

By a slight abuse of notation, for $\Lambda\leq\Gamma$, we also write $\mathbb E_\Lambda$ for the canonical conditional expectation onto $A\rtimes_{\alpha|_\Lambda,\mathrm r}\Lambda$, given on finite sums by $\mathbb E_\Lambda(\sum_g a_g\lambda_g)=\sum_{g\in\Lambda}a_g\lambda_g$.

Let us now turn to the von Neumann setup.
The \emph{group von Neumann algebra} of $\Gamma$ denoted by
$\vN(\Gamma)$ is the von Neumann algebra generated by $\{\lambda_g\mid g\in\Gamma\}
\subseteq\mathcal B(\ell^2(\Gamma))$.
The canonical trace extends to a faithful normal tracial state on
$\vN(\Gamma)$, still denoted by $\tau_\Gamma$, given by
$\tau_\Gamma(x)=\langle x\delta_e,\delta_e\rangle$.
For a subgroup $\Lambda\leq\Gamma$, we also write $\mathbb E_\Lambda$
for the canonical trace-preserving normal conditional expectation
from $\vN(\Gamma)$ onto $\vN(\Lambda)$.

We also briefly recall the tracial crossed product of von Neumann algebras. Let $\alpha\colon\Gamma\curvearrowright(M,\tau)$ be a trace-preserving
action on a finite von Neumann algebra with faithful normal tracial
state. Represent $M$ on $\mathcal H:=L^2(M,\tau)$ by left multiplication.
On the space $\ell^2(\Gamma,\mathcal H)$ of square-summable
$\mathcal H$-valued functions on $\Gamma$, define
\[
(\sigma(a)\xi)(h):=\alpha_{h^{-1}}(a)\xi(h),~
(\lambda_g\xi)(h):=\xi(g^{-1}h)
\]
for $a\in M$, $g,h\in\Gamma$, and $\xi\in\ell^2(\Gamma,\mathcal H)$.
The \emph{tracial crossed product} $M\rtimes_\alpha\Gamma$ is the
von Neumann algebra generated by $\sigma(M)$ and
$\{\lambda_g\mid g\in\Gamma\}$. Identifying $M$ with $\sigma(M)$,
one has $\lambda_ga\lambda_g^*=\alpha_g(a)$.

The canonical faithful normal conditional expectation
$\mathbb E_M\colon M\rtimes_\alpha\Gamma\to M$ is determined by
$\mathbb E_M(a\lambda_g)=\delta_{g,e}a$, and $\tau\circ\mathbb E_M$
is a faithful normal tracial state.
For $\Lambda\leq\Gamma$, we also denote the canonical trace-preserving
normal conditional expectation onto
$M\rtimes_{\alpha|_\Lambda}\Lambda$ by $\mathbb E_\Lambda$, and write
$\mathbb E_g:=\mathbb E_{\langle g\rangle}$ for $g\in\Gamma$.
\vspace{3mm}


\section{Inner \texorpdfstring{\Cs}{}-selfless groups}
\label{sec:innerselflessness}

Inspired by Robert's notion of selfless \Cs-algebras
\cite{Robert25}, and more directly by the notion of
existentially \Cs-residually-$\Gamma$ groups in
\cite[Definition~3.4]{AGKP25} and the ultrapower embeddings
constructed by Ozawa in \cite[Theorem~1]{Oza}, we introduce
the following definition.

\begin{definition} \label{InnerDefinition} A non-trivial discrete group $\Gamma$ is called \emph{inner \Cs-selfless} if there exist an ultrafilter $\mathcal{U}$ on a set $I$, a family $(g_{i})_{i\in I}\subseteq\Gamma$, and a unital embedding $\theta\colon \Csr(\Gamma\star\mathbb{Z})\hookrightarrow \Csr(\Gamma)^{\mathcal{U}}$ such that $\theta|_{\Csr(\Gamma)}=\Delta$ and $\theta(\lambda_{h})=(\lambda_{g_{i}})_{i\rightarrow\mathcal{U}}$, where $h$ denotes the canonical generator of $\mathbb{Z}$ and where $\Delta\colon \Csr(\Gamma)\hookrightarrow \Csr(\Gamma)^{\mathcal{U}}$ is the diagonal embedding. In this case, the family $(g_{i})_{i\in I}\subseteq\Gamma$ is said to \emph{witness inner \Cs-selflessness} of $\Gamma$. \end{definition}

\begin{remark} (i) By \cite[Theorem~1]{Oza}, every countably infinite group admitting a topologically free extreme boundary with more than two points is inner \Cs-selfless. Among countable groups, examples include groups acting minimally and of general type on a tree with trivial half-tree fixators, such as $\mathrm{BS}(m,n)$ with $|m|,|n|\geq2$ and $|m|\ne|n|$; see \cite[Lemma~8.6]{FMMS}. Combining Ozawa's theorem with \cite[Theorem~1.3]{Yang26} also covers all countable acylindrically hyperbolic groups with trivial finite radical. Further examples are $\operatorname{Aut}(\Gamma)$ and $\operatorname{Comm}(\Gamma)$ for non-elementary hyperbolic groups $\Gamma$ with trivial finite radical \cite{BasuFlores26}.

(ii) The embedding $\theta$ in Definition \ref{InnerDefinition} is automatically
trace-preserving in the sense that
$\tau_\Gamma^{\mathcal U}\circ\theta
=\tau_{\Gamma\star\mathbb Z}$.
Indeed, the left-hand side is a tracial state on
$\Csr(\Gamma\star\mathbb Z)$, whose tracial state is unique.\end{remark}

The following lemma demonstrates that inner \Cs-selflessness imposes severe restrictions on the normal subgroup structure of the respective group.

\begin{lemma} \label{NormalWitnessLemma} Let $\Gamma$ be an inner \Cs-selfless discrete group and let $N\triangleleft\Gamma$ be a non-trivial normal subgroup. Then there exists a family $(n_{i})_{i \in I}\subseteq N$ witnessing inner \Cs-selflessness of $\Gamma$. In particular, $N$ itself is inner \Cs-selfless. \end{lemma}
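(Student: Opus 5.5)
Start from a witnessing family $(g_i)_{i\in I}$ with embedding $\theta\colon\Csr(\Gamma\star\mathbb Z)\hookrightarrow\Csr(\Gamma)^{\mathcal U}$, and fix some $n\in N\setminus\{e\}$. The idea is to replace $g_i$ by a commutator-type element lying in $N$, namely $n_i:=g_i n g_i^{-1} n^{-1}$ or, more robustly, a word $w(g_i)$ in $g_i$ and fixed elements of $N$ that lies in $N$ by normality. In $\Gamma\star\mathbb Z$ the element $u:=hnh^{-1}n^{-1}$ lies in the normal closure of $N$, and $\theta(\lambda_u)=(\lambda_{n_i})_{i\to\mathcal U}$ with $n_i\in N$. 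So it suffices to find a word $u\in\Gamma\star\langle h\rangle$ such that the unital $\ast$-subalgebra generated by $\Csr(\Gamma)$ and $\lambda_u$ inside $\Csr(\Gamma\star\mathbb Z)$ is canonically isomorphic to $\Csr(\Gamma\star\mathbb Z)$, with $\lambda_u$ corresponding to the free generator. Composing that isomorphism with $\theta$ then gives an embedding $\theta'$ with $\theta'|_{\Csr(\Gamma)}=\Delta$ and $\theta'(\lambda_h)=(\lambda_{n_i})_{i\to\mathcal U}$, which is exactly what we need.

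The group-theoretic step is to show that $\Gamma$ and $u$ generate a subgroup of $\Gamma\star\langle h\rangle$ isomorphic to $\Gamma\star\mathbb Z$ via the obvious map. Then $\Csr$ of this subgroup sits in $\Csr(\Gamma\star\mathbb Z)$ as the reduced group \Cs-algebra of a subgroup, which gives the isomorphism. The choice $u=hnh^{-1}n^{-1}$ may fail, since $\langle\Gamma,u\rangle$ contains $hnh^{-1}$ and there could be relations. I would instead use $u:=hnh^{-1}$, which lies in $N$ once mapped. Then $\langle \Gamma, hnh^{-1}\rangle=\Gamma\star h\langle n\rangle h^{-1}$ by normal forms, which is a free product $\Gamma\star\langle n\rangle$. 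That is free with $\mathbb Z$ only if $n$ has infinite order.

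To handle this, I would first take any element $v\in\Gamma\star\mathbb Z$ with $v$ free from $\Gamma$ and $\theta(\lambda_v)$ represented in $N$. A clean choice is $v:=h n h^{-1} \, h^{2} n h^{-2}$ or, more generally, $v:=hnh^{-1}h^2nh^{-2}$ with $n\ne e$. This lies in the normal closure of $n$ and is represented by $g_ing_i^{-1}g_i^2ng_i^{-2}\in N$. I would verify via the normal form and Bass--Serre tree facts recalled in Subsection~\ref{subsec:free-products} that $\langle\Gamma,v\rangle\cong\Gamma\star\langle v\rangle$ with $v$ of infinite order. The element $v$ has syllables $h,n,h,n,h^{-2}$, and reduced words in $\Gamma\cup\{v^{\pm1}\}$ never cancel fully because the $h$-syllables $h^{\pm1}$ and $h^{\mp2}$ of adjacent $v^{\pm}$ do not collapse. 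This syllable-cancellation check (a ping-pong or normal-form verification) is the main obstacle: I expect to have to adjust the word, for example to $h n h n h^{-2}$, so that all products of the form $v^{\epsilon_1}\gamma_1 v^{\epsilon_2}\cdots$ remain reduced.

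For the final claim, restrict $\theta'$ to $\Csr(N\star\mathbb Z)\subseteq\Csr(\Gamma\star\mathbb Z)$. Its image lies in $\Csr(N)^{\mathcal U}$, since it is generated by $\Delta(\Csr(N))$ and $(\lambda_{n_i})$, and $\Csr(N)^{\mathcal U}\subseteq\Csr(\Gamma)^{\mathcal U}$ isometrically. This shows that $N$ is inner \Cs-selfless.
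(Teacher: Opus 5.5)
Your proposal is correct and is essentially the paper's argument: your final choice $v=(hnh^{-1})(h^{2}nh^{-2})$, with $n_i=(g_ing_i^{-1})(g_i^{2}ng_i^{-2})\in N$, is exactly the element the paper uses, and the paper likewise restricts $\theta$ to $\Csr(\langle\Gamma,v\rangle)\cong\Csr(\Gamma\star\mathbb Z)$. The only difference is how free independence of $v$ from $\Gamma$ is justified: the paper views $v$ inside the kernel $\star_{k\in\mathbb Z}h^{k}\Gamma h^{-k}$ of $\Gamma\star\mathbb Z\to\mathbb Z$, whereas you check normal forms directly in $\Gamma\star\langle h\rangle$, which also works as you sketched (and $hnhnh^{-2}$ is the same word as $v$, so no adjustment is needed).
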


\begin{proof} Let $(g_{i})_{i \in I}\subseteq\Gamma$ be a family witnessing inner \Cs-selflessness of $\Gamma$, so that there exists an ultrafilter $\mathcal{U}$ on $I$ and a unital embedding $\theta\colon \Csr(\Gamma\star\mathbb{Z})\hookrightarrow \Csr(\Gamma)^{\mathcal{U}}$ with $\theta|_{\Csr(\Gamma)}=\Delta$ and $\theta(\lambda_{h})=(\lambda_{g_{i}})_{i\rightarrow\mathcal{U}}$. Here, $h$ denotes the canonical generator of $\mathbb{Z}$ and $\Delta\colon \Csr(\Gamma)\hookrightarrow \Csr(\Gamma)^{\mathcal{U}}$ is the diagonal embedding. For an arbitrary element $n\in N\setminus\{e\}$ and $i\in I$ define $n_{i}:=(g_{i}ng_{i}^{-1})(g_{i}^{2}ng_{i}^{-2})\in N$. Note that $\widetilde{h}:=(hnh^{-1})(h^{2}nh^{-2})$ is contained in the kernel of the canonical group homomorphism $\Gamma\star\mathbb{Z}\rightarrow\mathbb{Z}$, which canonically identifies with the free product $\star_{k\in\mathbb{Z}}(h^{k}\Gamma h^{-k})$. It follows that $\widetilde{h}$ has infinite order and is freely independent from $\Gamma\leq\Gamma\star\mathbb{Z}$. Thereby, the subgroup $\Lambda$ of $\Gamma\star\mathbb{Z}$ generated by $\Gamma$ and $\widetilde{h}$ is of the form $\Lambda\cong\Gamma\star\mathbb{Z}$ and restricting $\theta$ to the \Cs-subalgebra $\Csr(\Lambda)$ yields a unital embedding $\widetilde{\theta}\colon \Csr(\Lambda)\hookrightarrow \Csr(\Gamma)^{\mathcal{U}}$ with $\widetilde{\theta}|_{\Csr(\Gamma)}=\Delta$ and $\widetilde{\theta}(\lambda_{\widetilde{h}})=(\lambda_{n_{i}})_{i\rightarrow\mathcal{U}}$. \end{proof}

\begin{remark} Lemma \ref{NormalWitnessLemma} implies that for every inner \Cs-selfless discrete group $\Gamma$, and every non-trivial normal subgroup $N \triangleleft\Gamma$ the inclusion $\Csr(N)\subseteq(\Csr(\Gamma),\tau_{\Gamma})$ is selfless in the sense of \cite{HKEPR25}, i.e., there exists an ultrafilter $\mathcal{U}$, a non-trivial \Cs-probability space $(C,\kappa)$, and a unital $\ast$-embedding $\theta\colon(\Csr(\Gamma),\tau_{\Gamma})\star(C,\kappa)\hookrightarrow \Csr(\Gamma)^{\mathcal{U}}$ with $\theta|_{\Csr(\Gamma)}=\Delta$, $\tau_{\Gamma}^{\mathcal{U}}\circ\theta=(\tau_{\Gamma}\star\kappa)$ and $\theta(C)\subseteq \Csr(N)^{\mathcal U}$. Here $\Delta\colon \Csr(\Gamma)\hookrightarrow \Csr(\Gamma)^{\mathcal{U}}$ denotes the diagonal embedding of $\Csr(\Gamma)$ inside $\Csr(\Gamma)^{\mathcal{U}}$, and $\tau_{\Gamma}\star\kappa$ is the free product state of the reduced free product $(\Csr(\Gamma),\tau_{\Gamma})\star(C,\kappa)$.
\end{remark}

To collect some immediate group-theoretic implications of inner \Cs-selflessness, let us introduce a few well-known notions.

\begin{definition}
    A group $\Gamma$ is said to be:
    \begin{itemize}
    \item \emph{mixed identity-free} (MIF) if, for every
    integer $r\geq1$ and every non-trivial word $w\in\Gamma\mathbin{\star}\mathbb F_r$, there is
    a group homomorphism $\varphi\colon\Gamma\mathbin{\star} \mathbb F_r\to\Gamma$ such that
    $\varphi|_\Gamma=\id_\Gamma$ and $\varphi(w)\ne e$.
    For non-trivial $\Gamma$, it is equivalent to impose this condition
    only for $r=1$ (see \cite[Remark~5.1 and Definition~5.2]{HO}),
    \item \emph{directly indecomposable} if it cannot be expressed as a direct product of two non-trivial groups, i.e., if $\Gamma\cong H\times K$, then $H=\{e\}$ or $K=\{e\}$.
    \end{itemize}
\end{definition}

\begin{proposition} \label{InnerPropertiesProposition} Let $\Gamma$ be an inner \Cs-selfless discrete group. Then the following statements hold:
\begin{enumerate}[leftmargin=*,label=\textup{(\roman*)}]
\item The group $\Gamma$ is mixed identity-free.
\item The group $\Gamma$ is directly indecomposable.
\item The group $\Gamma$ contains no two non-trivial commuting normal subgroups.
\item Any two non-trivial normal subgroups of $\Gamma$ intersect non-trivially.
\item The centralizer of every non-trivial normal subgroup of $\Gamma$ is trivial.
\item There exists at most one minimal non-trivial normal subgroup of $\Gamma$.
\end{enumerate}
\end{proposition}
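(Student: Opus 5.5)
The plan is to derive (i) directly from the embedding $\theta$, and then obtain (ii)--(vi) as purely group-theoretic consequences of (i) together with Lemma~\ref{NormalWitnessLemma}.

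\textbf{Step 1: mixed identity-freeness.} Fix a witnessing family $(g_i)_{i\in I}$, the ultrafilter $\mathcal U$ and the embedding $\theta$. Let $w\in\Gamma\star\mathbb Z$ be non-trivial, with $\mathbb Z=\langle h\rangle$; by the definition of MIF it suffices to treat $r=1$. For each $i$ let $\varphi_i\colon\Gamma\star\mathbb Z\to\Gamma$ be the homomorphism that is the identity on $\Gamma$ and sends $h\mapsto g_i$. Since $\theta$ is multiplicative, restricts to $\Delta$ on $\Csr(\Gamma)$, and sends $\lambda_h$ to $(\lambda_{g_i})_{i\to\mathcal U}$, we get $\theta(\lambda_w)=(\lambda_{\varphi_i(w)})_{i\to\mathcal U}$. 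By Remark~(ii), $\theta$ is trace-preserving, so
\[
0=\tau_{\Gamma\star\mathbb Z}(\lambda_w)=\lim_{i\to\mathcal U}\tau_\Gamma(\lambda_{\varphi_i(w)})=\lim_{i\to\mathcal U}\delta_{\varphi_i(w),e}.
\]
Hence $\varphi_i(w)\neq e$ for $\mathcal U$-many $i$, and in particular for at least one $i$, which gives MIF.

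\textbf{Step 2: commuting normal subgroups, (iii).} Let $N,M$ be non-trivial normal subgroups with $[N,M]=\{e\}$. Pick $a\in N\setminus\{e\}$ and $b\in M\setminus\{e\}$, and consider $w=[xax^{-1},b]\in\Gamma\star\langle x\rangle$. This word is non-trivial in the free product because $a,b\neq e$. For every substitution $x\mapsto g$, the element $gag^{-1}$ lies in $N$ by normality, so it commutes with $b$ and $w$ evaluates to $e$. This contradicts (i).

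\textbf{Step 3: deducing (ii), (iv), (v), (vi) from (iii).}
\begin{itemize}
\item For (ii): if $\Gamma\cong H\times K$ with both factors non-trivial, then $H$ and $K$ are commuting non-trivial normal subgroups, contradicting (iii).
\item For (iv): if $N\cap M=\{e\}$, then $[N,M]\subseteq N\cap M=\{e\}$, again contradicting (iii).
\item For (v): the centralizer $C_\Gamma(N)$ of a normal subgroup $N$ is itself normal and commutes with $N$. By (iii) it is trivial, since $N$ is non-trivial.
\item For (vi): two distinct minimal non-trivial normal subgroups intersect in a normal subgroup strictly smaller than at least one of them. By minimality this intersection is trivial, which contradicts (iv).
\end{itemize}

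The only analytic content is Step 1, specifically the identity $\theta(\lambda_w)=(\lambda_{\varphi_i(w)})_{i\to\mathcal U}$ combined with the trace preservation of $\theta$. Everything else reduces to checking that the commutator word in Step 2 is non-trivial in $\Gamma\star\mathbb Z$, which follows from the normal form for free products. So I do not expect a serious obstacle; the main care point is that non-triviality check.

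Lemma~\ref{NormalWitnessLemma} is not actually needed for this argument. Alternatively, one could prove (v) directly by choosing witnesses $n_i$ in $N$ via Lemma~\ref{NormalWitnessLemma}: for a centralizing element $c\neq e$, the elements $c$ and $n_i$ commute, which is incompatible with the freeness of $c$ and $h$ under $\theta$.
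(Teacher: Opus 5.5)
Your proof is correct and has the same architecture as the paper's: (i) comes from the embedding $\theta$, then (iii) follows from (i), and (iv)--(vi) are deduced from (iii) exactly as in the paper. The differences are minor. For (i) you use trace preservation where the paper uses injectivity of $\theta$ (a mixed identity $w$ gives $\theta(\lambda_w)=1$). For (ii)--(iii) you give a self-contained argument via the explicit non-trivial mixed identity $[xax^{-1},b]$, deriving (ii) from (iii), whereas the paper simply cites \cite[Proposition~5.4(b)]{HO} and \cite[Section~2.4]{LeBoudecMatteBon22}.
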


\begin{proof} Let $(g_{i})_{i\in I}\subseteq\Gamma$ be a family witnessing inner $\Cs$-selflessness of $\Gamma$, so that there exists an ultrafilter $\mathcal{U}$ on $I$ and a unital embedding $\theta\colon\Csr(\Gamma\star\mathbb{Z})\hookrightarrow\Csr(\Gamma)^{\mathcal{U}}$ with $\theta|_{\Csr(\Gamma)}=\Delta$ and $\theta(\lambda_{h})=(\lambda_{g_{i}})_{i\rightarrow\mathcal{U}}$. If $w\in\Gamma\star\mathbb{Z}$ is a mixed identity, then $\theta(\lambda_{w})=1$, so that the injectivity of $\theta$ gives $w=e$. Thus (i) follows from \cite[Remark 5.1]{HO}.

Using (i), the assertions (ii) and (iii) immediately follow from \cite[Proposition 5.4(b)]{HO} and \cite[Section~2.4]{LeBoudecMatteBon22}. Furthermore, (iii) implies the statement in (iv) since $[H,K]\subseteq H\cap K$ for all normal subgroups $H,K\triangleleft\Gamma$. Similarly, the centralizer of a normal subgroup is normal in $\Gamma$ and commutes with that subgroup, so (iii) also implies (v).

Finally, by (iv), two minimal non-trivial normal subgroups of $\Gamma$ have a non-trivial intersection, which must therefore be equal to both subgroups. This proves (vi). \end{proof}

We next show that inner \Cs-selflessness passes to finite index subgroups.

\begin{lemma} 
\label{FiniteIndexLemma}
Let $\Gamma$ be an inner \Cs-selfless discrete group and let $\Lambda\leq\Gamma$ be a finite index subgroup. Then $\Lambda$ is inner \Cs-selfless. \end{lemma}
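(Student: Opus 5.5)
The plan is to reduce to the normal case via the normal core, and then restrict the embedding provided by Lemma~\ref{NormalWitnessLemma}. Let $N:=\bigcap_{g\in\Gamma}g\Lambda g^{-1}$ be the normal core of $\Lambda$. Since $\Lambda$ has finite index, $N$ is a normal subgroup of finite index in $\Gamma$ and $N\subseteq\Lambda$.

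The first step is to check that $N$ is non-trivial, which forces $\Lambda\neq\{e\}$. Suppose $N=\{e\}$. Then $\Gamma$ is finite, say of order $m$. The word $w:=h^{m}\in\Gamma\star\mathbb Z$ is non-trivial, yet every homomorphism $\varphi\colon\Gamma\star\mathbb Z\to\Gamma$ with $\varphi|_\Gamma=\id_\Gamma$ satisfies $\varphi(w)=\varphi(h)^m=e$. This contradicts Proposition~\ref{InnerPropertiesProposition}(i). Hence $\Gamma$ is infinite, so the finite index subgroup $N$ is infinite as well, and in particular non-trivial.

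By Lemma~\ref{NormalWitnessLemma}, there is an ultrafilter $\mathcal U$ on some set $I$, a family $(n_i)_{i\in I}\subseteq N\subseteq\Lambda$, and a unital embedding $\theta\colon\Csr(\Gamma\star\mathbb Z)\hookrightarrow\Csr(\Gamma)^{\mathcal U}$ with $\theta|_{\Csr(\Gamma)}=\Delta$ and $\theta(\lambda_h)=(\lambda_{n_i})_{i\to\mathcal U}$.

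Next, $\Lambda\star\mathbb Z$ is the subgroup of $\Gamma\star\mathbb Z$ generated by $\Lambda$ and $h$. For a subgroup, the reduced group \Cs-algebra embeds canonically as the closed span of the corresponding group unitaries. So we may restrict $\theta$ to $\Csr(\Lambda\star\mathbb Z)\subseteq\Csr(\Gamma\star\mathbb Z)$, and the restriction is still injective.

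It remains to see that the restriction lands in $\Csr(\Lambda)^{\mathcal U}$, viewed as a \Cs-subalgebra of $\Csr(\Gamma)^{\mathcal U}$. This inclusion is isometric because $\Csr(\Lambda)\subseteq\Csr(\Gamma)$ is isometric and norms in ultrapowers are computed as limits along $\mathcal U$. The generators land there: $\theta(\lambda_g)=(\lambda_g)_{i\to\mathcal U}$ for $g\in\Lambda$, and $\theta(\lambda_h)=(\lambda_{n_i})_{i\to\mathcal U}$ with $n_i\in\Lambda$. Since these generators generate $\Csr(\Lambda\star\mathbb Z)$, the whole image lies in $\Csr(\Lambda)^{\mathcal U}$. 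By construction, the restricted map is the diagonal embedding on $\Csr(\Lambda)$ and sends $\lambda_h$ to $(\lambda_{n_i})_{i\to\mathcal U}$. Thus $(n_i)_{i\in I}$ witnesses inner \Cs-selflessness of $\Lambda$.

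The only genuinely non-formal point is the non-triviality of the core, i.e.\ excluding finite $\Gamma$; this is handled by mixed identity-freeness from Proposition~\ref{InnerPropertiesProposition}(i).
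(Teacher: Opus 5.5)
Your proposal is correct and follows the paper's proof: pass to the normal core $N\subseteq\Lambda$, use Lemma~\ref{NormalWitnessLemma} to get a witness family inside $N$, and restrict the embedding to $\Csr(\Lambda\star\mathbb Z)\to\Csr(\Lambda)^{\mathcal U}$. The paper simply asserts that the core is non-trivial. Your mixed identity-freeness argument with the law $h^{|\Gamma|}$, which shows that $\Gamma$ is infinite, is a valid way to fill in that step.
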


\begin{proof} The normal core $N:=\bigcap_{g\in\Gamma}g\Lambda g^{-1}$ is a non-trivial finite index normal subgroup of $\Gamma$ contained in $\Lambda$. By Lemma \ref{NormalWitnessLemma}, we may choose a family $(n_{i})_{i\in I}\subseteq N$ witnessing inner $\Cs$-selflessness of $\Gamma$ with respect to an ultrafilter $\mathcal{U}$ on $I$. Since $N\subseteq\Lambda$, the corresponding $\ast$-embedding restricts to a unital embedding $\Csr(\Lambda\star\mathbb{Z})\hookrightarrow\Csr(\Lambda)^{\mathcal{U}}$ that is diagonal on $\Csr(\Lambda)$ and maps $\lambda_{h}$ to $(\lambda_{n_{i}})_{i\to\mathcal{U}}$. Thus $\Lambda$ is inner $\Cs$-selfless. \end{proof}

Beyond the algebraic implications of inner \Cs-selflessness recorded above, the notion leads to a number of analytic consequences, some of which we record in the following. By \cite[Theorem~3.1]{Robert25}, inner \Cs-selflessness of a discrete group $\Gamma$ implies that $\Csr(\Gamma)$ is simple, has $\tau_\Gamma$ as its unique tracial
state, and has stable rank one and strict comparison of positive elements. 

The references \cite{Haagerup17} and \cite{Kennedy20} establish an equivalence between \Cs-simplicity and Powers\textquoteright{} averaging property. As the following lemma illustrates, inner \Cs-selfless groups allow for a particularly efficient averaging process.

\begin{lemma} Let $\Gamma$ be an inner $\Cs$-selfless discrete group and let $N\triangleleft\Gamma$ be a non-trivial normal subgroup. Then, for every $x\in\Csr(\Gamma)$, 
\[
\inf_{n_{1},\ldots,n_{5}\in N}\Big\Vert \frac{1}{5}\sum_{j=1}^{5}\lambda_{n_{j}}^{*}x\lambda_{n_{j}}-\tau_{\Gamma}(x)1\Big\Vert \leq\frac{2}{\sqrt{5}}\|x\|.
\]
 In particular, the inclusion $\Csr(N)\subseteq\Csr(\Gamma)$ is $\Cs$-irreducible in the sense of \cite[Definition 3.1]{Rordam23}. \end{lemma}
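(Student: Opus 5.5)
By Lemma~\ref{NormalWitnessLemma}, we may fix a family $(n_i)_{i\in I}\subseteq N$ witnessing inner \Cs-selflessness of $\Gamma$. Let $\theta\colon\Csr(\Gamma\star\mathbb Z)\hookrightarrow\Csr(\Gamma)^{\mathcal U}$ be the corresponding embedding, with $\theta(\lambda_h)=(\lambda_{n_i})_{i\to\mathcal U}$. The plan is to do the averaging inside $\Csr(\Gamma\star\mathbb Z)$ with the conjugating unitaries $\lambda_{h^j}$, $j=1,\dots,5$, and then transfer the estimate back through $\theta$. Since $\theta$ is an injective $\ast$-homomorphism, it is isometric, and norms in the ultrapower are limits along $\mathcal U$.

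First I replace $x$ by $x_0:=x-\tau_\Gamma(x)1$. Averaging the scalar part gives back $\tau_\Gamma(x)1$, so it suffices to bound $\bigl\|\frac15\sum_{j}\lambda_{h^j}^*x_0\lambda_{h^j}\bigr\|$. The elements $y_j:=\lambda_{h}^{-j}x_0\lambda_h^{j}$ lie in the subalgebras $\Csr(h^{-j}\Gamma h^{j})$. These subalgebras are freely independent with respect to $\tau_{\Gamma\star\mathbb Z}$, by the identification of the kernel of $\Gamma\star\mathbb Z\to\mathbb Z$ with $\star_{k}h^k\Gamma h^{-k}$ used in Lemma~\ref{NormalWitnessLemma}. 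Moreover, each $y_j$ has trace zero.

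The key step is a Haagerup/Akemann--Ostrand-type norm bound for sums of free centered elements: for freely independent centered $y_1,\dots,y_n$ with $\|y_j\|\le C$, one has $\bigl\|\sum_j y_j\bigr\|\le 2\sqrt{n}\,C$. The cleanest route I would try is the following. Choose a GNS/free-product model on $\ell^2$ of the free product, and decompose each $y_j$ as $y_j=P_jy_j+(1-P_j)y_jP_j+\ldots$, splitting it into a creation-like part and an annihilation-like part with respect to the free product Hilbert space. The creation parts have orthogonal ranges, and the annihilation parts are their adjoints up to a diagonal term, so each family sums with norm at most $\sqrt n\,C$ (the row/column estimate). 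This is the step I expect to be the main obstacle. The naive estimate gives $\|x_0\|\le2\|x\|$, and hence a constant $4/\sqrt5$. Getting the stated $2/\sqrt5$ requires applying the bound carefully, namely in the form $\bigl\|\sum_jy_j\bigr\|\le 2\sqrt n\max_j\|y_j\|$ with $\max_j\|y_j\|$ replaced by a quantity at most $\|x\|$. The plan for this is to run the creation/annihilation splitting on $y_j$ itself, using that the relevant pieces $P^\perp y_jP$ and $Py_jP^\perp$ have norm at most $\|x\|$ (the compressions of $x$ and of $x_0$ off the vacuum agree). Alternatively, one can invoke the known estimate $\bigl\|\frac1n\sum_j u_j^*xu_j-\tau(x)\bigr\|\le\frac{2}{\sqrt n}\|x\|$ for free Haar-type conjugations, as in Haagerup's proof of Powers' theorem for $\mathbb F_2$.

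Having $\bigl\|\frac15\sum_j\lambda_{h^j}^*x\lambda_{h^j}-\tau_\Gamma(x)1\bigr\|\le\frac{2}{\sqrt5}\|x\|$ in $\Csr(\Gamma\star\mathbb Z)$, I apply $\theta$. This element maps to the class of $\bigl(\frac15\sum_j\lambda_{n_i^j}^*x\lambda_{n_i^j}-\tau_\Gamma(x)1\bigr)_i$, with $n_i^j\in N$. Hence the limit of these norms along $\mathcal U$ is at most $\frac{2}{\sqrt5}\|x\|$, and the infimum over $N^5$ obeys the bound (up to any $\varepsilon$, which the infimum absorbs). For the last claim, let $B$ be an intermediate unital \Cs-algebra with $\Csr(N)\subseteq B\subseteq\Csr(\Gamma)$, and let $J\subseteq B$ be a nonzero ideal. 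Take a positive $b\in J$. Since $\tau_\Gamma$ is faithful, $\tau_\Gamma(b)>0$. Iterating the averaging (applied to averages, which stay in $J$) drives the error geometrically below $\tau_\Gamma(b)$, so $J$ contains an invertible element. Thus $B$ is simple, which is $\Cs$-irreducibility in the sense of \cite{Rordam23}.
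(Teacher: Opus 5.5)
Your architecture is the paper's: a witness family in $N$ from Lemma~\ref{NormalWitnessLemma}, averaging with $\lambda_{h^j}$, $1\le j\le 5$, inside $\Csr(\Gamma\star\mathbb Z)$, and transfer through $\theta$. Your \Cs-irreducibility iteration is also fine, since it only ever applies the bound to centred elements, where $x_0=x$.

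The gap is at the step you flagged: obtaining $\frac{2}{\sqrt5}\|x\|$ rather than $\frac{2}{\sqrt5}\|x-\tau_\Gamma(x)1\|$. Since $P^\perp x_0P^\perp=0$, your splitting $y_j=P_jy_j+(1-P_j)y_jP_j$ is exact. But the creation part then contains the diagonal block $Px_0P=PxP-\tau_\Gamma(x)P$, and this block is not controlled by the off-diagonal compressions $PxP^\perp$ and $P^\perp xP$. For instance, let $q=\frac1m\sum_{k\in H}\lambda_k$ for a finite subgroup $H\le\Gamma$ of order $m$, and put $x=2q-1$. Then $\|x\|=1$. Now test $Px_0P$ on the normalised indicator of a coset $Hg$ with $g\notin H$; this vector lies in the range of $P$ and is fixed by $q$, so $\|Px_0P\|\ge 2-\frac2m$. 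Hence your bookkeeping only yields $(\|x-\tau_\Gamma(x)1\|+\|x\|)/\sqrt5\le 3\|x\|/\sqrt5$. The usual Powers-type lemma says that $(1-p)z(1-p)=0$ together with mutually orthogonal $u_jpu_j^*$ implies $\|\frac1n\sum_j u_jzu_j^*\|\le\frac{2}{\sqrt n}\|z\|$. Applied to $z=x-\tau_\Gamma(x)1$, it likewise gives only $\frac{2}{\sqrt5}\|x-\tau_\Gamma(x)1\|$.

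The missing idea is a regrouping. Put the diagonal block of $x$ (not of $x_0$) into the column part, and the scalar term into the row part. Let $p$ be the projection onto the closed span of the $\delta_g$, where $g$ is a reduced word in $\Gamma\star\mathbb Z$ beginning with a syllable in $\Gamma\setminus\{e\}$. Then $p^\perp xp^\perp=\tau_\Gamma(x)p^\perp$ for $x\in\Csr(\Gamma)$, hence $x-\tau_\Gamma(x)1=xp+y$ with $y:=pxp^\perp-\tau_\Gamma(x)p$.
\begin{itemize}
\item Trivially $\|xp\|\le\|x\|$.
\item The element $y$ has range in $p$, and $p^\perp x^*xp^\perp=p^\perp x^*pxp^\perp+|\tau_\Gamma(x)|^2p^\perp$. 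Therefore $\|y\|^2=\|pxp^\perp\|^2+|\tau_\Gamma(x)|^2=\|xp^\perp\|^2\le\|x\|^2$.
\end{itemize}
The $\lambda_{h^j}$-conjugates of $xp$ have the mutually orthogonal domain projections $\lambda_{h^j}^*p\lambda_{h^j}$, and those of $y$ have orthogonal ranges. The row/column estimate then gives $(\|xp\|+\|y\|)/\sqrt5\le 2\|x\|/\sqrt5$. No general free-probability norm bound for sums of free centred elements is needed.
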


\begin{proof} By Lemma \ref{NormalWitnessLemma} we may choose a family $(n_{i})_{i\in I}\subseteq N$ witnessing inner $\Cs$-selflessness of $\Gamma$. Let $\mathcal{U}$ be the associated ultrafilter on $I$ and denote the corresponding embedding $\Csr(\Gamma\star\mathbb{Z})\hookrightarrow\Csr(\Gamma)^{\mathcal{U}}$ by $\theta$. Following the projection argument in \cite[Lemma~1.2]{PaschkeSalinas79}, let $p$ be the orthogonal projection onto the closure of the span of all elements $\delta_g \in \ell^2(\Gamma \star \mathbb{Z})$, where $g$ is a reduced expression in $\Gamma \star \mathbb{Z}$ starting with a syllable in $\Gamma\setminus\{e\}$. Then, $p^{\perp}xp^{\perp}=\tau_{\Gamma}(x)p^{\perp}$ and thereby $x-\tau_{\Gamma}(x)1=xp+y$, where $y:=pxp^{\perp}-\tau_{\Gamma}(x)p$. Furthermore, we estimate $\|xp\|\leq\|x\|$ and 
\[
\begin{aligned}\|y\|^{2} & =\Vert pxp^{\perp}x^{*}p+|\tau_{\Gamma}(x)|^{2}p\Vert \\
 & =\|pxp^{\perp}\|^{2}+|\tau_{\Gamma}(x)|^{2}=\|xp^{\perp}\|^{2}\leq\|x\|^{2},
\end{aligned}
\]
where the third equality follows from $(xp^{\perp})^{*}(xp^{\perp})=p^{\perp}x^{*}pxp^{\perp}+|\tau_{\Gamma}(x)|^{2}p^{\perp}$.

For $1\leq j\leq5$ the projections $\lambda_{h^{j}}^{*}p\lambda_{h^{j}}\in\mathcal{B}(\ell^{2}(\Gamma\star\mathbb{Z}))$ are mutually orthogonal, where $h$ denotes the generator of $\mathbb{Z}$. The corresponding conjugates of $xp$ have orthogonal domain projections, and those of $y$ have orthogonal ranges. Consequently, the Cauchy--Schwarz inequality yields 
\[
\Big\Vert \frac{1}{5}\sum_{j=1}^{5}\lambda_{h^{j}}^{*}(x-\tau_{\Gamma}(x)1)\lambda_{h^{j}}\Big\Vert \leq\frac{\|xp\|+\|y\|}{\sqrt{5}}\leq\frac{2}{\sqrt{5}}\|x\|.
\]
 Applying $\theta$ implies the asserted inequality. \end{proof}

Following \cite[Definition 3.4]{AGKP25}, a discrete group $\Gamma$ containing a subgroup $\Lambda$ is said to be \emph{existentially \Cs-residually-$\Lambda$} if for any finite subset $F\subseteq\Gamma$ and $\varepsilon>0$, there exists a group homomorphism $\varphi\colon\Gamma\to\Lambda$ restricting to the identity on $\Lambda$, such that for every element $x:=\sum_{g\in F}x(g)\lambda_{g}\in\mathbb{C}[\Gamma]$ with $(x(g))_{g\in F}\subseteq\mathbb{C}$ and $\sum_{g\in F}|x(g)|=1$ the inequality 
\[
\Big\Vert \sum_{g\in F}x(g)\lambda_{\varphi(g)}\Big\Vert \leq\|x\|+\varepsilon
\]
holds. The following proposition illustrates that inner \Cs-selflessness can be expressed in terms of this terminology.

\begin{proposition} Let $\Gamma$ be a non-trivial discrete group, and $\mathbb Z=\langle h\rangle$. Then the following statements are equivalent:
\begin{enumerate}[label=\textup{(\roman*)},leftmargin=*]
\item $\Gamma$ is inner \Cs-selfless.
\item $\Gamma\star\mathbb{Z}$ is existentially \Cs-residually-$\Gamma$.
\item For all finite subsets $K_{1},K_{2}\subseteq\Gamma\star\mathbb{Z}$ with $e\notin K_{2}$ and $\varepsilon>0$ there exists an element $s\in\Gamma$ such that the unique group homomorphism $\varphi_{s}\colon\Gamma\star\mathbb{Z}\rightarrow\Gamma$ given by $\varphi_{s}|_{\Gamma}=\id_{\Gamma}$ and $\varphi_{s}(h):=s$ satisfies $\varphi_{s}(g)\neq e$ for all $g\in K_{2}$ and $\left\Vert \sum_{g\in K_{1}}x(g)\lambda_{\varphi_{s}(g)}\right\Vert \leq\|x\|+\varepsilon$ for all $x:=\sum_{g\in K_{1}}x(g)\lambda_{g}\in\mathbb{C}[\Gamma\star\mathbb Z]$ with $(x(g))_{g\in K_{1}}\subseteq\mathbb{C}$ and $\sum_{g\in K_1}|x(g)|=1$.
\end{enumerate}
\end{proposition}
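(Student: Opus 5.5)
We prove (i)$\Rightarrow$(iii)$\Rightarrow$(ii)$\Rightarrow$(i). The common idea is that any unital $\ast$-homomorphism $\theta$ with $\theta|_{\Csr(\Gamma)}=\Delta$ and $\theta(\lambda_h)=(\lambda_{g_i})_{i\to\mathcal U}$ satisfies $\theta(\lambda_g)=(\lambda_{\varphi_{g_i}(g)})_{i\to\mathcal U}$ for every $g\in\Gamma\star\mathbb Z$. Hence statements about $\theta$ are statements about the homomorphisms $\varphi_{g_i}$ along $\mathcal U$.

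\emph{(i)$\Rightarrow$(iii).} Let $\theta$ and $(g_i)$ witness inner \Cs-selflessness, and fix $K_1,K_2,\varepsilon$.
\begin{itemize}
\item By Remark (ii) after Definition~\ref{InnerDefinition}, $\theta$ is trace-preserving. For $g\in K_2$ this gives $\lim_{i\to\mathcal U}\tau_\Gamma(\lambda_{\varphi_{g_i}(g)})=\tau_{\Gamma\star\mathbb Z}(\lambda_g)=0$. Since these traces take only the values $0,1$, the set of $i$ with $\varphi_{g_i}(g)\neq e$ lies in $\mathcal U$.
\item For the norm condition, the unit sphere $S$ of $\ell^1(K_1)$ is compact, and both sides of the inequality are $1$-Lipschitz in the coefficients for the $\ell^1$-norm. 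Choose a finite $\varepsilon/3$-net $x^{(1)},\dots,x^{(m)}$ in $S$.
\item Since $\theta$ is isometric, $\lim_{i\to\mathcal U}\|\sum_g x^{(k)}(g)\lambda_{\varphi_{g_i}(g)}\|=\|x^{(k)}\|$ for each $k$. So for $\mathcal U$-many $i$ the inequality holds with error $\varepsilon/3$ simultaneously for all $k$.
\item Intersecting finitely many sets in $\mathcal U$ yields some $i$. Then $s:=g_i$ satisfies both requirements, with error $\varepsilon$ on all of $S$ by the Lipschitz estimate.
\end{itemize}

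\emph{(iii)$\Rightarrow$(ii).} This is immediate: $\varphi_s$ is a homomorphism $\Gamma\star\mathbb Z\to\Gamma$ restricting to the identity on $\Gamma$, and (iii) gives the required norm estimate on any finite $F=K_1$.

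\emph{(ii)$\Rightarrow$(i).} First, every retraction $\varphi\colon\Gamma\star\mathbb Z\to\Gamma$ equals $\varphi_s$ with $s=\varphi(h)$. Let $I$ be the directed set of pairs $(F,\varepsilon)$, choose $s_{(F,\varepsilon)}$ from (ii), and take a cofinal ultrafilter $\mathcal U$ on $I$. Define $\theta_0$ on $\mathbb C[\Gamma\star\mathbb Z]$ by $\theta_0(x)=(\sum_g x(g)\lambda_{\varphi_{s_i}(g)})_{i\to\mathcal U}$. This is a unital $\ast$-homomorphism, and by homogeneity (ii) gives $\|\theta_0(x)\|\le\|x\|$. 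Hence $\theta_0$ extends to a unital $\ast$-homomorphism $\theta\colon\Csr(\Gamma\star\mathbb Z)\to\Csr(\Gamma)^{\mathcal U}$. By construction $\theta$ restricts to $\Delta$ on $\Csr(\Gamma)$ and sends $\lambda_h$ to $(\lambda_{s_i})_{i\to\mathcal U}$.

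It remains to show $\theta$ is injective, which we expect to be the main obstacle, since (ii) gives only an upper norm bound.
\begin{itemize}
\item The group $\Gamma\star\mathbb Z$ is a free product of two nontrivial groups, not both of order two, so its reduced \Cs-algebra is simple (Paschke--Salinas). Hence it suffices that $\theta$ is nonzero, which holds because $\theta$ is unital.
\item Alternatively, without invoking simplicity one would also need $\varphi_{s_i}(g)\ne e$ eventually for each $g\neq e$. Then $\tau^{\mathcal U}\circ\theta=\tau_{\Gamma\star\mathbb Z}$, and faithfulness of the trace gives injectivity. Simplicity makes this unnecessary.
\end{itemize}
So $\theta$ is an embedding and $\Gamma$ is inner \Cs-selfless.
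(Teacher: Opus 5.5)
Your proof is correct and uses essentially the same ingredients as the paper: an ultrapower construction along a cofinal ultrafilter, with simplicity of $\Csr(\Gamma\star\mathbb Z)$ giving injectivity; uniqueness of the trace forcing $\varphi_{s}(g)\neq e$ on a $\mathcal U$-large set; and a finite $\ell^1$-net on the unit sphere for the norm estimate. The only difference is bookkeeping: you run the cycle (i)$\Rightarrow$(iii)$\Rightarrow$(ii)$\Rightarrow$(i) and apply the trace argument directly to the given witness, whereas the paper proves (i)$\Rightarrow$(ii) by the net argument and then derives both (i) and (iii) from (ii) via the embedding it constructs.
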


\begin{proof} We first show that (ii) implies (i) and (iii). For this purpose, assume that $\Gamma\star\mathbb{Z}$ is existentially \Cs-residually-$\Gamma$ and define $I:=\{(F,m) \mid F\subseteq\Gamma\mathbin{\star}\mathbb Z\text{ finite},\ m \in \mathbb{N}_{\geq1}\}$, ordered by inclusion on the first component, and with the standard order on the second entry. By the assumption, for each $(F,m) \in I$ there exists an element $g_{F,m}\in\Gamma$ such that $\Vert\sum_{g\in F}x(g)\lambda_{\varphi_{g_{F,m}}(g)}\|\leq\|x\|+m^{-1}$ for all elements $x:=\sum_{g\in F}x(g)\lambda_{g}\in\mathbb{C}[\Gamma\star\mathbb{Z}]$ with $\sum_{g\in F}|x(g)|=1$, where $\varphi_{g_{F,m}}\colon\Gamma\star\mathbb{Z}\rightarrow\Gamma$ is the group homomorphism uniquely determined by $\varphi_{g_{F,m}}|_{\Gamma}=\id_{\Gamma}$ and $\varphi_{g_{F,m}}(h):=g_{F,m}$. Write $g_i:=g_{F,m}$ for $i=(F,m)\in I$. The implication ``$\text{(ii)}\Rightarrow\text{(i)}$'' now follows from the observation that for a cofinal ultrafilter $\mathcal{U}$ on $I$, the conditions above imply that the map $\theta\colon \Csr(\Gamma\star\mathbb{Z})\to \Csr(\Gamma)^{\mathcal{U}}$ given by $\theta(\lambda_{g}):=(\lambda_{\varphi_{g_{i}}(g)})_{i\rightarrow\mathcal{U}}$ for $g\in\Gamma\star\mathbb{Z}$ is a well-defined unital $\ast$-homomorphism with $\theta|_{\Csr(\Gamma)}=\Delta$ and $\theta(\lambda_{h})=(\lambda_{g_{i}})_{i\rightarrow\mathcal{U}}$. Since $\Csr(\Gamma\star\mathbb Z)$ is simple and has a unique tracial state, $\theta$ is injective and trace-preserving. To confirm the implication ``$\text{(ii)}\Rightarrow\text{(iii)}$'', fix finite subsets $K_{1}\subseteq\Gamma\star\mathbb{Z}$, $K_{2}\subseteq(\Gamma\star\mathbb{Z})\setminus\{e\}$, and $\varepsilon>0$. By the trace preservation of $\theta$,
\[
\lim_{i\to\mathcal{U}}\delta_{\varphi_{g_{i}}(g),e}=(\tau_{\Gamma}^{\mathcal{U}}\circ\theta)(\lambda_{g})=(\tau_{\Gamma}\star\tau_{\mathbb{Z}})(\lambda_{g})=0\quad\text{for every }g\in K_{2},
\]
so that the set $I_{0}:=\bigcap_{g\in K_{2}}\left\{ i\in I\mid\varphi_{g_{i}}(g)\neq e\right\}$ belongs to $\mathcal{U}$. Note that the set \[
I_{1}:=\left\{(F,m)\in I\mid K_{1}\subseteq F,\,m^{-1}<\varepsilon\right\}
\]
contains a final segment of $I$, and thus belongs to $\mathcal{U}$ as well. Then for any $i\in I_{0}\cap I_{1}$, the corresponding element $g_{i}\in\Gamma$ satisfies (iii).

The implication ``$\text{(iii)}\Rightarrow\text{(ii)}$'' is true by definition. For the implication ``$\text{(i)}\Rightarrow\text{(ii)}$'', suppose that $(g_{i})_{i\in I}\subseteq\Gamma$ is a family witnessing inner \Cs-selflessness of $\Gamma$ with respect to an ultrafilter $\mathcal U$ on $I$, and denote by $\theta\colon \Csr(\Gamma\star\mathbb{Z})\hookrightarrow \Csr(\Gamma)^{\mathcal{U}}$ the corresponding $\ast$-embedding with $\theta|_{\Csr(\Gamma)}=\Delta$ and $\theta(\lambda_{h})=(\lambda_{g_{i}})_{i\rightarrow\mathcal{U}}$. Fix an arbitrary finite subset $F\subseteq\Gamma\star\mathbb{Z}$, and $\varepsilon>0$. Since the unit sphere of $\ell^{1}(F)$ is compact, there exist finitely many elements $x_{1},\ldots,x_{k}\in\mathbb{C}[\Gamma\star\mathbb{Z}]$ of the form $x_{l}:=\sum_{g\in F}x_{l}(g)\lambda_{g}$ with $\sum_{g\in F}|x_{l}(g)|=1$ such that every other element $x:=\sum_{g\in F}x(g)\lambda_{g}\in\mathbb{C}[\Gamma\star\mathbb{Z}]$ with $\sum_{g\in F}|x(g)|=1$ admits a number $1\leq l\leq k$ with $\sum_{g\in F}|x(g)-x_{l}(g)|<\frac{\varepsilon}{3}$. By $\lim_{i\to\mathcal{U}}\Vert\sum_{g\in F}x_{l}(g)\lambda_{\varphi_{g_{i}}(g)}\Vert=\|x_{l}\|$ for every $1\leq l\leq k$, the set
\[
\bigl\{ i \in I \bigm| \bigl| \bigl\| \textstyle\sum_{g \in F} x_l(g) \lambda_{\varphi_{g_i}(g)} \bigr\| - \|x_l\| \bigr| < \varepsilon/3 \bigr\}
\]
belongs to $\mathcal{U}$, and thereby
\[
\bigcap_{l=1}^{k} \bigl\{ i \in I \bigm| \bigl| \bigl\| \textstyle\sum_{g \in F} x_l(g) \lambda_{\varphi_{g_i}(g)} \bigr\| - \|x_l\| \bigr| < \varepsilon/3 \bigr\} \in \mathcal{U}.
\]
Fix an element $i$ belonging to this intersection. For $x$ and $1\leq l\leq k$ as above we then obtain
\begin{eqnarray*}
\left\Vert \sum_{g\in F}x(g)\lambda_{\varphi_{g_{i}}(g)}\right\Vert &\leq& \left\Vert \sum_{g\in F}x_{l}(g)\lambda_{\varphi_{g_{i}}(g)}\right\Vert +\sum_{g\in F}\left|x(g)-x_{l}(g)\right| \\
&<& \Vert x\Vert+\varepsilon,
\end{eqnarray*}
hence finishing the proof. \end{proof}

Lastly, we observe that inner \Cs-selflessness passes to directed unions of groups.

\begin{proposition}\label{prop:directed-union}
    Let $(\Gamma_j)_{j\in J}$ be a directed family of inner \Cs-selfless groups. Then $\Gamma=\bigcup_{j\in J}\Gamma_j$ is inner \Cs-selfless.
\end{proposition}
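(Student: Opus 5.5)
The most convenient route is through the equivalence with condition (iii) of the preceding proposition, which is stated with finite data only. That characterisation is phrased for a single group, but I only need its implication (i)$\Rightarrow$(iii) applied to each $\Gamma_j$, and then its implication (ii)$\Rightarrow$(i) applied to $\Gamma$. So the plan is to verify that $\Gamma\star\mathbb Z$ is existentially \Cs-residually-$\Gamma$.

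First fix a finite set $F\subseteq\Gamma\star\mathbb Z$ and $\varepsilon>0$. Every element of $F$ is a word in $h^{\pm1}$ and finitely many letters of $\Gamma$. Since the family is directed, there is an index $j\in J$ such that all these letters lie in $\Gamma_j$, so $F\subseteq\Gamma_j\star\mathbb Z$. By the implication (i)$\Rightarrow$(iii) for the inner \Cs-selfless group $\Gamma_j$, with $K_1=F$ and $K_2=\emptyset$, there is $s\in\Gamma_j$ such that the homomorphism $\varphi_s\colon\Gamma_j\star\mathbb Z\to\Gamma_j$ satisfies
\[
\Bigl\|\sum_{g\in F}x(g)\lambda_{\varphi_s(g)}\Bigr\|_{\Csr(\Gamma_j)}\leq\|x\|_{\Csr(\Gamma_j\star\mathbb Z)}+\varepsilon
\]
for all $\ell^1$-normalised coefficient vectors $x$ supported on $F$.

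Next I transport this inequality to $\Gamma$. Define $\psi_s\colon\Gamma\star\mathbb Z\to\Gamma$ by $\psi_s|_\Gamma=\id_\Gamma$ and $\psi_s(h)=s$; it extends $\varphi_s$. Because $\Gamma_j\leq\Gamma$ and $\Gamma_j\star\mathbb Z\leq\Gamma\star\mathbb Z$, the canonical inclusions $\Csr(\Gamma_j)\subseteq\Csr(\Gamma)$ and $\Csr(\Gamma_j\star\mathbb Z)\subseteq\Csr(\Gamma\star\mathbb Z)$ are isometric. Here I use the standard fact that reduced group \Cs-algebras of subgroups embed isometrically, for instance via the restriction of the regular representation, which is a multiple of the regular representation of the subgroup. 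Consequently both norms in the displayed inequality agree with the corresponding norms computed in $\Csr(\Gamma)$ and $\Csr(\Gamma\star\mathbb Z)$. This gives exactly the defining condition of existentially \Cs-residually-$\Gamma$ for the pair $(F,\varepsilon)$, with homomorphism $\psi_s$.

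Since $F$ and $\varepsilon$ were arbitrary, (ii) holds for $\Gamma$, and the implication (ii)$\Rightarrow$(i) yields that $\Gamma$ is inner \Cs-selfless. $\Gamma$ is non-trivial because it contains the non-trivial groups $\Gamma_j$. I expect no serious obstacle. The only delicate point is the compatibility of norms, namely the isometric inclusion of reduced group \Cs-algebras of subgroups, which is standard. Alternatively, one could take an ultraproduct of the witnessing embeddings over a cofinal ultrafilter on $J\times I_j$, but the finitary route above avoids that bookkeeping.
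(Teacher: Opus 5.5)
Your proof is correct and takes essentially the paper's route: place the finite data in some $\Gamma_j\star\mathbb Z$, use inner \Cs-selflessness of $\Gamma_j$ to find $s\in\Gamma_j$ with approximate norm control, and transport the estimate through the isometric inclusions $\Csr(\Gamma_j)\subseteq\Csr(\Gamma)$ and $\Csr(\Gamma_j\star\mathbb Z)\subseteq\Csr(\Gamma\star\mathbb Z)$. The only difference is packaging: the paper builds the ultrapower embedding by hand over a cofinal ultrafilter, while you hand that step to the implication (ii)$\Rightarrow$(i) of the preceding proposition, which also makes injectivity explicit via simplicity of $\Csr(\Gamma\star\mathbb Z)$, a point the paper leaves implicit.
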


\begin{proof}
    Let $I:=\big\{(F,n) \mid F\subseteq\mathbb C[\Gamma\star\mathbb Z] \text{ finite}, \, n\in\mathbb N\setminus\{0\} \big\}$ be ordered by inclusion on the first variable, and with the standard order on the second. 
    Moreover, let $h$ be the generator of $\mathbb Z$.
    Then, notice that for any $i=(F,n) \in I$, there exists an index $j\in J$ such that $F\subseteq \mathbb C[\Gamma_j\star\mathbb Z]$ because the family is directed. Since $\Gamma_j$ is inner \Cs-selfless, we may choose some element $s_i\in\Gamma_j\subseteq\Gamma$ such that the $\ast$-homomorphism $\theta_i\colon \mathbb C[\Gamma\star\mathbb Z] \to \mathbb C[\Gamma]$ induced by the identity map on $\Gamma$ and $h\mapsto s_i$ satisfies 
    \[
        \|\theta_i(x)\|_{\Cs_r(\Gamma)} \leq \|x\|_{\Cs_r(\Gamma\star\mathbb Z)}+\frac1n \qquad \text{for all }x\in F,
    \]
    where we used that both embeddings $\Cs_r(\Gamma_j\star\mathbb Z) \hookrightarrow\Cs_r(\Gamma\star\mathbb Z)$ and $\Cs_r(\Gamma_j) \hookrightarrow\Cs_r(\Gamma)$ are isometric.
    Now, choose any cofinal ultrafilter $\mathcal U$ on $I$, and define 
    \[
        \Theta\colon \mathbb C[\Gamma\star\mathbb Z] \to \Cs_r(\Gamma)^{\mathcal U},\qquad \Theta(x)=\lim_{i\to\mathcal U}\theta_i(x).
    \]
    Then $\Theta$ extends to a $\ast$-homomorphism $\Cs_r(\Gamma\star\mathbb Z) \to \Cs_r(\Gamma)^{\mathcal U}$ by the norm bounds of the $\theta_i$ maps. Moreover, it is clear that $\Theta$ restricts to the diagonal embedding on $\Cs_r(\Gamma)$, and by definition $\lambda_h$ is mapped to a family of group unitaires. This establishes that $\Gamma$ is inner \Cs-selfless.
\end{proof}

\vspace{3mm}


\section{Invariant subalgebras of crossed products by inner \texorpdfstring{\Cs}{}-selfless groups}
\label{sec:invC*crossed}
The aim of the present section is to prove the following theorem.

\begin{theorem} \label{InvariantMainTheorem} Let $\alpha\colon\Gamma\curvearrowright A$ be an action of an inner \Cs-selfless discrete group $\Gamma$ on a unital \Cs-algebra $A$, and assume that the ultrafilter $\mathcal{U}$ on the set $I$ and the family $(g_{i})_{i\in I}\subseteq\Gamma$ witnessing inner \Cs-selflessness satisfy
\begin{eqnarray} \label{Compatibility}
\lim_{i\rightarrow\mathcal{U}}\left\Vert \alpha_{g_{i}}(a)-a\right\Vert =0\quad\text{for every }a\in A.
\end{eqnarray}
Furthermore, let $N\triangleleft\Gamma$ be a non-trivial normal subgroup and assume that $B\subseteq A\rtimes_{\alpha,\mathrm r}\Gamma$ is a (possibly non-unital) \Cs-subalgebra that is invariant under conjugation by $N$. Then, $\mathbb{E}_{A}(b\lambda_{g}^{\ast})\lambda_{g}\in B$ for every $b\in B$ and $g\in\Gamma$. \end{theorem}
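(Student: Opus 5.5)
First reduce to $g=e$. If $b\in B$ and $g\in\Gamma$, then $\mathbb{E}_A(b\lambda_g^*)\lambda_g$ is the $g$-th Fourier coefficient of $b$. It suffices to show that for every $b\in B$ the element $\mathbb{E}_A(b)$ lies in $B$, and then to recover the other coefficients from it. The catch is that $b\lambda_g^*$ need not lie in $B$, so I would not reduce naively. Instead I would run the same averaging argument directly with the "$g$-coefficient" in place of the "$e$-coefficient."

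By Lemma~\ref{NormalWitnessLemma} I replace the witnessing family by one inside $N$. This has to keep the compatibility \eqref{Compatibility}. The new witnesses are $n_i=(g_i n g_i^{-1})(g_i^2 n g_i^{-2})$, and
\[
\alpha_{n_i}=\alpha_{g_i}\alpha_n\alpha_{g_i}^{-1}\alpha_{g_i}^2\alpha_n\alpha_{g_i}^{-2},
\]
which converges pointwise in norm to $\alpha_n^2$ along $\mathcal U$, not to the identity. I would fix this by choosing a word in $h$ and $n$ whose exponent sum in $n$ is zero, for instance $\widetilde h=(hnh^{-1})\,(h^2n^{-1}h^{-2})\,(h^3 n h^{-3})\,(h^4 n^{-1} h^{-4})$. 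This word is still free from $\Gamma$ and of infinite order in $\star_k h^k\Gamma h^{-k}$, and $\alpha_{\widetilde{h}_i}\to\alpha_n\alpha_n^{-1}\alpha_n\alpha_n^{-1}=\id$ pointwise. So I get witnesses $(n_i)\subseteq N$ with compatibility intact.

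Next I extend $\theta$ to crossed products. Compatibility should give a $\ast$-homomorphism
\[
\Theta\colon A\rtimes_{\mathrm r}(\Gamma\star\mathbb Z)\to (A\rtimes_{\alpha,\mathrm r}\Gamma)^{\mathcal U},
\]
where $\Gamma\star\mathbb Z$ acts through $\alpha$ on $\Gamma$ and trivially via $h$. $\Theta$ is diagonal on $A\rtimes\Gamma$ and sends $\lambda_h\mapsto(\lambda_{n_i})$. Continuity for the reduced norm is the \textbf{main obstacle}. I would try a Fell-absorption/tensor trick: realize $A\rtimes_{\mathrm r}\Gamma$ inside $(A\rtimes_{\mathrm r}\Gamma)\otimes \Csr(\Gamma)$ via $a\lambda_g\mapsto a\lambda_g\otimes\lambda_g$. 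I would then use $\theta$ on the second tensor leg, with compatibility making $\lambda_{n_i}\otimes\lambda_{n_i}$ asymptotically commute with $A\otimes1$ like $1\otimes\lambda_{n_i}$. The catch is that this needs $\theta$ to be min-continuous after tensoring, which is not in the hypotheses. So I would avoid a full embedding. I would only prove the needed norm estimate below on finite sums directly, using that the $\lambda_{n_i}$ asymptotically commute with $A$.

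For the estimate itself, take $b\in B$, approximated by a finite sum $x=\sum a_g\lambda_g$, and set $u_i=\lambda_{n_i}\in N$. Consider the averages
\[
\frac1m\sum_{j=1}^m u_i^{-j}\, b\, u_i^{j}\in B,
\]
which lie in $B$ by $N$-invariance. Along $\mathcal U$, and using compatibility, these behave like the elements
\[
\frac1m\sum_{j} \lambda_{h^{-j}}\, x\,\lambda_{h^{j}}\in A\rtimes(\Gamma\star\mathbb Z).
\]
There, the term $a_e$ is fixed (up to $o(1)$), while the terms with $g\neq e$ become words $h^{-j}gh^j$ with mutually orthogonal supports. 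The Paschke--Salinas/Haagerup-type argument of the averaging lemma above then bounds the non-$e$ part by $C\|x\|/\sqrt m$.

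Since $B$ is closed, this gives $a_e=\mathbb{E}_A(b)\in B$. For general $g$, I would instead average with $u_i^{-j}\lambda_g^{*}\cdots$. More simply, I would conjugate by powers of $u_i$ while twisting: the element $\frac1m\sum_j u_i^{-j}bu_i^{j}$ kills all coefficients except those $g'$ commuting appropriately. To isolate the $g$-coefficient, I would use $\frac1m\sum_j (u_i^{-j}) b\,(\lambda_g^{*}u_i^{j}\lambda_g)$. Note that $\lambda_g^*u_i\lambda_g\in N$ and that this expression equals $\mathrm{Ad}(u_i^{-j})$ applied after an $N$-conjugation composition. I expect this bookkeeping to need care: the free word $h^{-j}g'g^{-1}h^jg\cdots$ must be nontrivial and mutually "orthogonal" exactly when $g'\ne g$, which freeness in $\Gamma\star\mathbb Z$ ensures.
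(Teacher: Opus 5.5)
Your replacement of the witnesses by a family in $N$ with exponent sum zero is correct; the paper does the same in Lemma~\ref{StrongNormalWitnessLemma}, with $(g_ing_i^{-1})(g_i^2n^{-1}g_i^{-2})$. Your $g=e$ step also works without any crossed-product extension. For a finite sum $x=\sum_{f\in F}a_f\lambda_f$, compatibility gives, along $\mathcal U$,
\[
\frac1m\sum_{j=1}^m\lambda_{n_i^{-j}}x\lambda_{n_i^{j}}\approx a_e+\sum_{f\neq e}a_f\,y_{f,i},\qquad y_{f,i}:=\frac1m\sum_j\lambda_{n_i^{-j}fn_i^{j}} .
\]
Since $\theta$ is isometric, $\lim_{i\to\mathcal U}\|y_{f,i}\|=\|\frac1m\sum_j\lambda_{\widetilde h^{-j}f\widetilde h^{j}}\|$, and this tends to $0$ as $m\to\infty$ because these are free, trace-zero unitaries. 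If you choose $m$ after $x$, the crude bound $\sum_{f\neq e}\|a_f\|\,\|y_{f,i}\|$ suffices, so you need no $C\|x\|/\sqrt m$ estimate in the crossed product.

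The genuine gap is the case $g\neq e$, which is where essentially all the difficulty of the theorem lies. Your averaged element is not in $B$. With $u_i=\lambda_{n_i}$,
\[
u_i^{-j}\,b\,\lambda_g^*u_i^{j}\lambda_g=\bigl(\lambda_{n_i^{-j}}b\lambda_{n_i^{j}}\bigr)\lambda_{n_i^{-j}g^{-1}n_i^{j}g}=\operatorname{Ad}(\lambda_{n_i^{-j}})(b\lambda_g^*)\,\lambda_g ,
\]
which is an element of $B$ multiplied on one side by a group unitary. That is exactly the naive reduction through $b\lambda_g^*$ that you ruled out in your first paragraph. It is not a composition of $N$-conjugations, and $B$ is not a module over $\lambda(N)$. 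Nothing else in your proposal produces an element of $B$ whose Fourier support is isolated at $g$.

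The missing idea is to build the one-sided multipliers inside $B$ itself. The paper does this in the free-product setting, in fact in the universal quotient $D$ of Remark~\ref{UniversalFreeProductRemark}, which is also how it bypasses the reduced-norm obstacle you flagged. The steps are:
\begin{enumerate}
\item The quartic $\mathbf P(x)$ lies in $B$, and selective averaging over the loxodromic word $\mathbf w(g)\in L$ on the Bass--Serre boundary gives $(x(g)x(g)^*)^2\lambda_{\mathbf w(g)}\in B$.
\item Functional calculus then gives $p_\varepsilon\lambda_{gng^{-1}}\in B$ and $q_\varepsilon\lambda_{n}\in B$ for all $n$ in a normal subgroup of $L$, which is non-trivial by Proposition~\ref{InnerPropertiesProposition}.
\item Averaging the genuine products $p_\varepsilon\lambda_{gng^{-1}}\,x\,q_\varepsilon\lambda_{n^{-1}}\in B$ over a plump subgroup gives $p_\varepsilon x(g)\alpha_g(q_\varepsilon)\lambda_g\in B$, and then one lets $\varepsilon\to0$.
\end{enumerate}
This is the twisted conjugation you were after, made legitimate because the coefficients $p_\varepsilon,q_\varepsilon\in A$ turn both multipliers into elements of $B$. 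Without some such mechanism, your argument does not reach $g\neq e$.
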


The proof of Theorem \ref{InvariantMainTheorem} requires some preparation and will be provided in Subsection \ref{subsec:ProofInvariantMainTheorem}. Before establishing the respective ingredients, we investigate its applications.

\begin{remark}
If $\alpha\colon\Gamma\curvearrowright A$ is an action of an inner \Cs-selfless discrete group $\Gamma$ on a unital \Cs-algebra $A$, for which the image $\alpha(\Gamma) \subseteq \mathrm{Aut}(A)$ has compact closure in the point-norm topology, the compatibility hypothesis in Theorem \ref{InvariantMainTheorem} is automatic, i.e., there exists an ultrafilter $\mathcal{U}$ and a family witnessing inner \Cs-selflessness of $\Gamma$  and satisfying \eqref{Compatibility}. Indeed, choose an ultrafilter $\mathcal{V}$ and a family $(h_i)_{i \in I} \subseteq \Gamma$ witnessing inner \Cs-selflessness of $\Gamma$. By the compactness of $\overline{\alpha(\Gamma)}$ there exists an element $\beta\in\mathrm{Aut}(A)$ with $\lim_{i\rightarrow \mathcal{V}}\alpha_{h_i}(a)=\beta(a)$ for every $a \in A$. Let $\mathcal U:=\mathcal V\otimes\mathcal V$ be the Fubini ultrafilter on $I \times I$, defined by
\[
S\in\mathcal U
\quad\Longleftrightarrow\quad
\bigl\{j\in I:
       \{i\in I:(i,j)\in S\}\in\mathcal V\bigr\}
\in\mathcal V,
\]
and put $g_{(i,j)}:=h_i h_j^{-1}$ for $i,j \in I$. Then,
\[
\begin{aligned}
\lim_{(i,j)\to\mathcal U}
 \|\alpha_{g_{(i,j)}}(a)-a\|
&=
\lim_{j\to\mathcal V}\lim_{i\to\mathcal V}
 \|\alpha_{h_{i}}(\alpha_{h_{j}^{-1}}(a))-a\|\\
&=
\lim_{j\to\mathcal V}
 \|\beta(\alpha_{h_{j}^{-1}}(a))-a\|
=0
\end{aligned}
\]
for every $a\in A$. One checks that the family $(g_{(i,j)})_{(i,j)\in I \times I}$ and the ultrafilter $\mathcal{U}$ also witness inner \Cs-selflessness of $\Gamma$. This implies the claim.

Similarly, if $\ker(\alpha)\ne\{e\}$, Lemma
\ref{NormalWitnessLemma} provides a witness contained in
$\ker(\alpha)$, for which the compatibility hypothesis in \eqref{Compatibility} holds exactly.
\end{remark}

We now apply the Fourier coefficient extraction theorem to invariant subalgebra rigidity.

\begin{definition}[{\cite[Definition 1.1]{AmrutamJiang26} and \cite[Definition 1.1]{amrutam2026relative}}] Let $\Gamma$ be a discrete group.
\begin{enumerate}[label=\textup{(\roman*)},leftmargin=*]
\item The group is said to have the \emph{\Cs-invariant subalgebra rigidity property} (\emph{\Cs-ISR property}) if every unital \Cs-subalgebra $A \subseteq \Csr (\Gamma)$ that is invariant under conjugation by $\Gamma$ is of the form $A=\Csr(N)$ for some normal subgroup $N\triangleleft\Gamma$.
\item The group is said to have the\emph{ relative \Cs-invariant subalgebra rigidity property} (\emph{relative \Cs-ISR property}) if, for every non-trivial normal subgroup $N\triangleleft\Gamma$, every unital \Cs-subalgebra $A\subseteq \Csr(\Gamma)$ that is invariant under conjugation by $N$ is of the form $A=\Csr(\Lambda)$ for some subgroup $\Lambda\leq\Gamma$.
\end{enumerate}
\end{definition}

\begin{corollary} \label{rCISR-Corollary}Let $\Gamma$ be an inner \Cs-selfless discrete group. Then $\Gamma$ has the relative \Cs-ISR property. \end{corollary}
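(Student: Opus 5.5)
We apply Theorem~\ref{InvariantMainTheorem} to the trivial action of $\Gamma$ on $A=\mathbb C$. For the trivial action, the compatibility condition \eqref{Compatibility} holds for any witnessing family $(g_i)_{i\in I}$, since $\alpha_{g_i}(a)-a=0$. Moreover, $\mathbb C\rtimes_{\mathrm r}\Gamma=\Csr(\Gamma)$ and $\mathbb E_{\mathbb C}=\tau_\Gamma$.

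Fix a non-trivial normal subgroup $N\triangleleft\Gamma$ and a unital \Cs-subalgebra $B\subseteq\Csr(\Gamma)$ invariant under conjugation by $N$. Theorem~\ref{InvariantMainTheorem} gives $\tau_\Gamma(b\lambda_g^*)\lambda_g\in B$ for all $b\in B$ and $g\in\Gamma$. Define
\[
\Lambda:=\{g\in\Gamma\mid \lambda_g\in B\}.
\]
Then $\tau_\Gamma(b\lambda_g^*)\neq0$ forces $g\in\Lambda$. Since $B$ is a unital $\ast$-algebra closed under products and adjoints, $\Lambda$ is a subgroup: $e\in\Lambda$, and $\lambda_g\lambda_h=\lambda_{gh}$, $\lambda_g^*=\lambda_{g^{-1}}$. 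Also $\Lambda$ is normalized by $N$, because $\lambda_n\lambda_g\lambda_n^*=\lambda_{ngn^{-1}}$.

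The inclusion $\Csr(\Lambda)\subseteq B$ is clear, since $B$ is closed and contains each $\lambda_g$ with $g\in\Lambda$.

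For the reverse inclusion, take $b\in B$. Every Fourier coefficient $\tau_\Gamma(b\lambda_g^*)$ vanishes for $g\notin\Lambda$. We then need $b\in\Csr(\Lambda)$, and this is the only step that is not immediate. Use the conditional expectation $\mathbb E_\Lambda\colon\Csr(\Gamma)\to\Csr(\Lambda)$. The element $b-\mathbb E_\Lambda(b)$ has all Fourier coefficients zero: coefficients at $g\in\Lambda$ cancel, and those at $g\notin\Lambda$ vanish by the above. Since $x\mapsto x\delta_e$ is injective on $\Csr(\Gamma)$ (faithfulness of $\tau_\Gamma$), and the coefficients are exactly the entries of $x\delta_e\in\ell^2(\Gamma)$, we get $b=\mathbb E_\Lambda(b)\in\Csr(\Lambda)$.

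Hence $B=\Csr(\Lambda)$, which is the relative \Cs-ISR property.
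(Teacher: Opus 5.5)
Your proposal is correct and follows essentially the same route as the paper: apply Theorem~\ref{InvariantMainTheorem} to the trivial action on $\mathbb C$, set $\Lambda:=\{g\in\Gamma\mid\lambda_g\in B\}$, and conclude $b=\mathbb E_\Lambda(b)$ from the vanishing of the Fourier coefficients outside $\Lambda$. The only difference is that you spell out the last step explicitly via the injectivity of $x\mapsto x\delta_e$, which the paper leaves implicit.
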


\begin{proof}
Let $N\triangleleft\Gamma$ be a non-trivial normal subgroup and let
$A \subseteq \Csr(\Gamma)$ be a unital
$N$-invariant \Cs-subalgebra. Since $A$ is unital and closed under multiplication and
adjoints,  $\Lambda:=\{g\in\Gamma \mid \lambda_g\in A\}$ is a subgroup of $\Gamma$ with $\Csr(\Lambda)\subseteq A$. Furthermore, by applying Theorem \ref{InvariantMainTheorem} to the trivial action on $\mathbb C$, we have that $\tau_\Gamma(a\lambda_g^{\ast})\lambda_g\in A$ for all $a\in A$ and $g\in \Gamma$. It follows that for every element $a\in A$ the equality
$\tau_\Gamma(a\lambda_g^{\ast})=0$ holds whenever $g\notin\Lambda$.
Consequently, $a=\mathbb E_\Lambda(a)\in \Csr(\Lambda)$ and hence $A=\Csr(\Lambda)$.
\end{proof}

We next pass to finite products.

\begin{corollary} \label{C-ISR} Let $\Gamma_1,\ldots,\Gamma_n$ be inner \Cs-selfless discrete groups. Then the product $\Gamma_1 \times \dots\times \Gamma_n$ has the \Cs-ISR property. \end{corollary}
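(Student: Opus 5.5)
We argue by induction on $n$, realizing $\Csr(\Gamma_1\times\cdots\times\Gamma_n)$ as a crossed product with trivial action and applying Theorem~\ref{InvariantMainTheorem} one factor at a time. Write $\Gamma:=\Gamma_1\times\cdots\times\Gamma_n$ and let $D\subseteq\Csr(\Gamma)$ be a unital $\Gamma$-invariant \Cs-subalgebra. Set $\Lambda:=\{g\in\Gamma\mid\lambda_g\in D\}$. As in the proof of Corollary~\ref{rCISR-Corollary}, $\Lambda$ is a subgroup, and it is normal because $D$ is $\Gamma$-invariant. Also $\Csr(\Lambda)\subseteq D$. The goal is to show that every Fourier coefficient extraction $\tau_\Gamma(d\lambda_g^*)\lambda_g$, for $d\in D$ and $g\in\Gamma$, lies in $D$. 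Then $d=\mathbb E_\Lambda(d)\in\Csr(\Lambda)$ follows exactly as before.

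For a fixed index $k$, write $\Gamma=\Gamma_k\times\Gamma_k'$ with $\Gamma_k'=\prod_{l\neq k}\Gamma_l$. Then $\Csr(\Gamma)\cong A_k\rtimes_{\mathrm r}\Gamma_k$ with $A_k:=\Csr(\Gamma_k')$ and the trivial action of $\Gamma_k$. The trivial action satisfies the compatibility condition \eqref{Compatibility} for any witnessing family, and $D$ is invariant under conjugation by the normal subgroup $N=\Gamma_k$. Theorem~\ref{InvariantMainTheorem} then gives
\[
\mathbb E_{A_k}(d\lambda_{g_k}^*)\lambda_{g_k}\in D\qquad\text{for all } d\in D,\ g_k\in\Gamma_k .
\]
Call this map $P^{(k)}_{g_k}$. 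It is a contraction onto the $g_k$-th Fourier slice in the $k$-th coordinate, and it preserves $D$.

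Now take $g=(g_1,\ldots,g_n)$. I claim that the composition $P^{(1)}_{g_1}\circ\cdots\circ P^{(n)}_{g_n}$ equals the scalar extraction $x\mapsto\tau_\Gamma(x\lambda_g^*)\lambda_g$. On a group element $\lambda_{(s_1,\ldots,s_n)}$, the map $P^{(k)}_{g_k}$ keeps the element if $s_k=g_k$ and kills it otherwise. Hence the composition keeps the element exactly when $s=g$, and the maps commute. By linearity and continuity, both sides agree on all of $\Csr(\Gamma)$. Since each $P^{(k)}_{g_k}$ maps $D$ into $D$, so does the composition. Induction is therefore not even needed: the maps are applied successively to the same algebra $D$.

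The main point to check is that the identification $\Csr(\Gamma_k\times\Gamma_k')\cong\Csr(\Gamma_k')\rtimes_{\mathrm r}\Gamma_k$ (trivial action) intertwines the relevant structures. Concretely, the canonical expectation $\mathbb E_{A_k}$ must correspond to the expectation onto $\Csr(\{e\}\times\Gamma_k')$, and $\lambda_{g_k}$ must correspond to $\lambda_{(g_k,e)}$. Both are standard, since $\ell^2(\Gamma)\cong\ell^2(\Gamma_k)\otimes\ell^2(\Gamma_k')$ and the crossed product with trivial action is the minimal tensor product $\Csr(\Gamma_k')\otimes\Csr(\Gamma_k)$. Theorem~\ref{InvariantMainTheorem} allows non-unital $B$, but here $D$ is unital, so nothing extra is needed.
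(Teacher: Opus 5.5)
Your proof is correct and follows the paper's argument. You identify $\Csr(\Gamma)$ with the trivial-action crossed product of $\Csr(\prod_{l\neq k}\Gamma_l)$ by $\Gamma_k$, apply Theorem~\ref{InvariantMainTheorem} with $N=\Gamma_k$ for each $k$, and compose the coordinate-wise extractions to get $\tau_\Gamma(d\lambda_g^*)\lambda_g\in D$, from which $D=\Csr(\Lambda)$ follows. Your explicit check that the composed slice maps equal the scalar Fourier extraction simply spells out what the paper calls ``successively extracting the Fourier coefficient in each coordinate.''
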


\begin{proof}
Let $\Gamma:=\Gamma_1\times\cdots\times\Gamma_n$, and let $A$ be a unital \Cs-subalgebra of $\Csr(\Gamma)$ that is invariant under conjugation by $\Gamma$. For each integer $1 \leq j \leq n$ there exists a canonical isomorphism
\[
\Csr(\Gamma)
\cong
\Csr\Bigl(\prod_{k\ne j}\Gamma_k\Bigr)
\rtimes_{\mathrm{id},\mathrm r}\Gamma_j,
\]
where $\id\colon \Gamma_j \curvearrowright \Csr\Bigl(\prod_{k\ne j}\Gamma_k\Bigr)$ denotes the trivial action. Since $A$ is in particular invariant under the conjugation by $\Gamma_j$, we may apply Theorem \ref{InvariantMainTheorem} with $N:=\Gamma_j$. Successively extracting the Fourier coefficient in each coordinate, we obtain that $\tau_\Gamma(a\lambda_g^{\ast})\lambda_g\in A$ for all $a\in A$ and $g\in\Gamma$. 

Since $A$ is unital and $\Gamma$-invariant, $\Lambda:=\{g\in\Gamma \mid \lambda_g\in A\}$ is a normal subgroup
of $\Gamma$ with $\Csr(\Lambda)\subseteq A$. By the above, $\tau_\Gamma(a\lambda_g^{\ast})=0$ holds whenever $a \in A$ and $g\notin\Lambda$. Consequently, $a=\mathbb E_\Lambda(a)\in \Csr(\Lambda)$ and hence $A=\Csr(\Lambda)$.
\end{proof}
To describe invariant subalgebras of crossed products, we first recall the approximation property.

For a finitely supported complex-valued function $\varphi$ on a discrete group $\Gamma$ we denote the corresponding completely bounded multiplier on $\Csr(\Gamma)$ given by $\lambda_{g}\mapsto\varphi(g)\lambda_{g}$ by $m_{\varphi}$. Recall that a discrete group $\Gamma$ is said to have the \emph{approximation property} if there exists a net $(\varphi_{i})_{i\in I}$ of finitely supported complex-valued functions on $\Gamma$ such that for any unital \Cs-algebra $A$ the net $(m_{\varphi_{i}}\otimes \id_{A})_{i\in I}$ of completely bounded maps on $\Csr(\Gamma)\otimes A$ converges to the identity in the pointwise norm topology; we refer the reader to \cite[Chapter 12]{BrownOzawa08} for more details.

\begin{corollary} Under the hypotheses of Theorem~\ref{InvariantMainTheorem}, the following statements hold.
\begin{enumerate}[label=\textup{(\roman*)},leftmargin=*]
\item If $\Gamma$ satisfies the approximation property, then $B$ is of the form
\[
B=\overline{\mathrm{span}}\left\{a\lambda_g \mid g\in\Gamma,a\in A\cap B\lambda_g^\ast \right\}.
\]
\item If $A$ is simple with $A\subseteq B$, then there exists a subgroup $\Lambda\leq\Gamma$ normalized by $N$ such that $B=A\rtimes_{\alpha|_{\Lambda},\mathrm r}\Lambda$.
\end{enumerate}
\end{corollary}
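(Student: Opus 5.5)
For (i), I will use Theorem~\ref{InvariantMainTheorem} together with the approximation property to rebuild each $b\in B$ from its Fourier coefficients. Write $D:=\overline{\mathrm{span}}\{a\lambda_g \mid g\in\Gamma,\ a\in A\cap B\lambda_g^\ast\}$. Note that $a\in A\cap B\lambda_g^\ast$ exactly means $a\in A$ and $a\lambda_g\in B$, so $D\subseteq B$ is immediate.

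For the reverse inclusion, fix $b\in B$. By Theorem~\ref{InvariantMainTheorem}, for every $g$ the element $a_g:=\mathbb E_A(b\lambda_g^\ast)$ satisfies $a_g\lambda_g\in B$. Hence $a_g\in A\cap B\lambda_g^\ast$ and $a_g\lambda_g\in D$. Next let $(\varphi_j)$ be the net of finitely supported functions given by the approximation property. I will use the standard fact (Brown--Ozawa, Chapter~12) that the approximation property passes from $\Csr(\Gamma)\otimes A$ to reduced crossed products. Concretely, the multipliers $M_{\varphi_j}\colon a\lambda_g\mapsto\varphi_j(g)a\lambda_g$ on $A\rtimes_{\alpha,\mathrm r}\Gamma$ converge to the identity pointwise in norm. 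This is obtained by realising $A\rtimes_{\alpha,\mathrm r}\Gamma$ inside $A\otimes\Csr(\Gamma)$ via the covariant embedding $a\lambda_g\mapsto \alpha^{-1}$-twisted $a\otimes\lambda_g$, or via the Fell absorption $\Delta\colon \lambda_g\mapsto\lambda_g\otimes\lambda_g$. Then
\[
M_{\varphi_j}(b)=\sum_{g\in\operatorname{supp}\varphi_j}\varphi_j(g)\,a_g\lambda_g\in D,
\]
and letting $j\to\infty$ gives $b\in D$. The main technical point is this transfer of the approximation property to the crossed product. I would cite it rather than reprove it, and I would check that the multipliers really act on $B$'s Fourier coefficients as stated, which holds because $M_\varphi(b)=\sum\varphi(g)\mathbb E_A(b\lambda_g^\ast)\lambda_g$ for finitely supported $\varphi$.

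For (ii), set $\Lambda:=\{g\in\Gamma\mid \lambda_g\in B\}$. Since $A\subseteq B$, the algebra $B$ is unital, so $e\in\Lambda$, and $\Lambda$ is a subgroup. If $n\in N$, then $\lambda_n\lambda_g\lambda_n^\ast\in B$, so $\Lambda$ is normalized by $N$. Clearly $A\rtimes_{\alpha|_\Lambda,\mathrm r}\Lambda\subseteq B$.

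Conversely, take $b\in B$ and $g\in\Gamma$ with $a:=\mathbb E_A(b\lambda_g^\ast)\neq0$. By Theorem~\ref{InvariantMainTheorem}, $a\lambda_g\in B$. Then for all $x,y\in A\subseteq B$ we have $x a\lambda_g y=x a\alpha_g(y)\lambda_g\in B$. Since $A$ is simple, the two-sided ideal of $A$ generated by $a$ is $A$, so finite sums $\sum x_k a\, y_k'$ (with $y_k'=\alpha_g(y_k)$ ranging over $A$) approximate $1$. By closedness of $B$ this gives $\lambda_g\in B$, that is, $g\in\Lambda$.

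Thus every nonzero Fourier coefficient of $b$ sits at a group element of $\Lambda$. It remains to conclude $b\in A\rtimes_{\alpha|_\Lambda,\mathrm r}\Lambda$ without assuming the approximation property. For this I use the faithful conditional expectation $\mathbb E_\Lambda$: the element $c:=b-\mathbb E_\Lambda(b)$ has all Fourier coefficients zero, because its coefficients vanish at $\Lambda$ by construction and those of $b$ vanish off $\Lambda$. Then $\mathbb E_A(c^\ast c)=0$ (as in $\ell^2(\Gamma,\mathcal H)$ computations: $c\,\xi_0$ has zero components), so faithfulness of $\mathbb E_A$ gives $c=0$ and $b=\mathbb E_\Lambda(b)$, as required.
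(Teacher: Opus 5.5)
Your proposal is correct and follows the paper's proof. In (i) the paper likewise combines Theorem~\ref{InvariantMainTheorem} with the Fell-absorption embedding $a\lambda_g\mapsto\lambda_g\otimes a\lambda_g$ of $A\rtimes_{\alpha,\mathrm r}\Gamma$ into $\Csr(\Gamma)\otimes_{\min}(A\rtimes_{\alpha,\mathrm r}\Gamma)$ and the maps $m_{\varphi_j}\otimes\id$. In (ii) it uses your observation that a non-zero coefficient at $g$ generates the simple algebra $A$, phrased there as $A\cap B\lambda_g^\ast$ being a closed ideal of $A$; this forces $\lambda_g\in B$ and then $b=\mathbb E_\Lambda(b)$.

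Drop the first route you offer for (i). The reduced crossed product need not embed into $A\otimes\Csr(\Gamma)$ at all: an irrational rotation algebra cannot sit inside the commutative algebra $C(\mathbb T)\otimes C(\mathbb T)$. The Fell absorption map is the one that works.
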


\begin{proof}
\emph{About (i)}: Choose a net $(\varphi_i)_{i \in I}$ witnessing the approximation property of $\Gamma$. By \cite[Proposition 4.1.7]{BrownOzawa08} there exists a $\ast$-embedding $\delta\colon A\rtimes_{\alpha,\mathrm r}\Gamma
\hookrightarrow
\Csr(\Gamma)\otimes_{\min}
(A\rtimes_{\alpha,\mathrm r}\Gamma)$ given by $\delta(a\lambda_g)=\lambda_g\otimes a\lambda_g$ for $a\in A$, $g \in \Gamma$. For $b\in B$ we have that
\begin{eqnarray*}
\left\Vert b-\sum_{g\in\text{supp}(\varphi_{i})}\varphi_{i}(g)\mathbb{E}_{A}(b\lambda_{g}^{\ast})\lambda_{g}\right\Vert &=& \left\Vert \delta(b)-\delta\left(\sum_{g\in\text{supp}(\varphi_{i})}\varphi_{i}(g)\mathbb{E}_{A}(b\lambda_{g}^{\ast})\lambda_{g}\right)\right\Vert \\
&=& \left\Vert \delta(b)-(m_{\varphi_{i}}\otimes \id)\bigl(\delta(b)\bigr)\right\Vert \rightarrow0
\end{eqnarray*}
By Theorem \ref{InvariantMainTheorem},
$\mathbb E_A(b\lambda_g^\ast)\in A\cap B\lambda_g^\ast$,
so $b$ belongs to the closed linear span of all elements $a\lambda_g$ with $g\in \Gamma$ and $a \in A\cap B\lambda_g^\ast$.
The reverse inclusion is obvious.\\

\emph{About (ii)}: By $A\subseteq B$, each subspace of the form $A\cap B\lambda_g^\ast \subseteq A$ with $g \in \Gamma$ is a closed two-sided ideal of $A$, so that either $A\cap B\lambda_g^\ast=\{0\}$ or $A\cap B\lambda_g^\ast=A$.
Our assumptions moreover imply that $\Lambda:=\{g\in\Gamma\mid\lambda_g\in B\}$ is a subgroup of $\Gamma$ normalized by $N$ and satisfying
$A\rtimes_{\alpha|_\Lambda,\mathrm r}\Lambda\subseteq B$. For $g\notin \Lambda$ the \Cs-algebra $A\cap B\lambda_g^\ast$ cannot contain the unit element of $A$, so that $A\cap B\lambda_g^\ast=\{0\}$. With Theorem \ref{InvariantMainTheorem} it therefore follows that $\mathbb E_A(b\lambda_g^\ast)=0$ for all $b\in B$ and $g \notin \Lambda$. Hence,
$b=\mathbb E_\Lambda(b)$ for every $b\in B$, proving that $B \subseteq A\rtimes_{\alpha |_\Lambda, \mathrm r}\Lambda$. The converse inclusion is clear. \end{proof}

The following definition is a natural generalisation of \cite[Definition 1.2]{amrutam2024crossed} to actions on
arbitrary unital \Cs-algebras.

\begin{definition}[{\cite[Definition 1.2]{amrutam2024crossed}}] Let $\alpha\colon\Gamma\curvearrowright A$ be an action of a discrete group on a unital \Cs-algebra $A$.
\begin{enumerate}[label=\textup{(\roman*)},leftmargin=*]
\item The crossed product $A\rtimes_{\alpha,\mathrm r}\Gamma$ is called \emph{reflecting} if every intermediate \Cs-algebra $\Csr(\Gamma)\subseteq B\subseteq A\rtimes_{\alpha,\mathrm r}\Gamma$ is of the form $B_{0}\rtimes_{\alpha,\mathrm r}\Gamma$ for a unital $\Gamma$-invariant \Cs-subalgebra $B_{0}\subseteq A$.
\item The crossed product $A\rtimes_{\alpha,\mathrm r}\Gamma$ is called \emph{almost reflecting} if $\mathbb{E}_{A}(B)\subseteq B$ for every intermediate \Cs-algebra $\Csr(\Gamma)\subseteq B\subseteq A\rtimes_{\alpha,\mathrm r}\Gamma$.
\end{enumerate}
\end{definition}

The following statement generalises \cite[Theorems 1.1 and 1.2]{amrutam2021intermediate}.

\begin{corollary} 
\label{CReflectingCorollary}
Let $\alpha\colon\Gamma\curvearrowright A$ be an action of an inner \Cs-selfless discrete group $\Gamma$ on a unital \Cs-algebra $A$, and assume that the ultrafilter $\mathcal{U}$ on $I$ and the family $(g_{i})_{i\in I}\subseteq\Gamma$ witnessing inner \Cs-selflessness satisfy $\lim_{i\rightarrow\mathcal{U}}\left\Vert \alpha_{g_{i}}(a)-a\right\Vert =0$ for every $a\in A$. Then the crossed product $A\rtimes_{\alpha,\mathrm r}\Gamma$ is almost reflecting. Furthermore, if $\Gamma$ satisfies the approximation property, the crossed product is reflecting.
\end{corollary}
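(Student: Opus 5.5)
The plan is to deduce Corollary~\ref{CReflectingCorollary} directly from Theorem~\ref{InvariantMainTheorem} and the preceding corollary, taking $N:=\Gamma$.

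\emph{Almost reflecting.} Let $\Csr(\Gamma)\subseteq B\subseteq A\rtimes_{\alpha,\mathrm r}\Gamma$ be an intermediate \Cs-algebra. Since $B$ contains every $\lambda_g$, it is invariant under conjugation by $\Gamma$. As $\Gamma$ is non-trivial, $\Gamma$ is a non-trivial normal subgroup of itself, and the compatibility assumption is exactly \eqref{Compatibility}. Theorem~\ref{InvariantMainTheorem} therefore applies with $N=\Gamma$. Taking $g=e$ gives $\mathbb E_A(b)=\mathbb E_A(b\lambda_e^\ast)\lambda_e\in B$ for every $b\in B$, so $\mathbb E_A(B)\subseteq B$.

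\emph{Reflecting under the approximation property.} Put $B_0:=A\cap B$. This is a unital \Cs-subalgebra of $A$, because $1\in\Csr(\Gamma)\subseteq B$. It is $\Gamma$-invariant, since $\alpha_g(a)=\lambda_ga\lambda_g^\ast\in B$ for $a\in B_0$.

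For every $g\in\Gamma$ we have $A\cap B\lambda_g^\ast=B_0$. Indeed, if $a\in A$ and $a=b\lambda_g^\ast$ with $b\in B$, then $a\in B$, because $\lambda_g^\ast\in B$. Conversely, if $a\in B_0$, then $a=(a\lambda_g)\lambda_g^\ast$ with $a\lambda_g\in B$.

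Part (i) of the preceding corollary, which holds under the hypotheses of Theorem~\ref{InvariantMainTheorem} with $N=\Gamma$, now gives
\[
B=\overline{\mathrm{span}}\{a\lambda_g\mid g\in\Gamma,\ a\in B_0\}.
\]
The right-hand side is the copy of $B_0\rtimes_{\alpha,\mathrm r}\Gamma$ inside $A\rtimes_{\alpha,\mathrm r}\Gamma$. This identification uses the standard fact that the reduced crossed product of an invariant subalgebra embeds canonically, since reduced crossed products respect inclusions. So $B=B_0\rtimes_{\alpha,\mathrm r}\Gamma$.

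\emph{Expected obstacle.} The argument is essentially bookkeeping once the theorem is available. The only points to check are that $N=\Gamma$ is admissible, which needs $\Gamma$ to be non-trivial (true by the definition of inner \Cs-selflessness), and the identification $A\cap B\lambda_g^\ast=B_0$ above.
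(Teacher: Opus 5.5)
Your proposal is correct and follows essentially the same route as the paper. You apply Theorem~\ref{InvariantMainTheorem} with $N=\Gamma$ to get $\mathbb E_A(B)\subseteq B$, then use $B\lambda_g^\ast=B$ (so that $A\cap B\lambda_g^\ast=B_0$) together with part (i) of the preceding corollary to write $B$ as the closed span of $B_0\lambda_g$, which is identified with $B_0\rtimes_{\alpha,\mathrm r}\Gamma$.
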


\begin{proof}
Let
$\Csr(\Gamma)\subseteq B
\subseteq A\rtimes_{\alpha,\mathrm r}\Gamma$
be an intermediate \Cs-algebra. Since $\lambda_g\in B$
for every $g\in\Gamma$, the \Cs-algebra $B$ is invariant under conjugation by $\Gamma$. Applying Theorem
\ref{InvariantMainTheorem} gives that
$\mathbb E_A(B)\subseteq B$, proving that the crossed product is almost reflecting.

The unital \Cs-subalgebra $B_0:=A\cap B$ is $\Gamma$-invariant. If $\Gamma$ has the approximation property, then
$B\lambda_g^\ast=B$ for every $g\in\Gamma$, so that part (i) of the preceding corollary yields
\[
B=\overline{\mathrm{span}}
\{a\lambda_g\mid a\in B_0,\ g\in\Gamma\}
=B_0\rtimes_{\alpha,\mathrm r}\Gamma.
\]
Thus the crossed product is reflecting.
\end{proof}


\subsection{Selective averaging}
To prove Theorem~\ref{InvariantMainTheorem}, we first establish a selective averaging result.
The following proposition is a refinement of \cite[Proposition 3.2]{AmrutamJiang26}.

\begin{proposition} \label{AveragingProposition} Let $\Gamma$ be a discrete group acting continuously on a compact Hausdorff space $X$. Assume that $g\in\Gamma$ is an infinite order element for which there exist distinct points $z_{+},z_{-}\in X$ satisfying the following conditions:
\begin{itemize}
\item $g^{n}.x\rightarrow z_{+}$ for every $x\in X\setminus\{z_{-}\}$;
\item $g^{-n}.x\rightarrow z_{-}$ for every $x\in X\setminus\{z_{+}\}$;
\item $\{h.z_{+},h.z_{-}\}\cap\{z_{+},z_{-}\}=\emptyset$ for every $h\in\Gamma\setminus\langle g\rangle$.
\end{itemize}
Then, $\mathbb{E}_{g}(x)$ is for every $x\in \Csr(\Gamma)$ contained in the norm closure of the convex hull of the set $\{\lambda_{g^{n}}x\lambda_{g^{n}}^{\ast}\mid n\geq1\}$. \end{proposition}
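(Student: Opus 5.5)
The plan is to split off the part of $x$ supported on $\langle g\rangle$ and to kill the rest by a Powers-type averaging, with the ping-pong sets coming from the north–south dynamics of $g$ on $X$.

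\emph{Reduction.} The maps $y\mapsto\lambda_{g^n}y\lambda_{g^n}^\ast$ are isometric, and $\mathbb E_g$ is contractive. So it suffices to treat $x\in\mathbb C[\Gamma]$, up to an error $\varepsilon$ that is preserved under passing to convex combinations. Since $\langle g\rangle$ is abelian, $\mathbb E_g(x)\in\mathbb C[\langle g\rangle]$ commutes with every $\lambda_{g^n}$ and is fixed by all the conjugations. Writing $y:=x-\mathbb E_g(x)=\sum_{h\in F}c_h\lambda_h$ with $F\subseteq\Gamma\setminus\langle g\rangle$ finite, it therefore suffices to find $n_1,\dots,n_k\geq1$ with
\[
\Big\Vert \frac1k\sum_{j=1}^k\lambda_{g^{n_j}}y\lambda_{g^{n_j}}^\ast\Big\Vert\leq \frac{C\|y\|}{\sqrt k}
\]
for some constant $C$ (or any bound tending to $0$ as $k\to\infty$). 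Then $\mathbb E_g(x)$ is within $\varepsilon$ of the convex hull.

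\emph{Ping-pong sets.} By the third hypothesis, for $h\in F$ the points $h.z_\pm$ avoid $\{z_+,z_-\}$. I will choose closed neighbourhoods $U_\pm$ of $z_\pm$ with $U_+\cap U_-=\emptyset$ and $h(U_+\cup U_-)\cap(U_+\cup U_-)=\emptyset$ for all $h\in F\cup F^{-1}$. Using the two attraction hypotheses, I then want an exponent $m$ with $g^{m}(X\setminus \mathrm{int}\,U_-)\subseteq \mathrm{int}\,U_+$, and symmetrically for $g^{-m}$. This makes $g^{m}$ act as a ping-pong pair with $h$, and it gives "fundamental annuli" $A:=U_+\setminus g^{m}U_+$ whose translates $g^{mr}A$ are pairwise disjoint.

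Fix a base point $x_0\in X$ and transfer subsets $S\subseteq X$ to subsets $\widehat S:=\{\gamma\in\Gamma\mid \gamma.x_0\in S\}$ of $\Gamma$. For $n_j:=jmL$ with $L$ large, the conjugate $t=g^{n_j}hg^{-n_j}$ maps the complement of $g^{n_j}U_-$ into $g^{n_j}hU_-$, which lies in a region $D_j$ built from translates of $A$ and $U_+$. Spacing the exponents should make the sets $\widehat{D_j}$ pairwise disjoint. I would then run de la Harpe's Powers estimate. Decompose each $\lambda_{g^{n_j}}y\lambda_{g^{n_j}}^\ast$ using the projections $P_j$ onto $\ell^2(\widehat{D_j})$, exactly as in the $2/\sqrt5$ lemma above: $\lambda_t P_j^\perp=P_j\lambda_tP_j^\perp$, and the projections $P_j$ are mutually orthogonal. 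Cauchy--Schwarz then gives a bound of order $\sum_h|c_h|/\sqrt k$. The step from $\sum_h|c_h|$ to $\|y\|$ comes for free once we work at a fixed finite $F$ and let $k\to\infty$.

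\emph{Main obstacle.} I expect the dynamical input to be the main difficulty. The hypotheses only give \emph{pointwise} convergence $g^{\pm n}.x\to z_\pm$, whereas ping-pong needs uniform convergence on the compact sets $X\setminus\mathrm{int}\,U_\mp$. I would first try to upgrade pointwise to locally uniform convergence, using compactness together with the fact that $z_-$ is the only exceptional point: if a sequence $x_k\notin \mathrm{int}\,U_-$ had $g^{n_k}x_k\notin \mathrm{int}\,U_+$, pass to a cluster point and use continuity of the finitely many maps $g^{r}$, arguing in the spirit of the standard north–south dynamics arguments. If that upgrade fails, the fallback is to choose the $n_j$ depending on the finitely many relevant orbit points of $x_0$ and of $h^{\pm1}.x_0$, so that only pointwise convergence is used. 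A second, smaller worry is ensuring that the translated sets are genuinely disjoint in $\Gamma$ rather than merely in $X$. This is why I would pass through the orbit map of a single base point, so that disjointness in $X$ of the sets $D_j$ automatically gives disjointness of the sets $\widehat{D_j}$.
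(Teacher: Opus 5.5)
Your route differs from the paper's. The paper argues by contradiction. A separating functional on $C(X)\rtimes_{\mathrm r}\Gamma$ is written as a combination of states, which are pushed along $\operatorname{Ad}(\lambda_{g^n})$. Dominated convergence shows that their restrictions to $C(X)$ concentrate on $\{z_+,z_-\}$. A weak$^\ast$ limit then kills $\lambda_h$ for $h\notin\langle g\rangle$ (the Amrutam--Jiang lemma), so it factors through $\mathbb E_g$. That argument uses only the \emph{pointwise} convergence in the hypotheses and is non-constructive. Your Powers-type averaging gives more: explicit exponents $n_j=jm$ and the rate $\|\frac1k\sum_j y_j\|\le 2\|y\|/\sqrt k$, where $y_j:=\lambda_{g^{jm}}y\lambda_{g^{jm}}^\ast$. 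This follows directly from $y_j=y_jP_j+P_jy_jP_j^\perp$ and Cauchy--Schwarz, since the projections do not depend on $h$. The price is that you need \emph{uniform} north--south dynamics, and you have not established it.

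The cluster-point argument you sketch does not close this as stated. Continuity of finitely many $g^r$ only shows that $g^rx_k$ is near $z_+$ for a fixed large $r$. You then need that orbits coming close to $z_+$ never leave a fixed neighbourhood of it (Lyapunov stability), and forward convergence alone does not give this. The missing ingredient is the backward hypothesis, used through a first-exit argument:
\begin{enumerate}
\item First, $gz_\pm=z_\pm$ by continuity and uniqueness of limits.
\item Fix an open $U\ni z_+$ with $z_-\notin\overline U$, and an open $U_1\ni z_+$ with $\overline{U_1}\subseteq U\cap g^{-1}U$.
\item Suppose no neighbourhood $U'\subseteq U_1$ of $z_+$ satisfies $g^nU'\subseteq U$ for all $n\ge0$. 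Then there is a net $p_\beta\to z_+$ in $U_1$ with first exit times $t_\beta\ge1$ from $U_1$.
\item The points $q_\beta:=g^{t_\beta}p_\beta\in gU_1\setminus U_1$ lie in the compact set $\overline U\setminus U_1$, which avoids $z_\pm$.
\item Since $g^tp_\beta\to z_+$ for each fixed $t$, eventually $t_\beta>j$ for every $j$.
\item Hence a cluster point $q$ of $(q_\beta)$ satisfies $g^{-j}q\in\overline{U_1}$ for all $j\ge1$. This contradicts $g^{-j}q\to z_-\notin\overline U$.
\end{enumerate}
Stability, pointwise convergence and a finite subcover of the compact set $X\setminus\mathrm{int}\,U_-$ then give $m$ with $g^{m}(X\setminus\mathrm{int}\,U_-)\subseteq\mathrm{int}\,U_+$, and symmetrically for $g^{-m}$. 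Your fallback (choosing the $n_j$ from finitely many orbit points) cannot work. The ping-pong inclusions must hold at every point of the infinite orbit $\Gamma.x_0$, not just at finitely many.

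Your description of the ping-pong sets also needs correcting. The element $g^{n_j}hg^{-n_j}$ maps the complement of $g^{n_j}U_-$ onto $g^{n_j}h(X\setminus U_-)$, which is not contained in $g^{n_j}hU_-$. Instead, take $W:=U_+\cup U_-$, so that $hW\cap W=\emptyset$ for $h\in F$, and set $E_j:=g^{jm}(X\setminus W)$.
\begin{enumerate}
\item Then $t_j:=g^{jm}hg^{-jm}$ satisfies $t_j(X\setminus E_j)=g^{jm}hW\subseteq E_j$.
\item Iterating the uniform inclusions gives $g^{rm}(X\setminus W)\subseteq W$ for $r\neq0$, so the $E_j$ are pairwise disjoint.
\item The annuli and the factor $L$ are unnecessary.
\end{enumerate}
With these repairs, transferring the sets to $\Gamma$ via $x_0$ and running the $2/\sqrt5$-type estimate gives a complete alternative proof.
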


\begin{proof} We view $\Csr(\Gamma)$ as a \Cs-subalgebra of the reduced crossed product $C(X)\rtimes_{\mathrm r}\Gamma$. Let $x\in \Csr(\Gamma)$ and assume that $\mathbb{E}_{g}(x)$ is not contained in the norm closure of the convex hull of the set $\{\lambda_{g^{n}}x\lambda_{g^{n}}^{\ast}\mid n\geq1\}$. Then the Hahn-Banach separation theorem provides a functional $\omega$ on $C(X)\rtimes_{\mathrm r}\Gamma$ with 
\begin{equation}
\operatorname{Re}\omega(\mathbb E_g(x))>
\sup_{n\geq1}\operatorname{Re}\omega(\lambda_{g^n}x\lambda_{g^n}^{\ast}).\label{eq:Separation}
\end{equation}
Write $\omega=\sum_{i=1}^4c_i\omega_i$, where each $\omega_i$ is a state, and put $\nu_i:=\omega_i|_{C(X)}$ and $\omega_{i,n}(y):=\omega_i(\lambda_{g^n}y\lambda_{g^n}^{\ast})$. By dominated convergence,
\[
\omega_{i,n}|_{C(X)}\longrightarrow
\nu_i(X\setminus\{z_+\})\delta_{z_-}+\nu_i(\{z_+\})\delta_{z_+}
\]
in the weak$^{\ast}$ topology. Passing to a common subnet, assume that $\omega_{i,n}\to\overline\omega_i$ weak$^{\ast}$ for every $i$. Each $\overline\omega_i$ is a state whose restriction to $C(X)$ is supported on $\{z_+,z_-\}$. By \cite[Lemma 3.1]{AmrutamJiang26}, $\overline\omega_i(\lambda_h)=0$ for $h\in\Gamma\setminus\langle g\rangle$. Thus $\overline\omega:=\sum_{i=1}^4c_i\overline\omega_i$ satisfies $\overline\omega|_{\Csr(\Gamma)}=\overline\omega\circ\mathbb E_g$.
Since $\mathbb E_g(x)$ commutes with $\lambda_g$, (\ref{eq:Separation}) gives
\begin{align*}
\operatorname{Re}\omega(\mathbb E_g(x))
&=\operatorname{Re}\overline\omega(x)\\
&\leq\sup_{n\geq1}\operatorname{Re}\omega(\lambda_{g^n}x\lambda_{g^n}^{\ast})
<\operatorname{Re}\omega(\mathbb E_g(x)),
\end{align*}
a contradiction. This proves the claim.
\end{proof}


\subsection{The free product case}

We first prove Fourier coefficient extraction for a free product in which the added generator acts trivially.

In the context of the present subsection, we fix a free product $\Gamma=\Lambda\star\mathbb{Z}$ of a non-trivial discrete group $\Lambda$ by the integers. Let $h\in\mathbb{Z}$ be a generator of $\mathbb{Z}$, and let $\alpha\colon\Gamma\curvearrowright A$ be an action of $\Gamma$ on a unital \Cs-algebra $A$ with $\alpha_{h}= \id_{A}$. Furthermore, assume that $B$ is a \Cs-subalgebra of $A\rtimes_{\alpha,\mathrm r}\Gamma$ that is invariant under conjugation by the group $L$ generated by all elements of the form $g^{-1}hg$ with $g\in\Lambda$. Observe that $L$ is the kernel of the group homomorphism $\Lambda\star\mathbb{Z}\twoheadrightarrow\Lambda$ restricting to the identity on $\Lambda$ and mapping $h$ to the neutral element; in particular, $L$ is a normal subgroup of $\Gamma$. It can furthermore be expressed as $L\cong\star_{g\in\Lambda}g^{-1}\langle h\rangle g\cong\star_{g\in\Lambda}\mathbb{Z}$ and is thereby inner \Cs-selfless.

For every $g\in\Lambda\setminus\{e\}$ define 
\[
\mathbf{w}(g):=g(hg^{-1}h^{-1})(h^{2}gh^{-2})(h^{3}g^{-1}h^{-3})\in L
\]
and denote the normal subgroup of $L$ generated by this element by $\langle\mathbf{w}(g)\rangle^{L}$. Note that by our assumptions the restriction of $\alpha$ to the subgroup $L$ is trivial, so that in particular $[a,\lambda_{\mathbf{w}(g)}]=0$ for every $a\in A$ and $g\in\Lambda\setminus\{e\}$.

\begin{lemma} \label{ElementPropertiesLemma} For every element $g\in\Lambda\setminus\{e\}$ the following statements hold:
\begin{enumerate}[label=\textup{(\roman*)},leftmargin=*]
\item $\mathbf{w}(g)$ is primitive in $\Gamma$ (i.e., it is not a proper power).
\item $\mathbf{w}(g)$ is not conjugate to $\mathbf{w}(g)^{-1}$.
\item For $f_{1},\ldots,f_{4}\in\Lambda\setminus\{e\}$ the element $f_{1}hf_{2}^{-1}hf_{3}hf_{4}^{-1}h^{-3}$ is contained in $\langle\mathbf{w}(g)\rangle$ if and only if $f_{1}=f_{2}=f_{3}=f_{4}=g$.
\end{enumerate}
 \end{lemma}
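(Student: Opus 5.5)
The plan is to compute the reduced normal form of $\mathbf{w}(g)$ in $\Gamma=\Lambda\star\mathbb{Z}$ and then read off all three statements from the normal form theorem, the conjugacy theorem, and the description of proper powers recalled in Subsection~\ref{subsec:free-products}. Cancelling adjacent powers of $h$ gives
\[
\mathbf{w}(g)=g\,h\,g^{-1}\,h\,g\,h\,g^{-1}\,h^{-3}.
\]
Since $g\neq e$ and $h,h^{-3}\neq e$ in $\mathbb{Z}$, this is a reduced word with $8$ syllables. Its first syllable lies in $\Lambda$ and its last in $\mathbb{Z}$, so it is cyclically reduced. Consequently $\mathbf{w}(g)$ has infinite order, and $\mathbf{w}(g)^k$ has syllable length $8|k|$.

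For (i), I use the criterion from Subsection~\ref{subsec:free-products}. A cyclically reduced element of syllable length at least two is a proper power if and only if its syllable sequence is a concatenation of at least two identical shorter blocks. Such a block must have even length, because the syllables alternate between the factors, and its length must divide $8$. So the block length is $2$ or $4$. The $\mathbb{Z}$-syllables sit in positions $2,4,6,8$ and read $h,h,h,h^{-3}$. Period $4$ would force the syllables in positions $4$ and $8$ to agree, i.e.\ $h=h^{-3}$, which fails in $\mathbb{Z}$; period $2$ fails for the same reason. Hence $\mathbf{w}(g)$ is primitive.

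For (ii), I write out
\[
\mathbf{w}(g)^{-1}=h^{3}\,g\,h^{-1}\,g^{-1}\,h^{-1}\,g\,h^{-1}\,g^{-1},
\]
which is again cyclically reduced of syllable length $8$. By the conjugacy theorem for free products, if it were conjugate to $\mathbf{w}(g)$, it would be a cyclic permutation of the syllables of $\mathbf{w}(g)$. A cyclic permutation preserves the multiset of $\mathbb{Z}$-syllables. That multiset is $\{h,h,h,h^{-3}\}$ for $\mathbf{w}(g)$ and $\{h^{3},h^{-1},h^{-1},h^{-1}\}$ for $\mathbf{w}(g)^{-1}$. These differ because $h\neq h^{-1}$ in $\mathbb{Z}$, so $\mathbf{w}(g)$ and $\mathbf{w}(g)^{-1}$ are not conjugate.

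For (iii), the direction ``if'' holds because the element with all $f_j=g$ is exactly $\mathbf{w}(g)$. Conversely, the element $f_{1}hf_{2}^{-1}hf_{3}hf_{4}^{-1}h^{-3}$ with all $f_j\neq e$ is already a reduced word of syllable length $8$. If it equals $\mathbf{w}(g)^k$, comparing syllable lengths gives $8=8|k|$, so $k=\pm1$. The case $k=-1$ is excluded by uniqueness of normal forms, since $\mathbf{w}(g)^{-1}$ begins with the $\mathbb{Z}$-syllable $h^3$, whereas our element begins with a syllable in $\Lambda$. So $k=1$, and uniqueness of the normal form forces $f_1^{\pm}$, $f_2^{-1}$, $f_3$, $f_4^{-1}$ to match $g,g^{-1},g,g^{-1}$, i.e.\ $f_1=f_2=f_3=f_4=g$.

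The only delicate point is in (i): the proper-power criterion applies to the cyclically reduced form, and we have already checked that $\mathbf{w}(g)$ is cyclically reduced as written. The rest is bookkeeping with normal forms.
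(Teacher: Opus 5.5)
Your proposal is correct and takes essentially the same route as the paper: the same eight-syllable cyclically reduced normal form, periodicity of the syllable sequence of a proper power for (i), the conjugacy theorem for (ii), and syllable length plus uniqueness of normal forms for (iii). You spell out the block-length and multiset checks that the paper leaves implicit. One small typo: the $f_1^{\pm}$ in (iii) should read $f_1$.
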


\begin{proof} The expression $\mathbf{w}(g)=ghg^{-1}hghg^{-1}h^{-3}$ is cyclically reduced, has eight syllables, and has exactly one $\langle h\rangle$-syllable equal to $h^{-3}$. Since the syllable sequence of a cyclically reduced proper power is periodic, this proves (i). Furthermore, by the conjugacy theorem for free products, $\mathbf{w}(g)$ cannot be conjugate to $\mathbf{w}(g)^{-1}$, implying (ii).

For $f_{1},\ldots,f_{4}\in\Lambda\setminus\{e\}$ the expression $f_{1}hf_{2}^{-1}hf_{3}hf_{4}^{-1}h^{-3}$ is cyclically reduced with eight syllables. If $f_{1}hf_{2}^{-1}hf_{3}hf_{4}^{-1}h^{-3}=\mathbf{w}(g)^{r}$ for some $r\in\mathbb{Z}$, the reduced syllable lengths imply that $|r|=1$. The case $r=(-1)$ is excluded by the $\langle h\rangle$-syllables, and comparison of reduced forms for $r=1$ gives $f_{1}=f_{2}=f_{3}=f_{4}=g$. The converse implication is clear by the definition of $\mathbf{w}(g)$.  \end{proof}

We also need averaging over non-trivial normal subgroups of $L$.

\begin{lemma} \label{PlumpSubgroupLemma} Every non-trivial normal subgroup $N$ of $L$ is \emph{plump} in $\Gamma$, meaning that for any $\varepsilon>0$ and any finite subset $F\subseteq\Gamma\setminus\{e\}$, there exist elements $h_{1},\ldots,h_{m}\in N$ such that
\[
\left\Vert \frac{1}{m}\sum_{i=1}^{m}\lambda_{h_{i}gh_{i}^{-1}}\right\Vert <\varepsilon \quad \text{ for all } g\in F.
\]
\end{lemma}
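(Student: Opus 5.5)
The plan is to run a Powers-type ping-pong argument, in the spirit of de la Harpe \cite{de2007simplicity}, on the visual boundary $\partial T$ of the Bass--Serre tree $T$ of $\Gamma=\Lambda\star\mathbb Z$ from Subsection~\ref{subsec:free-products}. The dynamical input is one loxodromic element $k\in N$ whose two ends are moved off themselves by every $g\in F$. Then the conjugates $k^{j}gk^{-j}$ are supported on nearly disjoint regions of $\Gamma$, and averaging them has small norm.

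\emph{Step 1 ($N$ contains loxodromic elements).} If every element of $N$ were elliptic, then $N$ would fix a vertex or an end of $T$. Edge stabilizers are trivial and $N\neq\{e\}$, so $N$ would fix a unique vertex $v$ or a unique end. Since $N\triangleleft L$, the group $L$ would preserve this fixed vertex or end. This is impossible: $L=\ker(\Gamma\to\Lambda)$ is a non-trivial normal subgroup of $\Gamma$, it contains the loxodromic elements $g^{-1}hghgh^{-2}$ for $g\in\Lambda\setminus\{e\}$, and its limit set is all of $\partial T$. So $N$ contains a loxodromic $k_0$ with ends $z_\pm$. The same normality argument shows that the limit set of $N$ is $L$-invariant, hence equal to $\partial T$. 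Consequently the fixed-point pairs $\{n.z_+,n.z_-\}$ with $n\in N$ accumulate everywhere in $\partial T\times\partial T$.

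\emph{Step 2 (choosing $k$; the expected main obstacle).} We need $g.\{k_+,k_-\}\cap\{k_+,k_-\}=\emptyset$ for all $g\in F$. A non-trivial elliptic element fixes exactly one vertex, because edge stabilizers are trivial. Hence it fixes no end at all, and the set of ends $x$ with $g.x\in\{x,y\}$, for a given end $y$, has empty interior. A loxodromic $g$ fixes exactly its two axis ends. Thus for each $g\in F$ the set of bad pairs $(x,y)$, those with $\{g.x,g.y\}\cap\{x,y\}\neq\emptyset$, is closed with empty interior in $\partial T\times\partial T$. By Step 1 there is $n\in N$ such that $k:=nk_0n^{-1}$ has its end pair outside the finite union of these bad sets. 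The delicate point is that $\Lambda$ may be finite, for instance $\mathbb Z/2$. Then $\partial T$ is a Cantor set rather than a space with many isolated features, and one must check that the sets of bad pairs genuinely have empty interior. I would verify this directly using observers' neighbourhoods (half-trees).

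\emph{Step 3 (ping-pong estimate).} Choose small disjoint half-tree neighbourhoods $U_\pm$ of $k_\pm$ such that $gU\cap U=\emptyset$ and $g^{-1}U\cap U=\emptyset$ for all $g\in F$, where $U:=U_+\cup U_-$. Let $D\subseteq\Gamma$ be the set of elements whose image of a base vertex lies in the closure of $U$. For each $g\in F$, the translates $k^{j}(\Gamma\setminus D)$ and $k^{j}D$, $1\le j\le m$, satisfy the following after passing to a suitable subsequence of powers $k^{j}$: the projections $\lambda_{k^j}p\lambda_{k^j}^*$, where $p$ is the projection onto $\ell^2(k^{-j}\text{-adjusted regions})$, are mutually orthogonal, and each $\lambda_{k^jgk^{-j}}$ maps $(\text{range of } p_j)^\perp$ into the range of $p_j$. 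This is exactly the configuration used in the proof of the $2/\sqrt5$ averaging lemma above. The Cauchy--Schwarz argument there gives
\[
\Bigl\|\frac1m\sum_{j=1}^m\lambda_{k^jgk^{-j}}\Bigr\|\le\frac{2}{\sqrt m}
\]
simultaneously for all $g\in F$. Taking $h_j:=k^j\in N$ and $m$ large finishes the proof.
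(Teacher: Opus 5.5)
\textbf{Gap in Step 2.} Your Step~3 is fine. Take $U_\pm$ to be half-trees cut out by axis edges. Then $k^{\pm M}\bigl((VT\cup\partial T)\setminus U\bigr)\subseteq U$ holds exactly for $M$ large. With $S=\{\gamma\mid \gamma.o\notin U\}$ and $s_j=k^{jM}$, the Cauchy--Schwarz argument of the $2/\sqrt5$ lemma gives the $2/\sqrt m$ bound uniformly in $g\in F$. The gap is in how you produce $k$ in Step~2. You take $k=nk_0n^{-1}$ and rely on the claim that the pairs $(n.z_+,n.z_-)$, $n\in N$, accumulate everywhere in $\partial T\times\partial T$. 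This claim is false, for three reasons:
\begin{itemize}
\item Edge stabilizers are trivial, $\Gamma$ acts transitively on edges, and $\langle k_0\rangle$ has only finitely many orbits of edges on $\mathrm{Ax}(k_0)$. Hence only finitely many distinct translates $\gamma.\mathrm{Ax}(k_0)$ pass through a given edge $e$.
\item Any pair of ends lying in the two half-trees cut out by $e$ spans a geodesic through $e$.
\end{itemize}
So $\Gamma.(z_+,z_-)$, and a fortiori $N.(z_+,z_-)$, is closed and discrete in the space of pairs of distinct ends, which has no isolated points. Conjugates of one fixed loxodromic therefore cannot be used to escape an arbitrary closed nowhere-dense bad set. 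The fact that $N$ has full limit set does not give you this.

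\textbf{Repair.} The conclusion of Step~2 is still true; you just should not restrict to conjugates of a single element.
\begin{itemize}
\item The complement of the bad set is open and dense, so it contains $\partial U_1\times\partial U_2$ for disjoint half-trees $U_1,U_2$.
\item Using full limit set and general type, choose loxodromics $a,b\in N$ with $a_+\in U_1$, $b_-\in U_2$ and four distinct endpoints.
\item Ping-pong on half-trees: for $p,q$ large, $k:=a^pb^q$ maps a half-tree around $a_+$ properly into itself, and $k^{-1}$ does the same around $b_-$. Hence $k$ is loxodromic with $k_+\in U_1$ and $k_-\in U_2$.
\end{itemize}

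\textbf{Comparison with the paper.} With this fix, your argument is a valid, self-contained alternative to the paper's. The paper reduces to countable $\Lambda$ and transfers minimality and general type from $\Gamma$ to $L$ to $N$ via \cite[Lemma~4.2]{LeBoudec16}. It then invokes \cite[Lemma~3.5]{amrutam2021intermediate}, using absence of fixed points, strong proximality on $VT\cup\partial T$, and the bound of two fixed points per non-trivial element. Your route additionally yields averaging over powers of a single element of $N$ with an explicit $2/\sqrt m$ bound.

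\textbf{Minor slips.}
\begin{itemize}
\item $g^{-1}hghgh^{-2}$ maps to $g$ under $\Gamma\to\Lambda$, so it is not in $L$. Use $\mathbf w(g)$ or $hghg^{-1}$ instead.
\item $\partial T$ is never a Cantor set here, even for finite $\Lambda$. The vertices $g\mathbb Z$ have infinite valence, so $\partial T$ is not even compact in the observers' topology. This does not affect your empty-interior claim.
\end{itemize}
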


\begin{proof} Given a finite subset $F\subseteq\Gamma\setminus\{e\}$ and $n\in N\setminus\{e\}$, choose a non-trivial finitely generated subgroup $\Lambda_{0}\leq\Lambda$ containing all $\Lambda$-syllables occurring in the reduced expressions of elements of $F\cup\{n\}$, and put $\Gamma_{0}:=\Lambda_{0}\star\mathbb{Z}$. Then $F\subseteq\Gamma_{0}$ and $\{e\}\neq N\cap\Gamma_{0}\triangleleft L\cap\Gamma_{0}\triangleleft\Gamma_{0}$, where $L\cap\Gamma_{0}$ is the kernel of the canonical retraction onto $\Lambda_{0}$. Since $\Lambda_{0}$ is countable, we may thereby without loss of generality assume that $\Lambda$ itself is countable.

The action of $\Gamma$ on its Bass--Serre tree $T$ is faithful, minimal, and of general type in the sense of \cite{LeBoudec16}; see \cite[Proposition~18\textup{(i)--(ii)}]{deLaHarpePreaux11}. By \cite[Lemma~4.2]{LeBoudec16}, applied first to $L\triangleleft\Gamma$ and then to $N\triangleleft L$, the same holds for $N$. In particular, $N$ contains two loxodromic elements with disjoint pairs of axis endpoints.

The $N$-action on the compactification $VT\cup\partial T$ of $T$ (see Subsection \ref{subsec:free-products}) has no fixed point and is strongly proximal. Indeed, positive powers of the first loxodromic element send any probability measure in the weak$^{\ast}$-limit to a measure supported on its two endpoints; positive powers of the second then move this measure to a Dirac mass. Moreover, by the discussion in Subsection \ref{subsec:free-products}, every non-identity element of $\Gamma$ has at most two fixed points in $VT\cup\partial T$. Thus \cite[Lemma~3.5]{amrutam2021intermediate} implies that $N$ is plump in $\Gamma$. \end{proof}

For notational convenience, we frequently write $x(g):=\mathbb{E}_{A}(x\lambda_{g}^{\ast})$ for $g\in\Gamma$ and $x\in A\rtimes_{\alpha,\mathrm r}\Gamma$, where $\mathbb{E}_{A}\colon A\rtimes_{\alpha,\mathrm r}\Gamma\rightarrow A$ is the canonical conditional expectation.

First observe that $\mathbb E_A(B)\subseteq B$. Indeed, let $x\in B$ and $\varepsilon>0$, and choose $y=\sum_{f\in F}a_f\lambda_f$ with $\|x-y\|<\varepsilon$. By Lemma~\ref{PlumpSubgroupLemma}, choose an average $T=\frac1m\sum_{i=1}^m\operatorname{Ad}(\lambda_{h_i})$, with $h_i\in L$, such that $\|T(\lambda_f)\|<\varepsilon/(1+\sum_{f\in F}\|a_f\|)$ for $f\in F\setminus\{e\}$. Since $L$ acts trivially on $A$,
\[
\|T(x)-\mathbb E_A(x)\|
\leq2\|x-y\|+\sum_{f\in F\setminus\{e\}}\|a_f\|\,\|T(\lambda_f)\|
<3\varepsilon.
\]
Moreover, $T(x)\in B$ by $L$-invariance. Closedness gives $\mathbb E_A(x)\in B$.

We first use selective averaging to construct an element of $B$ supported on the word $\mathbf{w}(g)$.

\begin{proposition} \label{FirstExtractionStep} Let $x\in B\cap(A\rtimes_{\alpha|_{\Lambda},\mathrm r}\Lambda)$ and $g\in\Lambda\setminus\{e\}$. Then, 
\begin{equation}
\left(x(g)x(g)^{\ast}\right)^{2}\lambda_{\mathbf{w}(g)}\in B.\label{eq:FirstExtractionStep}
\end{equation}
\end{proposition}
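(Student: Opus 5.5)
The plan is to build from $x$ a product of four $h$-conjugates of $x$ and $x^{\ast}$ whose Fourier expansion contains $\lambda_{\mathbf w(g)}$ with coefficient $(x(g)x(g)^{\ast})^{2}$. We then isolate this term by selective averaging along $\mathbf w(g)$ (Proposition~\ref{AveragingProposition}), and finally remove the remaining constant term using $\mathbb E_A(B)\subseteq B$.

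\emph{Step 1: the product.} Since $h=e^{-1}he\in L$ and $B$ is $L$-invariant, each $\lambda_{h^{k}}y\lambda_{h^{k}}^{\ast}$ lies in $B$ whenever $y\in B$. Because $\alpha_h=\id_A$, conjugation by $\lambda_{h^k}$ moves the coefficient $y(f)$ to the position $h^kfh^{-k}$ without changing it. Put
\[
z:=x\,\bigl(\lambda_{h}x^{\ast}\lambda_{h}^{\ast}\bigr)\bigl(\lambda_{h^{2}}x\lambda_{h^{2}}^{\ast}\bigr)\bigl(\lambda_{h^{3}}x^{\ast}\lambda_{h^{3}}^{\ast}\bigr)\in B .
\]
Since $x\in A\rtimes_{\alpha|_\Lambda,\mathrm r}\Lambda$, we expand (first for finitely supported approximants of $x$, then by continuity). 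This expresses $z$ as a norm-convergent combination of terms $a\lambda_{u}$ with
\[
u=f_{1}hf_{2}^{-1}hf_{3}hf_{4}^{-1}h^{-3},\qquad f_{1},\ldots,f_{4}\in\Lambda .
\]
The coefficient of the term with $f_1=\cdots=f_4=g$ is computed by moving all group unitaries to the right. The coefficient of $x^{\ast}$ at $g^{-1}$ is $\alpha_{g^{-1}}(x(g)^{\ast})$. Pushing it past $\lambda_g$ (and past $\lambda_h$, which acts trivially) turns it back into $x(g)^{\ast}$, and the remaining twists cancel in the same way. The resulting coefficient is $x(g)x(g)^{\ast}x(g)x(g)^{\ast}$. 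This bookkeeping is routine but has to be checked carefully.

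\emph{Step 2: which group elements survive $\mathbb E_{\mathbf w(g)}$.} If all $f_j\neq e$, Lemma~\ref{ElementPropertiesLemma}(iii) shows that $u\in\langle\mathbf w(g)\rangle$ if and only if $f_1=\cdots=f_4=g$, in which case $u=\mathbf w(g)$. If some $f_j=e$, then $u$ collapses to a reduced word of syllable length strictly less than $8$. Since $\mathbf w(g)^{r}$ is cyclically reduced of length $8|r|$, such a $u$ lies in $\langle\mathbf w(g)\rangle$ only when $u=e$. Hence
\[
\mathbb E_{\mathbf w(g)}(z)=a_{0}+(x(g)x(g)^{\ast})^{2}\lambda_{\mathbf w(g)}\qquad\text{for some }a_0\in A .
\]

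\emph{Step 3: selective averaging.} We apply Proposition~\ref{AveragingProposition} to $w:=\mathbf w(g)$ acting on the compactification $VT\cup\partial T$ of the Bass--Serre tree. The element $w$ is cyclically reduced of length $8$, hence loxodromic, and its endpoints $z_\pm$ satisfy the two convergence conditions. For the third condition, suppose $k\notin\langle w\rangle$ and $k.z_+\in\{z_+,z_-\}$. Then $kwk^{-1}$ has the same translation length as $w$ and its axis shares an end with that of $w$, so $kwk^{-1}=w^{\pm1}$. The case $w^{-1}$ is excluded by Lemma~\ref{ElementPropertiesLemma}(ii). In the case $kwk^{-1}=w$, $k$ centralizes $w$; in a free product the centralizer of a primitive loxodromic element is $\langle w\rangle$ by Lemma~\ref{ElementPropertiesLemma}(i), contradicting $k\notin\langle w\rangle$. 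The same argument applies to $z_-$.

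The proposition is stated for $\Csr(\Gamma)$, but we need it for $z\in A\rtimes_{\alpha,\mathrm r}\Gamma$. We therefore rerun its proof inside $(A\otimes C(X))\rtimes_{\mathrm r}\Gamma$. The only extra input needed is that $\mathbb E_w(z)$ commutes with $\lambda_w$; this holds because $w\in L$ acts trivially on $A$. Redoing this argument, and checking that \cite[Lemma~3.1]{AmrutamJiang26} still kills $\lambda_k$ with $k\notin\langle w\rangle$ in this setting, is the step I expect to be the main obstacle.

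Granting it, $\mathbb E_w(z)$ lies in the closed convex hull of $\{\lambda_{w^n}z\lambda_{w^n}^{\ast}\mid n\geq 1\}$, and this set is contained in $B$ since $w\in L$. So $\mathbb E_w(z)\in B$. Finally, $\mathbb E_A(B)\subseteq B$ (shown just above the statement) gives $a_0=\mathbb E_A(\mathbb E_w(z))\in B$. Subtracting yields \eqref{eq:FirstExtractionStep}.
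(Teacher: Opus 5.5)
Your proposal is correct and follows the paper's proof: the same product $\mathbf P(x)\in B$, the same Fourier computation via Lemma~\ref{ElementPropertiesLemma}(iii), the same check of the third hypothesis of Proposition~\ref{AveragingProposition} for $\mathbf w(g)$, and the same use of $\mathbb E_A(B)\subseteq B$. The paper applies $\mathbb E_A(B)\subseteq B$ to $x$ at the outset, so no constant term arises; you instead remove $a_0=\mathbb E_A(z)$ at the end, which is equally valid because words with some $f_j=e$ have syllable length $<8$. The step you flag as the main obstacle is handled more cheaply in the paper: finitely many averaging maps from the $\Csr(\Gamma)$ statement are convex combinations of powers of $\operatorname{Ad}(\lambda_{\mathbf w(g)})$, so they commute and fix $A$ (as $\alpha_{\mathbf w(g)}=\id_A$); composing them handles finite Fourier sums, and density does the rest, though your rerun in $(A\otimes C(X))\rtimes_{\mathrm r}\Gamma$ would also work.
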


\begin{proof} Since $\mathbb E_A(x)\in B$ by the preceding argument, replacing $x$ by $x-\mathbb E_A(x)$ preserves membership in $B$ and does not change $x(g)$. Thus we may assume that $\mathbb E_A(x)=0$. For every $y\in A\rtimes_{\alpha,\mathrm r}\Gamma$ define 
\[
\mathbf{P}(y):=y\left(\lambda_{h}y^{\ast}\lambda_{h}^{\ast}\right)\left(\lambda_{h^{2}}y\lambda_{h^{2}}^{\ast}\right)\left(\lambda_{h^{3}}y^{\ast}\lambda_{h^{3}}^{\ast}\right)\in A\rtimes_{\alpha,\mathrm r}\Gamma,
\]
and note that $\mathbf{P}(x)\in B$.

By Subsection \ref{subsec:free-products} the cyclically reduced word $\mathbf{w}(g)$ acts loxodromically on the Bass--Serre tree $T$. Its two poles $z_{+},z_{-}$ belong to the compact Hausdorff space $VT\cup\partial T$, on which $\Gamma$ acts by homeomorphisms, and its powers have the convergence required in Proposition \ref{AveragingProposition}. By Subsection \ref{subsec:free-products}, two conjugates of $\mathbf{w}(g)$ with a common pole agree up to inversion, since they have the same translation length. Thus an element taking a pole to a pole preserves the axis. By Lemma~\ref{ElementPropertiesLemma}, the orientation-preserving axis stabilizer is $\langle\mathbf w(g)\rangle$, and no element reverses the axis. Hence Proposition~\ref{AveragingProposition} applies.

The averaging conclusion also holds in the coefficient crossed product. For finitely many group elements outside $\langle\mathbf w(g)\rangle$, choose one averaging map for each element. These maps are convex combinations of positive powers of $\operatorname{Ad}(\lambda_{\mathbf w(g)})$ and therefore commute. Their composition makes all the corresponding group terms simultaneously small, by contractivity. They fix $A$, since $\alpha_{\mathbf w(g)}=\mathrm{id}_A$, and fix every term supported on $\langle\mathbf w(g)\rangle$. Applying the triangle inequality to finite Fourier sums, and then using contractivity and density, gives convergence to $\mathbb E_{\mathbf w(g)}$ on $A\rtimes_{\alpha,\mathrm r}\Gamma$. These averages preserve $B$ because $\mathbf w(g)\in L$. Thus $\mathbb E_{\mathbf w(g)}(B)\subseteq B$, and in particular $\mathbb E_{\mathbf w(g)}(\mathbf P(x))\in B$.

We next calculate $\mathbb{E}_{\mathbf{w}(g)}(\mathbf{P}(x))$. For this, choose for every $k\geq1$ an element $x_{k}\in C_{c}(\Lambda,A)$ with $\Vert x-x_{k}\Vert<\frac{1}{k}$. The element $x_{k}$ can be expressed as a sum of the form $x_{k}=\sum_{f\in F_{k}}x_{k}(f)\lambda_{f}$, where $F_{k}\subseteq\Lambda$ is finite. Without loss of generality, we can assume that $e\notin F_{k}$ and $g\in F_{k}$ with $x_{k}(g)=x(g)$. The element $\mathbf{P}(x_{k})$ is given by
\begin{eqnarray*}
\mathbf{P}(x_{k})&=& \sum_{f_{1},f_{2},f_{3},f_{4}\in F_{k}}\left[x_{k}(f_{1})\alpha_{f_{1}hf_{2}^{-1}}\left(x_{k}(f_{2})^{\ast}\right)\alpha_{f_{1}hf_{2}^{-1}h}\left(x_{k}(f_{3})\right)\alpha_{f_{1}hf_{2}^{-1}hf_{3}hf_{4}^{-1}}\left(x_{k}(f_{4})^{\ast}\right)\right] \\
& & \qquad \qquad \qquad \qquad \qquad \qquad \qquad \qquad \qquad \qquad \qquad \qquad\qquad  \times \lambda_{f_{1}hf_{2}^{-1}hf_{3}hf_{4}^{-1}h^{-3}},
\end{eqnarray*}
so that by Lemma \ref{ElementPropertiesLemma} the equality 
\begin{eqnarray*}
\mathbb{E}_{\mathbf{w}(g)}(\mathbf{P}(x_{k})) &=& \left[x_{k}(g)\alpha_{ghg^{-1}}\left(x_{k}(g)^{\ast}\right)\alpha_{ghg^{-1}h}\left(x_{k}(g)\right)\alpha_{ghg^{-1}hghg^{-1}}\left(x_{k}(g)^{\ast}\right)\right]\lambda_{\mathbf{w}(g)} \\
&=& \left(x(g)x(g)^{\ast}\right)^{2}\lambda_{\mathbf{w}(g)}
\end{eqnarray*}
 holds, and so
\begin{eqnarray*}
\left\Vert \mathbb{E}_{\mathbf{w}(g)}(\mathbf{P}(x))-\left(x(g)x(g)^{\ast}\right)^{2}\lambda_{\mathbf{w}(g)}\right\Vert \leq \left\Vert \mathbf{P}(x)-\mathbf{P}(x_{k})\right\Vert \rightarrow 0.
\end{eqnarray*}
This implies the claim. \end{proof}

In a next step, we apply a functional calculus argument to the elements extracted in Proposition \ref{FirstExtractionStep}.

\begin{lemma} \label{FunctionalCalculusLemma} Given $\varepsilon>0$, $x\in B\cap(A\rtimes_{\alpha|_{\Lambda},\mathrm r}\Lambda)$ and $g\in\Lambda\setminus\{e\}$ one has that 
\[
\left(x(g)x(g)^{\ast}\right)^{2}\left(\left(x(g)x(g)^{\ast}\right)^{2}+\varepsilon1\right)^{-1}\lambda_{k}\in B\quad\text{for all }k\in\langle\mathbf{w}(g)\rangle^{L}.
\]
\end{lemma}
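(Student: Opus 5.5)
Write $a:=(x(g)x(g)^{\ast})^{2}\in A$, a positive element, and $w:=\mathbf w(g)$. By Proposition~\ref{FirstExtractionStep}, $a\lambda_{w}\in B$. The plan has three steps. First produce $f(a)$ for suitable functions $f$ vanishing at $0$, still multiplied by $\lambda_w$. Then turn $\lambda_w$ into $\lambda_{w^{-1}}$ and $\lambda_e$. Finally pass to conjugates of $w$ by $L$.

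The algebraic input is that $L$ acts trivially on $A$, since $\alpha_h=\id_A$ and $L$ is the normal closure of $h$ in $\Gamma$. Hence $\lambda_w$ commutes with $A$. We get $(a\lambda_w)^{\ast}(a\lambda_w)=\lambda_w^{\ast}a^2\lambda_w=a^2$, so $a^2\in B$. Likewise $(a\lambda_w)^{\ast}=a\lambda_{w^{-1}}\in B$, and more generally $a^{n}\lambda_{w^{m}}\in B$ for $n\geq |m|$ with $n\ge 1$, by taking products. Since $a$ commutes with $\lambda_w$, the $\ast$-algebra generated by $a\lambda_w$ is commutative. I would therefore use functional calculus in the commutative \Cs-algebra $\Cs(a,\lambda_w)$, or simply polynomial approximation.

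Set $f_\varepsilon(t):=t(t+\varepsilon)^{-1}$ on $[0,\|a\|]$. Then $f_\varepsilon(a)=a\cdot(a+\varepsilon)^{-1}$. Write $f_\varepsilon(t)=t\,q_\varepsilon(t)$ and approximate $q_\varepsilon$ uniformly on the spectrum by polynomials $p_j$. This gives
\[
a\,p_j(a)\lambda_w\to f_\varepsilon(a)\lambda_w .
\]
Each $a\,p_j(a)\lambda_w$ lies in $B$. Indeed, $a^{n+1}\lambda_w=(a^{n})\cdot(a\lambda_w)$ and $a^{n}=(a^2)^{n/2}$ lies in $B$ for $n$ even, with the odd powers handled similarly via $a\lambda_w\cdot a\lambda_{w^{-1}}=a^2$ and $a^3\lambda_w=(a\lambda_w)(a\lambda_{w^{-1}})(a\lambda_w)$. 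Since $B$ is closed, $f_\varepsilon(a)\lambda_w\in B$, and by the same argument $f_\varepsilon(a)\lambda_{w^{m}}\in B$ and $f_\varepsilon(a)\in B$.

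It remains to reach every $k\in\langle w\rangle^{L}$. Any such $k$ is a product of $L$-conjugates $\ell w^{\pm1}\ell^{-1}$. By $L$-invariance of $B$, we have
\[
\lambda_\ell f_\varepsilon(a)\lambda_{w^{\pm1}}\lambda_\ell^{\ast}=f_\varepsilon(a)\lambda_{\ell w^{\pm1}\ell^{-1}}\in B,
\]
using that $\alpha_\ell$ fixes $A$. Products of such elements give $f_\varepsilon(a)^{r}\lambda_k\in B$, where $r$ is the number of factors.

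The main obstacle is removing the power $r$, since $f_\varepsilon(a)^r\neq f_\varepsilon(a)$. Here I would run the functional calculus once more. Apply the polynomial argument directly to the elements $a^{r}\lambda_k$, which lie in $B$ by the same product argument with $a$ in place of $f_\varepsilon(a)$. Every function $t\mapsto t^{r}p(t)$, with $p$ a polynomial, then gives an element of $B$ after multiplying by $\lambda_k$, by left-multiplying with $a^{2n}\in B$ and its combinations. Such functions are uniformly dense in the continuous functions on $[0,\|a\|]$ vanishing at $0$, by Stone--Weierstrass. Since $f_\varepsilon$ is one of these functions, we obtain $f_\varepsilon(a)\lambda_k\in B$ for every $k$.

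To handle the parity issues cleanly, I would do all functional calculus with respect to $a^2\in B$. Concretely, the set of continuous $\phi$ on $\operatorname{spec}(a)$ with $\phi(a)\lambda_k\in B$ is a closed subspace that is stable under multiplication by polynomials in $a^2$ without constant term. It contains $t^{r}$, and this suffices, since the closed ideal generated by $t^r$ in $C_0((0,\|a\|])$ is everything.
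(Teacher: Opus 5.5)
Your proof is correct and follows the paper's route. Both arguments obtain $a^{r}\lambda_k\in B$ for some $r\geq1$ by conjugating $a\lambda_{\mathbf w(g)^{\pm1}}$ with elements of $L$ (which fix $A$) and multiplying, and then remove the surplus power by multiplying with functional-calculus elements of $B$. The paper does this explicitly via $f_\delta(z)=z[(z+\varepsilon)(z^r+\delta)]^{-1}$ and the bound $\|a^rf_\delta(a)-a(a+\varepsilon1)^{-1}\|\leq\varepsilon^{-1}\delta^{1/r}$, while you use the closed-ideal argument in $C_0((0,\|a\|])$. The parity detour in your middle paragraphs is unnecessary: products alone do not give, e.g., $a^2\lambda_{\mathbf w(g)}$, but $a=|a\lambda_{\mathbf w(g)}|$ already lies in $B$, as the paper observes.
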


\begin{proof} Since $\lambda_{\mathbf w(g)}$ commutes with $A$, it commutes with $b:=(x(g)x(g)^*)^2$. Thus Proposition~\ref{FirstExtractionStep} gives $b=|b\lambda_{\mathbf w(g)}|\in B$. Conjugation by $l\in L$ fixes $b$, so $b\lambda_{l\mathbf w(g)^{\pm1}l^{-1}}\in B$. Writing $k\in\langle\mathbf w(g)\rangle^L$ as a non-empty product of these conjugates gives $b^n\lambda_k\in B$ for some $n\geq1$.

For $\delta>0$, define $f_\delta\colon[0,\|b\|]\to\mathbb R_{\geq0}$ by $f_\delta(z):=z[(z+\varepsilon)(z^n+\delta)]^{-1}$. Since $f_\delta(0)=0$, continuous functional calculus gives $f_\delta(b)\in B$. Moreover,
\[
\big\|(b^n\lambda_k)f_\delta(b)-b(b+\varepsilon1)^{-1}\lambda_k\big\|
\leq\varepsilon^{-1}\delta^{1/n}\longrightarrow0.
\]
It follows that $b(b+\varepsilon1)^{-1}\lambda_k\in B$, finishing the proof.
\end{proof}

By combining the tools established above with each other, we obtain the following main result of the present subsection.

\begin{theorem} \label{FreeProductCaseTheorem} Let $\Gamma=\Lambda\star\mathbb{Z}$ be a free product of a non-trivial discrete group $\Lambda$ by the integers, let $h\in\mathbb{Z}$ be a generator of $\mathbb{Z}$, and let $\alpha\colon\Gamma\curvearrowright A$ be an action of $\Gamma$ on a unital \Cs-algebra $A$ with $\alpha_{h}=\id_{A}$. Then, if $B$ is a \Cs-subalgebra of $A\rtimes_{\alpha,\mathrm r}\Gamma$ invariant under conjugation by the subgroup $\left\langle g^{-1}hg\mid g\in\Lambda\right\rangle \leq\Gamma$, one has that $\mathbb{E}_{A}(x\lambda_{g}^{\ast})\lambda_{g}\in B$ for every $x\in B\cap(A\rtimes_{\alpha|_{\Lambda},\mathrm r}\Lambda)$ and $g\in\Lambda$. \end{theorem}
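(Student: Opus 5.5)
The case $g=e$ is already settled: we showed above that $\mathbb E_A(B)\subseteq B$, using Lemma~\ref{PlumpSubgroupLemma}. So fix $x\in B\cap(A\rtimes_{\alpha|_\Lambda,\mathrm r}\Lambda)$ and $g\in\Lambda\setminus\{e\}$. As in Proposition~\ref{FirstExtractionStep}, replace $x$ by $x-\mathbb E_A(x)$, so that $x(e)=0$ and $x(g)$ is unchanged. Put $b:=(x(g)x(g)^*)^2$ and $b_\varepsilon:=b(b+\varepsilon1)^{-1}$. Lemma~\ref{FunctionalCalculusLemma} gives $b_\varepsilon\lambda_k\in B$ for all $k\in N_g:=\langle\mathbf w(g)\rangle^L$; taking $k=e$ gives $b_\varepsilon\in B$. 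Since $L$ acts trivially on $A$, the elements $\lambda_k$ with $k\in L$ commute with $A$. The plan has three steps.

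\textbf{Step 1 (build a test element).} Multiply $x$ on both sides by elements of the form $b_\varepsilon\lambda_k$ and form mixed products of the shape
\[
z:=(b_\varepsilon\lambda_{k_1})\,x\,(b_\varepsilon\lambda_{k_2})\,x^*\,(b_\varepsilon\lambda_{k_3})\in B ,\qquad k_1,k_2,k_3\in N_g .
\]
Then the Fourier support of $z$ consists of the words $k_1f_1k_2f_2^{-1}k_3$ with $f_1,f_2\in\Lambda\setminus\{e\}$. All of these lie in $L$, so selective averaging by elements of $L$ is available.

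\textbf{Step 2 (isolate one coefficient).} Choose $k_1,k_2,k_3$ as long, cyclically reduced conjugates of powers of $\mathbf w(g)$. The aim is that $u:=k_1gk_2g^{-1}k_3$ is cyclically reduced and primitive, not conjugate to its inverse, and has the following property, in the spirit of Lemma~\ref{ElementPropertiesLemma}(iii): $k_1f_1k_2f_2^{-1}k_3\in\langle u\rangle$ holds only for $f_1=f_2=g$. Given this, the argument of Proposition~\ref{FirstExtractionStep} applies verbatim, via Proposition~\ref{AveragingProposition} and the Bass--Serre tree. It gives $\mathbb E_u(B)\subseteq B$, and after approximating $x$ by finitely supported elements,
\[
\mathbb E_u(z)=b_\varepsilon x(g)b_\varepsilon x(g)^*b_\varepsilon\,\lambda_u\in B .
\]
By the functional calculus trick of Lemma~\ref{FunctionalCalculusLemma}, and by conjugating and multiplying with the $b_\varepsilon\lambda_{k}$, we can then strip the $N_g$-parts. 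This should yield elements $c_\varepsilon\lambda_{gkg^{-1}}\in B$ for $k\in N_g$, with $c_\varepsilon\in A$ approximately acting as a unit on $x(g)$.

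\textbf{Step 3 (reintroduce a single factor of $x$; the main obstacle).} Every element produced so far is supported in $L$, while $\lambda_g\notin L\cdot A$. Moreover, averaging by $L$ alone cannot isolate a term that is linear in $x$ at a group element outside $L$. The reason is that the centralizer of any element of $\Gamma\setminus L$ meets $L$ only in torsion, and $L$ is torsion-free. So the final step must keep one factor of $x$ unaveraged. The first attempt: form
\[
(b_\varepsilon\lambda_{k})\,x\,(c_\varepsilon\lambda_{g k' g^{-1}}) ,
\]
choosing $k,k'$ so that the $g$-term of $x$ yields $k\,g\,gk'g^{-1}=k\,g^2k'g^{-1}$, and then cancel against a second copy. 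Concretely, right-multiply by an element $c'_\varepsilon\lambda_{g k'' g^{-1}}\in B$ built in Step~2, and then apply $\mathbb E_A$ restricted to the relevant Fourier piece, using the plumpness of Lemma~\ref{PlumpSubgroupLemma}. This should make the $f\neq g$ terms negligible and leave $\approx x(g)\lambda_g$. Letting $\varepsilon\to0$ and using $b_\varepsilon x(g)\to x(g)$ in norm (as $b$ has the same support as $x(g)x(g)^*$) would then give $x(g)\lambda_g\in B$. Verifying the combinatorial uniqueness in Step~2, and finding the right cancellation in Step~3, are where I expect the real work to lie.
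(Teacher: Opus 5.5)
\textbf{There is a genuine gap: Step~3 is left open, and the mechanisms you sketch for it do not work as stated.} Step~3 carries the whole content of the case $g\neq e$.

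\begin{itemize}
\item In $(b_\varepsilon\lambda_k)\,x\,(c_\varepsilon\lambda_{gk'g^{-1}})$ the $g$-term sits at $kg^2k'g^{-1}$. This is not $g$ unless $k$ and $k'$ are coupled, and you never specify $k,k'$. You also do not say what property $c_\varepsilon$ would need, namely $x(g)\alpha_g(c_\varepsilon)\approx x(g)$.
\item ``Applying $\mathbb E_A$ restricted to the relevant Fourier piece'' is circular. The only tools available are products and averages of elements of $B$, and that a Fourier projection preserves $B$ is exactly what is being proved.
\item Your argument that averaging by $L$ cannot isolate a coefficient at $g\notin L$ applies only to conjugation averages $\lambda_\ell y\lambda_\ell^*$. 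It overlooks twisted two-sided multiplication, where $(gng^{-1})\,g\,n^{-1}=g$ for every $n$.
\item Steps~1--2 are a detour whose combinatorics you leave unverified. Also, the coefficient of $\lambda_u$ there would be $b_\varepsilon x(g)\alpha_g(b_\varepsilon)x(g)^*b_\varepsilon$, not $b_\varepsilon x(g)b_\varepsilon x(g)^*b_\varepsilon$.
\end{itemize}

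\textbf{The missing idea is to apply Lemma~\ref{FunctionalCalculusLemma} twice and intersect the resulting subgroups.} This is how the paper argues.
\begin{itemize}
\item Applied to $x$ at $g$, the lemma gives $p_\varepsilon\lambda_k\in B$ for $k\in\langle\mathbf w(g)\rangle^L$, where $p_\varepsilon=b_\varepsilon$.
\item Applied to $x^*$ at $g^{-1}$, using $x^*(g^{-1})=\alpha_{g^{-1}}(x(g)^*)$, it gives $q_\varepsilon\lambda_{k'}\in B$ for $k'\in\langle\mathbf w(g^{-1})\rangle^L$. Here $q_\varepsilon:=\alpha_{g^{-1}}\bigl((x(g)^*x(g))^2((x(g)^*x(g))^2+\varepsilon1)^{-1}\bigr)$.
\item Since $L\cong\star_{f\in\Lambda}\mathbb Z$ is inner \Cs-selfless, Proposition~\ref{InnerPropertiesProposition}(iv) makes $N:=\langle\mathbf w(g^{-1})\rangle^L\cap g^{-1}\langle\mathbf w(g)\rangle^Lg$ a non-trivial normal subgroup of $L$. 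By Lemma~\ref{PlumpSubgroupLemma} it is plump.
\item For $n\in N$ we have $gng^{-1}\in\langle\mathbf w(g)\rangle^L$, so $p_\varepsilon\lambda_{gng^{-1}}\,x\,q_\varepsilon\lambda_{n^{-1}}\in B$. Since $L$ acts trivially on $A$, its Fourier expansion is $\sum_f p_\varepsilon x(f)\alpha_f(q_\varepsilon)\,\lambda_g\lambda_{n(g^{-1}f)n^{-1}}$.
\item The $f=g$ term does not depend on $n$. Averaging over suitable $n_1,\dots,n_m\in N$ makes the terms with $g^{-1}f\neq e$ small, by plumpness.
\item This gives $p_\varepsilon x(g)\alpha_g(q_\varepsilon)\lambda_g\in B$. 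The identity $p_\varepsilon x(g)=x(g)\alpha_g(q_\varepsilon)$ then gives norm convergence to $x(g)\lambda_g$ as $\varepsilon\to0$.
\end{itemize}
The second family $q_\varepsilon$ is essential. With $b_\varepsilon$ on both sides you would only obtain $b_\varepsilon x(g)\alpha_g(b_\varepsilon)$, which need not converge to $x(g)$.
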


\begin{proof} As before, write $L:=\langle g^{-1}hg\mid g\in\Lambda\rangle $ and let $x\in B\cap(A\rtimes_{\alpha|_{\Lambda},\mathrm r}\Lambda)$ be an arbitrary element. For every $k\geq1$ choose $x_{k}\in C_{c}(\Lambda,A)$ with $\Vert x-x_{k}\Vert<\frac{1}{4k}$ and express the element as a sum of the form $x_{k}=\sum_{f\in F_{k}}x_{k}(f)\lambda_{f}$, where $F_{k}\subseteq\Lambda$ is finite and contains $e$. For every $f\in F_{k}$ the corresponding coefficient satisfies
\begin{equation}
\left\Vert x_{k}(f)\right\Vert =\left\Vert \mathbb{E}_{A}(x_{k}\lambda_{f}^{\ast})\lambda_{f}\right\Vert \leq\left\Vert \mathbb{E}_{A}(x\lambda_{f}^{\ast})\lambda_{f}\right\Vert +\frac{1}{4k}<\Vert x\Vert+\frac{1}{k}.\label{eq:CoefficientInequality}
\end{equation}

We proceed by distinguishing two cases.\\

\begin{itemize}
\item \emph{Case 1}: Assume that $g=e$. The assertion is the inclusion $\mathbb E_A(x)\in B$ established before Proposition~\ref{FirstExtractionStep}.
\item \emph{Case 2}: Now assume that $g\in\Lambda\setminus\{e\}$, and define for every $\varepsilon>0$ elements 
\begin{eqnarray*}
p_{\varepsilon} &:=& \left(x(g)x(g)^{\ast}\right)^{2}\left(\left(x(g)x(g)^{\ast}\right)^{2}+\varepsilon1\right)^{-1} \\
q_{\varepsilon} &:=& \alpha_{g^{-1}}\left(\left(x(g)^{\ast}x(g)\right)^{2}\left(\left(x(g)^{\ast}x(g)\right)^{2}+\varepsilon1\right)^{-1}\right),
\end{eqnarray*}
so that $p_{\varepsilon}x(g)=x(g)\alpha_{g}(q_{\varepsilon})$. By $L\triangleleft\Gamma$, the group $g^{-1}\langle\mathbf w(g)\rangle^{L}g$ is a non-trivial normal subgroup of $L$. Since $L\cong\star_{g\in\Lambda}\mathbb{Z}$ is inner \Cs-selfless, it follows with Proposition \ref{InnerPropertiesProposition} that the intersection
\[
N:=\langle\mathbf{w}(g^{-1})\rangle^{L}\cap\left(g^{-1}\langle\mathbf w(g)\rangle^{L}g\right)
\]
is also a non-trivial normal subgroup of $L$. Lemma \ref{PlumpSubgroupLemma} hence asserts that $N$ is a plump subgroup of $\Gamma$. Apply Lemma~\ref{FunctionalCalculusLemma} to $x$ at $g$ and to $x^*$ at $g^{-1}$. Since $x^*(g^{-1})=\alpha_{g^{-1}}(x(g)^*)$, we obtain $p_\varepsilon\lambda_{gng^{-1}}\in B$ and $q_\varepsilon\lambda_n\in B$ for every $n\in N$. \\
By the above, we find elements $n_{1},\ldots,n_{m_{k}}\in N$ with $\Vert\frac{1}{m_{k}}\sum_{i=1}^{m_{k}}\lambda_{n_{i}(g^{-1}f)n_{i}^{-1}}\Vert<\frac{1}{k(\#F_{k})}$ for every $f\in F_{k}\setminus\{g\}$. Thereby,
\begin{eqnarray*}
& & \left\Vert \frac{1}{m_{k}}\sum_{i=1}^{m_{k}}p_{\varepsilon}\lambda_{gn_{i}g^{-1}}x_{k}q_{\varepsilon}\lambda_{n_{i}^{-1}}-p_{\varepsilon}x_{k}(g)\alpha_{g}(q_{\varepsilon})\lambda_{g}\right\Vert \\
& & \qquad = \left\Vert \sum_{f\in F_{k}\setminus\{g\}}\left(p_{\varepsilon}x_{k}(f)\alpha_{f}(q_{\varepsilon})\right)\lambda_{g}\left(\frac{1}{m_{k}}\sum_{i=1}^{m_{k}}\lambda_{n_{i}(g^{-1}f)n_{i}^{-1}}\right)\right\Vert \\
& & \qquad \qquad \leq \frac{1}{k(\#F_{k})}\sum_{f\in F_{k}\setminus\{g\}}\left\Vert p_{\varepsilon}x_{k}(f)\alpha_{f}(q_{\varepsilon})\right\Vert \leq \sum_{f\in F_{k}\setminus\{g\}}\frac{\left\Vert x_{k}(f)\right\Vert }{k(\#F_{k})} \\
& & \qquad \qquad \qquad \leq \frac{1}{k}\left(\Vert x\Vert+\frac{1}{k}\right)
\end{eqnarray*}
and so 
\begin{eqnarray*}
& & \left\Vert \frac{1}{m_{k}}\sum_{i=1}^{m_{k}}p_{\varepsilon}\lambda_{gn_{i}g^{-1}}xq_{\varepsilon}\lambda_{n_{i}^{-1}}-p_{\varepsilon}x(g)\alpha_{g}(q_{\varepsilon})\lambda_{g}\right\Vert \\
&\leq& 2\left\Vert x-x_{k}\right\Vert +\left\Vert \frac{1}{m_{k}}\sum_{i=1}^{m_{k}}p_{\varepsilon}\lambda_{gn_{i}g^{-1}}x_{k}q_{\varepsilon}\lambda_{n_{i}^{-1}}-p_{\varepsilon}x_{k}(g)\alpha_{g}(q_{\varepsilon})\lambda_{g}\right\Vert \\
&<& \frac{1}{k}\left(\Vert x\Vert+\frac{1}{k}+1\right)\rightarrow0.
\end{eqnarray*}
For every $1\leq i\leq m_{k}$ the element $p_{\varepsilon}\lambda_{gn_{i}g^{-1}}xq_{\varepsilon}\lambda_{n_{i}^{-1}}$ is contained in $B$, so $p_{\varepsilon}x(g)\alpha_{g}(q_{\varepsilon})\lambda_{g}\in B$. The identity $p_{\varepsilon}x(g)=x(g)\alpha_{g}(q_{\varepsilon})$ finally gives 
\begin{eqnarray*}
& & \left\Vert p_{\varepsilon}x(g)\alpha_{g}(q_{\varepsilon})\lambda_{g}-x(g)\lambda_{g}\right\Vert ^{2} \leq \left\Vert x(g)\alpha_{g}(q_{\varepsilon}^{2})-x(g)\right\Vert ^{2} \\
& &  \qquad = \left\Vert \left(x(g)^{\ast}x(g)\right)\alpha_{g}(q_{\varepsilon}^{4})-2\left(x(g)^{\ast}x(g)\right)\alpha_{g}(q_{\varepsilon}^{2})+\left(x(g)^{\ast}x(g)\right)\right\Vert \\
& & \qquad \qquad \leq \sup_{z\in[0,\Vert x\Vert^2]}\left|\frac{z^{9}}{(z^{2}+\varepsilon)^{4}}-\frac{2z^{5}}{(z^{2}+\varepsilon)^{2}}+z\right|\\
& & \qquad \qquad \leq4\sqrt\varepsilon,
\end{eqnarray*}
Here the last bound follows, for $z\geq0$, from
\[
z\left(1-\left(\frac{z^2}{z^2+\varepsilon}\right)^2\right)^2
\leq\frac{4\varepsilon^2z}{(z^2+\varepsilon)^2}
\leq4\sqrt\varepsilon.
\]
Thus $x(g)\lambda_{g}=\lim_{\varepsilon\downarrow0}p_{\varepsilon}x(g)\alpha_{g}(q_{\varepsilon})\lambda_{g}\in B.$
\end{itemize}
This finishes the proof. \end{proof}
For the ultrapower argument below, we also need the following extension of Theorem~\ref{FreeProductCaseTheorem} to a quotient of a full amalgamated free product.
\begin{remark} \label{UniversalFreeProductRemark} In the setting of Theorem \ref{FreeProductCaseTheorem}, let $D$ be the quotient of the full amalgamated free product $(A\rtimes_{\alpha|_{\Lambda},\mathrm{r}}\Lambda)\star_{\Csr(\Lambda)}\Csr(\Gamma)$ by the two-sided closed ideal generated by all commutators of the form $[a,\lambda_{h}]$ with $a\in A$. The restriction of the canonical quotient map $q\colon D\twoheadrightarrow A\rtimes_{\alpha,\mathrm{r}}\Gamma$ to both $A\rtimes_{\alpha|_{\Lambda},\mathrm{r}}\Lambda$ and $\Csr(\Gamma)$ is faithful. Identifying $A$ with its canonical image in $D$, the map
$\widetilde{\mathbb E}_A:=\mathbb E_A\circ q$ is a conditional expectation from $D$ onto $A$. The conclusion of Theorem \ref{FreeProductCaseTheorem} then remains valid for $L$-invariant $\Cs$-subalgebras of $D$, i.e., if $B\subseteq D$ is a $\Cs$-subalgebra invariant under conjugation by $L:=\langle g^{-1}hg\mid g\in\Lambda\rangle$, then $\widetilde{\mathbb{E}}_{A}(x\lambda_{g}^{*})\lambda_{g}\in B$ for every $x\in B\cap(A\rtimes_{\alpha|_{\Lambda},\mathrm{r}}\Lambda)\subseteq D$ and $g\in\Lambda$.

Indeed, for $g\in\Lambda\setminus\{e\}$ the amenability of $\langle\mathbf{w}(g)\rangle$ and the triviality of $\alpha|_{\langle\mathbf{w}(g)\rangle}$ imply that $q$ restricts to an isomorphism 
\[
C^{*}(A\cup\{\lambda_{s}\mid s\in\langle\mathbf{w}(g)\rangle\})\cong A\otimes\Csr(\langle\mathbf{w}(g)\rangle).
\]
Thus the conditional expectation $\mathbb{E}_{\langle\mathbf{w}(g)\rangle}$ onto $A\rtimes_{\alpha|_{\langle\mathbf{w}(g)\rangle},\mathrm{r}}\langle\mathbf{w}(g)\rangle$ pulls back to a conditional expectation on $D$. The canonical unitaries in $D$ satisfy the covariance relations for $\alpha$; in particular, $L$ acts trivially on $A$, and finite Fourier sums are dense in $D$. All group-norm estimates used in the preceding proofs remain valid in $D$, since its canonical copy of $\Csr(\Gamma)$ is faithful. The initial and selective averaging arguments therefore converge to these expectations by the same finite-sum estimates and density. The polynomial computation, functional calculus, and final averaging argument are unchanged. Thus the proof of Theorem \ref{FreeProductCaseTheorem} applies to $D$ as well. \end{remark}


\subsection{The general case\label{subsec:ProofInvariantMainTheorem}}

The following statement is a strengthening of Lemma \ref{NormalWitnessLemma}.

\begin{lemma} \label{StrongNormalWitnessLemma} Let $\alpha\colon\Gamma\curvearrowright A$ be an action of an inner \Cs-selfless discrete group $\Gamma$ on a unital \Cs-algebra $A$, and assume that the ultrafilter $\mathcal{U}$ on $I$ and the family $(g_{i})_{i\in I}\subseteq\Gamma$ witnessing inner \Cs-selflessness satisfy $\lim_{i\rightarrow\mathcal{U}}\Vert\alpha_{g_{i}}(a)-a\Vert=0$ for every $a\in A$. Then, if $N\triangleleft\Gamma$ is a non-trivial normal subgroup, there exists a family $(n_{i})_{i\in I}\subseteq N$ witnessing inner \Cs-selflessness of $\Gamma$ with $\lim_{i\rightarrow\mathcal{U}}\Vert\alpha_{n_{i}}(a)-a\Vert=0$ for every $a\in A$. \end{lemma}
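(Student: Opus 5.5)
We reuse the construction from the proof of Lemma~\ref{NormalWitnessLemma} and check that it also preserves the compatibility condition. Fix $n\in N\setminus\{e\}$ and let $(g_i)_{i\in I}$, $\mathcal U$ and $\theta\colon\Csr(\Gamma\star\mathbb Z)\hookrightarrow\Csr(\Gamma)^{\mathcal U}$ be the given data, with $\lim_{i\to\mathcal U}\|\alpha_{g_i}(a)-a\|=0$ for all $a\in A$. As before, put
\[
n_i:=(g_ing_i^{-1})(g_i^2ng_i^{-2})\in N
\quad\text{and}\quad
\widetilde h:=(hnh^{-1})(h^2nh^{-2}).
\]
The argument of Lemma~\ref{NormalWitnessLemma} shows the following. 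The subgroup $\langle\Gamma,\widetilde h\rangle\leq\Gamma\star\mathbb Z$ is canonically isomorphic to $\Gamma\star\mathbb Z$, with $\widetilde h$ playing the role of the generator. Restricting $\theta$ to this subgroup gives a unital embedding that is diagonal on $\Csr(\Gamma)$ and sends $\lambda_{\widetilde h}$ to $(\lambda_{n_i})_{i\to\mathcal U}$. Hence $(n_i)_{i\in I}$, together with the same ultrafilter $\mathcal U$, witnesses inner \Cs-selflessness of $\Gamma$. It remains only to verify the compatibility condition for this new family.

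For this, note that $n_i=g_i\,n\,g_i\,n\,g_i^{-2}$, so that
\[
\alpha_{n_i}=\alpha_{g_i}\circ\alpha_n\circ\alpha_{g_i}\circ\alpha_n\circ\alpha_{g_i^{-1}}\circ\alpha_{g_i^{-1}}.
\]
Fix $a\in A$. The step that needs care is passing from $\alpha_{g_i}\approx\mathrm{id}$ to $\alpha_{g_i^{-1}}\approx\mathrm{id}$. This follows from isometry:
\[
\|\alpha_{g_i^{-1}}(a)-a\|=\|a-\alpha_{g_i}(a)\|\longrightarrow0 \quad\text{along }\mathcal U.
\]
Since each automorphism is isometric, we can then peel off the factors from the inside out, writing $\approx$ for equality up to a norm error tending to $0$ along $\mathcal U$:
\[
\alpha_{g_i^{-1}}\alpha_{g_i^{-1}}(a)\approx a,\qquad
\alpha_{g_i}\alpha_n\alpha_{g_i}\alpha_n\alpha_{g_i^{-1}}\alpha_{g_i^{-1}}(a)\approx\alpha_{g_i}\alpha_n\alpha_{g_i}(\alpha_n(a)).
\]
Next apply the hypothesis to the fixed element $\alpha_n(a)$, which gives $\alpha_{g_i}(\alpha_n(a))\approx\alpha_n(a)$. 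This reduces the expression to $\alpha_{g_i}(\alpha_n^2(a))$. Applying the hypothesis once more, now to the fixed element $\alpha_n^2(a)$, yields approximately $\alpha_{n^2}(a)$.

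The main obstacle is that this limit is $\alpha_{n^2}(a)$ and not $a$, since the two copies of $n$ are not cancelled. I would therefore modify the word so that the $\Gamma$-letters cancel under $h\mapsto e$. Take instead
\[
\widetilde h:=(hnh^{-1})(h^2n^{-1}h^{-2}),\qquad n_i:=(g_ing_i^{-1})(g_i^2n^{-1}g_i^{-2})\in N.
\]
The element $\widetilde h$ still lies in the free product $\star_k h^k\Gamma h^{-k}$, where it is a reduced product of two letters from distinct factors. Hence it has infinite order and is freely independent from $\Gamma$, because the kernel of $\Gamma\star\mathbb Z\to\mathbb Z$ is $\star_{k\in\mathbb Z}h^k\Gamma h^{-k}$ and $\Gamma$ corresponds to the factor $k=0$. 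The restriction argument therefore goes through unchanged. Repeating the peeling computation above, with the two inner applications of the hypothesis now made to the fixed elements $\alpha_{n^{-1}}(a)$ and $a$, gives
\[
\alpha_{n_i}(a)\approx\alpha_{g_i}\alpha_n\alpha_{g_i}\alpha_{n^{-1}}(a)\approx\alpha_n\alpha_{n^{-1}}(a)=a
\]
along $\mathcal U$. This establishes $\lim_{i\to\mathcal U}\|\alpha_{n_i}(a)-a\|=0$ for every $a\in A$, as required.
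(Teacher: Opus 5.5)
Your proposal is correct and is essentially the paper's proof. The paper uses exactly the modified family $n_i=(g_ing_i^{-1})(g_i^2n^{-1}g_i^{-2})$, switching from $n$ to $n^{-1}$ relative to Lemma~\ref{NormalWitnessLemma} for precisely the reason you identified; it appeals to the same restriction argument, and it bounds $\|\alpha_{n_i}(a)-a\|$ by $3\|\alpha_{g_i^{-1}}(a)-a\|+\|\alpha_{g_i}(\alpha_{n^{-1}}(a))-\alpha_{n^{-1}}(a)\|$ via the same isometric peeling, while your explicit check that $(hnh^{-1})(h^2n^{-1}h^{-2})$ is still a reduced two-letter word in $\star_{k}h^k\Gamma h^{-k}$ avoiding the factor $k=0$ fills in a step the paper leaves implicit.
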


\begin{proof} Fix $n\in N\setminus\{e\}$. By following the argument in the proof of Lemma \ref{NormalWitnessLemma}, one obtains that the family $(n_{i})_{i\in I}$ with $n_{i}:=(g_{i}ng_{i}^{-1})(g_{i}^{2}n^{-1}g_{i}^{-2})\in N$ witnesses the inner \Cs-selflessness of $\Gamma$. Furthermore, for every $a\in A$,
\begin{eqnarray*}
\left\Vert \alpha_{n_{i}}(a)-a\right\Vert &=& \left\Vert \alpha_{g_{i}ng_{i}n^{-1}g_{i}^{-2}}(a)-a\right\Vert \\
&\leq& \left\Vert \alpha_{g_{i}^{-1}}(a)-a\right\Vert +\left\Vert \alpha_{g_{i}ng_{i}n^{-1}g_{i}^{-1}}(a)-a\right\Vert \\
&\leq& 2\left\Vert \alpha_{g_{i}^{-1}}(a)-a\right\Vert +\left\Vert \alpha_{g_{i}ng_{i}}\left(\alpha_{n^{-1}}(a)\right)-a\right\Vert \\
&\leq& 3\left\Vert \alpha_{g_{i}^{-1}}(a)-a\right\Vert +\left\Vert \alpha_{g_{i}}\left(\alpha_{n^{-1}}(a)\right)-\alpha_{n^{-1}}(a)\right\Vert 
\end{eqnarray*}
It follows that $\lim_{i\rightarrow\mathcal{U}}\Vert\alpha_{n_{i}}(a)-a\Vert=0$. \end{proof}

We have now assembled all the ingredients to prove Theorem \ref{InvariantMainTheorem}.

\begin{proof}[Proof of Theorem \ref{InvariantMainTheorem}] As in the statement of Theorem \ref{InvariantMainTheorem}, let $\alpha\colon\Gamma\curvearrowright A$ be an action of an inner $\Cs$-selfless discrete group $\Gamma$ on a unital $\Cs$-algebra $A$, and assume that the ultrafilter $\mathcal{U}$ on $I$ and the family $(g_{i})_{i\in I}\subseteq\Gamma$ witnessing inner $\Cs$-selflessness satisfy $\lim_{i\rightarrow\mathcal{U}}\Vert\alpha_{g_{i}}(a)-a\Vert=0$ for every $a\in A$. Write $\theta\colon\Csr(\Gamma\star\mathbb{Z})\hookrightarrow\Csr(\Gamma)^{\mathcal{U}}\subseteq(A\rtimes_{\alpha,\mathrm r}\Gamma)^{\mathcal{U}}$ for the associated map with $\theta|_{\Csr(\Gamma)}=\Delta$ and $\theta(\lambda_{h})=(\lambda_{g_{i}})_{i\rightarrow\mathcal{U}}$. Let $N\triangleleft\Gamma$ be a non-trivial normal subgroup and assume that $B\subseteq A\rtimes_{\alpha,\mathrm r}\Gamma$ is a $\Cs$-subalgebra that is invariant under conjugation by $N$. Then, by Lemma \ref{StrongNormalWitnessLemma}, we can assume that the family $(g_{i})_{i\in I}$ is already contained in $N$.

Denote the trivial extension of $\alpha$ to $\Gamma\star\mathbb{Z}$ by $\widetilde{\alpha}$, and let $D$ be the quotient from Remark \ref{UniversalFreeProductRemark}. The map $\theta$ and the diagonal embedding of $A\rtimes_{\alpha,\mathrm r}\Gamma$ agree on $\Csr(\Gamma)$, and the compatibility assumption gives $[\theta(\lambda_{h}),\Delta(a)]=0$ for every $a\in A$. By the universal property of $D$, the map $\theta$ therefore extends to a unital $\ast$-homomorphism $\widetilde{\theta}\colon D\to(A\rtimes_{\alpha,\mathrm r}\Gamma)^{\mathcal{U}}$ whose restriction to $A\rtimes_{\alpha,\mathrm r}\Gamma$ is the diagonal embedding and which satisfies $\widetilde{\theta}(\lambda_{h})=(\lambda_{g_{i}})_{i\rightarrow\mathcal{U}}$. Since $B$ is $N$-invariant, the preimage $C:=\widetilde{\theta}^{-1}(B^{\mathcal{U}})$ is a $\Cs$-subalgebra of $D$ that is invariant under conjugation by the subgroup $L:=\left\langle g^{-1}hg\mid g\in\Gamma\right\rangle \leq\Gamma\star\mathbb{Z}$. Indeed, for every element $c\in C$ there exists a bounded family $(b_{i})_{i\in I}\subseteq B$ with $\widetilde{\theta}(c)=(b_{i})_{i\rightarrow\mathcal{U}}$, so that 
\[
\widetilde{\theta}\left(\operatorname{Ad}(\lambda_{g^{-1}hg})(c)\right)=\widetilde{\theta}\left(\lambda_{g^{-1}hg}c\lambda_{g^{-1}hg}^{\ast}\right)=\left(\lambda_{g^{-1}g_{i}g}b_{i}\lambda_{g^{-1}g_{i}g}^{\ast}\right)_{i\rightarrow\mathcal{U}}\in B^{\mathcal{U}}
\]
for every $g\in\Gamma$. It follows that $\lambda_{g^{-1}hg}c\lambda_{g^{-1}hg}^{\ast}\in C$ for every $g\in\Gamma$. The same holds with $h^{-1}$ in place of $h$, and therefore $C$ is $L$-invariant.

By Remark \ref{UniversalFreeProductRemark}, and since its conditional expectation $\widetilde{\mathbb{E}}_{A}$ agrees with $\mathbb{E}_{A}$ on $A\rtimes_{\alpha,\mathrm r}\Gamma$, we obtain that $\mathbb{E}_{A}(x\lambda_{g}^{\ast})\lambda_{g}\in C$ for every $x\in C\cap(A\rtimes_{\alpha,\mathrm r}\Gamma)$ and $g\in\Gamma$. Moreover, $C\cap(A\rtimes_{\alpha,\mathrm r}\Gamma)=B$, since $\widetilde{\theta}$ is diagonal on $A\rtimes_{\alpha,\mathrm r}\Gamma$ and $\Delta(y)\in B^{\mathcal{U}}$ exactly when $y\in B$. This finishes the proof.
\end{proof}

\vspace{3mm}


\section{The cyclic Powers property}
\label{sec:cyclicpowers}
The main goal of this section is to provide examples of inner \Cs-selfless groups. With this aim in mind, we introduce a new group-theoretic notion, which we call the \emph{cyclic Powers property}, that implies inner \Cs-selflessness, Ozawa's property $\mathrm{P}_{\mathrm{PHP}}$, and de la Harpe's Powers property. Our new property admits a sufficient criterion in terms of bounded free independence, and seems to be an interesting notion on its own right.

\begin{definition}\label{def:cyclic-Powers}
    We say that a non-trivial discrete group $\Gamma$ has the \emph{cyclic Powers property} if for every finite subset $F\subseteq \Gamma$ and $n\in\mathbb N\setminus\{0\}$, there exist $A\subseteq \Gamma$ and $s\in \Gamma$ such that
    \begin{enumerate}[leftmargin=*,label=\textup{(\roman*)}]
        \item the sets $\{gA\}_{g\in F}$ are pairwise disjoint,
        \item $s^k(\Gamma\setminus A) \subseteq A$ for all $k=1,\dots,n$, and
        \item $A\cap F=\emptyset$.
    \end{enumerate} 
\end{definition}

For notational convenience we sometimes write $A^c=\Gamma\setminus A$ for a subset $A\subseteq\Gamma$.
Next, we compare the cyclic Powers property with Ozawa's property $\mathrm{P}_{\mathrm{PHP}}$ and the Powers property.

\begin{definition}[{see~\cite[Section 8]{Oza}}]
    A discrete group $\Gamma$ has \emph{property $\mathrm{P}_{\mathrm{PHP}}$} if, given a finite subset $F\subseteq\Gamma$ and $\varepsilon>0$, there exists an integer $N\geq1$ such that the following holds for every integer $n\geq N$.
    There are elements $t_1,\dots,t_n\in\Gamma$ and subsets $C_i\subseteq D_i\subseteq\Gamma$, for $1\leq i\leq n$, satisfying:
    \begin{enumerate}[label=\textup{(\roman*)},leftmargin=*]
        \item the sets in the family
        \[
            \{aC_i\mid a\in F,\ 1\leq i\leq n\}\cup\{bt_j^{-1}(\Gamma\setminus D_j)\mid b\in F,\ 1\leq j\leq n\}
        \]
        are pairwise disjoint,
        \item we have
        \[
            \sup_{x\in\Gamma}\left|\{i\in\{1,\dots,n\}\mid x\in D_i\}\cup\{j\in\{1,\dots,n\}\mid x\in t_j^{-1}(\Gamma\setminus C_j)\}\right|\leq\varepsilon\sqrt n.
        \]
    \end{enumerate}
\end{definition}

\begin{definition}[{see~\cite[Definition~9]{de2007simplicity}}]
    A discrete group $\Gamma$ is said to have the \emph{Powers property} if for every finite subset $F\subseteq \Gamma\setminus\{e\}$ and $n\in\mathbb N\setminus\{0\}$, there exist sets $C,D\subseteq\Gamma$ and elements $s_1 ,\ldots ,s_n \in \Gamma$ such that:
    \begin{enumerate}[leftmargin=*,label=\textup{(\roman*)}]
        \item $C\cap D=\emptyset$ and $\Gamma=C\cup D$,
        \item $gC\cap C=\emptyset$ for all $g\in F$,
        \item $s_i D\cap s_j D=\emptyset$ for all $i\neq j$.
    \end{enumerate} 
\end{definition}

\begin{lemma}\label{lem:cyclicPowers-to-Powers}
Let $\Gamma$ be a discrete group with the cyclic Powers property. Then the following statements hold:
\begin{enumerate}[leftmargin=*,label=\textup{(\roman*)}]
    \item $\Gamma$ has the property $\mathrm{P}_{\mathrm{PHP}}$,
    \item $\Gamma$ has the Powers property.
\end{enumerate}
\end{lemma}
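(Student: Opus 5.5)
For part (ii) we apply the cyclic Powers property to the enlarged set $F':=F\cup\{e\}$ and the integer $n$. This produces $A\subseteq\Gamma$ and $s\in\Gamma$, and we put $C:=A$, $D:=\Gamma\setminus A$ and $s_i:=s^{i}$ for $0\leq i\leq n-1$.

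\begin{itemize}
\item Condition (i) of the Powers property holds trivially, since $D$ is the complement of $C$.
\item Condition (ii) follows from disjointness of $\{gA\}_{g\in F'}$: since $e\in F'$, we get $gA\cap A=\emptyset$ for every $g\in F$.
\item For condition (iii), take $0\leq i<j\leq n-1$. Translating by $s^{-i}$ gives
\[
s^{-i}\bigl(s^iD\cap s^jD\bigr)=D\cap s^{j-i}D .
\]
Since $1\leq j-i\leq n$, the cyclic condition (ii) gives $s^{j-i}D\subseteq A$, and $A\cap D=\emptyset$. Hence $s^iD\cap s^jD=\emptyset$.
\end{itemize}
This part is routine.

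For part (i) we use a ping-pong reformulation of the cyclic condition. From $s^k(\Gamma\setminus A)\subseteq A$ we get $\Gamma\setminus A\subseteq s^{-k}A$, and taking complements gives $s^{-k}(\Gamma\setminus A)\subseteq A$. So the condition is symmetric in $s^{\pm1}$. Now apply the cyclic Powers property to $F'':=F\cup F^{-1}F\cup\{e\}$ with a large integer (say $4n$). Then the translates $W_k:=s^k(\Gamma\setminus A)$, for $|k|\leq 2n$, are pairwise disjoint, and for $k\neq0$ they lie inside $A$. The natural choice is to take $C_i\subseteq D_i$ to be suitable unions of the $W_k$, and to take the $t_j$ to be powers of $s$. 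With this choice, disjointness of the family $\{aC_i\}$ follows from disjointness of $\{aA\}_{a\in F}$ together with disjointness of the $W_k$; the sets $bt_j^{-1}(\Gamma\setminus D_j)$ are handled the same way. The multiplicity requirement in (ii) is then checked by showing that every $x\in\Gamma$ lies in a uniformly bounded number of the relevant sets. A bound such as $2$ suffices, since $2\leq\varepsilon\sqrt n$ once $n\geq N:=\lceil 4/\varepsilon^2\rceil$.

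The main obstacle is arranging the multiplicity bound in (ii) of property $\mathrm{P}_{\mathrm{PHP}}$, specifically for the sets $t_j^{-1}(\Gamma\setminus C_j)$. Their complements $C_j$ must be small enough that the translates $aC_j$ are disjoint, so these sets are themselves huge. A naive choice $t_j=s^{m_j}$ makes $t_j^{-1}(\Gamma\setminus C_j)$ a translate $s^{l}A$, and every point of $\Gamma\setminus A$ then lies in all of them. I would therefore first try to use both directions $s$ and $s^{-1}$. The idea is to choose $C_j$ and $t_j$ so that $t_j^{-1}(\Gamma\setminus C_j)$ is a single "petal" $W_{-k_j}$ on the negative side while $D_i=C_i$ is a petal $W_{k_i}$ on the positive side, with the indices chosen pairwise distinct. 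If this exact matching is impossible, I would instead compare with Ozawa's verification of $\mathrm{P}_{\mathrm{PHP}}$ for extreme boundary actions in \cite[Section~8]{Oza}. There I would replace his boundary neighbourhoods by the sets $W_k$ and use the fact that the dynamics of $s$ on $\{W_k\}$ is a shift with bounded overlaps.
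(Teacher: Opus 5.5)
Your argument for (ii) is correct and is the paper's; the paper uses $s,\dots,s^n$ instead of $s^0,\dots,s^{n-1}$, which makes no difference.

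Part (i), however, has a genuine gap. You never actually produce $C_i$, $D_i$, $t_i$, and the construction you aim for cannot exist. \emph{No} choice with all $t_j$ in $\langle s\rangle$ can satisfy $\mathrm{P}_{\mathrm{PHP}}$ once $n$ is large. To see this, fix $b\in F$.
\begin{itemize}
\item Condition (i) forces the sets $G_j:=t_j^{-1}(\Gamma\setminus D_j)$ to be pairwise disjoint.
\item Condition (ii) gives $|\{j\mid x\in D_j\}|\leq\varepsilon\sqrt n$. Equivalently, $|\{j\mid t_j^{-1}x\in G_j\}|\geq n-\varepsilon\sqrt n$ for every $x\in\Gamma$.
\end{itemize}
Now suppose $t_j=s^{r_j}$ and sum the second inequality over $x\in\Phi_R:=\{s^k\mid |k|\leq R\}$. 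Disjointness of the $G_j$ gives $(n-\varepsilon\sqrt n)|\Phi_R|\leq\sum_j|t_j^{-1}\Phi_R\cap G_j|\leq|\Phi_R|+\sum_j|r_j|$. Letting $R\to\infty$ yields $n-\varepsilon\sqrt n\leq 1$, which is false for large $n$. If $s$ has finite order, take $\Phi=\langle s\rangle$ directly.

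The same F{\o}lner argument rules out all $t_j$ lying in any common amenable subgroup. So neither your exact matching of positive and negative petals $W_{\pm k}$ nor an adaptation of Ozawa's argument that relies on the ``shift dynamics of $s$ on $\{W_k\}$'' can give the multiplicity bound. The obstruction is not bookkeeping: a single cyclic subgroup does not supply enough independent ping-pong.

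The missing idea, which is the paper's, is to manufacture $m$ independent ping-pong pairs from a single one. You do this by translating on both sides, using the freedom to enlarge the finite set.
\begin{itemize}
\item Take $e\in F$. Since $\Gamma$ is infinite, choose $p_1,\dots,p_m,q_1,\dots,q_m$ such that the sets $Fp_i$, $Fq_i$ are pairwise disjoint.
\item Apply the cyclic Powers property to $E:=\bigcup_i(Fp_i\cup Fq_i)$ with exponent bound $1$ only.
\item Set $C_i=D_i:=p_iA$ and $t_i:=p_is^{-1}q_i^{-1}$. Then $t_i^{-1}(\Gamma\setminus D_i)=q_is(\Gamma\setminus A)\subseteq q_iA$.
\end{itemize}
All the sets $aC_i\subseteq ap_iA$ and $bt_j^{-1}(\Gamma\setminus D_j)\subseteq bq_jA$ lie in distinct members of the disjoint family $\{gA\}_{g\in E}$. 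Since $e\in F$, the same holds for the sets $D_i=p_iA$ and $t_j^{-1}(\Gamma\setminus C_j)\subseteq q_jA$. Hence the multiplicity in (ii) is at most $1\leq\varepsilon\sqrt m$ for all $m\geq\varepsilon^{-2}$.

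The $t_i$ are translates of $s^{-1}$ by distinct elements on both sides, which is exactly what evades the amenability obstruction above. Your petals, the $s^{\pm1}$-symmetry, and higher powers of $s$ are not needed for (i).
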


\begin{proof}
    We start by proving (i).
    Fix a finite subset $F\subseteq\Gamma$ and $\varepsilon>0$. Without loss of generality, we may assume that $e\in F$ for if this is not already the case we can replace $F$ with $F\cup\{e\}$.
    Let $m$ be an arbitrary positive integer that is larger than $\varepsilon^{-2}$.
    Since $\Gamma$ is necessarily infinite, we may find elements $p_1,\dots,p_m$ and $q_1,\dots,q_m$ in $\Gamma$ such that $\{Fp_i\}\cup\{Fq_i\}$ are pairwise disjoint.
    Set $E=\bigcup_{i=1}^m (Fp_i\cup Fq_i)$, and apply the cyclic Powers property to the pair $(E,n=1)$, which yields a subset $A\subseteq \Gamma$ and an element $s\in\Gamma$ such that $\{gA\}_{g\in E}$ are pairwise disjoint, and $sA^c\subseteq A$.
    Now, set 
    \[
    C_i=D_i=p_iA,\quad t_i=p_is^{-1}q_i^{-1}
    \]
    for all $i=1,\dots,m$.
    Then one has that, for each $i$,
    \[
    t_i^{-1}(C_i^c)=t_i^{-1}(D_i^c)=q_isp_i^{-1}(p_iA^c)\subseteq q_iA.
    \]
    It follows that $aC_i\subseteq ap_iA$ and is disjoint from $bt_j^{-1}(D_j^c)\subseteq bq_jA$ for all $a,b\in F$ and $i,j=1,\dots,m$.
    Also, $D_i=p_iA$ and $t_j^{-1}(C_j^c)\subseteq q_jA$ are  disjoint for all $i,j=1,\dots,m$, which implies that 
    \[
    \sup_{x\in\Gamma}|\{i \mid x\in D_i\} \cup \{j \mid x\in t_j^{-1}(C_j^c)\}| \leq 1\leq\varepsilon m^{1/2}.
    \]
    Hence, $\Gamma$ has the property $\mathrm{P}_{\mathrm{PHP}}$.

    Let us now prove (ii). 
    Fix a finite subset $F\subseteq \Gamma\setminus\{e\}$ and $n\in\mathbb N\setminus\{0\}$.
    Define $\widetilde F=F\cup\{e\}$, and use the cyclic Powers property to find $s\in\Gamma$ and $A\subseteq\Gamma$ such that $\{gA\}_{g\in \widetilde F}$ are pairwise disjoint, and $s^kA^c\subseteq A$ and for all $k=1,\dots,n$. Set $C=A$, $D=A^c$, and $s_k=s^k$ for all $k=1,\dots,n$.
    We have that, by definition, $C\cap D=\emptyset$ and $C\cup D=\Gamma$.
    Also, $gC\cap C=\emptyset$ for all $g\in F$ because $e\in\widetilde F$.
    Moreover, for any pair of indices $i< j$ we have that $s_iD\cap s_jD=\emptyset$ because 
    \[
    s_iD\cap s_jD=s^iD\cap s^jD=s^i(D\cap s^{j-i}D)=\emptyset,
    \]
    and the case $i>j$ follows from the same argument.
\end{proof}

The following theorem, which is interesting in its own right, has two main applications.
First, it implies that groups with the cyclic Powers property are inner \Cs-selfless; see Corollary~\ref{cor:cP-to-innerselfless}. 
Second, groups with the cyclic Powers property give rise to completely selfless reduced crossed products, which are often tracial, and thus non-nuclear; see Corollary~\ref{cor:uniquely-ergodic-crossed-selfless} and the following remark.

\begin{theorem}\label{thm:cP-crossed-product}
    Let $\Gamma$ be a non-trivial discrete group with the cyclic Powers property.
    Let $\alpha\colon \Gamma\curvearrowright A$ be an action on a unital
    \Cs-algebra, $\rho$ be a state on $A$ such that $\rho \circ \mathbb E_A$ is GNS-faithful, and suppose that there exists a net of unitaries $(v_i)_{i \in I} \subseteq \Csr(\Gamma)$ such that
    \[
        \|\mathbb E_A(v_i^*xv_i)-\rho(\mathbb E_A(x))1_A\|\longrightarrow0
    \]
    for all $x\in A\rtimes_{\alpha,\mathrm r}\Gamma$.
    Then the canonical inclusion $\Csr(\Gamma)\subseteq(A\rtimes_{\alpha,\mathrm r}\Gamma,\rho\circ\mathbb E_A)$ is completely selfless.
\end{theorem}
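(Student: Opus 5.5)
The plan is to follow Ozawa's ultrapower construction for groups with $\mathrm{P}_{\mathrm{PHP}}$, with $C:=C(\mathbb T)$ carrying the Haar state $\kappa$ and generated by a Haar unitary $z$. Write $\varphi:=\rho\circ\mathbb E_A$. I will index an ultrafilter $\mathcal U$ by triples $j=(F,n,i)$, where $F$ is a finite set of group elements, $n\in\mathbb N_{\geq1}$, and $i$ is an index of the net $(v_i)$, and then set
\[
\theta(z):=\bigl(v_{i}\lambda_{s_{j}}v_{i}^{*}\bigr)_{j\to\mathcal U},
\]
where $s_j$ and the set $A_j$ come from the cyclic Powers property applied to $(F,n)$. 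The map $\theta$ is the diagonal embedding on $A\rtimes_{\alpha,\mathrm r}\Gamma$. Since $v_i\in\Csr(\Gamma)$, we have $\theta(C)\subseteq\Csr(\Gamma)^{\mathcal U}$, as required. Condition (ii) of the cyclic Powers property forces $s_j^k\neq e$ for $1\leq k\leq n$. Hence $\lambda_{s_j}$ becomes asymptotically Haar, and $\varphi^{\mathcal U}\circ\theta$ is the Haar state on $C$.

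The key steps are as follows. Take a reduced word
\[
x_0\,\theta(z)^{k_1}\,x_1\cdots\theta(z)^{k_m}\,x_m
\]
with $\varphi(x_l)=0$ for the inner letters. Conjugating by $v_i$ rewrites it as
\[
v_i\,y_0\,\lambda_{s^{k_1}}\,y_1\cdots\lambda_{s^{k_m}}\,y_m\,v_i^{*},\qquad y_l:=v_i^{*}x_lv_i .
\]
The hypothesis gives $\|\mathbb E_A(y_l)\|\to0$ in norm. Approximating, I may therefore assume the $y_l$ are finite Fourier sums supported in $F\setminus\{e\}$ for a finite set $F$ that I enlarge to contain the products needed; all $|k_l|$ are at most $n$.

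The cyclic Powers property then supplies a ping-pong configuration on $\ell^2(\Gamma,\mathcal H)$. Let $P$ be the projection onto functions supported on $A_j$. Condition (iii) gives $A\cap F=\emptyset$, and condition (i) says the sets $gA$, $g\in F$, are disjoint. Together these let each $y_l$ move $P^{\perp}$-type vectors into ranges of the disjoint translates $gP$. Condition (ii), $s^k(A^c)\subseteq A$, shows that each power $\lambda_{s^{k}}$ maps $1-P$ into $P$. This is the same combinatorics as in the proof of Lemma~\ref{lem:cyclicPowers-to-Powers}(i), namely the choices $C_i=D_i=p_iA$. From it I would extract two conclusions:
\begin{itemize}
\item the value of $\varphi$ on the word tends to $0$, since $\mathbb E_A$ of a word with letters supported off $e$ in a ping-pong position vanishes, which gives state preservation;
\item a Paschke--Salinas / Ozawa type norm bound for any noncommutative polynomial $X$ in these letters,
\[
\|X\|\leq\|X\|_{(A\rtimes_{\alpha,\mathrm r}\Gamma,\varphi)\star(C,\kappa)}+o(1).
\]
\end{itemize}

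Because all operators involved act on $\ell^2(\Gamma,\mathcal H)$ and the estimates only use the projections $P$ in the $\Gamma$-variable, the same bound holds after tensoring with $\id_R$ on $\ell^2(\Gamma,\mathcal H)\otimes\mathcal K$ for any $R\subseteq\mathcal B(\mathcal K)$. This yields the minimal-tensor-norm continuity required for complete selflessness.

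The norm bound shows that $\theta$, defined on the algebraic free product, extends contractively to the reduced free product. By step 2 it is state-preserving. Since the free product state is GNS-faithful on the reduced free product, $\theta$ is then automatically injective.

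I expect the main obstacle to be the norm upper bound in step 3. It must be shown for arbitrary finite sums of alternating words, not just single words, and with exactly the reduced free product norm rather than up to a constant. I would first try to imitate Ozawa's proof that $\mathrm{P}_{\mathrm{PHP}}$ implies selflessness. There, his free-product norm estimate runs through the $\sqrt n$-averaging over $t_1,\dots,t_n$; here I would instead use the $n$ powers $s,\dots,s^n$ together with the unitaries $v_i$. If this route fails, the fallback is to derive the crossed-product statement directly from Ozawa's $\mathrm{P}_{\mathrm{PHP}}$ theorem, via Lemma~\ref{lem:cyclicPowers-to-Powers}(i), after a conditional expectation reduction to coefficients in $A$. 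The cost of that fallback is having to check that the resulting free unitary can be taken of the form $v\lambda_s v^{*}$.
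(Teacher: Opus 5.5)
\textbf{There is a genuine gap: the step that actually proves the theorem is missing.} Your free Haar unitary $\theta(z)=(v_i\lambda_{s}v_i^*)$ is the right one; the paper uses $z_{\mathcal F,k}=v\lambda_sv^*$. You also correctly read off from condition (ii) that $(1-P)\lambda_{s^j}(1-P)=0$ for $1\le|j|\le n$.

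But the proposal stops where the theorem lives. You never show that the algebraic map extends to the reduced free product $(A\rtimes_{\alpha,\mathrm r}\Gamma,\varphi)\star(C(\mathbb T),\kappa)$, nor that it stays min-continuous after tensoring with $R$. You list this as the main obstacle and offer two untested routes:
\begin{itemize}
\item Estimates on single reduced words cannot give the exact reduced free product norm of arbitrary sums.
\item Ozawa's $\mathrm{P}_{\mathrm{PHP}}$ argument runs through $\sqrt n$-averaging over many unrelated $t_j$ and the trace on $\Csr(\Gamma)$.
\item The fallback via Lemma~\ref{lem:cyclicPowers-to-Powers}(i) gives no control over the non-tracial state $\varphi=\rho\circ\mathbb E_A$ on the crossed product, nor over the form of the free unitary.
\end{itemize}

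The missing idea is a \emph{projection criterion}. At each finite stage the paper produces a projection $p=vP_Sv^*$ with
\begin{itemize}
\item $\|pxp-\varphi(x)p\|<3/k$,
\item $(1-p)z^j(1-p)=0$ for $1\le|j|\le k$,
\item $\|px\Omega\|<1/k$.
\end{itemize}
In the ultrapower these become exact identities: $p\Delta(x)p=\varphi(x)p$, $p^\perp\theta(y)p^\perp=\kappa(y)p^\perp$, and $p\Delta(x)(\Omega\otimes\eta)^{\mathcal U}=0$. Moreover $p$ commutes with the diagonal copy of $\mathcal B(\ell^2)$. Then \cite[Theorem~C]{KP26} gives the embedding, the state identity and complete selflessness at once. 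Your remark about tensoring with $\id_R$ is the right intuition; it corresponds to this commutation. However, it has nothing to act on until such a norm mechanism exists.

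\textbf{Your state-preservation step also has a concrete flaw.} After conjugating by $v_i$ you argue with $\mathbb E_A$ of the inner word. But $\varphi$ need not be tracial on $A\rtimes_{\alpha,\mathrm r}\Gamma$, so $\varphi(v_iWv_i^*)=\langle Wv_i^*\Omega,v_i^*\Omega\rangle$ differs from $\varphi(W)$ in general. Nor can the hypothesis on $(v_i)$ be applied, since $W$ itself depends on $i$. What is needed is that $P$ be nearly orthogonal to the vectors $v_i^*x\Omega$.

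The paper secures this as follows. After $v$ and the Fourier approximants $y_x$ of $v^*xv$ are fixed, it chooses a finite set $K\ni e$ with $\|(1-P_K)v^*x\Omega\|<1/k$. It then applies the cyclic Powers property to $F\cup K$:
\begin{itemize}
\item Condition (iii) gives $S\cap K=\emptyset$, hence $\|P_Sv^*x\Omega\|<1/k$.
\item Condition (i), with $e$ in the set, gives $gS\cap S=\emptyset$ for $g\in F$, hence $P_Sy_xP_S=\mathbb E_A(y_x)P_S$.
\end{itemize}
Your reading of (i) and (iii), that $y_l$ moves $P^\perp$-vectors into ranges of translates of $P$, inverts this. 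It is vectors in the range of $P=P_S$ that $y_l-\mathbb E_A(y_l)$ moves into the ranges of the projections $P_{gS}$, $g\in F\setminus\{e\}$, all of which are orthogonal to $P_S$.

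Finally, $F$ and $K$ depend on $v_i$, so they must be chosen after $v_i$. Indexing by independent triples $(F,n,i)$ does not ensure the needed containments along $\mathcal U$. This is why the paper indexes by $(\mathcal F,k)$ and makes all choices successively.
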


\begin{proof}
    Let $(\pi_\rho,\mathcal H_\rho,\xi_\rho)$ be the GNS-representation of $(A,\rho)$. Consider the faithful regular representation of $A\rtimes_{\alpha,\mathrm r}\Gamma$ on $\mathcal H :=\mathcal H_\rho\otimes\ell^2(\Gamma)$ induced by
    \[
        \pi(a)(\eta\otimes\delta_h)=\pi_\rho(\alpha_{h^{-1}}(a))\eta\otimes\delta_h, \qquad \lambda_g(\eta\otimes\delta_h)=\eta\otimes\delta_{gh}
    \]
    for $a\in A$, $g,h\in \Gamma$, and $\eta\in\mathcal H_\rho$. Then $\rho\circ\mathbb E_A$ corresponds to the vector state associated with $\Omega=\xi_\rho\otimes\delta_e$, and $\Omega$ is cyclic. 
    Hence, by the uniqueness of the GNS-construction we have that this representation is unitarily equivalent to the GNS-representation of $\rho \circ \mathbb E_A$. Since the latter is assumed to be faithful, we have that $A \rtimes_{\alpha,\mathrm r} \Gamma$ is faithfully represented on $\mathcal H_\rho\otimes\ell^2(\Gamma)$. In what follows, we identify $A\rtimes_{\alpha,\mathrm r}\Gamma$ with its image in $\mathcal B(\mathcal H)$ under $(\pi,\lambda)$.
    
    For any subset $S\subseteq \Gamma$, let $P_S$ be the projection onto $\mathcal H_\rho\otimes\ell^2(S)\subseteq\mathcal H$. By definition, $A$ commutes with each $P_S$, and whenever $gS\cap S=\emptyset$, we have
    \begin{equation}
        P_S(a\lambda_g)P_S=0 \label{eq:annihilation}
    \end{equation}
    for all $a\in A$.
    Now, fix a finite subset $\mathcal F\subseteq A\rtimes_{\alpha,\mathrm r}\Gamma$ containing $1_A$, and an integer $k\geq1$. By assumption, there exists a unitary $v_{\mathcal F,k}\in \Csr(\Gamma)$ such that
    \[
        \max_{x\in\mathcal F}\|\mathbb E_A(v_{\mathcal F,k}^*xv_{\mathcal F,k})-\rho(\mathbb E_A(x))1_A\|<\frac1k.
    \]
    For every $x\in\mathcal F$, we find a finite subset $F_x\subseteq \Gamma$ and $(a_{x,g})_{g\in F_x}\subseteq A$ such that $y_x:=\sum_{g\in F_x}a_{x,g}\lambda_g$ satisfies 
    \[
       \|v_{\mathcal F,k}^*xv_{\mathcal F,k}-y_x\|<\frac1k.
    \]
    Next, put
    \[
        F_{\mathcal F,k}:= \bigcup_{x\in\mathcal F}(F_x\setminus \{e\}).
    \]
    Then we may find a finite subset $e\in K_{\mathcal F,k}\subseteq \Gamma$ such that
    \[
        \max_{x\in\mathcal F}\|(1-P_{K_{\mathcal F,k}})v_{\mathcal F,k}^*x\Omega\|<\frac1k.
    \]
    Note that such a subset $K_{\mathcal F,k}$ exists because the subspaces of the form $\mathcal H_\rho\otimes\ell^2(K)$, for finite $K\subseteq \Gamma$, have dense union in $\mathcal H$.
    Applying the cyclic Powers property of $\Gamma$ to the finite set $F_{\mathcal F,k}\cup K_{\mathcal F,k}$, and the positive integer $k$, we obtain $S_{\mathcal F,k}\subseteq \Gamma$ and $s_{\mathcal F,k}\in\Gamma$ such that, in particular, 
    \[
        gS_{\mathcal F,k}\cap S_{\mathcal F,k}=\emptyset,\quad S_{\mathcal F,k}\cap K_{\mathcal F,k}=\emptyset,\quad s_{\mathcal F,k}^jS_{\mathcal F,k}^c\subseteq S_{\mathcal F,k}
    \]
    for all $g\in F_{\mathcal F,k}$ and $j=1,\dots,k$.
    To each pair $(\mathcal F,k)$, we associate $(S_{\mathcal F,k},s_{\mathcal F,k})$ as above. To simplify the notation, in the following we stop retaining the label $(\mathcal F,k)$ as follows: $F:=F_{\mathcal F,k}$, $K:=K_{\mathcal F,k}$, $S:=S_{\mathcal F,k}$, $s:=s_{\mathcal F,k}$ and $v:=v_{\mathcal F,k}$. Let us define: 
    \[
        p_{\mathcal F,k}=vP_Sv^*\in \mathcal{B}(\mathcal H), \qquad z_{\mathcal F,k}=v\lambda_sv^*\in \Csr(\Gamma).
    \]
    Since we have that $gS\cap S=\emptyset$ for all $g\in F$, we may argue with \eqref{eq:annihilation} that, for any $x\in\mathcal F$,
    \begin{equation}
        P_Sy_xP_S=\mathbb E_A(y_x)P_S. \label{eq:expected}
    \end{equation}
    Since $\mathbb E_A$ is contractive, we also have
    \begin{align*}
        \max_{x\in\mathcal F}\|\mathbb E_A(y_x)-\rho(\mathbb E_A(x))1_A\|\leq & \max_{x\in\mathcal F}\|y_x-v^*xv\|\\
        &+\max_{x\in\mathcal F}\|\mathbb E_A(v^*xv)-\rho(\mathbb E_A(x))1_A\|<\frac2k.
    \end{align*}
    Consequently, combining the previous inequality with \eqref{eq:expected}, we have that  
    \begin{align*}
        \max_{x\in\mathcal F}\|p_{\mathcal F,k}xp_{\mathcal F,k}-\rho(\mathbb E_A(x))p_{\mathcal F,k}\|&=\max_{x\in\mathcal F}\|P_Sv^*xvP_S-\rho(\mathbb E_A(x))P_S\|\\
        &< \frac1k +\max_{x\in\mathcal F}\|P_Sy_xP_S-\rho(\mathbb E_A(x))P_S\|\\
        &=\frac1k +\max_{x\in\mathcal F}\|\mathbb E_A(y_x)P_S-\rho(\mathbb E_A(x))P_S\|\\
        &<\frac3k .
    \end{align*}
    Furthermore, $S\cap K=\emptyset$ gives $P_SP_K=0$, and hence
    \[
        \max_{x\in\mathcal F}\|p_{\mathcal F,k}x\Omega\|=\max_{x\in\mathcal F}\|P_Sv^*x\Omega\|\leq\max_{x\in\mathcal F}\|(1-P_K)v^*x\Omega\|<\frac1k.
    \]
    Note then that the inclusions $s^jS^c\subseteq S$ for $j=1,\dots,k$ yield
    \[
        (1-P_S)\lambda_s^j(1-P_S)=0 
    \]
    for all $j=1,\dots,k$.
    Conjugating by $v$, and taking adjoints, we obtain
    \[
        (1-p_{\mathcal F,k})z_{\mathcal F,k}^j(1-p_{\mathcal F,k})=0 
    \]
    for all $j$ with $1\leq|j|\leq k$.
    Since $e\in K$ and $S\cap K=\emptyset$, the complement of $S$ is non-empty, and it follows that $s^j\neq e$ for $1\leq j\leq k$. Using that the restriction of $\rho \circ \mathbb E_A$ to $\Csr(\Gamma)$ is the canonical tracial state $\tau_\Gamma$, we have 
    \[
        \rho(\mathbb E_A(z_{\mathcal F,k}^j))=\tau_\Gamma(v\lambda_{s^j}v^*)=\tau_\Gamma(\lambda_{s^j})=0 
    \]
    for all $1\leq|j|\leq k$.
    Notice also that $S\neq\emptyset$, by the partition condition for $j=1$, so $p_{\mathcal F,k}\neq0$.

    Consider the index set $I=\{(\mathcal F,k) \mid 1_A\in \mathcal F\subseteq A\rtimes_{\alpha,\mathrm r}\Gamma \text{ finite},\, k\geq 1\}$ ordered by inclusion on the first component, and with the standard order on the second entry. Then choose an ultrafilter $\mathcal U$ over $I$ containing all final segments, and put $z:=(z_{\mathcal F,k})_{(\mathcal F,k)\to\mathcal U}\in \Csr(\Gamma)^{\mathcal U}$.
    The preceding identities show that $(\rho \circ \mathbb E_A)^{\mathcal U}(z^j)=0$ 
    for all $j\in\mathbb Z\setminus\{0\}$.
    {Let $\gamma$ denote the restriction of $(\rho\circ\mathbb E_A)^{\mathcal U}$ to $\Cs(z)$. Since $z$ is a Haar unitary, $\gamma$ is faithful.} Let $\theta\colon \Cs(z)\rightarrow \Csr(\Gamma)^{\mathcal U}$ denote the inclusion map.
    Next, fix a unit vector $\eta\in\ell^2$, and put
    \[
        p:=(p_{\mathcal F,k}\otimes1_{\mathcal B(\ell^2)})_{(\mathcal F,k)\to\mathcal U}\in\mathcal B((\mathcal H\otimes\ell^2)^{\mathcal U}).
    \]
    Let $\Delta\colon A\rtimes_{\alpha,\mathrm r}\Gamma\to (A\rtimes_{\alpha,\mathrm r}\Gamma)^{\mathcal U}$ denote the diagonal embedding, and identify $(A\rtimes_{\alpha,\mathrm r}\Gamma)^{\mathcal U}$ with its image inside $\mathcal B((\mathcal H\otimes\ell^2)^{\mathcal U})$ via
    \[
        (x_{\mathcal F,k})_{(\mathcal F,k)\to\mathcal U} \mapsto (x_{\mathcal F,k}\otimes1_{\mathcal B(\ell^2)})_{(\mathcal F,k)\to\mathcal U}
    \]
    for all elements $(x_{\mathcal F,k})_{(\mathcal F,k)\to\mathcal U} \in (A\rtimes_{\alpha,\mathrm r}\Gamma)^{\mathcal U}$.
    For every $d\in\mathcal B(\ell^2)$, each projection $p_{\mathcal F,k}\otimes1_{\mathcal B(\ell^2)}$ commutes with $1_{\mathcal B(\mathcal H)}\otimes d$. Hence $p$ commutes with the diagonal copy of $\mathcal B(\ell^2)$ in $\mathcal B((\mathcal H\otimes\ell^2)^{\mathcal U})$.
    Since tensoring an operator with $1_{\mathcal B(\ell^2)}$ preserves its norm, the compression estimates from before yield
    \[
        p\,\Delta(x)\,p=\rho(\mathbb E_A(x))p, \quad p^\perp\,\theta(y)\,p^\perp={\gamma(y)p^\perp},\quad p\,\Delta(x)(\Omega\otimes\eta)^{\mathcal U}=0,
    \]
    for all $x\in A\rtimes_{\alpha,\mathrm r}\Gamma$, and $y\in\Cs(z)$, where $(\Omega\otimes\eta)^{\mathcal U}$ denotes the vector in $(\mathcal H\otimes\ell^2)^{\mathcal U}$ induced by the family constantly equal to $\Omega\otimes\eta$. {The second identity follows first for powers of $z$ and then by norm density.}
    Thus, we may apply \cite[Theorem C]{KP26}, and conclude that the inclusion $\Csr(\Gamma)\subseteq(A\rtimes_{\alpha,\mathrm r}\Gamma,\rho\circ \mathbb E_A)$ is completely selfless.
\end{proof}

The following is the first main application of the preceding theorem: groups with the cyclic Powers property are inner \Cs-selfless.

\begin{corollary}\label{cor:cP-to-innerselfless}
    Let $\Gamma$ be a non-trivial discrete group with the cyclic Powers property.
    Then $\Gamma$ is completely \Cs-selfless and inner \Cs-selfless.
\end{corollary}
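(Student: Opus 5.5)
We will deduce both claims from Theorem~\ref{thm:cP-crossed-product}, applied to crossed products for which the averaging hypothesis is trivially met.

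\emph{Complete \Cs-selflessness.} Take $A=\mathbb C$ with the trivial action, $\rho=\id_{\mathbb C}$ and $v_i:=1$ for all $i$. Then $A\rtimes_{\alpha,\mathrm r}\Gamma=\Csr(\Gamma)$, $\mathbb E_A=\tau_\Gamma$, and $\rho\circ\mathbb E_A=\tau_\Gamma$ is GNS-faithful. The hypothesis $\|\mathbb E_A(v_i^*xv_i)-\rho(\mathbb E_A(x))1\|\to0$ holds exactly, since both terms equal $\tau_\Gamma(x)$. Theorem~\ref{thm:cP-crossed-product} then shows that the inclusion $\Csr(\Gamma)\subseteq(\Csr(\Gamma),\tau_\Gamma)$ is completely selfless, i.e.\ $\Gamma$ is completely \Cs-selfless.

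\emph{Inner \Cs-selflessness.} We do not use only the abstract output of \cite[Theorem~C]{KP26}. Instead we inspect the construction in the proof of Theorem~\ref{thm:cP-crossed-product} in this special case. There $v=1$, so $z_{\mathcal F,k}=\lambda_{s_{\mathcal F,k}}$ is a group unitary, and $z=(\lambda_{s_i})_{i\to\mathcal U}$ is a Haar unitary. The free complement $(C,\kappa)$ is $(\Cs(z),\gamma)\cong(\Csr(\mathbb Z),\tau_{\mathbb Z})$, because a Haar unitary with faithful state generates a copy of $\Csr(\mathbb Z)$ with its trace. The embedding produced by \cite[Theorem~C]{KP26} is a map $\Theta\colon(\Csr(\Gamma),\tau_\Gamma)\star(\Cs(z),\gamma)\hookrightarrow\Csr(\Gamma)^{\mathcal U}$ with $\Theta|_{\Csr(\Gamma)}=\Delta$. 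Its restriction to $\Cs(z)$ is the given inclusion $\theta$, so that $\Theta(\lambda_h)=z$. We expect this extension property to hold because Theorem~C is formulated for a given embedding $\theta$ of $C$ together with the projection $p$. If in doubt, we would argue directly: the compression identities force $\Delta$ and $z$ to be free with respect to $\tau_\Gamma^{\mathcal U}$. Concretely, alternating centred words have vanishing trace, by the Paschke--Salinas type argument combined with $p\Delta(x)(\Omega\otimes\eta)^{\mathcal U}=0$.

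With this in hand, the free-product identification $(\Csr(\Gamma),\tau_\Gamma)\star(\Csr(\mathbb Z),\tau_{\mathbb Z})\cong\Csr(\Gamma\star\mathbb Z)$ turns $\Theta$ into a unital embedding $\Csr(\Gamma\star\mathbb Z)\hookrightarrow\Csr(\Gamma)^{\mathcal U}$. This embedding is diagonal on $\Csr(\Gamma)$ and sends $\lambda_h$ to $(\lambda_{s_i})_{i\to\mathcal U}$, which is precisely Definition~\ref{InnerDefinition}. Non-triviality of $\Gamma$ is assumed.

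The main obstacle is justifying that the $\ast$-homomorphism obtained from freeness is defined on the \emph{reduced} free product and is injective. Injectivity is automatic once the map is defined, since $\Csr(\Gamma\star\mathbb Z)$ is simple. For well-definedness we use that the free product state $\tau_\Gamma\star\tau_{\mathbb Z}$ is faithful and that $\tau_\Gamma^{\mathcal U}$ restricted to the generated algebra agrees with it. The GNS representation of the generated \Cs-algebra with respect to this state is then a quotient of the reduced free product. This is exactly the content secured by \cite[Theorem~C]{KP26}, so we will cite it, noting that $\theta(C)$ consists of group unitaries.
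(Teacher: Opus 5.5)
Your proposal is correct and is essentially the paper's own argument. For complete selflessness, apply Theorem~\ref{thm:cP-crossed-product} to the trivial action on $\mathbb C$ with $v_i=1$; for inner selflessness, note that then $z_{\mathcal F,k}=\lambda_{s_{\mathcal F,k}}$, so the free Haar unitary fed into \cite[Theorem~C]{KP26} is a family of group unitaries, which gives the embedding required in Definition~\ref{InnerDefinition}.

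One caveat concerns your suggested fallback, freeness of $\Delta(\Csr(\Gamma))$ and $z$ with respect to $\tau_\Gamma^{\mathcal U}$. On its own this only yields a surjection from the generated algebra onto $\Csr(\Gamma\star\mathbb Z)$, because $\tau_\Gamma^{\mathcal U}$ need not be faithful there. The norm control therefore genuinely comes from Theorem~C, as you end up acknowledging.
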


\begin{proof}
    {Consider the trivial action $\Gamma\curvearrowright\mathbb C$. Then the GNS-faithfulness condition and the unitaries assumption of Theorem~\ref{thm:cP-crossed-product} are automatically satisfied with $v_i=1$. Hence $\Csr(\Gamma)$ is completely selfless. Choosing $v_{\mathcal F,k}=1$ throughout the construction gives $z_{\mathcal F,k}=\lambda_{s_{\mathcal F,k}}$, so the additional Haar unitary is represented by group unitaries. Thus $\Gamma$ is also inner \Cs-selfless.}
\end{proof}

To show the second main application of Theorem~\ref{thm:cP-crossed-product}, we need a few intermediate steps.

\begin{lemma}\label{lem:uniquely-ergodic-trick}
    Let $\alpha\colon \Gamma\curvearrowright A$ be an action of a discrete group
    on a unital \Cs-algebra, and let $t\in \Gamma$ be an element of infinite order.
    Assume that $\rho$ is a state on $A$ such that
    \[
        \lim_{N\to\infty}\left\|\frac1N\sum_{k=0}^{N-1}\alpha_{t}^{-k}(a)
        -\rho(a)1_A\right\|=0
    \]
    for all $a\in A$.
    Then there exists a sequence of unitaries $(v_n)_{n \in \mathbb{N}}\subseteq \Cs(\lambda_t)\subseteq \Csr(\Gamma)$ such that
    \[
        \lim_{n\to\infty}\|\mathbb E_A(v_n^*xv_n)-\rho(\mathbb E_A(x))1_A\|=0
    \]
    for all $x\in A\rtimes_{\alpha,\mathrm r}\Gamma$.
\end{lemma}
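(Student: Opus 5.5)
The plan is to take $v_n:=f_n(\lambda_t)$, where $f_n\in C(\mathbb T)$ is unimodular with absolutely summable Fourier coefficients $c^{(n)}_k:=\widehat{f_n}(k)$, and to compute $\mathbb E_A(v_n^*xv_n)$ explicitly. Since $t$ has infinite order, $\lambda_t$ is a Haar unitary, so $\Cs(\lambda_t)\cong C(\mathbb T)$. Hence $v_n$ is a unitary in $\Cs(\lambda_t)$ and $v_n=\sum_k c_k\lambda_{t^k}$ with norm-convergent sum. Two preliminary remarks: the hypothesis forces $\rho\circ\alpha_t=\rho$ (apply $\rho$ to the averages of $\alpha_t^{-1}(a)-a$, which telescope). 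Moreover, $x\mapsto\mathbb E_A(v^*xv)$ is contractive, and so is $x\mapsto\rho(\mathbb E_A(x))1_A$. It therefore suffices to treat $x=a\lambda_g$ with $a\in A$, $g\in\Gamma$, and then pass to finite sums and closures.

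For $x=a\lambda_g$ we have
\[
v^*a\lambda_g v=\sum_{k,l}\overline{c_k}c_l\,\alpha_{t^{-k}}(a)\lambda_{t^{-k}gt^{l}} .
\]
If $g\notin\langle t\rangle$, no term survives $\mathbb E_A$, so $\mathbb E_A(v^*xv)=0=\rho(\mathbb E_A(x))1_A$. If $g=t^m$, we obtain
\[
\mathbb E_A(v^*a\lambda_{t^m}v)=\sum_l w^{(m)}_l\,\alpha_{t^{-(l+m)}}(a),\qquad w^{(m)}_l:=\overline{c_{l+m}}c_l .
\]
Since $|f|=1$, we have $\sum_l w^{(m)}_l=\widehat{|f|^2}(\cdot)=\delta_{m,0}$. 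Writing $\alpha_{t^{-j}}(a)=\rho(a)1_A+b_j$ with $b_j:=\alpha_{t^{-j}}(a)-\rho(a)1_A$, the main term equals exactly $\delta_{m,0}\rho(a)1_A=\rho(\mathbb E_A(a\lambda_{t^m}))1_A$. It remains to show that $\sum_l w^{(m)}_l b_{l+m}\to0$ for each fixed $a$ and $m$.

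For this remainder I would use Abel summation. Set $S_L:=\sum_{j<L}b_j$. By hypothesis $\|S_L\|\le C_a+\varepsilon L$ for every $\varepsilon>0$, where the constant $C_a$ depends on $\varepsilon$; equivalently $\|S_L\|=o(L)$. Suppose the weights $w^{(m)}$ are essentially supported in a window of length $M_n$ and have total variation $O(1/M_n)$ (tails being controlled by $\ell^1$). Then
\[
\Bigl\|\sum_l w_l b_{l+m}\Bigr\|\lesssim \mathrm{TV}(w)\max_{L\lesssim M_n}\|S_L\|+\text{tail},
\]
and this tends to $0$ as $M_n\to\infty$.

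The main obstacle is therefore the construction of unimodular symbols $f_n$ whose coefficients are spread out and slowly varying, i.e.\ $\sum_k|c_k-c_{k+1}|=O(M_n^{-1/2})$ with $\sup_k|c_k|\to0$. This would make every autocorrelation sequence $w^{(m)}$ have total variation $O(1/M_n)$. A natural first attempt is a chirp: $f_n(e^{i\theta})=e^{iM_n\psi(\theta)}$ for a fixed smooth $\psi$ with nonvanishing $\psi''$ and $\psi'$ ranging over an interval. Stationary phase then shows that $c_k$ has modulus $\approx M_n^{-1/2}|\psi''|^{-1/2}$ at the frequency $k\approx M_n\psi'(\theta)$, varies smoothly in $k$, and decays rapidly outside $M_n\psi'(\mathbb T)$. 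If this is messy, a fallback is to construct a near-unitary with flat, slowly varying coefficients (for instance a discretised chirp $\sum_{k<N}N^{-1/2}e^{\pi i k^2/N}\lambda_{t^k}$, suitably windowed) and then correct it to a unitary by polar decomposition inside $\Cs(\lambda_t)$. One would then check that this correction changes the expectations only by a small amount.
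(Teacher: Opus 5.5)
\textbf{Gap: the condition you reduce to is impossible.} Your overall scheme matches the paper's: $v_n=f_n(\lambda_t)$ for a unimodular chirp $f_n$, reduction to monomials $a\lambda_g$, vanishing for $g\notin\langle t\rangle$, the formula $\mathbb E_A(v_n^*a\lambda_{t^m}v_n)=\sum_k\overline{c_k}c_{k-m}\alpha_{t^{-k}}(a)$, and Abel summation against $S_L$ for the remainder. The problem is the step you call the main obstacle. No unimodular $f$ can satisfy $\sum_k|c_k-c_{k+1}|=O(M_n^{-1/2})$. Indeed, $(c_{k+1}-c_k)_{k}$ is the Fourier coefficient sequence of $(\bar z-1)f(z)$, and the $\ell^1$-norm of coefficients dominates the supremum norm, so
\[
\sum_{k\in\mathbb Z}|c_{k+1}-c_k|\;\geq\;\sup_{z\in\mathbb T}|(\bar z-1)f(z)|=2 .
\]
Consistently with this, the chirp's coefficients are not slowly varying. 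Stationary phase controls $|c_k|$, but the phase of $c_k$ changes by roughly the stationary point $\theta_k$, which is of order one, from each index to the next. For $f(e^{2\pi is})=e^{\pi iNs^2}$ one has $c_k\approx N^{-1/2}e^{i\pi/4}e^{-\pi ik^2/N}$ for $0\le k<N$, so $\sum_k|c_{k+1}-c_k|$ is of order $\sqrt N$. Your estimate $\mathrm{TV}(w^{(m)})\le2\sup_k|c_k|\sum_k|c_{k+1}-c_k|$ then yields only $O(1)$, which is useless in the Abel summation.

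\textbf{Missing idea: slow variation of the products.} You need slow variation of the autocorrelations $\overline{c_k}c_{k-m}$ themselves, because that is where the quadratic phases cancel. Set $d_{N,k}:=N^{-1/2}e^{i\pi/4}e^{-\pi ik^2/N}1_{\{0,\ldots,N-1\}}(k)$. Then on the common support $\overline{d_{N,k}}d_{N,k-m}=N^{-1}e^{-\pi im^2/N}e^{2\pi ikm/N}$. This is a character of frequency $m/N$ with total variation $O((1+|m|)/N)$, and $\frac1N\sum_{k=0}^{N-1}e^{2\pi ikm/N}=\delta_{m,0}$ for $|m|<N$. With these weights your Abel summation goes through as intended.

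To pass to the genuine unitary, prove $\|c_N-d_N\|_2\to0$, using Fresnel tail bounds on the indices $\varepsilon N\le k\le(1-\varepsilon)N$ together with $\|c_N\|_2=\|d_N\|_2=1$. Then apply Cauchy--Schwarz: $\sum_k|\overline{c_{N,k}}c_{N,k-m}-\overline{d_{N,k}}d_{N,k-m}|\le2\|c_N-d_N\|_2$. This is precisely the paper's argument.

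It also shows that your polar-decomposition fallback must use $L^2$ rather than operator-norm control. For even $N$, the symbol $\sum_{k<N}N^{-1/2}e^{\pi ik^2/N}z^k$ vanishes at $z=e^{\pi i/N}$, because the terms for $k$ and $N-1-k$ cancel. Hence no unitary in $\Cs(\lambda_t)$ is norm-close to it.
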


\begin{proof}
    For each $n\geq1$, put $N:=2n$, and define a continuous function
    $f_n\colon\mathbb T\to\mathbb T$ by $ f_n(e^{2\pi is}):=e^{\pi iNs^2}$ for $s\in[0,1]$.
    Note that the values at the endpoints agree because $N$ is even.
    Using functional calculus, we may define $v_n=f_n(\lambda_t)$, and denote the $k$-th Fourier coefficients of $f_n$ by
    \[
        c_{N,k}:=\int_0^1e^{\pi iNs^2-2\pi iks}\,ds
    \]
    for $k\in\mathbb Z$.
    We first claim that
    \[
        \lim_{N\to\infty}\sum_{k\in\mathbb Z}|c_{N,k}-d_{N,k}|^2=0,
        \quad \text{where}\quad 
        d_{N,k}:=\frac{e^{i\pi/4}}{\sqrt N}
        e^{-\pi ik^2/N}\,1_{\{0,\ldots,N-1\}}(k).
    \]
    Let $c_N:=(c_{N,k})_{k\in\mathbb Z}$ and $d_N:=(d_{N,k})_{k\in\mathbb Z}$, and note that Parseval's identity gives that the norm in $\ell^2(\mathbb Z)$ of $c_N$ is $\|c_N\|_2=1$, while an easy calculation shows that $\|d_N\|_2=1$ as well.
    Now, note that
    \[
        c_{N,k}=e^{-\pi ik^2/N}\int_0^1e^{\pi iN(s-k/N)^2}\,ds=e^{-\pi ik^2/N}
        \int_{-k/N}^{1-k/N}e^{\pi iNs^2}\,ds,
    \]
    and that, as  
    $\int_{\mathbb R}e^{\pi iNs^2}\,ds=e^{i\pi/4}/\sqrt N$, we have $\left| \int_b^\infty e^{\pi iNs^2}\,ds\right| \leq\frac1{\pi Nb}$ for any $b>0$.
    Now, fix $\varepsilon\in(0,\frac12)$. For an index satisfying $\varepsilon N\leq k\leq(1-\varepsilon)N$, both \(k/N\) and \(1-k/N\) are at least \(\varepsilon\). Hence, 
    \[
        \begin{aligned}
        |c_{N,k}-d_{N,k}|
        &\leq
        \left|\int_{-\infty}^{-k/N}e^{\pi iNs^2}\,ds\right|
        +\left|\int_{1-k/N}^{\infty}e^{\pi iNs^2}\,ds\right|\\
        &\leq \frac{1}{\pi N(k/N)}
        +\frac{1}{\pi N(1-k/N)}
        \leq\frac{2}{\pi\varepsilon N}.
        \end{aligned}
    \]
    Split the support of \(d_N\) into
    \[
        I_N:=\{k\in\{0,\ldots,N-1\}\mid \varepsilon N\leq k\leq(1-\varepsilon)N\},
        \qquad
        J_N:=\{0,\ldots,N-1\}\setminus I_N.
    \]
    Their sizes satisfy
    \[
        \lim_{N\to\infty}\frac{|I_N|}{N}=1-2\varepsilon,
        \qquad
        \lim_{N\to\infty}\frac{|J_N|}{N}=2\varepsilon.
    \]
    By writing \(c_{N,k}=d_{N,k}+(c_{N,k}-d_{N,k})\) on \(I_N\), and using that \(|d_{N,k}|=N^{-1/2}\), and the upper bound from before, 
    \[
        \begin{aligned}
        \operatorname{Re}\bigg(\sum_{k\in I_N}c_{N,k}\overline{d_{N,k}}\bigg)
        &\geq\sum_{k\in I_N}|d_{N,k}|^2
        -\sum_{k\in I_N}|c_{N,k}-d_{N,k}|\,|d_{N,k}|\\
        &\geq\frac{|I_N|}{N}
        -\frac{2|I_N|}{\pi\varepsilon N^{3/2}} \geq\frac{|I_N|}{N}-\frac{2}{\pi\varepsilon\sqrt N}.
        \end{aligned}
    \]
    Thus, as $N\to\infty$, the indices in $I_N$ contribute at least \(1-2\varepsilon\).  
    For \(J_N\), we have by the Cauchy--Schwarz inequality that
    \[
        \begin{aligned}
        \left|\sum_{k\in J_N}c_{N,k}\overline{d_{N,k}}\right|
        &\leq
        \left(\sum_{k\in J_N}|c_{N,k}|^2\right)^{1/2}
        \left(\sum_{k\in J_N}|d_{N,k}|^2\right)^{1/2}\\
        &\leq \|c_N\|_2\sqrt{\frac{|J_N|}{N}}
        =\sqrt{\frac{|J_N|}{N}}.
        \end{aligned}
    \]
    The contribution of the real part is therefore bounded below by \(-\sqrt{|J_N|/N}\). Indices outside \(\{0,\ldots,N-1\}\) contribute zero because \(d_{N,k}=0\). 
    Combining these estimates gives
    \[
        \operatorname{Re}\bigg(\sum_{k\in\mathbb Z}c_{N,k}\overline{d_{N,k}}\bigg)
        \geq
        \frac{|I_N|}{N}
        -\frac{2}{\pi\varepsilon\sqrt N}
        -\sqrt{\frac{|J_N|}{N}}.
    \]
    Taking \(N\to\infty\), with \(\varepsilon\) fixed, yields 
    \[
        \liminf_{N\to\infty}\operatorname{Re}\bigg(\sum_{k\in\mathbb Z}c_{N,k}\overline{d_{N,k}}\bigg)
        \geq1-2\varepsilon-\sqrt{2\varepsilon}.
    \]
    Since $\varepsilon\in(0,\frac12)$ was arbitrary, and using Cauchy--Schwarz again, we obtain
    \[
        1\leq \liminf_{N\to\infty} \, \operatorname{Re}\bigg(\sum_{k\in\mathbb Z}c_{N,k}\overline{d_{N,k}}\bigg)
        \leq\liminf_{N\to\infty} \, \|c_N\|_2\|d_N\|_2=1,
    \]
    so that the real part tends to \(1\). Consequently,
    \[
        \sum_{k\in\mathbb Z}|c_{N,k}-d_{N,k}|^2
        =\|c_N\|_2^2+\|d_N\|_2^2
        -2\operatorname{Re}\bigg(\sum_{k\in\mathbb Z}c_{N,k}\overline{d_{N,k}}\bigg)
        \xrightarrow{N\to\infty} 0,
    \]
    as claimed.

    Next, note that for any $a\in A$, ${n\geq1}$, and $l\in\mathbb Z$, 
    \[
        \mathbb E_A(v_n^*a\lambda_t^lv_n)
        =\sum_{k\in\mathbb Z}
        \overline{c_{N,k}}c_{N,k-l}\alpha_{t^{-k}}(a).
    \]
    The identity follows,
    for instance, by first observing that the series on the right is absolutely convergent in norm, and then using the Fej\'er polynomials of $f_n$ and finally passing to the limit.  Next, by the Cauchy--Schwarz inequality,
    \[
        \sum_k\left|
        \overline{c_{N,k}}c_{N,k-l}
        -\overline{d_{N,k}}d_{N,k-l}\right|
        \leq2\|c_N-d_N\|_2\xrightarrow{N\to\infty}0.
    \]
    Therefore,
    \[
        \bigg\|\mathbb E_A(v_n^*a\lambda _t^lv_n)
        -\frac{e^{-\pi il^2/N}}N
        \sum_{k=0}^{N-1}e^{2\pi ikl/N}\alpha_{t^{-k}}(a)
        \bigg\|\xrightarrow{N\to\infty}0.
    \]
    Here the replacement of the overlap of the two supports
    by $\{0,\ldots,N-1\}$ changes the sum by at most
    $|l|\|a\|/N$.

    Write $S_m=\sum_{k=0}^{m-1}(\alpha_{t^{-k}}(a)-\rho(a)1_A)$, and use the assumption to infer that 
    \[
        \left\|\frac1N\sum_{k=0}^{N-1}e^{2\pi ikl/N}(\alpha_{t^{-k}}(a)-\rho(a)1_A)\right\|
        \leq\frac{\|S_N\|}{N}
        +\frac{2\pi|l|}{N^2}\sum_{k=1}^{N-1}\|S_k\|
        \xrightarrow{N\to\infty}0.
    \]
    Since we have that 
    \[
    \frac1N \sum_{k=0}^{N-1}e^{\frac{2\pi ikl}{N}}=\frac1{2n} \sum_{k=0}^{2n-1}e^{\frac{2\pi ikl}{2n}}=
    \begin{cases}
        1 & \text{if }l=0 \\
        \frac{1}{2n}\frac{1-e^{2\pi i l}}{1-e^{\frac{2\pi i l}{2n}}}=0 & \text{if }0< |l|<N=2n
    \end{cases},
    \]
the previous estimates yield
    \begin{align*}
        \|\mathbb E_A(v_n^*a\lambda _t^lv_n) -\rho(a)\delta_{l,0}1_A\|  &\leq   \bigg\|\mathbb E_A(v_n^*a\lambda_t^lv_n)
        -\rho(a)e^{-\frac{\pi il^2}{2n}}
        \left(\frac1{2n} \sum_{k=0}^{2n-1}e^{\frac{2\pi ikl}{2n}}\right)1_A\bigg\| \\
        & \quad +\bigg\|\rho(a)e^{-\frac{\pi il^2}{2n}}
        \left(\frac1{2n} \sum_{k=0}^{2n-1}e^{\frac{2\pi ikl}{2n}}\right)1_A - \rho(a)\delta_{l,0}1_A\bigg\| \\
        &\qquad \xrightarrow{n\to\infty} 0.
    \end{align*}
    If $g\notin\langle t\rangle$, the identity
    $\mathbb E_A(v_n^*a\lambda_gv_n)=0$ holds for every $n$ because polynomial approximations of \(v_n\) produce group terms \(t^{-k}gt^j\), none of which can equal \(e\).
    The conclusion now follows from a density argument.
\end{proof}

\begin{definition}
    An automorphism $\alpha\in \mathrm{Aut}(A)$ of a unital \Cs-algebra is said to be \emph{uniquely ergodic} if there exists exactly one state $\rho$ on $A$ such that $\rho\circ \alpha=\rho$.
\end{definition}

\begin{remark}\label{rem:unique-ergodicity}
    Let $\alpha\in\mathrm{Aut}(A)$ be an automorphism of a unital \Cs-algebra.
    By applying \cite[Theorem 3.2]{MT07} (for the Abelian setting see \cite[Theorem 5.1]{Oxtoby52}, and for a more general version see \cite[Theorem 3.2]{AD09}), and by observing that both $\alpha$ and $\alpha^{-1}$ have the same invariant states, we may conclude that $\alpha$ is uniquely ergodic if and only if there exists a state $\rho$ such that  
    \[
        \lim_{N\to\infty}\bigg\|\frac1N \sum_{k=0}^{N-1}\alpha^{-k}(a)-\rho(a)1_A\bigg\|=0\qquad \text{for all }a\in A.
    \]
\end{remark}

We are now ready to show that, as an application of Theorem~\ref{thm:cP-crossed-product}, groups with the cyclic Powers property give rise to completely selfless crossed product inclusions under an ergodicity assumption.

\begin{corollary}\label{cor:uniquely-ergodic-crossed-selfless}
    Let $\Gamma$ be a non-trivial group with the cyclic Powers property, and let $\alpha\colon \Gamma\curvearrowright A$ be an action
    on a unital \Cs-algebra. Suppose that there exists an 
    element $t\in \Gamma$ of infinite-order such that $\alpha_t$ is uniquely ergodic with unique invariant state $\rho$, and that $\rho \circ \mathbb E_A$ is GNS-faithful. Then the inclusion
    \[
    \Csr(\Gamma)\subseteq
       \bigl(A\rtimes_{\alpha,\mathrm r}\Gamma,\rho\circ \mathbb E_A\bigr)
    \]
    is completely selfless.
\end{corollary}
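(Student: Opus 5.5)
The plan is to reduce the corollary to Theorem~\ref{thm:cP-crossed-product}. The only hypothesis that does not appear verbatim in the corollary is the existence of a net of unitaries $(v_i)_{i\in I}\subseteq\Csr(\Gamma)$ with $\|\mathbb E_A(v_i^*xv_i)-\rho(\mathbb E_A(x))1_A\|\to0$ for all $x\in A\rtimes_{\alpha,\mathrm r}\Gamma$. So the proof consists of producing such unitaries from the unique ergodicity of $\alpha_t$.

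First, since $\alpha_t$ is uniquely ergodic with unique invariant state $\rho$, Remark~\ref{rem:unique-ergodicity} gives uniform convergence of the ergodic averages:
\[
\lim_{N\to\infty}\Big\|\frac1N\sum_{k=0}^{N-1}\alpha_t^{-k}(a)-\rho(a)1_A\Big\|=0\qquad\text{for all }a\in A.
\]
This is exactly the hypothesis of Lemma~\ref{lem:uniquely-ergodic-trick}, since $t$ has infinite order by assumption. That lemma then yields a sequence of unitaries $(v_n)_{n\in\mathbb N}\subseteq\Cs(\lambda_t)\subseteq\Csr(\Gamma)$ with $\|\mathbb E_A(v_n^*xv_n)-\rho(\mathbb E_A(x))1_A\|\to0$ for every $x\in A\rtimes_{\alpha,\mathrm r}\Gamma$.

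Together with the assumed GNS-faithfulness of $\rho\circ\mathbb E_A$ and the cyclic Powers property of $\Gamma$, all hypotheses of Theorem~\ref{thm:cP-crossed-product} are now in place. That theorem gives complete selflessness of $\Csr(\Gamma)\subseteq(A\rtimes_{\alpha,\mathrm r}\Gamma,\rho\circ\mathbb E_A)$.

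The only point needing care is the use of Remark~\ref{rem:unique-ergodicity}. The averages there are over $\alpha^{-k}$, which matches the lemma's convention once we take $\alpha=\alpha_t$; moreover, $\alpha_t$ and $\alpha_t^{-1}$ have the same invariant states, so either direction of the convention works. Beyond checking this, there is no real obstacle: the analytic content lives entirely in Lemma~\ref{lem:uniquely-ergodic-trick} and Theorem~\ref{thm:cP-crossed-product}.
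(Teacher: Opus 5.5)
Your proposal is correct and follows essentially the same route as the paper. The paper likewise uses Remark~\ref{rem:unique-ergodicity} to turn unique ergodicity into uniform convergence of the ergodic averages, then invokes Lemma~\ref{lem:uniquely-ergodic-trick} to obtain unitaries in $\Cs(\lambda_t)$, and concludes with Theorem~\ref{thm:cP-crossed-product}.
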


\begin{proof}
    As recalled in Remark~\ref{rem:unique-ergodicity}, unique ergodicity of $\alpha_t$ is equivalent to
    \[
        \lim_{N\to\infty}\left\|\frac1N\sum_{k=0}^{N-1}\alpha_{t}^{-k}(a)
            -\rho(a)1_A\right\|=0
    \]
    for all $a\in A$. Lemma~\ref{lem:uniquely-ergodic-trick} now gives unitaries
    $v_n\in\Cs(\lambda_t)\subseteq \Csr(\Gamma)$ such that
    \[
        \lim_{n\to\infty}\|\mathbb E_A(v_n^*xv_n)-\rho(\mathbb E_A(x))1_A\|=0
    \]
    for all $x\in A\rtimes_{\alpha,\mathrm r}\Gamma$.
    To conclude, apply Theorem~\ref{thm:cP-crossed-product}.
\end{proof}

\begin{remark}
    We remark that the crossed products covered by Corollary~\ref{cor:uniquely-ergodic-crossed-selfless} are necessarily tracial when $\rho$ is a $\Gamma$-invariant trace, and thus they can only be completely selfless with respect to the unique trace induced by $\rho$. In particular, traciality and complete selflessness imply that these crossed products are not nuclear.
    
    Furthermore, note that for an Abelian unital \Cs-algebra $A=C(X)$, the assumptions of Corollary~\ref{cor:uniquely-ergodic-crossed-selfless} can be conveniently rephrased as follows. The homeomorphism $T\in\mathrm{Homeo}(X)$ corresponding to $\alpha_t$ has exactly one invariant probability measure $\mu$, and 
    \[
        \overline{\bigcup_{g\in\Gamma}g\cdot \big(\mathrm{supp} (\mu)\big)} = X.
    \]
    In particular, when the only probability measure $\mu\in\mathrm{Prob}(X)$ invariant under $T$ is of the form $\mu=\delta_x$ for $x\in X$, which is necessarily the unique fixed point of $T$, then the remaining assumption is that the orbit of $x$ is dense in $X$.
\end{remark}


\subsection{Examples of groups with the cyclic Powers property}

We now proceed to provide (classes of) groups that satisfy the cyclic Powers property. Thanks to Corollary~\ref{cor:cP-to-innerselfless}, these groups are also inner \Cs-selfless.
We start with groups that admit a topologically free \emph{extreme boundary action}.

\begin{definition}
    An action $\alpha\colon\Gamma\curvearrowright X$ of a discrete group $\Gamma$ on a compact Hausdorff space $X$ is said to be an \emph{extreme boundary action} if, for every pair of non-empty open subsets $U,V\subseteq X$, there exists $g\in\Gamma$ such that $\alpha_g(X\setminus U)\subseteq V$; see \cite[p.~328]{Gla74}.
    
    Moreover, the action is \emph{topologically free} if the points with trivial stabilizer form a dense subset of $X$.
\end{definition}

\begin{proposition}\label{prop:extremelyproximal-cP}
    Let $\Gamma$ be a discrete group admitting a topologically free extreme boundary action on a compact Hausdorff space $X$ with $|X|>2$.
    Then $\Gamma$ has the cyclic Powers property.
\end{proposition}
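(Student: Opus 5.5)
Given finite $F\subseteq\Gamma$ and $n\geq1$, I would transfer a dynamical ping-pong configuration on $X$ to $\Gamma$ via orbit maps. The plan is to find an open set $U\subseteq X$, a point $x_0\in X$ with trivial stabilizer, and an element $s\in\Gamma$ such that three things hold. First, the translates $gU$, $g\in F$, are pairwise disjoint. Second, $s^k(X\setminus U)\subseteq U$ for $k=1,\dots,n$. Third, $g^{-1}x_0\notin U$ for all $g\in F$. One then sets $A:=\{\gamma\in\Gamma\mid \gamma x_0\in U\}$. Condition (i) of Definition~\ref{def:cyclic-Powers} follows because $\gamma\in gA\cap g'A$ forces $g^{-1}\gamma x_0\in U$ and $g'^{-1}\gamma x_0\in U$, i.e.\ $\gamma x_0\in gU\cap g'U$. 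Condition (ii) holds since $\gamma\notin A$ means $\gamma x_0\notin U$, so $s^k\gamma x_0\in U$. Condition (iii) is exactly the choice $F^{-1}x_0\cap U=\emptyset$ (read with $g\in A$ iff $gx_0\in U$, so we need $Fx_0\cap U=\emptyset$; I adjust accordingly).

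For the dynamics, I would first use topological freeness to pick $y\in X$ with trivial stabilizer. The points $gy$, $g\in F$, are then pairwise distinct, so we can choose a small open neighbourhood $W$ of $y$ with the sets $gW$ ($g\in F$) pairwise disjoint. This is where freeness enters. The aim is $U\subseteq W$ together with an element $s$ whose positive powers $s^1,\dots,s^n$ all map $X\setminus U$ into $U$. Extreme boundary actions are known to contain elements with north–south-like behaviour, but I would avoid relying on that and construct $s$ directly.

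Choose $U'\subseteq W$ nonempty open and a nonempty open $V$ with $\overline V\cap\overline{U'}=\emptyset$; this is possible since $|X|>2$ gives enough room. By the extreme boundary property, pick $s$ with $s(X\setminus V)\subseteq U'$; shrinking further, pick also $V\subseteq X\setminus \overline{U'}$ so that $sU'\subseteq U'$ (automatic, since $U'\subseteq X\setminus V$). Put $U:=U'\cup$ nothing, more precisely aim for $s(X\setminus V)\subseteq U$ and $V\cup U$ small. Then $X\setminus U\supseteq X\setminus(U\cup V)$ is mapped into $U$, and $U$ is $s$-invariant, so iterates stay in $U$. The trouble is points of $V$. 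To fix this, define $U$ as the complement of a set $C$ with $s(C)\subseteq U$ and $C$ disjoint from $U$. The cleaner route is to apply the extreme property to get $s$ with $s(X\setminus V)\subseteq U_0$, where $U_0,V$ are disjoint small open sets and $U_0\subseteq W$, and then take $U:=\bigcup_{k=1}^{n}s^k(X\setminus V)$'s complement trick. Since $s(X\setminus V)\subseteq U_0\subseteq X\setminus V$, we get $s^k(X\setminus V)\subseteq U_0$ for all $k\geq1$. So with $A$ defined via $U_0$ we only fail for $\gamma$ with $\gamma x_0\in V$. I would handle this by replacing the roles: take $A:=\{\gamma\mid\gamma x_0\in X\setminus V\}$ paired with $s^{-1}$. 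That again fails disjointness, since $X\setminus V$ is large.

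The real obstacle, then, is to get $s^k(X\setminus U)\subseteq U$ with $U$ small, which forces $s^k$ to also move the small "repelling" region into $U$. Here I would use $|X|>2$ and extreme proximality more strongly. Take three disjoint small open sets $U_0,V_0,V_1$ with $U_0\subseteq W$. Pick $a$ with $a(X\setminus V_0)\subseteq U_0$ and $b$ with $b(X\setminus U_0)\subseteq V_1$. A suitable product, e.g.\ $s=a b$ or a conjugate, should map $X\setminus U_0$ into $U_0$ in one step, by the chain $X\setminus U_0\to V_1\subseteq X\setminus V_0\to U_0$. Then $s(X\setminus U_0)\subseteq U_0$, which is equivalent to $s^{-1}(X\setminus U_0)\subseteq U_0$ failing in general, but for positive powers induction needs $sU_0\subseteq U_0$, which is not automatic. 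So I would instead require, for each $k\le n$, a chain, choosing $U_0\subset U_1\subset\dots$ nested. I expect this step — producing a single $s$ for which all of $s,\dots,s^n$ map $X\setminus U$ into $U$ with $U$ small — to be the crux. My first attempt would be to use $s=ab$ as above, so that $s(X\setminus U_0)\subseteq U_0$, and additionally arrange $aU_0\cup b U_0$ to be handled, e.g.\ choosing $b$ with $b(X\setminus U_0')\subseteq V_1$ for a slightly smaller $U_0'$. Finally, I would choose $x_0$ of trivial stabilizer outside $\bigcup_{g\in F}g^{-1}\overline{U}$, using density of free points and smallness of $U$.
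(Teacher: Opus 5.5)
Your reduction is sound and is the same one the paper uses. Take an open $U\subseteq X$ whose translates $gU$, $g\in F$, are pairwise disjoint, a point $x_0$ with $Fx_0\cap U=\emptyset$, and $s$ with $s^k(X\setminus U)\subseteq U$ for $1\le k\le n$. Then $A=\{\gamma\mid \gamma x_0\in U\}$ satisfies (i)--(iii) exactly as you verify, and $x_0$ need not have trivial stabilizer.

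The gap is the dynamical step, which you leave open, and the route you finally propose does not close it. With $s=ab$ you do get $s(X\setminus U_0)\subseteq U_0$. Since $s$ is a bijection, this is equivalent to $X\setminus U_0\subseteq s(U_0)$, so $s$ necessarily pushes the part $s^{-1}(X\setminus U_0)$ of $U_0$ out of $U_0$. Already $s^2(X\setminus U_0)\subseteq U_0$ would require $s(X\setminus U_0)\cap s^{-1}(X\setminus U_0)=\emptyset$, and nothing in the choice of $a,b$ gives this. The nested-chain idea and the variant with a smaller $U_0'$ are not carried out. As written, the argument does not produce the required pair $(A,s)$.

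The missing observation is that condition (ii) only constrains $\Gamma\setminus A$. So the repelling region never has to be moved into $U$; it can simply be placed inside $U$. Your first attempt was one step from the paper's argument:
\begin{enumerate}
\item choose the two disjoint non-empty open sets $U_+,U_-$ \emph{both} inside $W$;
\item by a single application of the extreme boundary property, take $s$ with $s(X\setminus U_-)\subseteq U_+$;
\item note $sU_+\subseteq U_+$, hence $s^k\bigl(X\setminus(U_-\cup U_+)\bigr)\subseteq U_+$ for every $k\geq1$;
\item put $U:=U_-\cup U_+\subseteq W$.
\end{enumerate}

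Fitting two disjoint non-empty open sets into $W$ requires that $X$ has no isolated points, which you should derive from the hypotheses. Suppose $\{x\}$ were open, and pick $y,z$ with $x,y,z$ distinct (this is where $|X|>2$ is used). Applying the extreme boundary property to the non-empty open sets $X\setminus\{y,z\}$ and $\{x\}$ gives $g$ with $g(\{y,z\})\subseteq\{x\}$, contradicting injectivity.

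Finally, your last step (finding $x_0\notin F^{-1}\overline U$) is not justified as stated, though it is fixable. It is cleaner to do as the paper does: fix $x_0$ first, then pick your free point $y$ inside the open set $X\setminus Fx_0$. This set is non-empty since $X$ is infinite, and free points are dense. Then shrink $W$ so that it also misses $Fx_0$.
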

\begin{proof}
    The space $X$ has no isolated points, otherwise extreme proximality would map a proper two-point closed subset of $X$ into an open singleton, which is impossible.
    In particular, $X$ is infinite.

    Fix a finite set $F\subseteq\Gamma$ and an integer $n\geq1$.
    Enlarging $F$ if necessary, we may assume that $e\in F$.
    Choose any point $x_0\in X$.
    Since $X$ is infinite, $X\setminus (Fx_0)$ is a non-empty open subset of $X$.
    For distinct $g,h\in F$, the set $\{x\in X\mid gx=hx\}$ is closed with empty interior by topological freeness.
    Since there are only finitely many such pairs, we may choose $x_1\in X\setminus (Fx_0)$ such that the points $gx_1$, for $g\in F$, are pairwise distinct.
    Since $F$ is finite and $X$ Hausdorff, there exists an open neighbourhood $W$ of $x_1$ such that
    \[
        W\cap (Fx_0)=\emptyset,\qquad (gW)\cap (hW)=\emptyset
    \]
    for all $g,h\in F$ with $g\neq h$.
    Since $X$ has no isolated points, we may choose non-empty disjoint open subsets $U_-,U_+\subseteq W$.
    By extremality, there exists $s\in\Gamma$ such that
    \[
        s(X\setminus U_-)\subseteq U_+, \qquad s^{-1}(X\setminus U_+)\subseteq U_-.
    \]
    Since $U_+$ and $U_-$ are disjoint, these inclusions imply
    \[
        sU_+\subseteq U_+,\qquad s^{-1}U_-\subseteq U_-.
    \]
    Consequently, for every integer $k\geq1$,
    \[
        s^k(X\setminus(U_-\cup U_+))\subseteq U_+,\qquad s^{-k}(X\setminus(U_-\cup U_+))\subseteq U_-.
    \]
    Define $ A:=\{g\in\Gamma\mid gx_0\in U_-\cup U_+\}$. The sets $\{gA\}_{g\in F}$ are pairwise disjoint, since the sets $\{g(U_-\cup U_+)\}_{g\in F}$ are pairwise disjoint.
    Moreover, $A\cap F=\emptyset$, since $(U_-\cup U_+)\cap (Fx_0)=\emptyset$.
    If $h\in A^c$, then $hx_0\in X\setminus(U_-\cup U_+)$, so the preceding inclusions give
    \[
        s^khx_0\in U_-\cup U_+
    \]
    for all $k\in\mathbb Z\setminus\{0\}$, and thus also
    \[
        s^kA^c\subseteq A
    \]
    for all $k\in\mathbb Z\setminus\{0\}$. In particular, this holds for $1\leq k\leq n$, and proves the cyclic Powers property.
\end{proof}


The following is a refinement of free independence introduced in \cite[Definition 2.2(1)]{Vigdorovich25}.

\begin{definition}
    Let $\Gamma$ be a discrete group, $Q\subseteq\Gamma\setminus\{e\}$ a finite subset, and $n\in\mathbb N\setminus\{0\}$. An element $s\in\Gamma$ is said to be \emph{$n$-freely independent from $Q$} if 
    \[
    q_0s^{k_1}q_1s^{k_2}q_2\dots s^{k_\ell}q_{\ell}\neq e
    \]
    whenever 
    \[
    q_0t^{k_1}q_1t^{k_2}q_2\dots t^{k_\ell}q_{\ell}
    \]
    is a non-trivial reduced word in $\Gamma\star\langle t\rangle=\Gamma\star\mathbb{Z}$, where $\ell\in\mathbb{N}$, $q_{0},q_{\ell}\in Q\cup\{e\}$, $q_{i}\in Q$ for $1\leq i<\ell$, and $0<|k_{i}|\leq n$.

    Following \cite[Definition 2.2(1)]{Vigdorovich25}, if $s$ is $n$-freely independent from $Q$ for all $n>0$, then we say that $s$ is \emph{freely independent from $Q$}.
\end{definition}

The following lemma allows us to verify the cyclic Powers property using $n$-free independence.

\begin{lemma}\label{lem:free-independence-CPproperty}
    Let $\Gamma$ be a non-trivial discrete group, and assume that for every finite subset $Q\subseteq\Gamma\setminus\{e\}$ and $n\in\mathbb N\setminus\{0\}$, there exists $s\in\Gamma$ that is $n$-freely independent from $Q$. Then, $\Gamma$ has the cyclic Powers property.
\end{lemma}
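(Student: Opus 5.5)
The plan is to build the set $A$ of Definition~\ref{def:cyclic-Powers} by a ping-pong construction. The element $s$ will come from the hypothesis, applied with a finite set $Q$ and an exponent bound $M$ that are much larger than $F$ and $n$. Fix $F$ and $n$, and enlarge $F$ to contain $e$; this is harmless, since conditions (i) and (iii) only become stronger. Put $Q:=(F\cup F^{-1}F)\setminus\{e\}$, which is non-empty when $\Gamma$ is non-trivial (add an arbitrary non-trivial element if needed), and take $M:=4n$. Let $s$ be $M$-freely independent from $Q$.

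Since $e\in F$, condition (i) forces $gA\subseteq A^c$ for $g\in F\setminus\{e\}$, while (ii) forces $s^kA^c\subseteq A$ for $1\le k\le n$. Thus $A$ should consist of the elements that "begin with a non-trivial $s$-syllable", and $A^c$ of those beginning with a letter of $Q$ or with nothing. In a genuine free product $\Gamma\star\langle t\rangle$ this set would be literally the set of reduced words starting with $t^{\pm}$. In $\Gamma$ there is no normal form, so I would define $A$ through the Schreier-type graph $\mathcal G$ on $\Gamma$. Its edges are $g\sim s^kg$ for $0<|k|\le M$ ("$s$-edges") and $g\sim qg$ for $q\in Q$ ("$Q$-edges"). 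Free independence says exactly that every closed walk in $\mathcal G$ alternating between $s$-blocks and $Q$-letters, with exponents bounded by $M$, is trivial. Hence, after collapsing each $\langle s\rangle$-segment of exponent at most $M$, $\mathcal G$ has no reduced cycles and is a forest of "pieces".

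In each connected component, I would choose by the axiom of choice a base vertex (one lying in $F$ when $F$ meets the component). I would then define $A$ as the set of vertices $g$ whose unique reduced path to the base vertex ends with an $s$-block, that is, $g=s^{k}y$ with $0<|k|\le M$ and $y$ closer to the base. The three conditions then become combinatorial checks. Condition (iii) holds because base vertices and $F$ are placed outside $A$, which requires choosing the bases compatibly. Condition (i) holds because left multiplication by $g\in F\setminus\{e\}$ prepends a $Q$-letter or cancels against one; with $Q\supseteq F^{-1}F$ this keeps $gA$ and $hA$ in distinct branches. Condition (ii) holds because prepending $s^k$ with $k\le n<M$ to a vertex whose path starts with a $Q$-letter produces a path starting with an $s$-block.

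The main obstacle is making the tree picture valid, since exponents are bounded by $M$ while repeatedly prepending $s^k$ could accumulate large powers of $s$. To handle this, I would replace the $s$-edges by the coset structure of $\langle s\rangle$: treat each set $\langle s\rangle y$ as one piece, and define the "first syllable" only modulo exponents in $(-M/2,M/2)$. I would then show that $n$ applications of $s^{k}$, with $k\le n$, never leave the window where $M$-independence applies; this is why $M=4n$ is chosen. I expect this bookkeeping, together with the compatible choice of base points in components meeting $F$, to be the delicate part. The rest reduces to reading off reduced words.
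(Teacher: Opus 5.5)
There is a genuine gap: the forest structure on which your definition of $A$ rests does not exist, and $M$-free independence cannot supply it. The hypothesis only says that \emph{strictly alternating} words $q_0s^{k_1}q_1\cdots s^{k_\ell}q_\ell$ are non-trivial, where each $q_i$ is a single letter of $Q$ and $0<|k_i|\le M$.

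It gives no control over closed walks in $\mathcal G$ that use two consecutive $Q$-edges. With $Q\supseteq F^{-1}F$ and distinct $f,g,h\in F$ you always get the triangle $(f^{-1}g)(g^{-1}h)(h^{-1}f)=e$. Relations such as $(qq')s^kq''=e$ with $qq'\notin Q\cup\{e\}$ are not excluded either.

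Nor does it control consecutive $s$-edges whose exponents add up to more than $M$. Nothing even prevents $s$ from having finite order larger than $M$. So after collapsing $\langle s\rangle$-cosets, words $q_1s^{a_1}q_2s^{a_2}\cdots$ with large $a_i$ are uncontrolled.

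Hence the collapsed graph is in general not a forest, and the ``unique reduced path to the base vertex'' is not defined. Neither, therefore, is your set $A$, and checking (i)--(iii) by ``reading off reduced words'' has nothing to stand on. Your exponent windows modulo $(-M/2,M/2)$ do not address the $Q$-cycles at all. The worry about accumulating powers is also misplaced: (ii) only asks that each single $s^k$, $1\le k\le n$, map $\Gamma\setminus A$ into $A$.

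The missing idea is to give up global normal forms and work only with bounded, strictly alternating words, glued together by Zorn's lemma. Take $e\in F$, $Q:=F^{-1}F\setminus\{e\}$, $s$ $2n$-freely independent from $Q$, and set
\[
P:=\{s^j\mid 0<|j|\le n\},\qquad D_0:=\{e\}\cup\bigcup_{\ell\ge1}(QP)^\ell,\qquad A_0:=PD_0.
\]
Independence gives $e\notin A_0$ (after conjugating by some $q\in Q$). It also gives that no non-empty product in $\bigcup_\ell(QP)^\ell$ is trivial (after conjugating by $s^{-j_m}$, which is where the bound $2n$ enters). These yield the purely set-theoretic identities $D_0^{-1}A_0=A_0^{-1}D_0=A_0$ and $QA_0=D_0\setminus\{e\}$.

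Next take a maximal $R\supseteq F$ with $A_0R\cap D_0R=\emptyset$; that $F$ itself satisfies this uses independence once more. This $R$ plays the role of your base points, possibly many per component. Maximality together with the identities forces $A_0R\cup D_0R=\Gamma$. Then $A:=A_0R$ satisfies $F\subseteq D_0R=\Gamma\setminus A$, $s^k(\Gamma\setminus A)\subseteq PD_0R=A$, and $QA\subseteq D_0R$, which gives (i)--(iii).
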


\begin{proof}
    The group $\Gamma$ is infinite. Indeed, if it were finite, then for $q\neq e$ the identity $qs^{|\Gamma|!}q^{-1}=e$ would hold for every $s\in\Gamma$, contradicting $|\Gamma|!$-free independence from $\{q,q^{-1}\}$.
    Pick a finite subset $F\subseteq\Gamma$, and $n\in\mathbb N\setminus\{0\}$. We may assume that $e\in F$ and $|F|\geq2$ without loss of generality.
    For $Q:=F^{-1}F\setminus\{e\}$, we may find an element $s\in\Gamma$ that is $2n$-freely independent from $Q$. Set
    \[
    P:=\{s^j \mid 0<|j|\leq n\}
    \]
    and let $D_0$ denote the set obtained by taking (finite) alternating products of elements in $QP$, i.e.,
    \[
    D_0:=\{e\} \cup \bigcup_{\ell\geq1} (QP)^\ell
    \]
    and put $A_0:=PD_0$. Fix $q\in Q$. Every alternating expression beginning and ending with non-zero powers of $s$, with coefficients in $Q$ and exponents of absolute value at most $2n$, is non-trivial: conjugating it by $q$ gives a word covered by $2n$-free independence. Thus $e\notin A_0$.

    Suppose that a non-empty product $d=q_1s^{j_1}\cdots q_ms^{j_m}$ in $D_0$ equals $e$. Then $s^{-j_m}ds^{j_m}=e$ is an alternating expression of the preceding form, with first exponent $-j_m$ and last exponent $2j_m$, a contradiction. Hence no non-empty product in $\bigcup_{m\geq1}(QP)^m$ equals $e$.
    Moreover, since $Q^{-1}=Q$ and $P^{-1}=P$, we have that 
    \[
    D_0^{-1}A_0=A^{-1}_0D_0=A_0.
    \]
    Note also that 
    \[
    QA_0 = D_0\setminus\{e\},\quad A_0Q=D_0^{-1}\setminus\{e\}.
    \]

    Say that a subset $R\subseteq \Gamma$ is admissible if it satisfies 
    \[
    (A_0R) \cap (D_0R)=\emptyset.
    \]
    We note straightaway that $F$ is admissible. Indeed, assume towards a contradiction that $ag=dh$ for $a\in A_0, d\in D_0$ and $g,h\in F$. Then $h^{-1}d^{-1}ag=e$. Since $d^{-1}a\in D_0^{-1}A_0=A_0$, we can write 
    \[
    d^{-1}a=s^{j_0}q_1s^{j_1}\dots q_ms^{j_m},
    \]
    which is a reduced word with $0<|j_i|\leq n,\, q_i\in Q$. Then,
    \begin{equation}\label{eq:contradiction-identity}
    e=h^{-1}d^{-1}ag=h^{-1}s^{j_0}q_1s^{j_1}\dots q_ms^{j_m}g.
    \end{equation}
    Now, if both $g,h$ are non-trivial, then $h^{-1},g\in Q$, and thus \eqref{eq:contradiction-identity} contradicts $2n$-free independence.
    If both $h=g=e$, then \eqref{eq:contradiction-identity} contradicts $e\notin A_0$.
    If $g\neq e$ and $h=e$, then $d^{-1}ag \in A_0 Q =D_0^{-1}\setminus\{e\}$, a contradiction, and analogously for $g=e$, $h\neq e$.

    We invoke Zorn's Lemma to find a maximal admissible subset $R$ such that $F\subseteq R\subseteq\Gamma$.
    Next, define 
    \[
    A:=A_0R,\quad D:=D_0R.
    \]
    Since the set $R$ is admissible, we have that $A\cap D=\emptyset$.
    We claim that $A\cup D=\Gamma$.
    Suppose, towards a contradiction, that there exists $x\in\Gamma\setminus (A\cup D)$. Maximality of $R$ implies that $R\cup\{x\}$ cannot be admissible. 
    Note that 
    \[
    (A_0x)\cap(D_0x)=\emptyset 
    \]
    because $A_0\cap D_0=\emptyset$: otherwise $a\in A_0\cap D_0$ would give $e=a^{-1}a\in D_0^{-1}A_0=A_0$, a contradiction.
    Hence, we have that one of the following must be true:
    \[
    (A_0x)\cap(D_0R)\neq\emptyset \, \text{ or } \, (D_0x)\cap(A_0R)\neq\emptyset.
    \]
    Assuming that the first holds true, we can write $ax=dr$ for some $a\in A_0$, $d\in D_0$ and $r\in R$. This implies that 
    \[
    x=a^{-1}dr \in A_0^{-1}D_0R = A_0R=A,
    \]
    a contradiction.
    Assume then that the second condition holds true. Then, $dx=ar$ for some $a\in A_0$, $d\in D_0$ and $r\in R$. But this implies that 
    \[
    x=d^{-1}ar \in D_0^{-1}A_0R=A_0R=A,
    \]
    which is again a contradiction.
    This proves the claim that $\Gamma=A\cup D$.

    Now, since we chose $e\in D_0$, we have that $F\subseteq R\subseteq D$, which gives 
    \[
    A\cap F=\emptyset.
    \]
    Next, observe that $PD=PD_0R=A_0R=A$. Therefore, for any $1\leq k\leq n$, 
    \[
    s^k(\Gamma\setminus A)=s^kD\subseteq A.
    \]
    Moreover, since we have $QA=QA_0R\subseteq D_0R=D$, it follows that 
    \[
    A\cap qA=\emptyset
    \]
    for all $q\in Q$.
    When $g,h\in F$ are distinct, we have that $g^{-1}h\in Q$, and thus 
    \[
    gA \cap hA=\emptyset.
    \]
    This implies that $\{gA\}_{g\in F}$ are pairwise disjoint.
    This proves that the pair $(s,A)$ satisfies the cyclic Powers property with respect to $(F,n)$, and thus $\Gamma$ has the cyclic Powers property.
\end{proof}

We now apply this criterion to Zariski-dense linear groups.

For what follows, recall that a \emph{local field} $\mathbb K$ is a field equipped with a \emph{non-trivial} absolute value $|\cdot|\colon\mathbb K \to[0,+\infty)$ that induces a locally compact topology on $\mathbb K$. Non-triviality of $|\cdot |$ here means that there exists $a\in\mathbb K^\times$ with $|a|\neq1$.

\begin{definition}[{see~\cite[Definition 3.7]{JTT25} or \cite{Benoist97}}]
    Let \(d\geq2\), fix a local field $\mathbb K$, and let \(a\in\operatorname{PSL}(d,\mathbb K)\). Choose a lift \(\widetilde a\in\operatorname{SL}(d,\mathbb K)\). List its eigenvalues $(\lambda_i)_{i=1}^d$ in an algebraic closure of \(\mathbb K\), counted with algebraic multiplicity, so that
    \[
        |\lambda_1|\geq|\lambda_2|\geq\cdots\geq|\lambda_d|,
    \]
    using the extension of the absolute value of \(\mathbb K\). Then \(a\) is said to be \emph{biproximal} if both \(\widetilde a\) and \(\widetilde a^{-1}\) are \emph{proximal}, namely 
    \[
        |\lambda_1|>|\lambda_2|
        \qquad\text{and}\qquad
        |\lambda_{d-1}|>|\lambda_d|.
    \]
    This condition is independent of the lift, since multiplying a matrix by a non-zero scalar preserves these inequalities.
\end{definition}

\begin{theorem}\label{thm:Zariskidense-cyclicPowers}
    Let $d\geq2$, and $\mathbb K$ be a local field.
    Then, every Zariski-dense discrete subgroup $\Gamma$ of $\mathrm{PSL}(d,\mathbb K)$ that contains a biproximal element has the cyclic Powers property.
\end{theorem}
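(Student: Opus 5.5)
By Lemma~\ref{lem:free-independence-CPproperty}, it suffices to prove the following. For every finite $Q\subseteq\Gamma\setminus\{e\}$ and every $n\geq1$ we must produce $s\in\Gamma$ that is $n$-freely independent from $Q$. We will in fact produce $s$ freely independent from $Q$, via a ping-pong argument on the projective space $\mathbb P(\mathbb K^d)$ and its dual $\mathbb P((\mathbb K^d)^*)$.

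A biproximal element $s$ has an attracting fixed point $s^+\in\mathbb P(\mathbb K^d)$ and a repelling hyperplane $H_s^-$. Dually, $s^{-1}$ has an attracting point $s^-$ and a repelling hyperplane $H_s^+$. On the compact set $\mathbb P(\mathbb K^d)\setminus \mathcal N_\varepsilon(H_s^-)$, the powers $s^k$ with $k\geq k_0$ contract into a small ball around $s^+$, and symmetrically for negative powers.

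The plan has three steps.

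\emph{Step 1: a generic biproximal element.} We find a biproximal $s\in\Gamma$ whose attracting and repelling data are in general position with respect to $Q$. Concretely, for all $q\in Q$ we require
\[
q s^{\pm}\notin H_s^{+}\cup H_s^{-}\quad\text{and}\quad q s^{\pm}\neq s^{\pm}.
\]
The second condition is what rules out words like $q s^k q^{-1} s^{-k}$ collapsing; it is ensured by asking that $q$ fix neither $s^{+}$ nor $s^{-}$. Starting from the given biproximal $a\in\Gamma$, we consider conjugates $gag^{-1}$ with $g\in\Gamma$; each is biproximal, with data $g a^{\pm}$ and $gH_a^{\pm}$. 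The set of $g\in\mathrm{PSL}(d,\mathbb K)$ violating one of the finitely many conditions (one for each $q\in Q$ and each sign) is a proper Zariski-closed subset. Properness needs an argument: it fails only if some fixed $q\neq e$ satisfies, for instance, $q x\in H$ for all pairs $(x,H)$ in the $\mathrm{PSL}$-orbit of $(a^+,H_a^-)$. That orbit is open in the incidence-free pairs, so this would force $q$ to be scalar, hence $q=e$ in $\mathrm{PSL}$. By Zariski density of $\Gamma$, some $g\in\Gamma$ avoids this closed set. We replace $a$ by $gag^{-1}$.

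\emph{Step 2: ping-pong sets.} We choose small neighbourhoods $U^{\pm}$ of $s^{\pm}$ and $V^{\pm}$ of $H_s^{\pm}$, and set $B:=U^+\cup U^-$. By Step 1 and continuity, for each $q\in Q$ the set $qB$ is disjoint from $B$ and from $V^+\cup V^-$. We then pass to a large power $s^m$, with $m$ chosen once for all relevant exponents. For all $k\neq0$, $s^{mk}$ maps the complement of $V^-$ (for $k>0$) or of $V^+$ (for $k<0$) into $U^+$, respectively $U^-$. The standard ping-pong lemma now says that every reduced word $q_0 s^{mk_1}q_1\cdots s^{mk_\ell}q_\ell$ acts non-trivially. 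One checks this by tracking a point chosen outside $B\cup V^+\cup V^-\cup Q^{-1}(\cdots)$; the end letters $q_0,q_\ell\in Q\cup\{e\}$ are handled by choosing that starting point generically.

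\emph{Step 3: conclusion.} Since $s^m$ is again biproximal, $s^m$ is freely independent from $Q$, and Lemma~\ref{lem:free-independence-CPproperty} gives the cyclic Powers property. Discreteness of $\Gamma$ is not really needed for the ping-pong. It may enter only to guarantee that $\Gamma$ is infinite and non-trivial; Zariski density in $\mathrm{PSL}(d,\mathbb K)$ already gives this.

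The main obstacle is Step 1, specifically the properness of the bad Zariski-closed sets. One must show that no non-trivial $q$ can fix or send into the relevant hyperplanes all translates $g a^{\pm}$, including the interaction between $s^+$ and $s^-$ simultaneously. The first thing to try is to use irreducibility of the $\mathrm{PSL}(d)$-action on $\mathbb P^{d-1}$ and on the flag variety of pairs (point, hyperplane) in general position. A second subtlety is to handle the cases $d=2$ and non-archimedean $\mathbb K$ uniformly, where $H_s^{\pm}$ are points; the same argument should apply verbatim.
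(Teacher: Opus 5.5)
Your proposal is correct and follows essentially the same route as the paper: you conjugate the given biproximal element by a generic $g\in\Gamma$ so that every $q\in Q$ moves both attracting points off both repelling hyperplanes, using proper Zariski-closed bad sets, irreducibility of $\mathrm{PSL}(d,\mathbb K)$ and Zariski density; you then run ping-pong with a large power to get full free independence, and invoke Lemma~\ref{lem:free-independence-CPproperty}. The only differences are cosmetic: the paper proves properness of the bad sets by an explicit matrix construction rather than your transitivity argument on incident and non-incident point--hyperplane pairs (for non-incident pairs no $q$ can satisfy the condition at all, and for incident pairs it forces $q$ to be scalar), and your extra requirement $qs^{\pm}\neq s^{\pm}$ is already implied by $qs^{\pm}\notin H_s^{+}\cup H_s^{-}$, since $s^{\pm}\in H_s^{\pm}$.
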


\begin{proof}
    Let $a\in\Gamma$ be biproximal, and fix a lift $\widetilde a\in\mathrm{SL}({d},\mathbb K)$.
    Enumerate its eigenvalues in an algebraic closure of $\mathbb K$, counted with algebraic multiplicity, so that
    \[
        |\lambda_1|\geq|\lambda_2|\geq\dots\geq|\lambda_{d}|>0.
    \]
    By biproximality, we have
    \[
        |\lambda_1|>|\lambda_2|,\qquad |\lambda_{{d}-1}|>|\lambda_{d}|.
    \]
    These inequalities imply that $\lambda_1$ and $\lambda_{d}$ have algebraic multiplicity one.
    Moreover, since every irreducible polynomial over a local field has all its roots of the same absolute value, the minimal polynomials of $\lambda_1$ and $\lambda_{d}$ over $\mathbb K$ must have degree one, and thus $\lambda_1,\lambda_{d}\in\mathbb K$.
    Choose non-zero eigenvectors $v_+,v_-\in\mathbb K^{d}$ corresponding to $\lambda_1$ and $\lambda_{d}$, respectively, and set
    \[
        V_+:=\operatorname{im}(\widetilde a-\lambda_{d} I),\qquad V_-:=\operatorname{im}(\widetilde a-\lambda_1 I).
    \]
    These are $\widetilde a$-invariant hyperplanes satisfying
    \[
        \mathbb K^{d}=\mathbb K v_+\oplus V_-,\qquad \mathbb K^{d}=\mathbb K v_-\oplus V_+.
    \]
    We introduce the following notation:
    \[
        p_+:=[v_+],\qquad p_-:=[v_-],\qquad H_+:=\mathbb P(V_+),\qquad H_-:=\mathbb P(V_-).
    \]
    The point $p_{+}$ (respectively, $p_-$) is called \emph{attracting point} of $a$ (respectively, of $a^{-1}$), while $H_-$ (respectively, $H_+$) is called \emph{repelling hyperplane} of $a$ (respectively, $a^{-1}$).
    Notice also that $p_+\in H_+$ and $p_-\in H_-$, whereas $p_+\notin H_-$ and $p_-\notin H_+$.
    
    Next, fix a finite subset $Q\subseteq\Gamma\setminus\{e\}$, which we may assume is non-empty by enlarging it if necessary.
    We claim that there exists $g\in\Gamma$ such that
    \[
        hgp_i\notin gH_j
    \]
    for all $h\in Q$ and $i,j\in\{+,-\}$.
    For each triple $(h,i,j)$, consider
    \[
        Z_{h,i,j}:=\{g\in\mathrm{PSL}({d},\mathbb K)\mid hgp_i\in gH_j\}.
    \]
    The incidence variety $\{(p,H)\mid p\in H\}$ is Zariski-closed in $\mathbb P^{d-1}(\mathbb K)\times\mathbb P((\mathbb K^d)^*)$, and the map $g\mapsto(hgp_i,gH_j)$ is algebraic. Consequently, its preimage $Z_{h,i,j}$ is Zariski-closed.
    We show that each $Z_{h,i,j}$ is also proper.
    Fix $h\in Q$ and $i,j\in\{+,-\}$, and let $\widetilde h\in\mathrm{SL}({d},\mathbb K)$ be a lift of $h$.
    Since $h\neq e$, the matrix $\widetilde h$ is not scalar.
    Then, there exists a non-zero vector $v\in\mathbb K^{d}$ such that $v$ and $\widetilde hv$ are linearly independent, otherwise $\widetilde h$ would preserve every line and hence would be scalar.
    Now, choose a hyperplane $W\subseteq\mathbb K^{d}$ as follows, 
    \begin{enumerate}[leftmargin=*,label=\textup{(\roman*)}]
        \item if $p_i\in H_j$, choose $W\subseteq\mathbb K^{d}$ such that $v\in W$ but $\widetilde hv\notin W$,
        \item if $p_i\notin H_j$, start by choosing any non-zero vector $v\in\mathbb K^{d}$. Since $v$ and $\widetilde hv$ are non-zero, the sets
        \[
            \{\varphi\in(\mathbb K^{d})^*\mid\varphi(v)=0\},\qquad \{\varphi\in(\mathbb K^{d})^*\mid\varphi(\widetilde hv)=0\}
        \]
        are proper linear subspaces of $(\mathbb K^{d})^*$.
        Their union cannot cover $(\mathbb K^{d})^*$, so we may choose a linear functional $\varphi$ satisfying $\varphi(v)\neq0$ and $\varphi(\widetilde hv)\neq0$.
        Then $W=\ker(\varphi)$ contains neither $v$ nor $\widetilde hv$.
    \end{enumerate}
    In either case, we have $\mathbb K v_i\subseteq V_j$ if and only if $\mathbb K v\subseteq W$.
    Choosing bases adapted to these pairs, we obtain $B\in\mathrm{GL}({d},\mathbb K)$ satisfying
    \[
        B(\mathbb K v_i)=\mathbb K v,\qquad BV_j=W.
    \]
    To arrange determinant one, choose a basis $w_1,\dots,w_{d}$ adapted to $(\mathbb K v,W)$ with $w_1=v$.
    Let $D$ multiply $w_1$ by $(\det B)^{-1}$ and fix the remaining basis vectors.
    Then $D$ preserves both $\mathbb K v$ and $W$, and $\det D=(\det B)^{-1}$.
    Consequently, $A=DB\in\mathrm{SL}({d},\mathbb K)$ still satisfies
    \[
        A(\mathbb K v_i)=\mathbb K v,\qquad AV_j=W.
    \]
    Denote by $g\in\mathrm{PSL}({d},\mathbb K)$ the image of $A$ under the quotient map. Observe that the equality \(A(\mathbb K v_i)=\mathbb K v\) means that \(Av_i=cv\) for some non-zero scalar \(c\in\mathbb K\). Consequently,
    \[
    hgp_i=[\widetilde hAv_i]=[c\widetilde hv]=[\widetilde hv].
    \]
    Note also that since \(H_j=\mathbb P(V_j)\), $gH_j=\mathbb P(AV_j)=\mathbb P(W)$.
    We conclude that $Z_{h,i,j}$ is proper because the $g$ we constructed satisfies 
    \[
        hgp_i = [\widetilde hv] \not\in \mathbb P(W)=gH_j.
    \]
    
    The space $\mathrm{PSL}({d},\mathbb K)$ is Zariski irreducible, being the image of the Zariski irreducible space $\mathrm{SL}({d},\mathbb K)$ under the quotient map.
    Thus the finite union of the proper Zariski-closed sets $Z_{h,i,j}$ cannot cover it.
    Consequently,
    \[
        Y=\mathrm{PSL}({d},\mathbb K)\setminus\bigcup_{\substack{h\in Q\\i,j\in\{+,-\}}}Z_{h,i,j}
    \]
    is non-empty and Zariski-open.
    By Zariski density, we may choose $g\in\Gamma\cap Y$, i.e., $g\in\Gamma$ satisfies $hgp_i\notin gH_j$ for all $h\in Q$ and $i,j\in\{+,-\}$, thus proving the claim.
    
    Now, set $b=gag^{-1}$, and introduce the notation
    \[
        p_b^\pm=gp_\pm,\qquad H_b^\pm=gH_\pm.
    \]
    By the claim, $hp_b^i\notin H_b^j$ for all $h\in Q$ and $i,j\in\{+,-\}$.
    Then $\{p_b^+,p_b^-\}\subseteq H_b^+\cup H_b^-$, whereas
    \[
        \left(\bigcup_{h\in Q}\{hp_b^+,hp_b^-\}\right)\cap(H_b^+\cup H_b^-)=\emptyset.
    \]
    Since $\mathbb P^{{d}-1}(\mathbb K)$ is compact Hausdorff and $\bigcup_{h\in Q}\{hp_b^+,hp_b^-\}$ is finite, we may choose an open set $V\subseteq\mathbb P^{{d}-1}(\mathbb K)$ such that
    \[
        \bigcup_{h\in Q}\{hp_b^+,hp_b^-\}\subseteq V,\qquad \overline V\cap(H_b^+\cup H_b^-)=\emptyset.
    \]
    Define
    \[
        U=(\mathbb P^{{d}-1}(\mathbb K)\setminus\overline V)\cap\bigcap_{h\in Q}h^{-1}V,
    \]
    and observe that $U$ is open, contains both $p_b^+$ and $p_b^-$, and satisfies
    \[
        U\cap V=\emptyset,\qquad \bigcup_{h\in Q}hU\subseteq V.
    \]
    
    Since $b$ is conjugate to the biproximal element $a$, both $b$ and $b^{-1}$ are proximal.
    Their attracting points are $p_b^+$ and $p_b^-$, and their repelling hyperplanes are $H_b^-$ and $H_b^+$, respectively.
    The set $\overline V$ is compact and disjoint from both repelling hyperplanes, while $U$ is an open neighborhood of both attracting points.
    Applying \cite[Lemma~3.8(i)]{JTT25} to $b$ and $b^{-1}$, we obtain an integer $N\geq1$ such that
    \[
        b^m\overline V\subseteq U,\qquad b^{-m}\overline V\subseteq U
    \]
    for all $m\geq N$.
    Set now $s=b^N\in\Gamma$, and observe that, for every $k\in\mathbb Z\setminus\{0\}$, we have $N|k|\geq N$, and hence
    \[
        s^kV\subseteq U,\qquad hU\subseteq V
    \]
    for all $k\in\mathbb Z\setminus\{0\}$ and $h\in Q$, with $U$ and $V$ non-empty, open, and disjoint.
    By a standard ping-pong argument (see for instance \cite[Lemma 2.4]{Vigdorovich25}),  $s$ is freely independent from $Q$, and thus, in particular, $n$-freely independent for any $n\geq1$.
    Now we invoke Lemma~\ref{lem:free-independence-CPproperty} to infer that $\Gamma$ has the cyclic Powers property.
\end{proof}

In the following corollary, we recapitulate the classes of groups with the cyclic Powers property (and thus inner \Cs-selfless groups) found in this section.

\begin{corollary}\label{cor:examples}
    The following classes of groups have the cyclic Powers property, and therefore are inner \Cs-selfless:
    \begin{enumerate}[leftmargin=*,label=\textup{(\roman*)}]
        \item Discrete groups that admit a  topologically free \emph{extreme boundary action} on a compact Hausdorff space $X$ with $|X|>2$, and 
        \item Zariski-dense discrete subgroups of $\mathrm{PSL}(d,\mathbb K)$ for a local field $\mathbb K$ and $d\geq2$ that contain a biproximal element. In particular, this class includes all Zariski-dense discrete subgroups of $\mathrm{PSL}(n,\mathbb R)$.
    \end{enumerate}
\end{corollary}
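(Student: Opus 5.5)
The corollary is essentially a recapitulation. The plan is to reduce each item to a result already proved and then add Corollary~\ref{cor:cP-to-innerselfless}.

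For (i), Proposition~\ref{prop:extremelyproximal-cP} directly gives the cyclic Powers property for any discrete group with a topologically free extreme boundary action on a compact Hausdorff space $X$ with $|X|>2$. Such a group is non-trivial: an action of the trivial group on $X$ cannot be an extreme boundary action when $|X|>2$, since one cannot map $X\setminus U$ into a small disjoint $V$. Corollary~\ref{cor:cP-to-innerselfless} then yields inner \Cs-selflessness.

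For the main part of (ii), Theorem~\ref{thm:Zariskidense-cyclicPowers} gives the cyclic Powers property directly. Non-triviality holds because a biproximal element is non-scalar, hence non-trivial in $\mathrm{PSL}(d,\mathbb K)$. Corollary~\ref{cor:cP-to-innerselfless} again finishes the argument.

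The only genuine step is the ``in particular'' clause, which requires that every Zariski-dense discrete subgroup $\Gamma\le\mathrm{PSL}(n,\mathbb R)$ contains a biproximal element. For this I would invoke the theorem of Benoist--Labourie/Abels--Margulis--Soifer (see \cite{Benoist97}): a Zariski-dense subsemigroup of a real semisimple group contains loxodromic elements, i.e.\ elements whose eigenvalue moduli are pairwise distinct, $|\lambda_1|>\dots>|\lambda_n|$. Over $\mathbb R$, such an element is in particular biproximal, since $|\lambda_1|>|\lambda_2|$ and $|\lambda_{n-1}|>|\lambda_n|$. Hence Theorem~\ref{thm:Zariskidense-cyclicPowers} applies with $\mathbb K=\mathbb R$.

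The main (mild) obstacle is citing this loxodromic-existence result correctly. A lighter alternative, which only needs proximality for $\Gamma$ and its inverse semigroup, is Goldsheid--Margulis/Prasad: it gives an $\mathbb R$-regular element, whose Jordan projection lies in the open Weyl chamber, and this again implies biproximality in $\mathrm{PSL}(n,\mathbb R)$. I would cite whichever source is cleanest and note that the statement is the standard fact that real Zariski-dense subgroups contain $\mathbb R$-regular elements.
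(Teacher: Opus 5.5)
Your proposal is correct and follows the paper's proof. Item (i) is Proposition~\ref{prop:extremelyproximal-cP}, item (ii) is Theorem~\ref{thm:Zariskidense-cyclicPowers}, inner \Cs-selflessness comes from Corollary~\ref{cor:cP-to-innerselfless}, and the ``in particular'' clause is handled exactly as you suggest, with the paper citing \cite[Th\'eor\`eme~A.1]{BL1993} (Benoist--Labourie) for an $\mathbb R$-regular, hence biproximal, element in every Zariski-dense subgroup of $\mathrm{PSL}(n,\mathbb R)$. Your non-triviality checks are harmless, since non-triviality is already part of the definition of the cyclic Powers property.
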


\begin{proof}
    Statement (i) follows from Proposition~\ref{prop:extremelyproximal-cP} and statement (ii) follows from Theorem~\ref{thm:Zariskidense-cyclicPowers}. The fact that any Zariski-dense subgroup of $\mathrm{PSL}(n,\mathbb R)$ contains a biproximal element follows from \cite[Théorème~A.1]{BL1993}.
\end{proof}

Furthermore, we answer a (far more general version of a) question of Bridson and de la Harpe posed more than twenty years ago in \cite[p.202]{BridsondelaHarpe2004} for $\mathrm{PSL}(n,\mathbb Z)$.
This follows from Corollary~\ref{cor:examples} and Lemma~\ref{lem:cyclicPowers-to-Powers}.

\begin{corollary}
\label{cor:linear-Powers}
    Let $\Gamma$ be a Zariski-dense discrete subgroup of $\mathrm{PSL}(d,\mathbb K)$ for a local field $\mathbb K$ and $d\geq2$ that contains a biproximal element.
    Then $\Gamma$ has the Powers property.
    
    In particular, any Zariski-dense discrete subgroup of $\mathrm{PSL}(n,\mathbb R)$ has the Powers property.
\end{corollary}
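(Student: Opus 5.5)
The plan is to chain together results already established in this section. The statement has two parts. The first part concerns Zariski-dense discrete subgroups $\Gamma\leq\mathrm{PSL}(d,\mathbb K)$ that contain a biproximal element. For these, Theorem~\ref{thm:Zariskidense-cyclicPowers} gives the cyclic Powers property directly. Lemma~\ref{lem:cyclicPowers-to-Powers}(ii) then turns this into the Powers property. For completeness I would recall how that lemma goes. Given $F\subseteq\Gamma\setminus\{e\}$ and $n$, one applies the cyclic Powers property to $F\cup\{e\}$ and obtains $A$ and $s$. One then sets $C=A$, $D=A^c$ and $s_k=s^k$ for $k=1,\dots,n$. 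Disjointness of the translates gives $gC\cap C=\emptyset$ for $g\in F$. For $i<j$, the inclusion $s^{j-i}D\subseteq A$ gives $s^iD\cap s^jD=s^i(D\cap s^{j-i}D)=\emptyset$.

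The second part concerns Zariski-dense discrete subgroups of $\mathrm{PSL}(n,\mathbb R)$. Here the only additional input is that such a group contains a biproximal element. This is \cite[Th\'eor\`eme~A.1]{BL1993}, which is already invoked in Corollary~\ref{cor:examples}(ii): a Zariski-dense subgroup of $\mathrm{SL}(n,\mathbb R)$ contains elements that are simultaneously proximal and whose inverses are proximal. The hypotheses of the first part are then met with $\mathbb K=\mathbb R$, a local field with the usual absolute value, and $d=n$. If $n=1$, the group $\mathrm{PSL}(1,\mathbb R)$ is trivial, so the claim is vacuous or excluded; the statement implicitly takes $n\geq2$.

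Finally, I would record the application to the Bridson--de la Harpe question. For $d\geq3$, $\mathrm{PSL}(d,\mathbb Z)$ is a lattice in $\mathrm{PSL}(d,\mathbb R)$. By the Borel density theorem it is discrete and Zariski-dense, so the second part applies.

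There is no genuine obstacle, since everything is assembled from earlier results. The only point I would check carefully is the logical compatibility of the biproximality notion in \cite{BL1993} with the definition used here, namely strict gaps $|\lambda_1|>|\lambda_2|$ and $|\lambda_{d-1}|>|\lambda_d|$ for a lift in $\mathrm{SL}(d,\mathbb K)$. This causes no trouble: passing between $\mathrm{SL}$ and $\mathrm{PSL}$ only multiplies by scalars, which preserves these inequalities, as noted after the definition of biproximal elements.
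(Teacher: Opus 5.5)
Your proposal is correct and is essentially the paper's own argument: Theorem~\ref{thm:Zariskidense-cyclicPowers} (packaged in Corollary~\ref{cor:examples}, with \cite[Th\'eor\`eme~A.1]{BL1993} supplying a biproximal element in the real case) gives the cyclic Powers property, and Lemma~\ref{lem:cyclicPowers-to-Powers}(ii) upgrades it to the Powers property. One minor nit in your application to $\mathrm{PSL}(d,\mathbb Z)$: Borel density gives only its Zariski density, while its discreteness is immediate from $\mathbb Z$ being discrete in $\mathbb R$.
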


We end the section with an observation about the relation between inner \Cs-selfless groups and tensorial decomposition of their reduced group \Cs-algebras that arises from Corollary~\ref{cor:examples}.

\begin{remark}
    Recall that, by Proposition~\ref{InnerPropertiesProposition}(ii), inner \Cs-selfless groups are directly indecomposable. 
    One might be tempted to think that such an indecomposability holds also, more generally, at the \Cs-algebraic level. However, we present an example exhibiting that this can fail quite spectacularly.
    Let us say that a \Cs-algebra $A$ is \emph{tensorially indecomposable} if whenever $A \cong A_1\otimes A_2$ for some \Cs-algebras $A_i$, $i=1,2$, then $A_1\cong\mathbb C$ or $A_2\cong\mathbb C$. Moreover, a related notion says that $A$ is \emph{tensorially prime} if whenever  $A \cong A_1\otimes A_2$, then $A_1$ or $A_2$ must be finite-dimensional; this notion already appeared for instance in \cite{Rordam2003,KirchbergClassification,Ror02,MilhojRordam2023}. 

    By \cite[Proposition~7.1(b)]{Oml26b}, the reduced group \Cs-algebra of $\mathrm{SL}_{\infty}(\mathbb Z):=\bigcup_{n\geq1}\mathrm{SL}(2n+1,\mathbb Z)$ is $\mathcal Z$-stable, i.e.,
$\Csr(\mathrm{SL}_{\infty}(\mathbb Z))
\cong\Csr(\mathrm{SL}_{\infty}(\mathbb Z))\otimes\mathcal Z$. Also, as $\mathrm{SL}(2n+1,\mathbb Z)=\mathrm{PSL}(2n+1,\mathbb Z)$ is inner \Cs-selfless by combining Corollaries~\ref{cor:examples} and \ref{cor:cP-to-innerselfless}, it follows from Proposition~\ref{prop:directed-union} that $\mathrm{SL}_{\infty}(\mathbb Z)$ is inner \Cs-selfless. Hence, this provides an explicit example of an inner
\Cs-selfless group whose reduced group \Cs-algebra is
neither tensorially indecomposable nor tensorially prime.
\end{remark}

\vspace{3mm}


\section{MIF groups and the ISR Property}
\label{sec:MIFandISR}
We use tracial ultrapowers to study invariant von Neumann
subalgebras of crossed products by non-trivial MIF groups.
This gives a tracial analogue of the extraction results
in Section~\ref{sec:invC*crossed}.

Recall that a group $\Gamma$ has the \emph{invariant subalgebra rigidity property}
(ISR) if every unital von Neumann subalgebra $P\subseteq\vN(\Gamma)$
satisfying $\lambda_gP\lambda_g^*=P$ for every $g\in\Gamma$ is of the
form $P=\vN(\Lambda)$ for some normal subgroup
$\Lambda\triangleleft\Gamma$ (see~\cite{AmrutamJiangISR}).

Similarly, the group $\Gamma$ has the \emph{relative invariant subalgebra rigidity property} (relative ISR) if, for every non-trivial normal subgroup $N\triangleleft\Gamma$, every unital
von Neumann subalgebra $P\subseteq\vN(\Gamma)$ invariant under
conjugation by $N$ is of the form $P=\vN(\Lambda)$ for some subgroup
$\Lambda\leq\Gamma$ (see \cite{amrutam2026relative}). Such a subgroup $\Lambda$ is necessarily normalized by $N$.

We first express mixed identity-freeness in terms of an ultrapower
embedding.

\begin{proposition}\label{prop:characterization}
For a non-trivial discrete group $\Gamma$, the following are equivalent.
\begin{enumerate}[label=\textup{(\roman*)},leftmargin=*]
\item $\Gamma$ is mixed identity-free.
\item There exist a set $I$, an ultrafilter $\mathcal U$ on $I$, a family
$(g_i)_{i\in I}\subseteq\Gamma$, and a normal unital trace-preserving
embedding
$\theta:\vN(\Gamma\mathbin{\star}\langle t\rangle)
\hookrightarrow\vN(\Gamma)_{\mathcal U}$
such that $\langle t\rangle\cong\Z$,
$\theta|_{\vN(\Gamma)}=\Delta$, and
$\theta(\lambda_t)=(\lambda_{g_i})_{i\to\mathcal U}$.
\end{enumerate}
\end{proposition}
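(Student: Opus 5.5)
The easy direction is (ii)$\Rightarrow$(i), which I would handle exactly as in Proposition~\ref{InnerPropertiesProposition}(i). Suppose $w\in\Gamma\star\langle t\rangle$ is non-trivial. Since $\theta$ is trace-preserving,
\[
0=\tau_{\Gamma\star\mathbb Z}(\lambda_w)=\tau_{\mathcal U}(\theta(\lambda_w))=\lim_{i\to\mathcal U}\tau_\Gamma\bigl(\lambda_{\varphi_{g_i}(w)}\bigr),
\]
where $\varphi_{g_i}\colon\Gamma\star\langle t\rangle\to\Gamma$ is the retraction with $t\mapsto g_i$. Here I use that $\theta$ is multiplicative, so $\theta(\lambda_w)=(\lambda_{\varphi_{g_i}(w)})_{i\to\mathcal U}$. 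The trace $\tau_\Gamma(\lambda_{\varphi_{g_i}(w)})$ only takes the values $0$ and $1$, so $\varphi_{g_i}(w)\neq e$ for $\mathcal U$-many $i$. In particular some retraction does not kill $w$. By the definition of MIF, with the case $r=1$ sufficing for non-trivial $\Gamma$, this gives (i).

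For (i)$\Rightarrow$(ii), let $I$ be the set of finite subsets $K\subseteq(\Gamma\star\langle t\rangle)\setminus\{e\}$, ordered by inclusion, and fix a cofinal ultrafilter $\mathcal U$ on $I$. The key input is a finite-set version of MIF: for each finite $K$ there is a single $s_K\in\Gamma$ with $\varphi_{s_K}(w)\neq e$ for all $w\in K$.

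\begin{itemize}
\item The standard way to see this is that MIF is equivalent to $\Gamma\star\mathbb Z$ being fully residually-$\Gamma$ via retractions.
\item Directly, list $K=\{w_1,\dots,w_m\}$ and use the words $u_j:=w_j$ together with iterated commutators. The word $w:=[\cdots[[w_1,c_2 w_2 c_2^{-1}],c_3w_3c_3^{-1}]\cdots]$ is non-trivial in the free product for suitable conjugators $c_j\in\langle t\rangle$ (say large powers of $t$). This holds because commutators of non-commuting elements in a free product are non-trivial.
\item Arranging non-commutation is the one place requiring care. If two $w_j$ commute, I conjugate one of them by a high power $t^{N}$, which kills commutation since centralizers in $\Gamma\star\mathbb Z$ of non-elliptic or non-conjugate elements are cyclic or conjugate into factors.
\item A single retraction with $\varphi(w)\neq e$ then forces every $\varphi(w_j)\neq e$.
\end{itemize}

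Alternatively, cite \cite[Remark 5.1]{HO}, which records precisely this reduction.

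Set $g_K:=s_K$ and define $\theta_0$ on $\mathbb C[\Gamma\star\langle t\rangle]$ by $\lambda_w\mapsto(\lambda_{\varphi_{g_K}(w)})_{K\to\mathcal U}\in\vN(\Gamma)_{\mathcal U}$. This is a unital $\ast$-homomorphism because each $\varphi_{g_K}$ is a group homomorphism. It is trace-preserving because for $w\neq e$ the set $\{K\mid w\in K\}$ lies in $\mathcal U$ by cofinality, and on that set $\tau_\Gamma(\lambda_{\varphi_{g_K}(w)})=0$. Consequently $\theta_0$ is isometric for the $2$-norms and extends to $L^2$. A trace-preserving $\ast$-homomorphism from the group algebra into a tracial von Neumann algebra extends uniquely to a normal, injective, trace-preserving embedding of $\vN(\Gamma\star\langle t\rangle)$; the standard route is to identify both sides with the weak closure in the GNS representation of the trace. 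By construction $\theta|_{\vN(\Gamma)}=\Delta$, first on $\mathbb C[\Gamma]$ and then by normality, and $\theta(\lambda_t)=(\lambda_{g_K})_{K\to\mathcal U}$.

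The main obstacle is the uniform finite-set version of MIF in the first step. Unlike in the $\Cs$-setting there is no norm control to establish, so once that combinatorial lemma is in hand, the rest is routine tracial-ultrapower bookkeeping.
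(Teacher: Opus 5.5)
Your overall argument matches the paper's in both directions. For (ii)$\Rightarrow$(i) you use trace preservation, the fact that $\tau_\Gamma$ takes only the values $0,1$ on group unitaries, and the one-variable reduction. For (i)$\Rightarrow$(ii) you use the directed set of finite subsets of $(\Gamma\star\langle t\rangle)\setminus\{e\}$, a cofinal ultrafilter, the trace computation, and the GNS extension. The one step the paper does not prove itself is finite simultaneous avoidance, which it takes from the proof of \cite[Proposition~5.3]{HO}. The paper cites \cite[Remark~5.1]{HO} only for the reduction to one variable, so your alternative citation seems to point to the wrong place.

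Your self-contained sketch of that step fails as written. Conjugators in $\langle t\rangle$ cannot break commutation inside the abelian factor $\langle t\rangle$. For $K=\{t,t^2\}$ you get $[t,t^{N}t^{2}t^{-N}]=[t,t^2]=e$ for every $N$ and either ordering, so there is no non-trivial word for a retraction to detect. Your justification is also aimed at the wrong condition. What must be excluded at stage $j$ is that the running commutator $u_{j-1}$ commutes with $c_jw_jc_j^{-1}$, not that two of the $w_j$ commute, and your centralizer remark does not address this.

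A clean repair is to use fresh free letters as conjugators. Let $H$ be a group and $y$ a new free generator. In the Bass--Serre tree of $H\star\langle y\rangle$, a non-trivial $u\in H$ fixes only the vertex $H$, and $yvy^{-1}$ with $v\in H\setminus\{e\}$ fixes only $yH\neq H$. An element commuting with $u$ permutes the fixed points of $u$, so $[u,yvy^{-1}]\neq e$. Inductively, $w:=[\cdots[[w_1,y_2w_2y_2^{-1}],y_3w_3y_3^{-1}]\cdots,y_mw_my_m^{-1}]$ is non-trivial in $\Gamma\star\langle t,y_2,\dots,y_m\rangle$. MIF with $r=m$, which is the paper's definition, gives a retraction $\varphi$ with $\varphi(w)\neq e$, and then $s_K:=\varphi(t)$ satisfies $\varphi_{s_K}(w_j)=\varphi(w_j)\neq e$ for all $j$.

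Alternatively, you can keep $c_j=t^{N_j}$ with $N_j$ large. One checks that the only obstruction is $u_{j-1},w_j\in\langle t\rangle$. Since commutators have $t$-exponent sum zero, this can only happen for $j=2$. So ordering $K$ with $w_1\notin\langle t\rangle$ works, and the case $K=\{t^{k_1},\dots,t^{k_m}\}\subseteq\langle t\rangle$ is handled by the single word $t^{\prod_j|k_j|}$. With either repair, or simply with the correct citation as the paper does, your proof is complete.
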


\begin{proof}
Suppose first that $\Gamma$ is MIF, and let
$I:=\{F\subseteq(\Gamma\mathbin{\star}\langle t\rangle)
\setminus\{e\}\mid F\text{ finite}\}$, directed by inclusion.
By the finite simultaneous-avoidance argument in the proof of
\cite[Proposition~5.3]{HO}, choose a family
$(g_F)_{F\in I}\subseteq\Gamma$ such that $h(g_F)\ne e$ for every
$F\in I$ and $h\in F$.
Let $\varphi_F:\Gamma\mathbin{\star}\langle t\rangle\to\Gamma$
be the homomorphism fixing $\Gamma$ pointwise and sending $t$ to $g_F$.
For any cofinal ultrafilter $\mathcal U$ on $I$, define
$\pi:\Gamma\mathbin{\star}\langle t\rangle
\to\vN(\Gamma)_{\mathcal U}$ by
$\pi(h)=(\lambda_{\varphi_F(h)})_{F\to\mathcal U}$.
We claim that $\pi$ defines a unitary representation and
$\tr_{\mathcal U}(\pi(h))=\delta_{h,e}$ for every
$h\in\Gamma\mathbin{\star}\langle t\rangle$.

Indeed, each $\varphi_F$ is a homomorphism, so
$\pi(hk)=\pi(h)\pi(k)$ and $\pi(e)=1$, and each $\pi(h)$ is unitary.
If $h\ne e$, then $\varphi_F(h)\ne e$ for every $F\supseteq\{h\}$.
Consequently, by cofinality,
$\tr_{\mathcal U}(\pi(h))
=\lim_{F\to\mathcal U}\tr_\Gamma(\lambda_{\varphi_F(h)})=0$.
Since $\tr_{\mathcal U}(\pi(e))=1$, this proves the claim.

Set $N=W^*(\pi(\Gamma\mathbin{\star}\langle t\rangle))$
with its restricted trace.
The trace identity shows that $\delta_h\mapsto\pi(h)$ extends to a
unitary
$\ell^2(\Gamma\mathbin{\star}\langle t\rangle)
\to L^2(N,\tr_{\mathcal U})$.
Its range is dense because the linear span of
$\pi(\Gamma\mathbin{\star}\langle t\rangle)$ is a weakly dense unital
$*$-subalgebra of $N$.
This unitary intertwines the left regular representation of
$\Gamma\mathbin{\star}\langle t\rangle$ with left multiplication by
$\pi(\Gamma\mathbin{\star}\langle t\rangle)$.
It therefore induces a normal trace-preserving isomorphism
$\vN(\Gamma\mathbin{\star}\langle t\rangle)\to N$.
Composing with $N\subseteq\vN(\Gamma)_{\mathcal U}$ gives $\theta$.
It agrees with the diagonal embedding on all canonical unitaries
from $\Gamma$, hence on $\vN(\Gamma)$ by normality, and
$\theta(\lambda_t)=(\lambda_{g_F})_{F\to\mathcal U}$.

Conversely, suppose that \textup{(ii)} holds, with an ultrafilter
$\mathcal U$ on $I$ and a family $(g_i)_{i\in I}$.
For any $e\ne h\in\Gamma\mathbin{\star}\langle t\rangle$,
trace preservation yields
\[
0=\tr_{\mathcal U}(\theta(\lambda_h))
=\lim_{i\to\mathcal U}\tr_\Gamma(\lambda_{h(g_i)})
=\lim_{i\to\mathcal U}\mathbf 1_{\{h(g_i)=e\}}.
\]
Thus $h(g_i)\ne e$ for some $i\in I$, so $\Gamma$ has no non-trivial
one-variable mixed identity.
By \cite[Remark~5.1]{HO}, $\Gamma$ is MIF.
\end{proof}

Throughout, $(M,\tau)$ denotes a finite von Neumann algebra with faithful
normal tracial state, $\|a\|_2:=\tau(a^*a)^{1/2}$, and $\mathbb E_M$
denotes the canonical conditional expectation onto $M$.
\begin{definition}\label{def:L2-compatible}
Let $\Gamma$ be a non-trivial MIF discrete group, and let
$\alpha\colon\Gamma\curvearrowright(M,\tau)$ be a trace-preserving action.
We say that $\alpha$ is \emph{$L^2$-compatible} (with respect to $M$) if the following hold:
\begin{enumerate}
    \item there exists an ultrafilter $\mathcal U$ on a set $I$, a family
$(g_i)_{i\in I}\subseteq\Gamma$, and a normal unital trace-preserving
embedding
$\theta:\vN(\Gamma\mathbin{\star}\langle t\rangle)
\hookrightarrow\vN(\Gamma)_{\mathcal U}$
such that $\langle t\rangle\cong\Z$,
$\theta|_{\vN(\Gamma)}=\Delta$ and $\theta(\lambda_t)=(\lambda_{g_i})_{i\to\mathcal U}$.
\item $\lim_{i\to\mathcal U}\|\alpha_{g_i}(a)-a\|_2=0$
for every $a\in M$.
\end{enumerate}
\end{definition}


\subsection{Fourier coefficient extraction}

We first prove the extraction result for a free product in which the
added generator acts trivially.

\begin{theorem}\label{TracialFreeProductCaseTheorem}
Let $\Gamma=\Lambda\star\mathbb Z$ be the free product of a discrete
group $\Lambda$ by the integers, and let $h$ generate $\mathbb Z$.
Let $\alpha\colon\Gamma\curvearrowright(M,\tau)$ be a trace-preserving
action with $\alpha_h=\mathrm{id}_M$.
Suppose that $B\subseteq M\rtimes_\alpha\Gamma$ is a possibly non-unital
von Neumann subalgebra invariant under conjugation by
$L:=\langle g^{-1}hg\mid g\in\Lambda\rangle$.
Then $\mathbb E_M(x\lambda_g^*)\lambda_g\in B$ for every
$x\in B\cap(M\rtimes_{\alpha|_\Lambda}\Lambda)$ and $g\in\Lambda$.
\end{theorem}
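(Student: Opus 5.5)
The plan is to transcribe the proof of Theorem~\ref{FreeProductCaseTheorem} into the tracial setting. Norm estimates are replaced by $\|\cdot\|_2$-estimates, norm closures by weak (equivalently $\|\cdot\|_2$-on-bounded-sets) closures, and continuous functional calculus by Borel functional calculus. Throughout, write $x(g):=\mathbb E_M(x\lambda_g^*)$. We may assume $\Lambda$ is non-trivial, since otherwise $g=e$ is the only case.

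\emph{Step 1: $\mathbb E_M(B)\subseteq B$.} Lemma~\ref{PlumpSubgroupLemma} still applies, because it is a statement about $\Csr(\Gamma)$. For $x\in B$, approximate $x$ in $\|\cdot\|_2$ by a finite Fourier sum $y$ with $\|y\|\le\|x\|$, using Kaplansky density. Then average $x$ by $\frac1m\sum_i\operatorname{Ad}(\lambda_{h_i})$ with $h_i\in L$. These averages are $\|\cdot\|_2$-contractive, they fix $M$ (since $L$ acts trivially on $M$), and they make the $\lambda_f$ with $f\neq e$ small in operator norm. Hence $T(x)\to\mathbb E_M(x)$ in $\|\cdot\|_2$, while $T(x)\in B$ stays bounded. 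Strong closedness of $B$ then gives $\mathbb E_M(x)\in B$.

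\emph{Step 2: extract $\mathbb E_{\mathbf w(g)}$.} Here it is actually simpler than in the $\Cs$-case. The subgroup $\langle\mathbf w(g)\rangle$ commutes with $M$, and conjugation by $\lambda_{\mathbf w(g)}$ acts on $\ell^2$-Fourier coefficients by shifting along conjugacy orbits. By Lemma~\ref{ElementPropertiesLemma}, every $f\notin\langle\mathbf w(g)\rangle$ has an infinite orbit under conjugation by $\langle\mathbf w(g)\rangle$, since the centralizer of the primitive loxodromic element $\mathbf w(g)$ is $\langle \mathbf w(g)\rangle$. So the Ces\`aro means $\frac1N\sum_{n=1}^N\lambda_{\mathbf w(g)^n}y\lambda_{\mathbf w(g)^n}^*$ converge in $\|\cdot\|_2$ to $\mathbb E_{\mathbf w(g)}(y)$, by a mean-ergodic argument: the Fourier coefficients on infinite orbits are averaged into an $\ell^2$ sum of disjointly supported terms of norm $O(N^{-1/2})$. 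This yields $\mathbb E_{\mathbf w(g)}(B)\subseteq B$ directly, and Proposition~\ref{AveragingProposition} is not needed. The computation of $\mathbb E_{\mathbf w(g)}(\mathbf P(x))$ then carries over verbatim. Approximating $x$ by bounded finite sums $x_k$ in $\|\cdot\|_2$ gives $\mathbf P(x_k)\to\mathbf P(x)$ in $\|\cdot\|_2$, since products of uniformly bounded factors are $\|\cdot\|_2$-continuous. This gives $(x(g)x(g)^*)^2\lambda_{\mathbf w(g)}\in B$.

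\emph{Step 3: functional calculus.} This step becomes easier. With $b:=(x(g)x(g)^*)^2\in B$ (obtained by taking the absolute value), the support projection $s(b)$ lies in $B$. Multiplying $b^n\lambda_k\in B$ on the left by a Borel function $f(b)\in B$ with $f(0)=0$, namely $f(z)=z^{-n}1_{z>\delta}$, and letting $\delta\downarrow0$, gives $s(b)\lambda_k\in B$ strongly for all $k\in\langle\mathbf w(g)\rangle^L$. Then let $p$ be the support of $x(g)x(g)^*$ and let $q=\alpha_{g^{-1}}(\text{support of }x(g)^*x(g))$, so that $px(g)=x(g)=x(g)\alpha_g(q)$. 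As in Theorem~\ref{FreeProductCaseTheorem}, use Proposition~\ref{InnerPropertiesProposition}(iv) for $L\cong\star\mathbb Z$ to get a non-trivial normal subgroup $N$ of $L$ with $p\lambda_{gng^{-1}},\,q\lambda_n\in B$ for all $n\in N$. Average $\frac1m\sum_i p\lambda_{gn_ig^{-1}}xq\lambda_{n_i^{-1}}$ using plumpness of $N$ (Lemma~\ref{PlumpSubgroupLemma}). Exactly as before, the terms $f\neq g$ become small, now in $\|\cdot\|_2$ after a Kaplansky approximation of $x$. The limit is $px(g)\alpha_g(q)\lambda_g=x(g)\lambda_g$, so no $\varepsilon$-regularisation is needed.

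\emph{Main obstacle.} The delicate point is Step 2. One must ensure that the averaging genuinely converges on the whole von Neumann crossed product to $\mathbb E_{\mathbf w(g)}$, uniformly on the $\|\cdot\|_2$-approximants. This amounts to showing that conjugation orbits of elements outside $\langle\mathbf w(g)\rangle$ are infinite, which follows from the centralizer computation above. A secondary issue is making sure the plumpness averages are $\|\cdot\|_2$-contractive when interleaved with the multipliers $p$ and $q$. This holds because left and right multiplication by contractions in $M\rtimes_\alpha\Gamma$ is $\|\cdot\|_2$-contractive for a trace.
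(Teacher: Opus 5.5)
Your proposal is correct and follows essentially the same route as the paper's proof. The shared steps are the plumpness averaging for $\mathbb E_M(B)\subseteq B$, the mean-ergodic extraction of $\mathbb E_{\mathbf w(g)}$ via $C_\Gamma(\mathbf w(g)^r)=\langle\mathbf w(g)\rangle$ (the paper also bypasses Proposition~\ref{AveragingProposition} here), and the final averaging over $N=\langle\mathbf w(g^{-1})\rangle^L\cap g^{-1}\langle\mathbf w(g)\rangle^Lg$ with bounded $\|\cdot\|_2$-approximants. The one difference is that you use Borel functional calculus to put the support projections $p,q$ directly into the weakly closed $B$, instead of the paper's regularised $p_\varepsilon,q_\varepsilon$. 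This is valid and removes the final $\varepsilon\to0$ estimate, since $px(g)\alpha_g(q)=x(g)$ holds exactly.
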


\begin{proof}
Equip $M\rtimes_\alpha\Gamma$ with the trace $\tau\circ\mathbb E_M$.
If $\Lambda=\{e\}$, the assertion is immediate.
Thus assume that $\Lambda\neq\{e\}$ and write
$L:=\langle f^{-1}hf\mid f\in\Lambda\rangle$.
Since $\alpha_h=\mathrm{id}_M$, the restriction of $\alpha$ to $L$
is trivial.

Fix $x\in B\cap(M\rtimes_{\alpha|_\Lambda}\Lambda)$ and $g\in\Lambda$,
and write $x(f):=\mathbb E_M(x\lambda_f^*)$ for $f\in\Lambda$.
By Kaplansky density, choose finite Fourier sums
$x_k=\sum_{f\in F_k}x_k(f)\lambda_f$ such that
$\|x-x_k\|_2\to0$ and $\sup_k\|x_k\|\leq C$ for some $C>0$.
Then $\|x_k(f)\|\leq C$ and
$\|x_k(f)-x(f)\|_2\leq\|x_k-x\|_2$.
We may enlarge each $F_k$ to contain $e$ and $g$.
We use that uniformly operator-bounded $L^2$-limits of elements of
$B$ belong to $B$.

We proceed by distinguishing two cases.\\

\begin{itemize}
\item \emph{Case 1}: Assume that $g=e$. By Lemma~\ref{PlumpSubgroupLemma}, we can choose elements
$h_{k,1},\ldots,h_{k,m_k}\in L$ such that
$\|\frac1{m_k}\sum_{i=1}^{m_k}
\lambda_{h_{k,i}fh_{k,i}^{-1}}\|<1/(k|F_k|)$
for every $f\in F_k\setminus\{e\}$.
Since $\alpha_{h_{k,i}}=\mathrm{id}_M$, we obtain
\[
\begin{aligned}
&\left\|
\frac1{m_k}\sum_{i=1}^{m_k}
\lambda_{h_{k,i}}x\lambda_{h_{k,i}}^*
-\mathbb E_M(x)
\right\|_2\\
&\quad\leq
2\|x-x_k\|_2+
\left\|
\sum_{f\in F_k\setminus\{e\}}x_k(f)
\left(\frac1{m_k}\sum_{i=1}^{m_k}
\lambda_{h_{k,i}fh_{k,i}^{-1}}\right)
\right\|\\
&\quad\leq2\|x-x_k\|_2+\frac{C}{k}\longrightarrow0.
\end{aligned}
\]
The averages belong to $B$ and have operator norm at most $\|x\|$.
Thus $\mathbb E_M(x)\in B$.
\item \emph{Case 2: $g\neq e$.}
By the first case, we may replace $x$ by $x-\mathbb E_M(x)$.
Replacing $x_k$ by $x_k-\mathbb E_M(x_k)$ preserves uniform boundedness
and $L^2$-convergence, so we may also assume that
$F_k\subseteq\Lambda\setminus\{e\}$.

We first verify the first extraction step in this setting.
As before, put
\[
\mathbf w(g):=g(hg^{-1}h^{-1})(h^2gh^{-2})(h^3g^{-1}h^{-3})
\]
and
\[
\mathbf P(y):=
y(\lambda_hy^*\lambda_h^*)
(\lambda_{h^2}y\lambda_{h^2}^*)
(\lambda_{h^3}y^*\lambda_{h^3}^*).
\]
Since $B$ is $L$-invariant, $\mathbf P(x)\in B$.
Let $\mathbb E_{\mathbf w(g)}$ denote the conditional expectation
onto $M\rtimes\langle\mathbf w(g)\rangle$.
By Lemma~\ref{ElementPropertiesLemma}, $\mathbf w(g)$ is primitive
and loxodromic.
The trivial edge stabilizers of the Bass--Serre action therefore give
$C_\Gamma(\mathbf w(g)^r)=\langle\mathbf w(g)\rangle$
for every $r\neq0$.
Consequently, every element outside $\langle\mathbf w(g)\rangle$
has an infinite orbit under conjugation by $\mathbf w(g)$.
Since $\alpha_{\mathbf w(g)}=\mathrm{id}_M$, the mean ergodic theorem
gives
\[
\frac1m\sum_{j=1}^m
\lambda_{\mathbf w(g)^j}y\lambda_{\mathbf w(g)^j}^*
\longrightarrow\mathbb E_{\mathbf w(g)}(y)
\quad\text{in }\|\cdot\|_2
\]
for every $y\in M\rtimes_\alpha\Gamma$.
The averages are uniformly operator bounded, so
$\mathbb E_{\mathbf w(g)}(B)\subseteq B$.

The Fourier calculation in Proposition~\ref{FirstExtractionStep}
and Lemma~\ref{ElementPropertiesLemma} give
\[
\mathbb E_{\mathbf w(g)}(\mathbf P(x_k))
=(x_k(g)x_k(g)^*)^2\lambda_{\mathbf w(g)}.
\]
After increasing $C$ so that $\|x\|,\|x_k\|\leq C$, telescoping and
$\|ayb\|_2\leq\|a\|\|y\|_2\|b\|$ yield
$\|\mathbf P(x_k)-\mathbf P(x)\|_2
\leq4C^3\|x_k-x\|_2\to0$.
The same estimate applies to the fourfold products of the Fourier
coefficients.
Since $\mathbb E_{\mathbf w(g)}$ is $L^2$-contractive, it follows that
$\mathbb E_{\mathbf w(g)}(\mathbf P(x))
=(x(g)x(g)^*)^2\lambda_{\mathbf w(g)}\in B$.

For $\varepsilon>0$, put
\[
\begin{aligned}
p_\varepsilon
&:=(x(g)x(g)^*)^2
\bigl((x(g)x(g)^*)^2+\varepsilon1\bigr)^{-1},\\
q_\varepsilon
&:=\alpha_{g^{-1}}\!\left(
(x(g)^*x(g))^2
\bigl((x(g)^*x(g))^2+\varepsilon1\bigr)^{-1}
\right).
\end{aligned}
\]
These are positive contractions and
$p_\varepsilon x(g)=x(g)\alpha_g(q_\varepsilon)$.
Apply the functional-calculus argument of
Lemma~\ref{FunctionalCalculusLemma} to the extracted element,
and also to $x^*$ at $g^{-1}$.
Since $x^*(g^{-1})=\alpha_{g^{-1}}(x(g)^*)$, we obtain
$p_\varepsilon\lambda_s\in B$ for
$s\in\langle\mathbf w(g)\rangle^L$ and
$q_\varepsilon\lambda_t\in B$ for
$t\in\langle\mathbf w(g^{-1})\rangle^L$.

By Proposition~\ref{InnerPropertiesProposition}, the subgroup
$N:=\langle\mathbf w(g^{-1})\rangle^L
\cap g^{-1}\langle\mathbf w(g)\rangle^Lg$
is a non-trivial normal subgroup of $L$.
Thus $p_\varepsilon\lambda_{gng^{-1}},
q_\varepsilon\lambda_n\in B$ for every $n\in N$.
By Lemma~\ref{PlumpSubgroupLemma}, choose
$n_1,\ldots,n_{m_k}\in N$ such that
$\|\frac1{m_k}\sum_{i=1}^{m_k}
\lambda_{n_i(g^{-1}f)n_i^{-1}}\|<1/(k|F_k|)$
for every $f\in F_k\setminus\{g\}$.
Since $\alpha|_L=\mathrm{id}_M$, we have
\[
\begin{aligned}
&\left\|
\frac1{m_k}\sum_{i=1}^{m_k}
p_\varepsilon\lambda_{gn_ig^{-1}}
xq_\varepsilon\lambda_{n_i^{-1}}
-p_\varepsilon x(g)\alpha_g(q_\varepsilon)\lambda_g
\right\|_2\\
&\quad\leq2\|x-x_k\|_2\\
&\qquad+
\left\|
\sum_{f\in F_k\setminus\{g\}}
p_\varepsilon x_k(f)\alpha_f(q_\varepsilon)\lambda_g
\left(\frac1{m_k}\sum_{i=1}^{m_k}
\lambda_{n_i(g^{-1}f)n_i^{-1}}\right)
\right\|\\
&\quad\leq2\|x-x_k\|_2+\frac{C}{k}\longrightarrow0.
\end{aligned}
\]
Each average belongs to $B$ and has operator norm at most $\|x\|$.
Hence $p_\varepsilon x(g)\alpha_g(q_\varepsilon)\lambda_g\in B$.

Finally, the identity
$p_\varepsilon x(g)=x(g)\alpha_g(q_\varepsilon)$
and functional calculus give
\[
\begin{aligned}
\|p_\varepsilon x(g)\alpha_g(q_\varepsilon)-x(g)\|
&\leq2\sup_{t\geq0}
\frac{\varepsilon\sqrt t}{t^2+\varepsilon}\\
&\leq2\varepsilon^{1/4}\longrightarrow0.
\end{aligned}
\]
Since $B$ is norm closed, $x(g)\lambda_g\in B$.
\end{itemize}

This finishes the proof. \end{proof}

We now prove Fourier coefficient extraction for $L^2$-compatible
actions of MIF discrete groups.

\begin{theorem}\label{thm:ISRMIF}\label{TracialInvariantMainTheorem}
Let $\Gamma$ be a non-trivial mixed identity-free discrete group, and let
$\alpha\colon\Gamma\curvearrowright(M,\tau)$ be a trace-preserving action.
Also, assume that $\alpha$ is $L^2$-compatible.

Let $N\triangleleft\Gamma$ be a non-trivial normal subgroup, and let
$B\subseteq M\rtimes_\alpha\Gamma$ be a possibly non-unital von Neumann
subalgebra invariant under conjugation by $N$.
Then $\mathbb E_M(x\lambda_g^*)\lambda_g\in B$ for every $x\in B$
and $g\in\Gamma$.
\end{theorem}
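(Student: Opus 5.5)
We will mimic the proof of Theorem~\ref{InvariantMainTheorem}, replacing norm ultrapowers by tracial ultrapowers and Theorem~\ref{FreeProductCaseTheorem} by Theorem~\ref{TracialFreeProductCaseTheorem}.

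\emph{Step 1: move the witness into $N$.} Let $(g_i)_{i\in I}$, $\mathcal U$ and $\theta$ be as in Definition~\ref{def:L2-compatible}. Fix $n\in N\setminus\{e\}$ and put $n_i:=(g_ing_i^{-1})(g_i^2n^{-1}g_i^{-2})\in N$. The argument of Lemma~\ref{NormalWitnessLemma} applies to the word $\widetilde t:=(tnt^{-1})(t^2n^{-1}t^{-2})$. This word generates, together with $\Gamma$, a copy of $\Gamma\star\mathbb Z$, so restricting $\theta$ yields a normal trace-preserving embedding sending the new generator to $(\lambda_{n_i})_{i\to\mathcal U}$. The telescoping estimate of Lemma~\ref{StrongNormalWitnessLemma} works verbatim in $\|\cdot\|_2$, since each $\alpha_g$ is $\|\cdot\|_2$-isometric. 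Hence $\lim_{i\to\mathcal U}\|\alpha_{n_i}(a)-a\|_2=0$, and we may assume $g_i\in N$.

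\emph{Step 2: build the ultrapower embedding of the crossed product.} Let $\widetilde\alpha$ be the extension of $\alpha$ to $\Gamma\star\langle t\rangle$ with $\widetilde\alpha_t=\id$. We want a normal trace-preserving embedding
\[
\widetilde\theta\colon M\rtimes_{\widetilde\alpha}(\Gamma\star\langle t\rangle)\to(M\rtimes_\alpha\Gamma)_{\mathcal U}
\]
that is diagonal on $M\rtimes_\alpha\Gamma$ and sends $\lambda_t\mapsto(\lambda_{g_i})_{i\to\mathcal U}$. By $L^2$-compatibility, $u:=(\lambda_{g_i})_{i\to\mathcal U}$ commutes with $\Delta(M)$ in the tracial ultrapower, because $\|\lambda_{g_i}a\lambda_{g_i}^*-a\|_2\to0$. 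We therefore get a covariant representation of $(M,\Gamma\star\langle t\rangle,\widetilde\alpha)$ in $(M\rtimes\Gamma)_{\mathcal U}$.

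To see that it generates a copy of the crossed product, verify trace preservation on the elements $a\lambda_w$:
\[
\tau_{\mathcal U}\bigl(\Delta(a)\,\pi(w)\bigr)=\lim_{i\to\mathcal U}\tau\circ\mathbb E_M(a\lambda_{w(g_i)})=\tau(a)\lim_{i\to\mathcal U}\delta_{w(g_i),e}.
\]
By trace preservation of $\theta$, this limit equals $\tau(a)\delta_{w,e}$. The standard GNS argument, as in Proposition~\ref{prop:characterization}, then gives a normal trace-preserving isomorphism onto the generated von Neumann algebra. This is the analogue of Remark~\ref{UniversalFreeProductRemark}, and it is easier here because tracial crossed products are determined by the trace.

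\emph{Step 3: pull back and apply the free product case.} Set $C:=\widetilde\theta^{-1}(B_{\mathcal U})$, where $B_{\mathcal U}\subseteq(M\rtimes\Gamma)_{\mathcal U}$ is the tracial ultrapower of $B$, realised as the bounded families in $B$. This $C$ is a von Neumann subalgebra: $B_{\mathcal U}$ is a von Neumann subalgebra, and $\widetilde\theta$ is normal with weakly closed range. It is also invariant under $L=\langle f^{-1}tf\mid f\in\Gamma\rangle$, exactly as in the proof of Theorem~\ref{InvariantMainTheorem}: $\widetilde\theta(\lambda_{f^{-1}tf})=(\lambda_{f^{-1}g_if})$ lies in $N$ coordinatewise, and $B$ is $N$-invariant.

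Theorem~\ref{TracialFreeProductCaseTheorem}, applied with $\Lambda=\Gamma$ and $h=t$, gives $\mathbb E_M(x\lambda_g^*)\lambda_g\in C$ for $x\in C\cap(M\rtimes_\alpha\Gamma)$. It remains to check $C\cap(M\rtimes_\alpha\Gamma)=B$. For this use $\Delta(y)\in B_{\mathcal U}\iff y\in B$, which holds via the trace-preserving conditional expectation $\mathbb E_B$: one has $\|y-\mathbb E_B(y)\|_2=\lim_{i\to\mathcal U}\|y-\mathbb E_B(b_i)\|_2\le\lim_{i\to\mathcal U}\|y-b_i\|_2=0$. Since $\mathbb E_B$ is only available for unital $B$, the non-unital case will be handled by working in the corner $1_BM1_B$, or by adding the unit.

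\emph{Main obstacle.} The delicate points are Step 2 and the identification of $B_{\mathcal U}$ as a von Neumann subalgebra with $\Delta^{-1}(B_{\mathcal U})=B$ in the possibly non-unital case. Step 2 needs care in defining the crossed product embedding, namely that $\widetilde\theta$ is well defined and normal. I expect the rest to go through formally.
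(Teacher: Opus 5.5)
Your proposal is correct and follows the paper's proof essentially step for step. You move the witness into $N$ via $n_i=(g_ing_i^{-1})(g_i^2n^{-1}g_i^{-2})$, build the trace-preserving embedding $\widetilde\theta\colon M\rtimes_{\widetilde\alpha}(\Gamma\star\langle t\rangle)\hookrightarrow(M\rtimes_\alpha\Gamma)_{\mathcal U}$ from the covariance relations and the trace computation, pull back $B_{\mathcal U}$ to an $L$-invariant $C$, and apply Theorem~\ref{TracialFreeProductCaseTheorem}; the one minor difference is that the paper gets $\Delta^{-1}(B_{\mathcal U})=B$ directly because a uniformly bounded $\|\cdot\|_2$-limit of elements of $B$ lies in $B$ by ultraweak closedness, which avoids your detour through $\mathbb E_B$ and the corner/unitization for non-unital $B$.
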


\begin{proof}
Choose $\mathcal U$ and $(g_i)_{i\in I}$ as in
Definition~\ref{def:L2-compatible} given by the $L^2$-compatibility.
Fix $n\in N\setminus\{e\}$ and put
$n_i:=(g_i n g_i^{-1})(g_i^2n^{-1}g_i^{-2})$.
The normal-form argument in the proof of Lemma~\ref{NormalWitnessLemma},
applied to the tracial embedding in
Proposition~\ref{prop:characterization}, shows that $(n_i)_{i\in I}$
again satisfies condition~\textup{(ii)} of that proposition.
The estimate in the proof of Lemma~\ref{StrongNormalWitnessLemma}
applies in $\|\cdot\|_2$, since $\alpha$ preserves $\tau$.
Thus the same $L^2$-convergence holds for $(n_i)_{i\in I}$, and we may
assume that $g_i\in N$ for every $i\in I$.

Let $P:=M\rtimes_\alpha\Gamma$, and let $\widetilde\alpha$ be the
extension of $\alpha$ to $\Gamma\star\langle t\rangle$ with
$\widetilde\alpha_t=\id_M$.
Compatibility implies that $(\lambda_{g_i})_{i\to\mathcal U}$
commutes with $\Delta(M)$ in $P_{\mathcal U}$.
Consequently, the assignments
\[
a\longmapsto\Delta(a),~
\lambda_g\longmapsto\Delta(\lambda_g),~
\lambda_t\longmapsto(\lambda_{g_i})_{i\to\mathcal U}
\]
for $a\in M$ and $g\in\Gamma$,
define a unital $\ast$-homomorphism on the algebraic crossed product.
For every non-identity word $w\in\Gamma\star\langle t\rangle$,
trace preservation of the given group embedding implies
$w(g_i)\ne e$ on a set belonging to $\mathcal U$.
Hence the image of $a\lambda_w$ has trace $\delta_{w,e}\tau(a)$.
The GNS-construction therefore gives a normal trace-preserving embedding
\[
\widetilde\theta\colon
M\rtimes_{\widetilde\alpha}(\Gamma\star\langle t\rangle)
\hookrightarrow P_{\mathcal U}
\]
whose restriction to $P$ is $\Delta$.

Set $C:=\widetilde\theta^{-1}(B_{\mathcal U})$ and
$L:=\langle g^{-1}tg\mid g\in\Gamma\rangle$.
For every $g\in\Gamma$, the unitary
$\widetilde\theta(\lambda_{g^{-1}tg})
=(\lambda_{g^{-1}g_i g})_{i\to\mathcal U}$
normalizes $B_{\mathcal U}$, since $g^{-1}g_i g\in N$.
Thus $C$ is an $L$-invariant von Neumann subalgebra.
Moreover, $\Delta(P)\cap B_{\mathcal U}=\Delta(B)$:
if $\Delta(y)=(b_i)_{i\to\mathcal U}$ for a uniformly bounded family
$(b_i)_{i\in I}\subseteq B$, then
$\lim_{i\to\mathcal U}\|y-b_i\|_2=0$, so $y\in B$ by ultraweak
closedness.
Consequently, $C\cap P=B$.

Apply Theorem~\ref{TracialFreeProductCaseTheorem} to $C$.
For every $x\in B$ and $g\in\Gamma$, it gives
$\mathbb E_M(x\lambda_g^*)\lambda_g\in C\cap P=B$.
\end{proof}
As in the $\Cs$-algebraic case, the trivial action gives the relative
ISR property.

\begin{corollary}\label{TracialRelativeISRCorollary}
Every non-trivial mixed identity-free discrete group $\Gamma$ has the relative ISR property.
More precisely, if $N\triangleleft\Gamma$ is non-trivial and
$B\subseteq\vN(\Gamma)$ is a unital von Neumann subalgebra invariant
under conjugation by $N$, then $B=\vN(\Lambda)$ for a subgroup
$\Lambda\leq\Gamma$ normalized by $N$.
In particular, $\Gamma$ has the ISR property.
\end{corollary}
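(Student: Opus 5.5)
The plan is to apply Theorem~\ref{TracialInvariantMainTheorem} to the trivial action of $\Gamma$ on $(M,\tau)=(\mathbb C,1)$, so that $M\rtimes_\alpha\Gamma=\vN(\Gamma)$ and $\mathbb E_M=\tau_\Gamma$. The first step is to check $L^2$-compatibility. Condition (1) of Definition~\ref{def:L2-compatible} is exactly Proposition~\ref{prop:characterization}(ii), which holds since $\Gamma$ is non-trivial and MIF. Condition (2) is trivial, because $\alpha_{g_i}=\id_{\mathbb C}$.

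Now fix a non-trivial normal subgroup $N\triangleleft\Gamma$ and a unital von Neumann subalgebra $B\subseteq\vN(\Gamma)$ invariant under conjugation by $N$. Theorem~\ref{TracialInvariantMainTheorem} gives $\tau_\Gamma(x\lambda_g^*)\lambda_g\in B$ for all $x\in B$ and $g\in\Gamma$. Put $\Lambda:=\{g\in\Gamma\mid\lambda_g\in B\}$. Since $B$ is a unital $\ast$-algebra, $\Lambda$ is a subgroup, and $\vN(\Lambda)\subseteq B$ because $B$ is weakly closed. If $g\notin\Lambda$ and $\tau_\Gamma(x\lambda_g^*)\neq0$ for some $x\in B$, we could divide by this scalar to get $\lambda_g\in B$, a contradiction. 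Hence every $x\in B$ has Fourier coefficients supported in $\Lambda$.

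For $x\in B$, the vector $x\delta_e\in\ell^2(\Gamma)$ is therefore supported on $\Lambda$. So $x-\mathbb E_\Lambda(x)$ has zero Fourier series, i.e.\ $(x-\mathbb E_\Lambda(x))\delta_e=0$. Since $\delta_e$ is separating for $\vN(\Gamma)$, this gives $x=\mathbb E_\Lambda(x)\in\vN(\Lambda)$, and hence $B=\vN(\Lambda)$. For $n\in N$ and $g\in\Lambda$, the element $\lambda_n\lambda_g\lambda_n^*=\lambda_{ngn^{-1}}$ lies in $B$, so $N$ normalizes $\Lambda$.

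For the ISR statement, take $N=\Gamma$, which is non-trivial. A $\Gamma$-invariant $B$ then equals $\vN(\Lambda)$ with $\Lambda$ normalized by $\Gamma$, i.e.\ $\Lambda\triangleleft\Gamma$.

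All substantive work is contained in Theorem~\ref{TracialInvariantMainTheorem}. The only point needing care is the passage from vanishing Fourier coefficients to membership in $\vN(\Lambda)$, which uses the $L^2$-picture rather than norm approximation; this is why the argument goes through $x\delta_e$ and the separating vector instead of Fourier sums.
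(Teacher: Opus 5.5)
Your proposal is correct and follows the paper's proof essentially verbatim. Both apply Theorem~\ref{TracialInvariantMainTheorem} to the trivial action on $\mathbb C$, set $\Lambda=\{g\in\Gamma\mid\lambda_g\in B\}$, use the vanishing of Fourier coefficients off $\Lambda$ to get $x=\mathbb E_\Lambda(x)$, and take $N=\Gamma$ for ISR; your explicit check of $L^2$-compatibility and the separating-vector argument via $\delta_e$ just fill in details the paper leaves implicit.
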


\begin{proof}
Apply Theorem~\ref{TracialInvariantMainTheorem} to the trivial action
on $\mathbb C$.
Then $\tau_\Gamma(x\lambda_g^*)\lambda_g\in B$ for every $x\in B$
and $g\in\Gamma$.
Since $B$ is unital and closed under multiplication and adjoints,
$\Lambda:=\{g\in\Gamma\mid\lambda_g\in B\}$ is a subgroup with
$\vN(\Lambda)\subseteq B$.
Its normalization by $N$ follows from the invariance of $B$.

For $x\in B$ and $g\notin\Lambda$, the coefficient
$\tau_\Gamma(x\lambda_g^*)$ vanishes.
Thus $x=\mathbb E_\Lambda(x)\in\vN(\Lambda)$, proving
$B=\vN(\Lambda)$.
Taking $N=\Gamma$ gives the ISR property.
\end{proof}

We next obtain ISR for finite products.

\begin{corollary}\label{TracialProductISRCorollary}
Let $\Gamma_1,\ldots,\Gamma_n$ be non-trivial mixed identity-free discrete groups.
Then $\Gamma_1\times\cdots\times\Gamma_n$ has the ISR property.
\end{corollary}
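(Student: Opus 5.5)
We will mirror the proof of Corollary~\ref{C-ISR}, replacing Theorem~\ref{InvariantMainTheorem} by its tracial analogue, Theorem~\ref{TracialInvariantMainTheorem}. Put $\Gamma:=\Gamma_1\times\cdots\times\Gamma_n$. Let $B\subseteq\vN(\Gamma)$ be a unital von Neumann subalgebra with $\lambda_gB\lambda_g^*=B$ for all $g\in\Gamma$. For each $1\leq j\leq n$ we use the canonical identification
\[
\vN(\Gamma)\cong\vN\Bigl(\prod_{k\neq j}\Gamma_k\Bigr)\rtimes_{\id}\Gamma_j ,
\]
where $\Gamma_j$ acts trivially on $M_j:=\vN(\prod_{k\neq j}\Gamma_k)$ with its canonical trace. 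Under this identification, $\mathbb E_{M_j}$ becomes the expectation $\mathbb E_{\prod_{k\ne j}\Gamma_k}$, and $\mathbb E_{M_j}(x\lambda_{g_j}^*)\lambda_{g_j}$ is the part of $x$ supported on $g_j\cdot\prod_{k\neq j}\Gamma_k$.

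\emph{Step 1: $L^2$-compatibility of the trivial action.} Since $\Gamma_j$ is non-trivial and MIF, Proposition~\ref{prop:characterization} provides $\mathcal U$, a family $(g_i)_{i\in I}\subseteq\Gamma_j$ and the required normal trace-preserving embedding $\theta$. Because the action on $M_j$ is trivial, condition (2) of Definition~\ref{def:L2-compatible} holds exactly. Hence the action is $L^2$-compatible.

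\emph{Step 2: successive extraction.} The algebra $B$ is invariant under conjugation by the non-trivial normal subgroup $N:=\Gamma_j\triangleleft\Gamma_j$. So Theorem~\ref{TracialInvariantMainTheorem} gives $\mathbb E_{M_j}(x\lambda_{g_j}^*)\lambda_{g_j}\in B$ for all $x\in B$ and $g_j\in\Gamma_j$. Denote this map by $\Phi^{(j)}_{g_j}$; it is the compression of $x$ to the coset $\{g\mid g_{(j)}=g_j\}$, where $g_{(j)}$ is the $j$-th coordinate. The maps $\Phi^{(j)}$ for different $j$ commute, since they are Fourier multipliers by indicator functions. Composing them, we get
\[
\Phi^{(1)}_{g_1}\circ\cdots\circ\Phi^{(n)}_{g_n}(x)=\tau_\Gamma(x\lambda_g^*)\lambda_g\in B
\]
for all $x\in B$ and $g=(g_1,\dots,g_n)\in\Gamma$. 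The one point to check is that each intermediate element stays in $B$, so that the theorem can be reapplied at the next coordinate. This holds because each step outputs an element of $B$ and the hypothesis on $B$ does not change.

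\emph{Step 3: conclusion.} Set $\Lambda:=\{g\in\Gamma\mid\lambda_g\in B\}$. This is a subgroup because $B$ is a unital $*$-algebra. It is normal because $B$ is $\Gamma$-invariant. Finally $\vN(\Lambda)\subseteq B$ because $B$ is weakly closed. By Step 2, for every $x\in B$ we have $\tau_\Gamma(x\lambda_g^*)=0$ whenever $g\notin\Lambda$, since otherwise $\lambda_g$ would be a non-zero scalar multiple of an element of $B$. The Fourier series of $x$ converges in $\|\cdot\|_2$, so $x=\mathbb E_\Lambda(x)\in\vN(\Lambda)$, and therefore $B=\vN(\Lambda)$.

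The main obstacle is Step 2: the identification of $\mathbb E_{M_j}$ with the coordinatewise coset projection. Once that is verified, the argument is bookkeeping, exactly as in the $\Cs$-case of Corollary~\ref{C-ISR}.
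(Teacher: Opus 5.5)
Your proposal is correct and follows essentially the same route as the paper. The paper also views $\vN(\Gamma)$ as the trivial-action crossed product $\vN(\prod_{k\neq j}\Gamma_k)\rtimes\Gamma_j$, applies Theorem~\ref{TracialInvariantMainTheorem} with $N=\Gamma_j$, extracts the Fourier coefficients coordinate by coordinate, and concludes with $\Lambda=\{g\in\Gamma\mid\lambda_g\in B\}$. Your explicit check of $L^2$-compatibility, via Proposition~\ref{prop:characterization} and the triviality of the action, fills in a detail the paper leaves implicit.
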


\begin{proof}
Write $\Gamma:=\Gamma_1\times\cdots\times\Gamma_n$, and let
$B\subseteq\vN(\Gamma)$ be a unital $\Gamma$-invariant von Neumann
subalgebra.
For each $j$, view $\vN(\Gamma)$ as the crossed product of
$\vN(\prod_{k\ne j}\Gamma_k)$ by the trivial action of $\Gamma_j$.
Apply Theorem~\ref{TracialInvariantMainTheorem} with $N=\Gamma_j$.
Successively extracting the Fourier coefficient in each coordinate
gives $\tau_\Gamma(x\lambda_g^*)\lambda_g\in B$ for every $x\in B$
and $g\in\Gamma$.

Consequently, $\Lambda:=\{g\in\Gamma\mid\lambda_g\in B\}$ is a normal
subgroup, and every $x\in B$ satisfies $x=\mathbb E_\Lambda(x)$.
Thus $B=\vN(\Lambda)$.
\end{proof}

Fourier coefficient extraction also describes the invariant subalgebras
of these crossed products.

\begin{corollary}\label{TracialFourierStructureCorollary}
Under the hypotheses of Theorem~\ref{TracialInvariantMainTheorem},
the following statements hold.
\begin{enumerate}[label=\textup{(\roman*)},leftmargin=*]
\item One has
\[
B=\overline{\mathrm{span}}^{\,\sigma\text{-weak}}
\{a\lambda_g\mid g\in\Gamma,\ a\in M,\ a\lambda_g\in B\}.
\]
\item If $M$ is a factor and $M\subseteq B$, then
$B=M\rtimes_{\alpha|_\Lambda}\Lambda$ for a subgroup
$\Lambda\leq\Gamma$ normalized by $N$.
\end{enumerate}
\end{corollary}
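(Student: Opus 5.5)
Both parts will be deduced from Theorem~\ref{TracialInvariantMainTheorem}, following the pattern of the $\Cs$-algebraic corollary after Corollary~\ref{C-ISR}. The role of the approximation property there is taken here by the fact that Fourier series converge in $\|\cdot\|_2$ in a tracial crossed product.

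For (i), fix $x\in B$ and write $x(g):=\mathbb E_M(x\lambda_g^*)$. By Theorem~\ref{TracialInvariantMainTheorem}, $x(g)\lambda_g\in B$ for every $g\in\Gamma$. Each such element therefore lies in the set $\{a\lambda_g\mid a\in M,\ a\lambda_g\in B\}$, which gives the inclusion "$\subseteq$" once we can recover $x$ from its Fourier terms. The finite partial sums $\sum_{g\in F}x(g)\lambda_g$ converge to $x$ in $\|\cdot\|_2$, by orthogonality of the subspaces $\overline{M\lambda_g}$ in $L^2(M\rtimes_\alpha\Gamma)$. These partial sums need not be uniformly operator bounded, so I would not argue with operator-norm bounded nets. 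Instead I would use the following duality argument. If a normal functional $\omega$ vanishes on the $\sigma$-weak span $S$ of these elements, then $\omega(y)=0$ for all $y$ in the $\|\cdot\|_2$-closure of $S$ intersected with the operator-norm ball of radius $\|x\|$. To place $x$ in that set, I would approximate $x$ by bounded elements of $S$. The natural choice is the averages produced inside the proof of Theorem~\ref{TracialFreeProductCaseTheorem}. Alternatively, and more simply, I would use that the $\|\cdot\|_2$-closure of $S$ inside $(M\rtimes_\alpha\Gamma)_1$ agrees with the $\sigma$-strong closure, and that $x$ is a $\sigma$-weak limit of Fejér-type bounded averages. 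The cleanest route avoids all of this: $S^{\sigma\text{-weak}}$ is a $\sigma$-weakly closed subspace containing every $x(g)\lambda_g$, and $x$ is determined by these coefficients. Concretely, if $\omega\in S_\perp$ is normal, one represents $\omega$ by an $L^1$-density $\xi$. Then $\omega(x)=\sum_g\omega(x(g)\lambda_g)=0$, using Fourier expansion in $L^2$ when $\xi\in L^2$, and density of such $\xi$ together with the boundedness of $x$. The reverse inclusion "$\supseteq$" is obvious, since $B$ is $\sigma$-weakly closed. The main obstacle is precisely this convergence step, and I would resolve it through the $L^2$-density pairing just described.

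For (ii), assume $M$ is a factor and $M\subseteq B$. For each $g\in\Gamma$ consider $J_g:=\{a\in M\mid a\lambda_g\in B\}$. Because $M\subseteq B$, $J_g$ is a two-sided ideal of $M$: indeed $ma\lambda_g\in B$, and also $a\lambda_g m=a\alpha_g(m)\lambda_g\in B$ with $\alpha_g(m)$ ranging over all of $M$. The set $J_g$ is $\sigma$-weakly closed, because $a\mapsto a\lambda_g$ is a $\sigma$-weak homeomorphism onto $M\lambda_g$ and $B$ is $\sigma$-weakly closed. Since $M$ is a factor, $J_g=\{0\}$ or $J_g=M$. Set $\Lambda:=\{g\mid J_g=M\}=\{g\mid\lambda_g\in B\}$. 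This is a subgroup, and it is normalized by $N$ because $B$ is $N$-invariant and $\lambda_n\lambda_g\lambda_n^*=\lambda_{ngn^{-1}}$. Then $M\rtimes_{\alpha|_\Lambda}\Lambda\subseteq B$. Conversely, by Theorem~\ref{TracialInvariantMainTheorem} every $x\in B$ satisfies $x(g)\in J_g$, so $x(g)=0$ for $g\notin\Lambda$. Hence $x=\mathbb E_\Lambda(x)\in M\rtimes_{\alpha|_\Lambda}\Lambda$, where we use that a Fourier series supported in $\Lambda$ represents an element of that subalgebra, via $\|\cdot\|_2$-injectivity of Fourier coefficients. This gives $B=M\rtimes_{\alpha|_\Lambda}\Lambda$.
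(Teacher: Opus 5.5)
Your argument for (ii) is correct. It is also slightly more direct than the paper's proof, which deduces (ii) from (i). Once the coefficients $x(g)$ with $g\notin\Lambda$ vanish, the conditional expectation $\mathbb E_\Lambda$ onto the von Neumann subalgebra $M\rtimes_{\alpha|_\Lambda}\Lambda$ finishes the job.

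Part (i), however, has a genuine gap at exactly the convergence step you flag, and none of your three proposed resolutions closes it.
\begin{enumerate}[label=\textup{(\alph*)},leftmargin=*]
\item Your first claim is that a normal $\omega\in S_\perp$ vanishes on $\overline{S}^{\|\cdot\|_2}\cap(M\rtimes_\alpha\Gamma)$. This is false for a general $\sigma$-weakly closed subspace. If $\omega=\tau(\,\cdot\,\xi)$ with $\xi\in L^1\setminus L^2$, then $\ker\omega$ is $\sigma$-weakly closed but $\|\cdot\|_2$-dense. If you instead mean the $\|\cdot\|_2$-closure of $S\cap\mathrm{Ball}$, the claim is true, but you then need bounded approximants of $x$ from $S$, which you do not have.
\item Bounded Fej\'er-type averages require amenability, or at least the approximation property, and neither is assumed. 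MIF groups such as $\mathrm{SL}(3,\mathbb Z)$ fail the AP (Lafforgue--de la Salle). The point of this corollary, as the paper's subsequent remark stresses, is that no such hypothesis is needed.
\item In your ``cleanest route'', the identity $\omega(x)=\sum_g\omega(x(g)\lambda_g)$ fails for general normal $\omega$. Fourier series of bounded elements need not converge $\sigma$-weakly, already in $\vN(\mathbb Z)\cong L^\infty(\mathbb T)$. Approximating $\xi$ in $L^1$ by $L^2$-densities $\xi_n$ does not rescue this. The functionals $\tau(\,\cdot\,\xi_n)$ no longer annihilate $S$, so you cannot conclude $\tau(x\xi_n)=0$.
\end{enumerate}

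The missing idea is that $S$ is a von Neumann algebra, not merely a subspace. Since $(a\lambda_g)(b\lambda_h)=a\alpha_g(b)\lambda_{gh}$ and $(a\lambda_g)^*=\alpha_{g^{-1}}(a^*)\lambda_{g^{-1}}$, the homogeneous elements of $B$ span a $*$-algebra. Hence there is a trace-preserving conditional expectation onto $S$, and its $L^2$-extension is the orthogonal projection onto $\overline{S}^{\|\cdot\|_2}$. If $S$ is non-unital, work in the corner $p(M\rtimes_\alpha\Gamma)p$ with $p=1_S$, noting that $\|\cdot\|_2$-limits $y$ of elements of $S$ satisfy $y=pyp$. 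This gives
\[
\overline{S}^{\|\cdot\|_2}\cap(M\rtimes_\alpha\Gamma)=S.
\]
The Fourier partial sums of $x$ lie in $S$ by Theorem~\ref{TracialInvariantMainTheorem} and converge to $x$ in $\|\cdot\|_2$, so $x\in S$. This is precisely the paper's argument, and it is the same mechanism you already use with $\mathbb E_\Lambda$ in (ii).
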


\begin{proof}
For \textup{(i)}, denote the algebra on the right by $C$.
The homogeneous elements of $B$ span a $*$-algebra, so $C$ is a
von Neumann subalgebra contained in $B$.
By Theorem~\ref{TracialInvariantMainTheorem}, all finite Fourier sums
of $x\in B$ belong to $C$.
These sums converge to $x$ in $\|\cdot\|_2$.
Since $L^2(C)\cap(M\rtimes_\alpha\Gamma)=C$, we obtain $x\in C$.

For \textup{(ii)}, put
$I_g:=\{a\in M\mid a\lambda_g\in B\}$ for $g\in\Gamma$.
Since $M\subseteq B$, each $I_g$ is an ultraweakly closed two-sided
ideal of $M$.
As $M$ is a factor, $I_g$ is either $\{0\}$ or $M$.
Thus the non-zero homogeneous components of $B$ are precisely
$M\lambda_g$ for
$g\in\Lambda:=\{g\in\Gamma\mid\lambda_g\in B\}$.
This is a subgroup normalized by $N$, and \textup{(i)} gives
$B=M\rtimes_{\alpha|_\Lambda}\Lambda$.
\end{proof}

We now apply this description to intermediate von Neumann algebras.

\begin{corollary}\label{TracialReflectingCorollary}
Let $\alpha\colon\Gamma\curvearrowright(M,\tau)$ satisfy the action
and compatibility hypotheses of
Theorem~\ref{TracialInvariantMainTheorem}.
Then every intermediate von Neumann algebra
$\vN(\Gamma)\subseteq B\subseteq M\rtimes_\alpha\Gamma$
is of the form
$B=(B\cap M)\rtimes_{\alpha|_{B\cap M}}\Gamma$.
\end{corollary}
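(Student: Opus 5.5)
The plan is to deduce the corollary from Corollary~\ref{TracialFourierStructureCorollary}\textup{(i)}, applied with $N:=\Gamma$. Let $\vN(\Gamma)\subseteq B\subseteq M\rtimes_\alpha\Gamma$ be an intermediate von Neumann algebra. Since $\lambda_g\in B$ for every $g\in\Gamma$, the algebra $B$ is invariant under conjugation by $\Gamma$, which is a non-trivial normal subgroup of itself. Hence the hypotheses of Theorem~\ref{TracialInvariantMainTheorem}, and thus of Corollary~\ref{TracialFourierStructureCorollary}, are satisfied. That corollary gives
\[
B=\overline{\mathrm{span}}^{\,\sigma\text{-weak}}
\{a\lambda_g\mid g\in\Gamma,\ a\in M,\ a\lambda_g\in B\}.
\]

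Next I identify the homogeneous components. Since $\lambda_g\in B$ and $B$ is an algebra, we have $a\lambda_g\in B$ if and only if $a=(a\lambda_g)\lambda_g^*\in B$, that is, if and only if $a\in B\cap M$. So the generating set above is exactly $\{a\lambda_g\mid a\in B\cap M,\ g\in\Gamma\}$.

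The algebra $B_0:=B\cap M$ is a von Neumann subalgebra of $M$. It is $\alpha$-invariant, because $\alpha_g(a)=\lambda_g a\lambda_g^*\in B\cap M$ for $a\in B_0$, and it is unital since $1\in\vN(\Gamma)\subseteq B$. Therefore $\alpha$ restricts to a trace-preserving action of $\Gamma$ on $(B_0,\tau|_{B_0})$.

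It remains to check that the $\sigma$-weak closed span of $\{a\lambda_g\mid a\in B_0,\ g\in\Gamma\}$ inside $M\rtimes_\alpha\Gamma$ is canonically $B_0\rtimes_{\alpha|_{B_0}}\Gamma$. The trace $\tau\circ\mathbb E_M$ restricted to this span is the canonical crossed product trace. The GNS/$L^2$ identification of $L^2(B_0)\otimes\ell^2(\Gamma)$ inside $L^2(M)\otimes\ell^2(\Gamma)$ therefore gives a normal trace-preserving isomorphism onto the tracial crossed product.

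I expect no serious obstacle. The only point needing care is this last routine identification of the generated von Neumann subalgebra with the abstract crossed product $B_0\rtimes\Gamma$.
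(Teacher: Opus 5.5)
Your proposal is correct and follows essentially the paper's route: you apply the Fourier extraction with $N=\Gamma$ through Corollary~\ref{TracialFourierStructureCorollary}(i), and you use $\lambda_g\in B$ to see that the homogeneous pieces $a\lambda_g\in B$ are exactly those with $a\in B\cap M$. The paper instead phrases the middle step as $\mathbb E_M(x\lambda_g^*)=\bigl(\mathbb E_M(x\lambda_g^*)\lambda_g\bigr)\lambda_g^*\in B\cap M$ and leaves implicit the routine identification of the $\sigma$-weakly closed span with the abstract crossed product $(B\cap M)\rtimes\Gamma$, which you spell out.
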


\begin{proof}
Since $\vN(\Gamma)\subseteq B$, the algebra $B$ is invariant under
conjugation by $\Gamma$, and $B\cap M$ is $\alpha$-invariant.
Apply Theorem~\ref{TracialInvariantMainTheorem} with $N=\Gamma$.
For $x\in B$ and $g\in\Gamma$, we obtain
$\mathbb E_M(x\lambda_g^*)\lambda_g\in B$.
Multiplying by $\lambda_g^*\in B$ gives
$\mathbb E_M(x\lambda_g^*)\in B\cap M$.
The conclusion follows from
Corollary~\ref{TracialFourierStructureCorollary}\textup{(i)}.
\end{proof}

\begin{remark}
The Fourier decomposition and reflecting conclusions require no
approximation property assumption on $\Gamma$.
\end{remark}

By Remark~3.2\textup{(i)} and Proposition~\ref{InnerPropertiesProposition}, every countable group admitting a topologically free extreme boundary action on a compact Hausdorff space with more than two points is mixed identity-free, and hence has the relative ISR property by Theorem~\ref{thm:ISRMIF}.\\
For the Baumslag--Solitar groups $\mathrm{BS}(m,n)$ with $|m|,|n|\geq2$ and $|m|\ne|n|$, mixed identity-freeness was already established in \cite[Theorem~B and Proposition~8.8]{FMMS}. As a consequence, the following corollary  answers the Baumslag--Solitar case of \cite[Question~6.1]{DJ}.

\begin{corollary}\label{cor:BS}
Let $m,n\in\Z$ satisfy $|m|,|n|\geq2$ and $|m|\ne|n|$.
Then the Baumslag--Solitar group
$\mathrm{BS}(m,n)=\langle a,t\mid ta^mt^{-1}=a^n\rangle$
has the relative ISR property.
\end{corollary}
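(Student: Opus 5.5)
The proof is a direct reduction to Corollary~\ref{TracialRelativeISRCorollary}: all that is needed is that $\mathrm{BS}(m,n)$ with $|m|,|n|\geq2$ and $|m|\ne|n|$ is a non-trivial mixed identity-free discrete group. Non-triviality is clear, since $a$ generates an infinite cyclic subgroup; indeed the group surjects onto $\Z$ via $t\mapsto1$, $a\mapsto0$, and $a$ has infinite order by Britton's lemma. For mixed identity-freeness I would simply cite \cite[Theorem~B and Proposition~8.8]{FMMS}, as recalled just before the statement. Corollary~\ref{TracialRelativeISRCorollary} then gives the relative ISR property immediately, and also the ISR property by taking $N=\Gamma$.

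If one prefers not to quote MIF directly, there is an alternative route through Remark~3.2(i). By \cite[Lemma~8.6]{FMMS}, the action of $\mathrm{BS}(m,n)$ on its Bass--Serre tree is minimal, of general type, and has trivial half-tree fixators. This yields a topologically free extreme boundary action on a compact space with more than two points. By \cite[Theorem~1]{Oza}, $\mathrm{BS}(m,n)$ is then inner \Cs-selfless, since it is countable. Proposition~\ref{InnerPropertiesProposition}(i) then shows it is MIF.

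The only point that needs care is the hypothesis $|m|\ne|n|$. When $|m|=|n|$, the element $a^m$ is central or has a centralizer of finite index, so the group satisfies a mixed identity. Mixed identity-freeness is therefore not available in that case, and the statement excludes it for this reason. There is no real obstacle beyond checking that the cited results apply exactly in the stated parameter range.
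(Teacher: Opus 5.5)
Your proposal is correct and is essentially the paper's proof: cite \cite[Theorem~B and Proposition~8.8]{FMMS} for mixed identity-freeness of $\mathrm{BS}(m,n)$, then apply Corollary~\ref{TracialRelativeISRCorollary}. Your alternative route via Remark~3.2(i), \cite[Theorem~1]{Oza}, and Proposition~\ref{InnerPropertiesProposition}(i) is the same one the paper notes in the paragraph just before the statement.
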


\begin{proof}
By \cite[Theorem~B and Proposition~8.8]{FMMS}, the group
$\mathrm{BS}(m,n)$ is mixed identity-free.
Apply Corollary~\ref{TracialRelativeISRCorollary}.
\end{proof}
We also obtain a class of amenable groups with ISR-property.
\begin{corollary}\label{cor:jacobson}
\begin{enumerate}[label=\textup{(\roman*)},leftmargin=*]
\item There exists a two-generated infinite elementary amenable discrete
group with the relative ISR property.
\item For each prime $p$, there exists an infinite locally finite discrete $p$-group with the relative ISR property.
\item There exist continuum many pairwise non-isomorphic finitely
generated non-amenable discrete groups with the relative ISR property which contain no non-Abelian free subgroup and have non-trivial amenable radical.
\end{enumerate}
\end{corollary}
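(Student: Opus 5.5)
The plan is to reduce all three items to Corollary~\ref{TracialRelativeISRCorollary}: it suffices to exhibit, in each case, a non-trivial mixed identity-free group with the stated properties. No further operator-algebraic work should be needed. The whole effort goes into locating suitable MIF groups in the group-theoretic literature, and into checking their extra properties (amenability type, torsion, radicals).

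\emph{Item (ii).} I would use Proposition~\ref{prop:directed-union}'s philosophy at the level of MIF. MIF passes to directed unions, because a mixed identity involves only finitely many constants. Moreover, by \cite{HO} and subsequent work there exist MIF groups that are locally finite. The natural candidate is the class of Hall-type universal locally finite groups. However, we need a $p$-group, so instead I would take a suitable infinite iterated wreath product or inductive limit of finite $p$-groups. The concrete choice I would try first is the $p$-adic analogue of Jacobson's construction (the title label ``jacobson'' suggests this): Jacobson's MIF groups arising from finitely generated amenable/elementary amenable constructions. For (ii), I would invoke the known result that there are MIF locally finite $p$-groups, presumably constructed as directed unions of finite $p$-groups with ``fixed-point-free'' embeddings. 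The required check is that every non-trivial word in $\Gamma\star\mathbb Z$ is already violated inside some finite stage. This check is the main obstacle if no citable reference exists. I would attempt it by a diagonal construction: enumerate the countably many candidate words and, at each stage, embed the current finite $p$-group into a larger one (e.g.\ a wreath product with $\mathbb Z/p$) in which that word is falsified.

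\emph{Item (i).} I would cite the two-generated elementary amenable MIF group due to Jacobson (an example showing that MIF does not force non-amenability). Then I would check that it is infinite and non-trivial, and apply Corollary~\ref{TracialRelativeISRCorollary} directly.

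\emph{Item (iii).} I would use the continuum family of finitely generated MIF groups without free subgroups built in the literature, e.g.\ MIF torsion or Tarski-type monster-like groups, or Golod--Shafarevich/branch examples. The delicate point is non-trivial amenable radical. This seems to conflict with Proposition~\ref{InnerPropertiesProposition}, but that proposition applies only to inner \Cs-selfless groups, and MIF alone only forbids two commuting normal subgroups. So an infinite amenable radical is allowed: by \cite{HO} it is itself MIF, and hence non-abelian and not virtually solvable. I would therefore take a finitely generated group containing a locally finite MIF normal subgroup (for instance one from (ii)) and verify non-amenability and the absence of free subgroups from the construction. To get continuum many such groups, I would vary a parameter sequence and distinguish the isomorphism classes, for example by a growth or quotient invariant. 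Non-isomorphism plus MIF of the whole group is where I expect to rely most heavily on citations.
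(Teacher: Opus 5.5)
Your reduction is the same as the paper's: everything follows from Corollary~\ref{TracialRelativeISRCorollary} once suitable non-trivial MIF groups are exhibited. The paper's proof consists only of citations, namely \cite[Theorems~1.2 and~1.3]{J} for (i) and (ii), and \cite[Corollary~1.3]{HL} for (iii). For (i) and (ii) your proposal agrees with this.

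If you want the fallback diagonal argument in (ii) to stand on its own, the step ``embed into a wreath product with $\mathbb Z/p$'' is unjustified and should be replaced by Gruenberg's theorem that free products of residually-$p$ groups are residually $p$. For a finite $p$-group $H$ and a non-trivial $w\in H\star\mathbb Z$, this gives a finite $p$-quotient of $H\star\mathbb Z$ that is injective on $H$ and in which $w$ survives. Dovetailing over all words then gives a directed union of finite $p$-groups that is MIF, and hence infinite.

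For (iii), however, there is a genuine gap: you never produce the groups, and their existence is the entire content of (iii) beyond Corollary~\ref{TracialRelativeISRCorollary}. The candidates you list do not work.

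\begin{itemize}
\item Tarski monsters are simple, so a non-amenable one has trivial amenable radical.
\item Branch groups, indeed weakly branch groups, are never MIF. Take non-trivial $a\in\mathrm{rist}(v)$, $b\in\mathrm{rist}(w_1)$, $c\in\mathrm{rist}(w_2)$ with $v,w_1,w_2$ on one level and $w_1\neq w_2$. Then $[[xax^{-1},b],[xax^{-1},c]]$ is a non-trivial mixed identity. This holds because rigid stabilizers of distinct vertices of one level commute, and $xv$ differs from at least one of $w_1,w_2$.
\end{itemize}

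Your final suggestion, a finitely generated group containing a locally finite MIF normal subgroup, leaves four things unverified: mixed identity-freeness of the ambient group, non-amenability, absence of non-abelian free subgroups, and the existence of continuum many isomorphism classes. It is a search strategy rather than an argument. What is missing is a theorem supplying exactly such a family of MIF groups, which the paper takes from \cite[Corollary~1.3]{HL}.
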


\begin{proof}
For \textup{(i)} and \textup{(ii)}, apply
\cite[Theorems~1.2 and~1.3]{J} along with
Corollary~\ref{TracialRelativeISRCorollary}.
For \textup{(iii)}, apply \cite[Corollary~1.3]{HL} and
Corollary~\ref{TracialRelativeISRCorollary}.
\end{proof}

The following embedding result gives further amenable
examples with the relative ISR property.

\begin{corollary}\label{cor:embedding}
Every countable amenable discrete group embeds in a finitely generated amenable group with the relative ISR property. If the given group is elementary amenable, the overgroup can be chosen elementary amenable.
\end{corollary}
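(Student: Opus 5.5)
The strategy is to embed the given countable amenable group $G$ into a finitely generated amenable group that is mixed identity-free, and then apply Corollary~\ref{TracialRelativeISRCorollary}. The relative ISR property requires only that the overgroup be non-trivial and MIF. The whole problem is therefore a group-theoretic embedding question: every countable amenable (respectively elementary amenable) group should sit inside a finitely generated amenable (respectively elementary amenable) MIF group.

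I would take this from the construction underlying Corollary~\ref{cor:jacobson}, namely Jacobson's work \cite{J} on amenable MIF groups. The expected input is an embedding theorem there. It says that every countable amenable group embeds into a finitely generated amenable MIF group, and that elementary amenability of the input is preserved. If \cite{J} states exactly this, the proof is immediate: take the embedding $G\hookrightarrow H$, note that $H$ is non-trivial and MIF, and invoke Corollary~\ref{TracialRelativeISRCorollary}.

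If only the two-generated examples are available there, I would proceed in two steps.

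\begin{enumerate}
\item Embed $G$ into a finitely generated amenable group $G_1$, for instance by a Neumann--Neumann or wreath-product-type embedding. Amenability is preserved, and elementary amenability is preserved because these embeddings use extensions and directed unions.
\item Pass to a finitely generated amenable MIF overgroup of $G_1$ by Jacobson's construction. That construction builds MIF groups as suitable permutational or iterated wreath-type extensions, which preserve (elementary) amenability and finite generation.
\end{enumerate}

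The main obstacle is ensuring MIF in the second step, i.e.\ that the overgroup admits no non-trivial mixed identity. I would rely on the cited theorem rather than reprove it. The only thing to check is that its hypotheses (countability, amenability, and for the second claim elementary amenability) match ours.
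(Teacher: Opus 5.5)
Your reduction is the same as the paper's: the corollary follows by embedding into a finitely generated (elementary) amenable mixed identity-free group and then applying Corollary~\ref{TracialRelativeISRCorollary}. The gap is that the one non-trivial input is looked for in the wrong place, and your fallback does not supply it. The paper uses \cite{J} only for existence statements, namely Corollary~\ref{cor:jacobson}(i),(ii): a two-generated elementary amenable MIF group and locally finite MIF $p$-groups. These are particular examples, not an embedding theorem, so the condition ``if \cite{J} states exactly this'' in your main route is not met. The result the paper actually invokes is \cite[Corollary~1.2(1)]{HL}. It states precisely that every countable (elementary) amenable group embeds into a finitely generated (elementary) amenable MIF group. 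With that citation, your main route becomes the paper's two-line proof.

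Your two-step fallback fails at step 2. Step 1 is fine: the Neumann--Neumann embedding places $G$ in a two-generated subgroup of $G\wr\mathbb Z$, and both amenability and elementary amenability pass to such wreath products and to their subgroups. Step 2, ``pass to a finitely generated amenable MIF overgroup of $G_1$ by Jacobson's construction'', is the missing embedding theorem restated for finitely generated groups, and nothing you cite justifies it. A construction that produces specific MIF groups gives no mechanism for containing an arbitrary prescribed subgroup $G_1$. It also gives no way to guarantee that every non-trivial mixed identity $w\in H\star\mathbb Z$ can still be falsified, where the constants of $w$ now range over the larger overgroup $H$. Ensuring this is exactly the hard part. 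You need an embedding theorem of the type \cite[Corollary~1.2(1)]{HL}, not an existence result.
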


\begin{proof}
By \cite[Corollary~1.2\textup{(1)}]{HL}, every countable
(elementary) amenable group embeds into a finitely generated
(elementary) amenable MIF discrete group.
Apply Corollary~\ref{TracialRelativeISRCorollary}.
\end{proof}

\begin{remark}
Amenable discrete groups with ISR were already known.
Jiang--Zhou establish ISR for the group of finitely supported
permutations of $\N$ \cite{JZ}.
\end{remark}

\vspace{3mm}


\bibliography{references}
\bibliographystyle{mybstnum}

\end{document}